\documentclass[bj,authoryear]{imsart}

\usepackage{
amsmath, 
amsfonts,
amsthm, 
amscd,
amssymb,
enumitem,
mathrsfs,
bbm}
\usepackage{url}
\usepackage{hyperref}
\usepackage{xcolor} 

\startlocaldefs
\theoremstyle{plain}

\newtheorem{theorem}{Theorem}[section]
\newtheorem*{theorem*}{Theorem}
\newtheorem{lemma}[theorem]{Lemma}
\newtheorem{condition}[theorem]{Condition}
\newtheorem{corollary}[theorem]{Corollary}
\newtheorem{proposition}[theorem]{Proposition}
\newtheorem*{proposition*}{Proposition}
\theoremstyle{definition}
\newtheorem{definition}[theorem]{Definition}
\newtheorem{remark}[theorem]{Remark}

\DeclareMathOperator{\Var}{Var}

\DeclareMathOperator{\Unif}{Unif}
\DeclareMathOperator{\supp}{supp}
\DeclareMathOperator{\Span}{span}

\DeclareMathOperator{\an}{an}
\DeclareMathOperator{\alt}{alt}
\DeclareMathOperator{\bc}{bc}
\DeclareMathOperator{\tr}{tr}
\DeclareMathOperator{\lip}{lip}
\DeclareMathOperator{\ess}{ess}
\DeclareMathOperator{\diag}{diag}
\DeclareMathOperator{\pen}{pen}

\newcommand{\E}{\mathbb E}
\newcommand{\Prob}{\mathbb P}
\newcommand{\R}{\mathbb R}
\newcommand{\N}{\mathbb N}
\newcommand{\Z}{\mathbb Z}
\newcommand{\iid}{\overset{\text{i.i.d.}}{\sim}}
\endlocaldefs

\begin{document}

\begin{frontmatter}
\title{Posterior Consistency under dependent errors in non-linear inverse problems with applications to time evolution equations}
\runtitle{Posterior Consistency under dependent errors}

\begin{aug}
\author[A]{\inits{A.}\fnms{Aditya}~\snm{Jambhale}\ead[label=e1]{aj644@cam.ac.uk}}
\address[A]{Department of Pure Mathematics and Mathematical Statistics,
University of Cambridge, Cambridge, United Kingdom\printead[presep={,\ }]{e1}}
\end{aug}

\begin{abstract}
    Let $\theta \mapsto \mathcal G(\theta)$ be a map from a parameter to the solution of some non-linear partial differential equation modelling a dynamical system. We provide conditions to show posterior consistency for $\theta$ under observations of $\mathcal G(\theta)$ with dependent noise, modeling potential time-correlations of the sensors used to make the measurements. We provide two examples, namely the Navier-Stokes equations with unknown viscosity and initial condition, as well as a Reaction-Diffusion type equation with diffusivity coefficient that is varying (heterogeneous) in space, which requires novel stability estimates for a Darcy transport operator with accumulation in time. 
\end{abstract}

\begin{keyword}
\kwd{Inverse Problems}
\kwd{Non-linear}
\kwd{Bayesian}
\kwd{Posterior Consistency}
\end{keyword}

\end{frontmatter}

\tableofcontents

\section{Introduction}\label{sec:introduction}

\textbf{Bayesian Inference in Non-linear Dynamical Systems.}
In a wide range of scientific applications, for example chemistry, atmospheric science, and geophysics, one takes measurements of a dynamical system $(u(t, \cdot): t\geq 0)$ evolving in $L^2(\Omega, V)$, the space of square-integrable functions on some $d$-dimensional domain $\Omega$ (either contained in $\R^d$ or the $d$-dimensional torus $\mathbb T^d$), and valued in some finite dimensional vector space $V = \R^{d_V}$ with the Euclidean inner product. However, the parameter(s) of interest $\theta$ (which determine the dynamics) are generally not directly observable from the solution. Denoting by $\Theta$ the set of possible parameters, we can formalize this by a \textit{non-linear} map 
\begin{equation}\label{eq:forwardmap}
\mathcal G: \Theta \to L^2([0, S]\times \Omega, V),
\end{equation}
where $S >0$ is some fixed time horizon (for notational convenience we assume $S = 1$).
Given some $\theta \in \Theta$, a popular observational model is
\begin{equation}\label{eq:introsetting}
    Y_i = \mathcal G(\theta)(t_i, X_i) + \epsilon_i, \quad \text{ for } i = 1, \dots N,
\end{equation}
where $(t_i, X_i)\iid \Unif([0, S]\times \Omega)$ are the design points and $\epsilon = (\epsilon_i)_{i = 1}^N$ are some observational errors. The Bayesian approach for recovering $\theta$ is to place a prior probability measure $\Pi$ (in general depending upon $N$) on $\Theta$, and then to update it with the observed data $\mathcal D_N \equiv (Y_i, t_i, X_i)_{i = 1}^N$ according to Bayes' rule yielding a posterior $\Pi(\cdot | \mathcal D_N)$. If we assume that the distribution of $\epsilon$ has a density with respect to the Lebesgue measure on $V^N$, then the law $\mathbb Q_\theta^N$ of $\mathcal D_N$ on $(V\times [0, S]\times \Omega)^N$ has a density $q_\theta^N$ with respect to the Lebesgue measure too. In this case, the posterior is the probability measure given by the formula
\begin{equation*}
    d\Pi(\theta |\mathcal D_N) \propto q_\theta^N(\mathcal D_N)d\Pi(\theta).
\end{equation*}

Because there are computationally feasible methods to sample from such a posterior, for example MCMC methods like pCN or Metropolis-Hastings, these methods have been widely adopted in applications following a seminal paper of A. Stuart \cite{S}, with theoretical foundations emerging since then in \cite{N20}, \cite{MNP}, \cite{MNP2}, in particular see \cite{N}.
More recent work has focused on problems specifically in the time-evolution setting, particularly for parabolic partial differential equations (PDEs), see \cite{NT}, \cite{KN}, \cite{KN2}, \cite{NPR}, \cite{CN}, \cite{N26+}, \cite{K}, or \cite{MvdMvdV}. 
We are interested in the asymptotic behavior of the posterior distribution, assuming the data is generated by some ground truth $\theta_0\in \Theta$. A first result towards this is \textit{posterior consistency}, namely that for some rate $\delta_N \to 0$, and constant $C$,
\begin{equation}\label{eq:posteriorconsistency}
    \Pi(\theta : d(\theta, \theta_0)\leq C \delta_N | \mathcal D_N) \to 1 \quad \text{ in }\mathbb Q_{\theta_0}^N \text{ probability as }N \to \infty,
\end{equation}
for some metric $d$ on $\Theta$, i.e. that the posterior will converge to the ground truth as more data is provided.

\noindent
\textbf{Two Inverse Problems.} To illustrate the theory, we are interested here in two parabolic non-linear PDEs posed in the $d$-dimensional torus $\Omega = \mathbb T^d$. A common feature for both inverse problems is that information about the parameters 'accumulate' in time, allowing one to weaken the assumptions required to show backwards stability for $\mathcal G$ (namely a local H\"older continuity estimate for the inverse map $\mathcal G^{-1}$, see Proposition \ref{thm:HRDEBack} and \eqref{eq:backward2}) when compared to the elliptic PDE setting. 

Firstly, we consider $u: [0, 1] \times \Omega \to \R$ (i.e. periodic in the last $d$ arguments) solving a reaction-diffusion type equation
\begin{equation}\label{eq:introRDE}
    \begin{split}
        \frac{\partial u}{\partial t} - \nabla \cdot (D\nabla u) &= f(u) \quad \text{ on } (0, 1]\times \Omega, \\
        u(0, \cdot) &= \varphi \quad \text{ on }\Omega,
    \end{split}
\end{equation}
where $\nabla$ and $\nabla\cdot$ are the gradient and divergence operators respectively, $f: \R\to \R$ is a time-independent, smooth and compactly supported `reaction' function, and we denote by $f(u)$ the function $f(u)(x) = f(u(x))$; moreover, $D: \Omega \to [D_{\min}, \infty)$ is called the 'diffusivity' (for some constant $D_{\min} >0$) and $\varphi: \Omega \to \R$ is the initial condition. We assume that $f$, $\varphi$ and $D_{\min}$ are known, while $D$ is unknown. When $d\leq 3$, \eqref{eq:introRDE} admits a unique solution (see Proposition \ref{prop:HRDEReg}), which we denote by $u_D$, from which we obtain a forward map 
\begin{equation}\label{eq:HRDEforwardmap}
    \mathcal G(D) = u_D.
\end{equation}
Physically, $u$ represents the amount of some substance as it moves around a medium and is created or destroyed by some reaction. It is created where $f(u)$ is positive and destroyed where $f(u)$ is negative. The term $-\nabla \cdot(D\nabla u)$ controls how much $u$ diffuses throughout the medium. When the medium is \textit{homogeneous}, i.e. $D$ is a constant, we have the usual Reaction Diffusion equation. Thus, we call \eqref{eq:introRDE} the \textit{heterogeneous} reaction diffusion equation, HRDE for short, and it has been considered as a more realistic alternative to the classical reaction diffusion equation in some biological processes \cite{RR} and in wave front analysis \cite{WW}.

To handle this mathematically, we prove a stability estimate for a Darcy type transport operator (see \cite{BCDVPW}, \cite{ABJN}, \cite{N24}, \cite{NvdGW} and \cite{NS}) in a ``parabolic'' setting. Assuming the \textit{critical points of the solution move in time}, i.e. that for some diffusivity $D_1$ there is a constant $\delta > 0$ so that 
\begin{equation}\label{eq:introcondition}
    \inf_{x \in \Omega} \int_0^1 |\nabla u_{D_1}(t, x)|^2\, dt \geq \delta,
\end{equation}
we obtain an estimate of the form, for all diffusivities $D_1$, $D_2$,
\begin{equation}\label{eq:backward1}
    \|D_1 - D_2\|_{L^2}\lesssim \|\nabla \cdot([D_1 - D_2]\nabla u_{D_1})\|_{L^2([0, 1]\times \Omega)}^\eta,
\end{equation}
for some $\eta$ depending on the smoothness of $D_1$ and $D_2$. See Lemma \ref{lem:transportstability} for details. The set of initial conditions $\varphi$ that make \eqref{eq:introcondition} hold for some $\delta$ will be shown to be generic in suitable topologies, see Proposition \ref{prop:opendense}. Moreover, provided the experimenter can choose the initial condition, there is a choice of $\varphi$ depending only on known quantities such that \eqref{eq:introcondition} holds, see Proposition \ref{prop:RDvarphi}.

Next, we are interested in solutions of the periodic 2D Navier-Stokes equations, namely $u: [0, 1]\times \Omega \to \R^2$ periodic in the last 2 arguments satisfying
\begin{equation}\label{eq:introNS}
    \begin{split}
        \frac{d u}{dt} + \nu A u + B(u, u)&= f \quad \text{ on } [0, 1] \times \Omega, \\
        \nabla \cdot u &= 0\quad \text{ on } [0, 1] \times \Omega, \\
        u(0, \cdot) &= \varphi \quad \text{ on } \Omega,
    \end{split}
\end{equation}
where $A = -P\Delta$, $B = P[(u\cdot\nabla)u]$, $\Omega = \mathbb T^2$, $\varphi: \Omega \to \R^2$ some initial vector field, $f: \Omega \to \R^2$ some time-independent forcing function, $\nu \geq \nu_{\min}$ for some constant $\nu_{\min} >0$, and for $i = 1, 2$,
\[((u\cdot\nabla)u)_i = \sum_{j = 1}^2 u_j \frac{\partial u_i}{\partial x_j},\]
$\Delta$ is the Laplacian, and $P$ is the projection onto the Hilbert space $H\subseteq L^2$ of divergence free, periodic, mean-zero vector fields, to be defined later.
Here, we assume $f$ and $\nu_{\min}$ are known, while $\nu$ and $\varphi$ are unknown. Solutions to \eqref{eq:introNS} exist and are unique (See \textsection 9 in \cite{R}), which we will denote by $u_{\nu ,\varphi}$. As such, we obtain a forward map
\begin{equation}\label{eq:NSforwardmap}
    \mathcal G(\nu, \varphi) = u_{\nu, \varphi}.
\end{equation}
Proving posterior consistency requires a stability estimate of the form
\begin{equation}\label{eq:backward2}
|\nu_1 - \nu_2| + \|\varphi_1 - \varphi_2\|_{L^2(\Omega)} \lesssim \|\mathcal G(\nu_1, \varphi_1) - \mathcal G(\nu_2, \varphi_2)\|_{L^2([0, 1]\times \Omega)}^\eta,
\end{equation}
where $\eta$ depends on the smoothness of $\varphi_1$ and $\varphi_2$, see Proposition \ref{prop:NSBack}. 

\noindent
\textbf{The Dependence Structure.} On the statistical aspects of \eqref{eq:introsetting}, most work has been done assuming the errors are i.i.d., while little has been done for other observational models which may match with applications more closely, see \cite{NS} and \cite{SS2}. In analyzing time-evolution equations such as \eqref{eq:introRDE} and \eqref{eq:introNS}, a natural set-up is that if the same sensing equipment is used at different times, then the observational errors for a given sensor may be time-correlated. This is the structural assumption we place on the $\epsilon_i$. Specifically, we consider a ``space-independent'' (or ``time-dependent'') scheme. Let $\mathcal G$ and $V = \R^{d_V}$ be as in \eqref{eq:forwardmap}, and
\begin{equation}\label{eq:timedependent}
    Y_{ij} = \mathcal G(\theta)(t_i, X_{ij}) + \epsilon_{ij}, \quad \text{for }i = 1, \dots T, \quad j = 1, \dots n,
\end{equation}
where $t_i = (2i-1)/2T$ are deterministic, $X_{ij}\iid \Unif(\Omega)$, $\epsilon_{ij}$ are jointly Gaussian random variables independent of the $X_{ij}$, the total number of observations is $N = Tn$, and 
\begin{alignat*}{2}
    \epsilon_{ij}&\perp \epsilon_{i'j'} && \text{ for all } j \neq j', \text{ and }i, i' = 1, \dots T,\\
    \E[\epsilon_{ij}\epsilon_{i'j}^T] &= M_{ii'}^j &&\text{ for all } i \neq i' \text{ and }j = 1, \dots n, \text{ and } \\
    \E[(\epsilon_{ij}^k)^2] &= (\sigma_j^k)^2 &&\text{ for all }k = 1, \dots d_V,
\end{alignat*}
where $M^j_{ii'}$ is a $d_V \times d_V$ matrix, $\epsilon_{ij} = (\epsilon^1_{ij}, \dots \epsilon_{ij}^{d_V})\in \R^{d_V}$, and $\sigma_j = (\sigma_j^k)\in \R^{d_V}$ has all positive entries. 
Intuitively, we assume that different sensors have independent errors, but for a single sensor at various points in time its observations may be dependent. We sometimes re-index \eqref{eq:timedependent} as
\begin{align}
   Y_\ell = \mathcal G(\theta)(t_\ell, X_\ell) &+ \epsilon_\ell \quad \text{ for } \ell = 1, \dots N, \nonumber \\
   \epsilon = (\epsilon_1, \dots, \epsilon_N) &\sim N(0, \Gamma_N),\label{eq:setting}
\end{align}
for some $d_VN\times d_VN$ invertiblecovariance matrix $\Gamma_N$, $t_i \equiv t_i^N \in [0, 1]$ are deterministic, and $X_i \iid \Unif(\Omega)$. We denote by $\Prob_\theta^N$ the law of the data $D_N = (Y_i, X_i)^N_{i = 1}$ on $(V\times \Omega)^N$, and by $p_\theta^N$ the density of this law with respect to the Lebesgue measure. 

\begin{remark}
    While on the surface a mixed design model might seem artificial, we have done so to ease computation in the proofs while maintaining a sensible dependence structure. Indeed, because we wish to model time-correlations, we want the correlation of $\epsilon_\ell$ and $\epsilon_k$ to depend on $|t_\ell - t_k|$. In order to avoid a random covariance matrix then, we fix the $t_\ell$'s in the design. However, to avoid accruing unnecessary approximation errors in the proofs, we assume a random design on the spatial variables.
    
    Schemes other than \eqref{eq:timedependent} can be considered as long as the empirical norm it induces concentrates suitably well around the $L^2([0, 1]\times \Omega)$ norm, see Remark \ref{rmk:whyscheme}.
\end{remark}

Our main statistical contribution is to provide general conditions on the forward map $\mathcal G$ and the covariance matrix $\Gamma_N$ which imply the posterior is consistent in the sense of \eqref{eq:posteriorconsistency}. We then give concrete examples where these conditions can be verified, namely with the forward map as described in \eqref{eq:HRDEforwardmap} and \eqref{eq:NSforwardmap}, though we postpone the statement of these results until we have established more notation.

\begin{remark}
    In the course of proving posterior consistency, we get a worse rate than in the iid case. However, once an estimator that converges at a sufficiently fast rate can be computed, the posterior can be shown to converge at the same rate as in the iid case. 
    In particular, the posterior contraction we show implies the existence of such estimators (e.g. the posterior mean). 

    To prove the same rate as in the i.i.d. case, one linearizes the forward map at the estimator, and constructs a penalized least-squares estimator on $\theta\in \Theta$. Because we linearize the forward map, the loss function is convex. One can then construct plug-in tests at the rate in the i.i.d. case, and hence show posterior contraction at that rate as well.
\end{remark}

In the course of proving these results, we prove the existence of a polynomial time initializer in the setting of \eqref{eq:introNS} with forward map as in \eqref{eq:NSforwardmap}.
In this setting, the floating-point operations which count towards computational cost are those that are involved with matrix--vector products and linear system solves. Note that this differs from the notion of computational cost used in MCMC literature; see Remark 2.3 in \cite{W} for details. 

\begin{proposition*}
    Consider the setting of \eqref{eq:introNS} with ground truth $(\nu_0, \varphi_0)$, with $\varphi_0 \in H^\alpha(\Omega)$ for $\alpha > d/2 +1$. Moreover, assume that $\nu, \|\varphi_0\|_{H^\alpha} \leq R$ for some $R>0$. There exists constants $c, C_1, C_2$ depending on $\Omega$, $R$, $\nu_{\min}$ and an exponent $\kappa = \kappa(d, \alpha) < 2$ such that for all $N \geq 1$, there is an estimator $\hat \varphi_N$ satisfying the convergence rate, for $\beta = \frac{2(\alpha +1) - d - 2}{2(\alpha +1) + d +2}$,
    \begin{equation*}
        \Prob_{\nu_0, \varphi_0}^N \big(\|\varphi_0 - \hat \varphi_N\|_\infty \leq C_1 N^{-\beta/2}\big) \gtrsim 1 - \exp\big(-cN^{\frac{d+2}{2(\alpha+1) + d+2}}\big)
    \end{equation*}
    while $\hat \varphi_N$ is computable within $C_2N^\kappa$ floating-point operations. 
\end{proposition*}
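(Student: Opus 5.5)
The plan is to build $\hat\varphi_N$ in two stages. First we compute a nonparametric regression estimator $\hat u_N$ of $u_0 = u_{\nu_0,\varphi_0}$ on $[0,1]\times\Omega$ from the data in \eqref{eq:timedependent}. Then we read off the initial condition by evaluating (or extrapolating) $\hat u_N$ at $t=0$. Since $\varphi_0 \in H^\alpha$, parabolic regularity for \eqref{eq:introNS} (\textsection 9 of \cite{R}) should give $u_0 \in H^{\alpha+1}$ jointly in $(t,x)$ on $[0,1]\times\Omega$, with norm bounded in terms of $R$ and $\nu_{\min}$, after accounting for the anisotropic parabolic scaling. This joint smoothness $\alpha+1$ in dimension $d+1$ is the source of the exponents appearing in the statement.

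\textbf{Step 1 (projection estimator).} Fix a truncation level $J$ with $2^{J(d+2)} \simeq N^{\frac{d+2}{2(\alpha+1)+d+2}}$. Take the least-squares (or empirical projection) estimator $\hat u_N$ onto a tensor-product space $V_J$ of dimension $K \simeq 2^{J(d+2)}$: trigonometric polynomials in $x$ times a boundary-adapted wavelet or polynomial basis in $t$. The design is mixed, with deterministic midpoints $t_i$ and uniform $X_{ij}$. So we first show that the empirical Gram matrix concentrates around the $L^2([0,1]\times\Omega)$ Gram matrix. This uses a matrix Bernstein inequality over $j$ (the sensors are independent across $j$), together with a deterministic quadrature error in $t$. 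This concentration step gives the exponential probability $1-\exp(-cN^{(d+2)/(2(\alpha+1)+d+2)}) = 1-\exp(-cK)$, as in Remark \ref{rmk:whyscheme}.

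\textbf{Step 2 (noise and bias).} The noise contribution is $G^{-1}\Phi^T\epsilon/N$. This vector is Gaussian with covariance $\Phi^T\Gamma_N\Phi/N^2$. Using the assumed operator-norm bound on $\Gamma_N$, together with Gaussian concentration on a ball in $\R^K$, it should be of order $\sqrt{K/N}$ in the coefficient norm, with the same exponential probability. The approximation bias is $K^{-(\alpha+1)/(d+2)}$ in $L^2$. Combining these, we need $\|\hat u_N - u_0\|_{L^\infty}$ rather than an $L^2$ bound. We plan to obtain it by interpolation between the $L^2$ rate and the $H^{\alpha+1}$ bound on $V_J$, or via the $L^\infty$ Lebesgue constant of the projection. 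This costs a factor $2^{J(d+2)/2}$ and produces the rate $N^{-\beta/2}$ with $\beta = \frac{2(\alpha+1)-d-2}{2(\alpha+1)+d+2}$. The hypothesis $\alpha > d/2+1$ is precisely what makes $\beta > 0$. Setting $\hat\varphi_N := \hat u_N(0,\cdot)$ then gives $\|\hat\varphi_N - \varphi_0\|_\infty \le \|\hat u_N - u_0\|_{L^\infty([0,1]\times\Omega)}$. If needed, we can also project onto $H$ without loss.

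\textbf{Step 3 (computational cost).} Forming $\Phi^T\Phi$ costs $O(NK^2)$ operations, but exploiting the tensor/Fourier structure reduces this considerably. Solving the $K\times K$ system costs $O(K^3)$ operations. Since $K = N^{\frac{d+2}{2(\alpha+1)+d+2}}$ and $\alpha > d/2+1$ implies $\frac{d+2}{2(\alpha+1)+d+2} < \tfrac12$, we get a total $\lesssim N^{1+2\gamma}+N^{3\gamma} \le C_2 N^\kappa$ with $\gamma<1/2$, hence $\kappa<2$.

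\textbf{Main obstacle.} We expect the hardest step to be controlling the dependent noise term and the Gram concentration uniformly under the mixed design. This requires using independence across sensors $j$ while the time correlations $M^j_{ii'}$ are handled only through a spectral bound on $\Gamma_N$. A second concern is the joint space-time regularity up to $t=0$ needed for the $L^\infty$ bias bound at the boundary time. If boundary behaviour at $t=0$ is problematic, our fallback is to use only the spatial smoothness $H^\alpha$ on short time slabs near $0$ combined with Lipschitz continuity in $t$.
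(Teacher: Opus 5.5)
Your architecture is the paper's. You estimate $u_0$ by a wavelet regression on $Q$ with anisotropic resolution ($2^{2J}$ in time, $2^{J}$ in space, $K\simeq 2^{J(d+2)}$) and set $\hat\varphi_N=\hat u_N(0,\cdot)$. You pass to $L^\infty$ at the cost of a factor $2^{J(d+2)/2}$, and count $O(NK^2+K^3)$ operations with $K=N^{\gamma}$, $\gamma<1/2$.

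\textbf{Where the two routes differ.} The difference is the estimator and how it is analysed.
\begin{itemize}
\item The paper uses the \emph{penalized} least-squares estimator \eqref{eq:estimator} with $a=\alpha+1$ and the anisotropic $h^{a}$ penalty. It controls it through van de Geer's $M$-estimation bound adapted to Gaussian noise with bounded $\|\Gamma_N\|$ (Theorem \ref{thm:Mestimation2}, Proposition \ref{prop:Mestimation}). It then uses per-time-slice spatial concentration plus the midpoint rule (Lemma \ref{lem:spatialconcentration}, Proposition \ref{prop:Mconcentration}). The probability $1-e^{-cN\delta_N^2}$ comes from there.
\item You use an unpenalized sieve projection and explicit linear algebra: matrix Bernstein plus time quadrature for the Gram matrix, and Gaussian concentration in $\R^K$ for the noise.
\end{itemize}

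\textbf{What your route buys.} It is more elementary, and it shows that your ``main obstacle'' is not one. The Gram matrix involves only the design, which is independent over all $(i,j)$, not just over $j$. The time correlations enter only through a Gaussian linear term controlled by $\|\Gamma_N\|$. Matrix Bernstein even gives $e^{-cN/K}$ for the Gram step; the $e^{-cK}$ comes from the noise step.

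\textbf{What the penalty buys the paper.} Strong convexity gives a unique minimizer for every design. The penalty also gives an a priori $H^{a,a/2}$ bound on $\hat u$, which Proposition \ref{prop:Mconcentration} reuses for the time-quadrature step. Your version instead needs the empirical Gram matrix to be well conditioned. Only $T$ distinct times are observed while you use $2^{2J}$ time basis functions, so this forces $T\gg 2^{2J}=N^{2/(2(\alpha+1)+d+2)}$. That holds under \eqref{eq:rates2}, but you should state it. You must also get the $H^{a,a/2}$ control on $V_J$ from a Bernstein inequality instead of from the penalty.

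\textbf{Corrections.}
\begin{itemize}
\item $u_0$ is not in $H^{\alpha+1}(Q)$ isotropically. Proposition \ref{prop:NSReg} gives $H^{\alpha+1,(\alpha+1)/2}$, so only $(\alpha+1)/2$ time derivatives are available, and the effective dimension is $d+2$, not $d+1$. Your exponents come out right only because your sieve is anisotropic.
\item Use boundary-corrected wavelets in time, not polynomials. For orthonormal Legendre polynomials $\sum_{k<P}|p_k(0)|^2\asymp P^2$, so with $P\asymp 2^{2J}$ you lose a factor $2^{J}$ exactly at $t=0$, where you evaluate.
\item With wavelets, your boundary worry and fallback are unnecessary. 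These wavelets characterize $H^{s,s/2}$ up to $t=0$ (Propositions \ref{prop:besovIsSobolev} and \ref{prop:BesovEmbeddings}).
\item To obtain exactly $N^{-\beta/2}$, use the Nikolskii bound on $V_J$ plus a blockwise $L^\infty$ bias bound. Interpolating through $H^{s,s/2}$ with $s>(d+2)/2$ loses a factor $N^{\epsilon}$.
\item $\beta>0$ needs only $\alpha>d/2$, so $\alpha>d/2+1$ is not ``precisely'' the condition making $\beta$ positive.
\end{itemize}
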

The estimator in question is $\hat \varphi_N = \hat u_N(0, \cdot)$ for $\hat u_N$ as in \eqref{eq:estimator}, and we note this proposition holds in the i.i.d. errors case as well. The proof follows mutatis mutandis from the elliptic setting of \cite{W} to the parabolic setting, see (21) and Theorem 2.4 therein. We record this result because one obtains a better rate by choosing a penalization more suited to the parabolic PDE setting than the na\"ive generalization of \cite{W}. This verifies the condition needed in \cite{NW24} to show polynomial time guarantees for Bayesian posterior computation, and we note that without such an initializer, the computation of the posterior via MCMC methods may require exponential run-times, see \cite{BMNW}. 

\subsection{Notation}\label{sec:notation}

We assume throughout that $\Omega$ is the $d$-dimensional torus, i.e. $\mathbb T^d$ where $\mathbb T = \R/\Z$. In keeping with the PDE literature, this is for notational simplicity and all the results can be adapted for $\Omega\subseteq \R^d$ a bounded domain with appropriate changes to the boundary conditions. We assume that $\Theta$ is a linear space satisfying
\begin{equation}\label{eq:Thetadef}
    \Theta \subseteq L^2(\mathcal Z, W),
\end{equation}
where $\mathcal Z$ is an $\ell$-dimensional domain (we adopt the convention that an $\ell$-dimensional domain is a bounded open subset of $\R^\ell$ with smooth boundary or $\mathbb T^\ell$), and W is a finite dimensional vector space. 

We will frequently denote the time-cylinder $[0, 1]\times \Omega = Q$. For $N\in \N$, write $[N]$ for the set $\{1, \dots, N\}$. For a vector $v\in \R^k$ for some $k$, we will denote its coordinates by $v^i$ for $i = 1, \dots k$, so that $v = (v^1, \dots, v^k)$.

For $X$ any normed space, we denote by $\|\cdot\|_X$ the norm on $X$, and by $X^\ast$ the topological dual of $X$. Also, for any $r\geq 0$, $B_X(r)$ denotes the closed ball of radius $r$ centered at the origin in $X$. For any $\mathcal O$ a domain in $\R^d$, we let $L^p(\mathcal O)$ denote the space of $p$-power integrable real-valued functions on $\mathcal O$. In the particular case of $L^p$-norms, we will write $\|\cdot\|_p$ when the domain is clear from context. We also define the spaces $C^k(\mathcal O)$ and $H^k(\mathcal O)$ to have their usual meanings as function spaces consisting of $k$-times differentiable (resp. weakly differentiable) functions, with derivatives defining uniformly continuous functions (resp. $L^2(\mathcal O)$ functions). The definition of $H^k(\mathcal O)$ extends to real valued $k$, see \cite{GT}. $C^\infty_c(\mathcal O)$ denotes the space of infinitely differentiable functions which are compactly support within $\mathcal O$, and $H^k_c(\mathcal O)$ the closure of $C^\infty_c(\mathcal O)$ with respect to the $H^k(\mathcal O)$-norm.

For some finite dimensional vector space $W$, we denote $C^k(\mathcal O, W)$ and $H^k(\mathcal O, W)$ to be the spaces of functions from $\mathcal O$ to $W$ so that each coordinate (w.r.t to any basis), lies in $C^k(\mathcal O)$ or $H^k(\mathcal O)$ respectively. When $W = \R^m$, we often write $C^k(\mathcal O)^m$ or $H^k(\mathcal O)^m$ for this space as well. To any Banach space $X \subseteq L^1$, we define $\dot X = \{ f \in X : \int f = 0\}$.

We also need to define certain functions spaces involving time. For a separable Banach space $X$ and $p \in [1, \infty]$ define $L^p([0, S], X)$ as the space of measurable functions $f:[0, S] \to X$ such that
\begin{equation*}
    \|f\|_{L^p([0, S], X)}^p = \int_{0}^S \|f(t)\|_X^p \, dt < \infty,
\end{equation*}
see \textsection 5.9.2 in \cite{Evans} for details. Moreover, for $m\geq 0$, we define the Sobolev spaces in time by
\begin{equation*}
    H^m([0, S], X) = \bigg\{ f \in L^2([0, S], X) : \partial_t^i f \in L^2([0, S], X), \text{ for }0\leq i \leq m\bigg\},
\end{equation*}
equipped with the norm $\|f\|^2_{H^m([0, S], X)} = \sum_{j = 0}^m \|\partial_t^j f\|^2_{L^2([0, S], X)}$. In the special case where $X = H^k(\mathcal O)$, we often abbreviate 
\[H^m([0, 1], H^k(\mathcal O)) = H^m_tH^k_x.\]
Similarly, for the spaces $L^p([0, 1], L^q(\mathcal O))$ we write $L^p_tL^q_x$. Lastly, we define spaces that deal with the smoothness yielded by solutions to parabolic equations. See e.g. \textsection 2.1 in \cite{LM2}. For $s\in [0, \infty)$, we define the \textit{parabolic} (or \textit{anisotropic}) Sobolev space $H^{s, s/2}(Q)$ of functions $f \in L^2(Q)$ such that the norm 
\begin{equation*}
    \|f\|_{H^{s, s/2}(Q)}^2 = \int_{0}^1 \|f(t, \cdot)\|^2_{H^{s}(\Omega)} \, dt+ \int_{\Omega} \|f(\cdot, x)\|^2_{H^{s/2}([0, 1])} \, dx
\end{equation*}
is finite. We will not mention $Q$ in the notation as it is the only domain these spaces are used on.

For some index set $A$ and $p \in [1 ,\infty]$, we define $\ell^p(A)$ to be the set of sequences indexed by $A$ such that
\begin{equation*}
    \|(a_i)_{i\in A}\|^p_{\ell^p(A)} = \sum_{i\in A}|a_i|^p < \infty.
\end{equation*}
When $A$ is finite and $n = |A|$, we often write this space as $\ell^p(\R^n)$, and we will omit $A$ when no confusion may arise. 
Next, for the space of $n \times n$ matrices, denoted by $\mathcal L(\R^n, \R^n)$, we define the norm $\|\cdot \|_{p\to p}$ as the operator norm on $\R^n$ equipped with the $\ell^p$ metric. For the case of $p = 2$, this is the usual operator norm for matrices, and we denote this norm simply by $\|\cdot\|_{2\to 2} = \|\cdot\|$. Throughout, we use $C$ and $c$ to denote generic constants that do not depend on the variables of the problem, and unless otherwise specified they can change from line to line. 

Where we deal with the Navier-Stokes equations in $2$D, we follow the notation in \textsection 3 of \cite{KN}. In particular, we define the Sobolev space of mean-zero divergence free vector fields
\begin{equation*}
    \dot H^\alpha_\diamond(\Omega) = \bigg\{ u \in H^\alpha(\Omega)^2 : \nabla \cdot u = 0, \quad \int_\Omega u = 0\bigg\}, \quad H = \dot H^{0}_\diamond.
\end{equation*}

Finally, let $p, q$ be two probability densities such that if $q = 0$, then $p = 0$. Recall that the Kullback-Leibler divergence and variation for $p, q$ are given by
\begin{align}\label{eq:KLFormula}
\begin{split}
    KL(p, q) &= \E_p[\log\frac{p}{q}],\text{ and } \\
    V_{2, 0}(p, q) &= \E_p[(\log \frac{p}{q} - KL(p, q))^2].
\end{split}
\end{align}
To specify the dependence structure, we need the notion of a vector autoregressive (VAR) process. For $T, k\in \N$, let $\epsilon_1, \dots, \epsilon_T$ be random variables valued in $\R^k$. Then, we say that $(\epsilon_i)_{i = 1}^T$ is a $VAR(p)$ process with parameters $A_1, \dots, A_p, \Sigma$ if 
\begin{equation}\label{eq:VAR}
    \epsilon_t = A_1\epsilon_{t-1} + A_2 \epsilon_{t-2} + \cdots + A_p \epsilon_{t-p} + \eta_t \quad \text{ for } \quad t\geq p,
\end{equation}
where $A_1, \dots, A_p$ are $k\times k$ matrices, and $\eta_t \iid N(0, \Sigma)$ for some $k\times k$ covariance matrix $\Sigma$. 
\subsection{Main results}
To start, we list the assumptions on the covariance matrix from \eqref{eq:timedependent} that are sufficient to yield posterior contraction. We leverage these conditions by bounding the Kullback-Leibler divergence and variation, in order to satisfy the small ball condition \eqref{eq:smallball} of Theorem \ref{thm:PosteriorGeneral}. Firstly, we need that the ``direction of worst variance'' is bounded:

\begin{condition}\label{con:variancebound}
    Let $\Gamma_N$ be the covariance from \eqref{eq:setting}. Suppose that there is some constant $C_1$ independent of $N$ such that
    \[
        \|\Gamma_N\| \leq C_1,
    \]
    where $\|\cdot\|$ denotes the matrix operator norm.
\end{condition}
Next, we need a stronger assumption on the precision matrix $\Gamma_N^{-1}$:

\begin{condition}\label{con:inversematrix}
    For the setup in \eqref{eq:setting} with forward map valued in $V = \R^{d_V}$ as in \eqref{eq:forwardmap}, we assume that 
    \[
        \max_{i = 1, \dots, Nd_V}\sum_{j=1}^{Nd_V} |(\Gamma_N^{-1})_{ij}| \leq C_2,
    \]
    for some constant independent of $N$. 
\end{condition}
Note that the norm on square matrices given by $\max_i \sum_j |A_{ij}|$ is the same as $\|A\|_{\infty \to \infty}$, see \textsection 5.6.5 in \cite{HJ}.

If the dynamical system of interest has the same average value amongst all solutions, then we can require a weaker assumption on $\Gamma_N$. For example, in the Navier-Stokes equations Condition \ref{con:inversematrix2}.a is satisfied as $\int u(t, x) \, dx = 0$ for all $t$ and initial conditions $\varphi$. 
Conceptually, this is because if the $\epsilon_i$ are highly correlated, the observations are essentially a constant shift of the solution, and if the average value of the solution is known, this can be corrected for. This phenomenon of dependent data has been observed before, see \cite{Y}.

\begin{condition}\label{con:inversematrix2}
    For the setup in \eqref{eq:setting}, with forward map $\mathcal G$ and vector space $V = \R^{d_V}$ as in \eqref{eq:forwardmap}, suppose the following:
    \begin{enumerate}[label = (\alph*)]
        \item There is a known function $A: [0, 1]\to V$ independent of all $\theta\in\Theta$ so that
            \[
                \int_{\Omega} \mathcal G(\theta)(t, x)\, dx = A(t).
            \]
        \item For a constant $C_2$ independent of $N$, $\tr(\Gamma_N^{-2}) \leq C_2N$.
    \end{enumerate}
\end{condition}
By Remark \ref{rmk:inversematrixconditions}, Condition \ref{con:inversematrix2}.b is indeed a weaker assumption than Condition \ref{con:inversematrix}. Moreover,  
Condition \ref{con:inversematrix} can be satisfied for many assumptions on the covariance structure, for example vector-valued autoregressive processes, i.e. $\text{VAR}(p)$ processes. 

\begin{proposition}\label{prop:VARCheck}
    Let $(\epsilon_{ij})$ be as in \eqref{eq:timedependent}, and suppose that for each $j$, $(\epsilon_{ij})_{i = 1}^T$ is a stationary Gaussian $\text{VAR}(p_j)$ process ($p_j\in \N$) with parameters $A_1, \dots A_{p_j}$ and $\Sigma_j$ as in \eqref{eq:VAR}. If we further suppose that $p_j$, and $A_1, \dots, A_{p_j}$ are uniformly bounded above entry-wise independent of $j$, and the minimum eigenvalue of $\Sigma_j$ is bounded away from $0$ independently of $j$, then Condition \ref{con:inversematrix} holds.
\end{proposition}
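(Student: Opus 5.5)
The plan is to exploit the block structure of $\Gamma_N$ and the explicit banded form of the precision matrix of a Gaussian VAR process. By \eqref{eq:timedependent}, errors from different sensors $j\neq j'$ are independent. So after a permutation of the coordinates that groups all indices belonging to sensor $j$ together, $\Gamma_N$ becomes block diagonal with blocks $\Gamma^{(j)}\in\R^{Td_V\times Td_V}$, the covariance of $(\epsilon_{1j},\dots,\epsilon_{Tj})$. A simultaneous row and column permutation $P^T\Gamma_N^{-1}P$ leaves the set of row sums $\sum_{j}|(\Gamma_N^{-1})_{ij}|$ unchanged. Moreover $\Gamma_N^{-1}$ is block diagonal with blocks $(\Gamma^{(j)})^{-1}$. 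Hence $\|\Gamma_N^{-1}\|_{\infty\to\infty}=\max_j\|(\Gamma^{(j)})^{-1}\|_{\infty\to\infty}$, and it suffices to bound $\|(\Gamma^{(j)})^{-1}\|_{\infty\to\infty}$ uniformly in $j$ and $T$.

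Fix $j$, write $k=d_V$, $p=p_j$, $\Sigma=\Sigma_j$, and let $\epsilon_t=\epsilon_{tj}$. The joint Gaussian density factorizes as
\[
p(\epsilon_1,\dots,\epsilon_p)\prod_{t=p+1}^T p(\epsilon_t\mid\epsilon_{t-1},\dots,\epsilon_{t-p}),
\]
and the conditional factors are $N(\sum_{s=1}^p A_s\epsilon_{t-s},\Sigma)$. Reading off the quadratic form gives
\[
(\Gamma^{(j)})^{-1}=E^T(\Gamma_p^{0})^{-1}E+\sum_{t=p+1}^T B_t^T\Sigma^{-1}B_t .
\]
Here $E$ extracts the first $p$ block coordinates, $\Gamma_p^0$ is the stationary covariance of $(\epsilon_1,\dots,\epsilon_p)$, and $B_t$ is the $k\times Tk$ matrix with blocks $-A_p,\dots,-A_1,I$ in block positions $t-p,\dots,t$ and zeros elsewhere. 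Consequently $(\Gamma^{(j)})^{-1}$ is block banded with bandwidth $p$. Each block row has at most $2p+1$ nonzero blocks, and each entry is a sum of at most $p+1$ terms, each a product of $\Sigma^{-1}$ or $(\Gamma^0_p)^{-1}$ with at most two of the $A_s$ or identities. Since $p$, $k$ and the entries of the $A_s$ are uniformly bounded, and $\|\Sigma^{-1}\|\le 1/\lambda_{\min}(\Sigma)$ is uniformly bounded, every row sum is bounded by a constant independent of $j$ and $T$. This holds provided we also control $\|(\Gamma_p^0)^{-1}\|$.

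The main obstacle is this last step: a uniform lower bound on $\lambda_{\min}(\Gamma^0_p)$. I would use stationarity, read as the standard causal stationary solution, to write $\epsilon_t=\sum_{s\ge0}\Psi_s\eta_{t-s}$. Then
\[
(\epsilon_1,\dots,\epsilon_p)=L(\eta_1,\dots,\eta_p)+R,
\]
where $R$ depends only on $(\eta_s)_{s\le0}$ and is independent of $(\eta_1,\dots,\eta_p)$, and $L$ is block unit lower triangular. Thus $\Gamma^0_p\succeq L(I_p\otimes\Sigma)L^T$, so that
\[
\lambda_{\min}(\Gamma^0_p)\ge\lambda_{\min}(\Sigma)/\|L^{-1}\|^2 .
\]
The inverse $L^{-1}$ is the block unit lower triangular matrix with blocks $-A_1,\dots,-A_{p-1}$ on the subdiagonals, because $\eta_t=\epsilon_t-\sum_{s}A_s\epsilon_{t-s}$ and the terms involving indices $\le0$ are absorbed into $R$. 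Hence $\|L^{-1}\|$ is bounded by a constant depending only on $p$, $k$ and the entry bounds on the $A_s$. Combining this with the previous step yields $\|\Gamma_N^{-1}\|_{\infty\to\infty}\le C_2$, which is Condition \ref{con:inversematrix}.
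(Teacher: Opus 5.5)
Your proposal is correct and follows the same route as the paper. You reduce via the block-diagonal structure of $\Gamma_N$ (Remark \ref{rmk:simplifyingConds}) to a single sensor, then use that the precision matrix of a stationary VAR$(p)$ is block $p$-banded with entries controlled by the $A_s$ and $\Sigma^{-1}$. The paper simply cites the explicit form of this precision matrix from \cite{L}. You instead derive it from the conditional factorization of the density, and you also justify the uniform bound on the initial stationary block $(\Gamma^0_p)^{-1}$ via $\lambda_{\min}(\Gamma^0_p)\ge\lambda_{\min}(\Sigma)/\|L^{-1}\|^2$, a step the paper leaves implicit in its citation.
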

See \textsection \ref{sec:dependenceCheck} for the proof of this proposition as well as another way to verify Condition \ref{con:inversematrix} which covers some time series with long-range dependence.
With these conditions, we can bound the KL-divergence and variation:

\begin{proposition}\label{prop:inversematrix}
    Suppose $\theta, \vartheta \in \Theta$, forward map $\mathcal G$ is as in \eqref{eq:forwardmap}, and $U >0$ are so that $\|\mathcal G(\theta)\|_\infty$, $\|\mathcal G(\vartheta)\|_\infty \leq U$. Let $p_\theta^N$ and $p_\vartheta^N$ the densities as introduced after \eqref{eq:setting}. Further, suppose that Condition \ref{con:inversematrix} \textbf{or} \ref{con:inversematrix2} holds for some constant $C_2$. Then, for some constant $C = C(C_2, U)$, we have that the KL-divergence and variation from \eqref{eq:KLFormula} satisfy
   \begin{align*}
       KL(p_\theta^N, p_\vartheta^N) &\leq CN \| \mathcal G(\theta) - \mathcal G(\vartheta)\|_{L^\infty_tL^2_x}^2, \text{ and } \\
       V_{2, 0}(p_\theta^N, p_\vartheta^N) &\leq CN\| \mathcal G(\theta) - \mathcal G(\vartheta)\|_{L^\infty_tL^2_x}^2.
   \end{align*}
\end{proposition}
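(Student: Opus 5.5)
Conditional on the design $X=(X_1,\dots,X_N)$, the data $Y$ under $\Prob^N_\theta$ and $\Prob^N_\vartheta$ are Gaussian with the same covariance $\Gamma_N$ and with means $g_\theta=(\mathcal G(\theta)(t_\ell,X_\ell))_\ell$ and $g_\vartheta$. The design has the same law under both parameters. The plan is therefore to write the log-likelihood ratio explicitly and reduce both bounds to moments of a quadratic form in the vector
\[
h=g_\theta-g_\vartheta\in\R^{Nd_V}.
\]
Write $h_\ell\in V$ for its blocks, $M=\Gamma_N^{-1}$ with blocks $M_{\ell\ell'}$, and set $\delta=\|\mathcal G(\theta)-\mathcal G(\vartheta)\|_{L^\infty_tL^2_x}$. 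With $\epsilon=Y-g_\theta\sim N(0,\Gamma_N)$ independent of $X$, a direct computation gives
\[
\log\frac{p^N_\theta}{p^N_\vartheta}=\epsilon^TMh+\tfrac12 h^TMh .
\]
Hence $KL=\tfrac12\E[h^TMh]$. Since $\E[\epsilon^TMh\mid X]=0$ and $\E[(\epsilon^TMh)^2\mid X]=h^TMh$, we also get
\[
V_{2,0}\le 2\E[h^TMh]+\tfrac12\Var(h^TMh).
\]
So it suffices to show $\E[h^TMh]\le CN\delta^2$ and $\Var(h^TMh)\le CN\delta^2$. The basic pointwise facts are $|h_\ell|\le 2U$ and, because $X_\ell\sim\Unif(\Omega)$ with $|\Omega|=1$, $\E|h_\ell|^2=\|(\mathcal G(\theta)-\mathcal G(\vartheta))(t_\ell,\cdot)\|_2^2\le\delta^2$.

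\textbf{Under Condition \ref{con:inversematrix}.} By Cauchy--Schwarz, every scalar coordinate satisfies $|\E[h_ih_j]|\le\delta^2$. Therefore
\[
\E[h^TMh]\le\delta^2\sum_{i,j}|M_{ij}|\le Nd_VC_2\delta^2 .
\]
For the variance I will use Efron--Stein in the independent variables $X_1,\dots,X_N$. Replacing $X_\ell$ by an independent copy changes $h_\ell$ to $h'_\ell$ and alters $h^TMh$ by
\[
2\sum_{\ell'\neq\ell}h_{\ell'}^TM_{\ell'\ell}(h_\ell-h'_\ell)+\big(h_\ell^TM_{\ell\ell}h_\ell-h_\ell'^TM_{\ell\ell}h'_\ell\big).
\]
Bounding $|h_{\ell'}|\le 2U$ and the row sums of $|M|$ by $C_2$, this is at most $CUC_2(|h_\ell|+|h'_\ell|)$. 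Efron--Stein then gives $\Var(h^TMh)\le CU^2C_2^2\sum_\ell\E|h_\ell|^2\le CN\delta^2$.

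\textbf{Under Condition \ref{con:inversematrix2}.} Part (a) gives $\E h_\ell=\int_\Omega(\mathcal G(\theta)-\mathcal G(\vartheta))(t_\ell,x)\,dx=A(t_\ell)-A(t_\ell)=0$. The $h_\ell$ are independent and centred, so all cross terms in the mean vanish and
\[
\E[h^TMh]=\sum_\ell\E[h_\ell^TM_{\ell\ell}h_\ell]\le\delta^2\sum_\ell\|M_{\ell\ell}\|.
\]
By Cauchy--Schwarz over $\ell$,
\[
\sum_\ell\|M_{\ell\ell}\|\le\sqrt N\Big(\sum_\ell\|M_{\ell\ell}\|_F^2\Big)^{1/2}\le\sqrt N\sqrt{\tr(M^2)}\le\sqrt{C_2}\,N .
\]
For the variance, split $h^TMh=D+O$ with $D=\sum_\ell h_\ell^TM_{\ell\ell}h_\ell$ and $O=\sum_{\ell\neq\ell'}h_\ell^TM_{\ell\ell'}h_{\ell'}$.
\begin{itemize}
\item $\Cov(D,O)=0$. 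Each product $\E[(h_a^TM_{aa}h_a)(h_b^TM_{bc}h_c)]$ with $b\neq c$ contains some index appearing exactly once, and that factor has mean zero.
\item $D$ is a sum of independent terms, so $\Var D\le\sum_\ell\E(h_\ell^TM_{\ell\ell}h_\ell)^2\le 4U^2\delta^2\sum_\ell\|M_{\ell\ell}\|^2\le 4U^2\delta^2\tr(M^2)$.
\item $\E O^2\le C\sum_{\ell\neq\ell'}\E[(h_\ell^TM_{\ell\ell'}h_{\ell'})^2]$, again because $\E h_\ell=0$ and only paired indices survive.
\item Each such term is at most $\|M_{\ell\ell'}\|_F^2\,\E|h_\ell|^2\,\E|h_{\ell'}|^2\le 4U^2\delta^2\|M_{\ell\ell'}\|_F^2$.
\end{itemize}
Summing, $\Var(h^TMh)\le CU^2\delta^2\tr(\Gamma_N^{-2})\le CN\delta^2$.

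The main obstacle is the Condition \ref{con:inversematrix2} case. There $M$ has no uniform row-sum control, so one must exploit the exact cancellation from the known spatial mean together with the Frobenius-norm (trace) bound. The delicate step is the careful accounting of which fourth-moment index patterns survive in $\E[(h^TMh)^2]$.
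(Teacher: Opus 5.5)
Your proof is correct. Under Condition~\ref{con:inversematrix2} it is essentially the paper's argument. The paper also uses $\E h_\ell=0$ to discard the off-diagonal blocks in the KL and bounds the diagonal part by $\tr(\Gamma_N^{-1})\lesssim N^{1/2}\tr(\Gamma_N^{-2})^{1/2}$. In the variation it keeps only the fourth-moment index patterns in which the indices pair up, bounds each by $\tr[(\Gamma_N^{-1})_{ij}(\Gamma_N^{-1})_{ji}]\,\E[|h_i|^2|h_j|^2]$, and sums to $\tr(\Gamma_N^{-2})$. Your split $h^TMh=D+O$, with $D$ and $O$ uncorrelated, is a tidier bookkeeping of exactly those cancellations.

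The genuine difference is the variation bound under Condition~\ref{con:inversematrix}. The paper, taking $d_V=1$, expands $\E[(h^TMh)^2]-(\E[h^TMh])^2$ as a quadruple sum and notes that terms with $\{i,j\}\cap\{i',j'\}=\emptyset$ cancel by independence. It then bounds the surviving triple sums by H\"older in $L^4_x$ and two uses of the row-sum bound, arriving at \eqref{eq:KLvariationBound}. You instead apply Efron--Stein in the independent design points, with a bounded-differences estimate that needs only $|h_{\ell'}|\le 2U$ and the row sums of $|\Gamma_N^{-1}|$. This has three advantages:
\begin{itemize}
\item it avoids the index combinatorics altogether;
\item it covers general $d_V$ at no extra cost;
\item it gives the variance directly as a multiple of $\sum_\ell\E|h_\ell|^2$, an $L^2_x$ quantity.
\end{itemize}
By contrast, the paper's intermediate bound is phrased in $L^4_x$ norms. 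To recover the $L^2_x$ form of the statement one must use $\|u-v\|_\infty\le 2U$ on the quartic part and recall that the quadratic part comes from the KL-type summand.

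In return, the paper's expansion gives the exact identity $V_{2,0}=\E[h^TMh]+\tfrac14\Var(h^TMh)$, and it treats both conditions with one method. Your looser inequality $V_{2,0}\le 2\E[h^TMh]+\tfrac12\Var(h^TMh)$ is harmless, since the cross term in fact vanishes because $\E[\epsilon\mid X]=0$.
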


Next, we discuss the class of priors for which we show posterior consistency. We work with Gaussian process priors, which live in linear spaces. If the parameter space $\Theta$ is not already a linear (for example the diffusivity in \eqref{eq:introRDE} or viscosity in \eqref{eq:introNS}), then the use of a \textit{link function} is required, see the discussion before and proof of Theorem \ref{thm:HRDEMain}. Following \cite{N}, we introduce a condition on the RKHS of the prior:

\begin{condition}\label{con:RKHS}
    Let $\Pi'$ be a centered Gaussian Borel probability measure with RKHS $\mathcal H$ on the linear space $\Theta$ and $\ell$-dimensional domain $\mathcal Z$ from \eqref{eq:Thetadef}.  Suppose further that $\Pi'(\mathcal R) = 1$ for some separable normed linear subspace $(\mathcal R, \|\cdot\|_{\mathcal R})$. Suppose further that $\mathcal H$ satisfies the continuous embedding
    \[ \mathcal H \subseteq H^{\alpha}_c(\mathcal Z) \text{ if }\kappa \geq 1/2, \quad \text{ or }\quad \mathcal H \subseteq H^\alpha(\mathcal Z) \text{ if }\kappa < 1/2,\]
    for some regularity parameter $\alpha > 0$. 
\end{condition}

We define the prior $\Pi = \Pi_N$ as the law $\mathcal L(\theta)$ after the rescaling
\begin{equation}\label{eq:priorrescaling}
    \theta = N^{-\ell/(4(\alpha+\kappa) + 2\ell)}\theta', \quad \theta'\sim\Pi'.
\end{equation}
We also note that given the data the $D_N$, the \textit{posterior} is given by
\begin{equation}\label{eq:posteriordef}
    d\Pi(\theta | D_N) \propto  p_\theta^N(D_N) d\Pi(\theta),
\end{equation}
where $p_\theta^N$ is defined after \eqref{eq:setting}.

Posterior contraction rates are proven using the existence of frequentist tests that work at an exponential rate. Commonly used is Hellinger testing theory as in \cite{GvdV} and \cite{N}; however, in the case of dependent errors, this theory breaks down. So, we construct tests via concentration inequalities for a wavelet estimator relying on the smoothness of $\mathcal G(\theta)$, using an idea from \cite{GN11}. 

\begin{condition}[Forward Regularity]\label{con:forwardregularity}
    Let $\mathcal G$, $V$ be from \eqref{eq:forwardmap} and $\mathcal Z$, $W$ be from \eqref{eq:Thetadef}. Suppose $\mathcal G$ is measurable, and let $\kappa \geq 0$. Suppose that for some normed linear subspace $(\mathcal R, \|\cdot\|_{\mathcal R})$ of $L^2(\mathcal Z, W)$ and all $M>0$, there exists a finite constant $U \geq 1$, and $L>0$ (that may depend on $M$) such that
    \[
        \sup_{\theta \in \Theta\cap B_{\mathcal R}(M)} \sup_{(t, x)\in Q} |\mathcal G(\theta)(t, x)|_V \leq U
    \]
    and
    \[
        \|\mathcal G (\theta_1) - \mathcal G(\theta_2)\|_{L^\infty_t L^2_x}\leq L\|\theta_1 - \theta_2\|_{(H^\kappa(\mathcal Z, W))^\ast}\text{ for all }\theta_1, \theta_2 \in \Theta\cap B_{\mathcal R}(M).
    \]
    We call $\mathcal R$ the regularization space.
\end{condition}

\begin{condition}[Forward Mapping]\label{con:forwardmapping}
    Let $\mathcal R$, $\mathcal H$ be as defined in Condition \ref{con:forwardregularity} and \ref{con:RKHS}. 
    Suppose that there is an $a \geq 0$ such that
    \[
        \mathcal G(\mathcal H)\subseteq H^{a, a/2}.
    \]
    Furthermore, suppose that for any $M$ there exists a $U$ depending on $M$ such that
    \begin{align*}
        \mathcal G(B_{\mathcal R}(M)) &\subseteq B_{H^{6, 3}}(U), and\\
        \mathcal G(B_{\mathcal H}(M)) &\subseteq B_{H^{a, a/2}}(U).
    \end{align*}
\end{condition}

\begin{remark}\label{rmk:besovIsSobolev}
    We refer the reader to \textsection \ref{sec:notation} for the definition of these parabolic Sobolev spaces. The choice of an anisotropic space reflects the best regularity one can prove for solutions to parabolic PDEs. When dealing with other equations, such as hyperbolic PDEs, then one would change this condition to involve isotropic spaces. A larger choice of $a$ in this condition will ultimately yield a faster contraction rate.
\end{remark}

\begin{theorem}[Existence of tests]\label{thm:TestsExist}
    Let $\mathcal G$ be as in \eqref{eq:forwardmap}, and suppose that Condition \ref{con:RKHS} holds for some regularization space $\mathcal R$, RKHS $\mathcal H$, $\ell$-dimensional domain $\mathcal Z$ and regularity parameter $\alpha$. Suppose that Conditions \ref{con:forwardregularity} and \ref{con:forwardmapping} hold for these choices, as well as for some regularity parameter $a \geq (d+1)\lor 6$.
    
    Then, set
    \begin{equation}\label{eq:rates}
        \delta_N = N^{-\frac{a}{2a + d +2}} \quad \text{ and }\quad \varepsilon_N = N^{-\frac{\alpha+\kappa}{2\alpha + 2\kappa + \ell}}, 
    \end{equation}
    and assume that $\varepsilon_N \lesssim \delta_N$. Also, with $n$, $T$ and $N = nT$ as in the setting of \eqref{eq:timedependent} and \eqref{eq:setting}, set 
    \begin{equation}\label{eq:rates2}
        N^{\frac{2d+3}{2a + d + 2}} \lesssim n \lesssim N\delta_N  \quad \text{ and }\quad \delta_N^{-1}\lesssim T \lesssim N^{\frac{2a - d - 1}{2a + d +2}},
    \end{equation}
    and for some $M>0$ let $\Theta_N$ be the regularization set
    \[
         \Theta_N = \{\theta\in \mathcal R : \theta = \theta_1 + \theta_2, \|\theta_1\|_{(H^\kappa(\mathcal Z))^\ast}\leq M\varepsilon_N, \|\theta_2\|_{\mathcal H}\leq M, \|\theta\|_{\mathcal R}\leq M\},
    \]
    and suppose that $\theta_0 \in \Theta_N$. 
    If Condition \ref{con:variancebound} holds, then there are tests (indicator functions) $\phi_N$ such that for any $K\in \N$ and $M>0$, there is some $C = C(M, K)$ sufficiently large so that
    \[
        \E^N_{\theta_0}[\phi_N] \to 0, \text{ and } \sup_{\substack{\theta \in \Theta_N \\ C\delta_N < \|\mathcal G(\theta)  -\mathcal G (\theta_0)\|_{L^2}}} \E_{\theta}^N[1 - \phi_N] \leq e^{-K N\varepsilon_N^2}
    \]
    provided $N$ is large enough.
\end{theorem}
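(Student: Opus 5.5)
Following the idea of \cite{GN11}, we build the test from a plug-in wavelet estimator of $u_0 = \mathcal G(\theta_0)$ and use concentration to control its errors, in place of Hellinger testing. Fix a tensor-product wavelet basis on $Q=[0,1]\times\Omega$ that is anisotropic, with resolution $2^{-2J}$ in time and $2^{-J}$ in space, so that its projections approximate $H^{a,a/2}$ functions. Choose $J$ with $2^{J(d+2)}\simeq N\delta_N^2$, equivalently $2^{-Ja}\simeq \delta_N$. Define the empirical coefficients by
\[
\hat u_{lk} = \frac{1}{N}\sum_{i,j} Y_{ij}\,\psi_{lk}(t_i, X_{ij}) \quad (\text{up to time-quadrature weights}),
\]
set $\hat u_N = \sum_{l\le J,k}\hat u_{lk}\psi_{lk}$, and take $\phi_N = 1\{\|\hat u_N - u_0\|_{L^2} > C'\delta_N\}$. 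By the triangle inequality it suffices to show that for every $\theta\in\Theta_N$,
\[
\Prob_\theta^N\big(\|\hat u_N - \mathcal G(\theta)\|_{L^2} > c\,C\delta_N\big)\le e^{-KN\varepsilon_N^2},
\]
and that the same quantity with $c\,C$ replaced by $C'$ tends to $0$ under $\theta_0$.

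Decompose $\hat u_N - \mathcal G(\theta)$ into three parts: the bias $P_J\mathcal G(\theta)-\mathcal G(\theta)$, the design/quadrature error, and the noise term. For the bias we first need $\mathcal G(\theta)$ to be uniformly bounded in $H^{a,a/2}$ on $\Theta_N$, which is not immediate because $\theta=\theta_1+\theta_2$. Condition~\ref{con:forwardmapping} gives $\|\mathcal G(\theta_2)\|_{H^{a,a/2}}\le U$. Condition~\ref{con:forwardregularity} gives $\|\mathcal G(\theta)-\mathcal G(\theta_2)\|_{L^\infty_tL^2_x}\le LM\varepsilon_N\lesssim\delta_N$. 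So we can compare with $\mathcal G(\theta_2)$, accepting an extra $O(\delta_N)$ error, and then use the $H^{6,3}$ bound from $\mathcal R$ to control $\mathcal G(\theta)$ at high frequencies. The bias is therefore $O(2^{-Ja}+\delta_N)=O(\delta_N)$.

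For the design/quadrature error we compare the empirical coefficients computed from $\mathcal G(\theta)$ with their population values. The time-midpoint quadrature error is bounded using the $H^{6,3}$ time regularity and $T\gtrsim\delta_N^{-1}$. The spatial sampling error is handled with Bernstein's inequality for each fixed $i$, using the independence of the $X_{ij}$ and the sup bound $U$. Summing over the $\simeq 2^{J(d+2)}$ coefficients, together with $n\gtrsim N^{(2d+3)/(2a+d+2)}$ and $T\lesssim N^{(2a-d-1)/(2a+d+2)}$, should give exponential tails of order $e^{-cN\delta_N^2}\le e^{-KN\varepsilon_N^2}$ once $C$ is large.

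The noise term is $\sum_{l,k}\big(N^{-1}\sum_\ell \epsilon_\ell\psi_{lk}(t_\ell,X_\ell)\big)^2$, and conditionally on the $X$'s it is a Gaussian quadratic form $\epsilon^TB\epsilon$. Here $B=N^{-2}\Psi\Psi^T$, where $\Psi$ is the $N\times 2^{J(d+2)}$ design matrix. The Hanson--Wright inequality together with Condition~\ref{con:variancebound} gives
\[
\epsilon^T B\epsilon\le \|\Gamma_N\|\big(\tr B + C\sqrt{x}\,\|B\|_F + Cx\|B\|\big)
\]
with probability at least $1-e^{-x}$. We then bound $\tr B\lesssim 2^{J(d+2)}/N\simeq\delta_N^2$ and $\|B\|\lesssim N^{-1}\|\Psi^T\Psi/N\|$. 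The hard part is controlling $\|\Psi^T\Psi/N\|$, the empirical Gram matrix of the wavelets at the mixed design, uniformly with exponential probability. I would use matrix Bernstein over the independent spatial points for each time slice; this is where the lower bound on $n$ and the upper bound on $T$ enter, because each time slice must contain enough spatial samples. With $\|\Psi^T\Psi/N\|\lesssim 1$, taking $x\simeq KN\varepsilon_N^2\lesssim N\delta_N^2$ gives a noise term of $O(\delta_N^2)$. Combining the three bounds proves both the type I and the type II error claims.
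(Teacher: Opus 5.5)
Your route differs from the paper's. The paper uses the penalized least-squares estimator over $V^{\an}_L$ with the $h^a$ penalty and bounds it in the empirical norm by van de Geer's $M$-estimation theorem (Proposition \ref{prop:Mestimation}). It then has to transfer this bound to $L^2(Q)$ for the data-dependent $\hat u$ (Proposition \ref{prop:Mconcentration}). In space this is done slice by slice via a spatial Gram-matrix concentration (Lemma \ref{lem:spatialconcentration}); that is where $n\gtrsim N^{\frac{2d+3}{2a+d+2}}$ and $T\lesssim N^{\frac{2a-d-1}{2a+d+2}}$ are used. In time it uses the midpoint rule for $t\mapsto\|(\hat u-u_{\theta,L})(t,\cdot)\|^2_{L^2(\Omega)}$, which needs $H^{6,3}$-control of $\hat u$. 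That is why the paper's test also rejects when $\|\hat\beta\|^2_{h^a}>c$, and why $a\ge 6$ is needed.

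Your linear estimator removes the transfer step, since Parseval gives the $L^2(Q)$ error directly. Your bias and noise steps are sound. The Hanson--Wright bound with $\tr B\lesssim 2^{J(d+2)}/N\asymp\delta_N^2$ is correct. The Gram bound needs no per-slice argument: matrix Bernstein over all $N$ summands, each of norm $\lesssim\delta_N^2$, gives $\|\Psi^T\Psi/N-\E[\Psi^T\Psi/N]\|\le 1/2$ off an event of probability $\lesssim 2^{J(d+2)}e^{-c\delta_N^{-2}}$, and $\delta_N^{-2}\gg N\delta_N^2$. Meanwhile $\|\E[\Psi^T\Psi/N]\|\le 1+O(T^{-2}2^{4J})$ by the midpoint rule for $t\mapsto\|v(t,\cdot)\|^2_{L^2(\Omega)}$, $v\in V^{\an}_J$. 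So the lower bound on $n$ and the upper bound on $T$ do not enter where you say. In your route they appear not to be needed at all. Only $T\gtrsim\delta_N^{-1}$ is used, for the time-discretization bias $\lesssim T^{-2}2^{4J}\lesssim\delta_N$, which also needs $a\ge 4$.

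The genuine gap is the design term. Write $\Delta=N^{-1}\sum_{i,j}Z_{ij}$ with $Z_{ij}=u(t_i,X_{ij})\psi(t_i,X_{ij})-\E[u(t_i,X_{ij})\psi(t_i,X_{ij})]\in\R^{D}$, where $\psi=(\psi_{lk})$ and $D=\dim V^{\an}_J\asymp 2^{J(d+2)}\asymp N\delta_N^2$.

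\begin{itemize}
\item Scalar Bernstein coefficient by coefficient, then a union bound and summing squares, cannot work. Each coefficient has variance $\asymp U^2/N$, so at confidence $e^{-KN\delta_N^2}$ it is only controlled at size $\asymp U\sqrt K\delta_N$. Summing over $D\asymp N\delta_N^2$ coefficients gives $\|\Delta\|^2\lesssim KU^2\delta_N^2\cdot N\delta_N^2\gg\delta_N^2$.
\item Bounding each time slice separately and then using the triangle inequality over $i$ loses a factor $\sqrt T$ instead.
\end{itemize}

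What you need is concentration of the norm of the whole sum in terms of the weak variance, for instance Talagrand's inequality in Bousquet's form. The ingredients are:
\begin{itemize}
\item weak variance: $\sup_{v\in V^{\an}_J,\|v\|_{L^2}\le 1}\sum_{i,j}\E\langle Z_{ij},v\rangle^2\le U^2n\sum_i\|v(t_i,\cdot)\|^2_{L^2(\Omega)}\lesssim U^2N$, by the same discretization bound as for the Gram mean;
\item envelope $\lesssim U2^{J(d+2)/2}$;
\item mean $\E\|\Delta\|\lesssim U\delta_N$.
\end{itemize}
Since $N\delta_N^2\lesssim\sqrt N$, this yields $\|\Delta\|\lesssim U(1+\sqrt K)\delta_N$ with probability at least $1-e^{-KN\delta_N^2}$ for $N$ large. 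With that replacement your argument closes. It then avoids both the penalty and the empirical-to-$L^2$ transfer that drive the extra design constraints in the paper's proof.
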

Note that we have constructed tests in the metric $\|\mathcal G(\theta) - \mathcal G(\vartheta)\|_{L^2}$, though the mixed design of \eqref{eq:timedependent} naturally yields tests in the 
\begin{equation*}
    \frac{1}{T}\sum_{i = 1}^T \|\mathcal G(\theta)(t_i , \cdot) - \mathcal G(\vartheta)(t_i, \cdot)\|_{L^2(\Omega)}^2
\end{equation*}
(psuedo-)metric. We circumvent this difficulty by showing the approximation error of our design is small, see Proposition \ref{prop:Mconcentration}. In order to get contraction directly on $\Theta$, we impose a condition following \cite{N}. This amounts to requiring the ``inverse'' of $\mathcal G$ satisfies a H\"older continuity condition on bounded sets. 

\begin{condition}[Backwards Stability]\label{con:stabilityestimate}
    Let $\mathcal G$ be as in \eqref{eq:forwardmap}, and $\mathcal R$ be as in Condition \ref{con:RKHS} and \ref{con:forwardregularity}, with ground truth $\theta_0 \in \mathcal R \cap \Theta$. For some $\eta >0$ and all $M>0$, suppose there exists a constant $L' = L'(\mathcal G, M)$ such that for all $\delta >0$ small enough, 
    \[
        \sup\{ \| \theta - \theta_0\|_{L^2} : \theta \in \Theta\cap B_{\mathcal R}(M), \|\mathcal G(\theta) - \mathcal G(\theta_0)\|_{L^2(Q)} \leq \delta\} \leq L' \delta^\eta.
    \]
\end{condition}

This leads us to the main contribution, which is posterior contraction in non-linear inverse problems on regression with dependent errors. 

\begin{theorem}\label{thm:Main}
    Let $\mathcal G$ be as in \eqref{eq:forwardmap}, and suppose that Condition \ref{con:RKHS} holds for some prior $\Pi'$, regularization space $\mathcal R$, RKHS $\mathcal H$, $\ell$-dimensional domain $\mathcal Z$ and regularity parameter $\alpha$. Suppose that Conditions \ref{con:forwardregularity} and \ref{con:forwardmapping} hold for these choices with some regularity parameter $a \geq (d+1)\lor 6$ and that the ground truth $\theta_0\in \mathcal R \cap \mathcal H$. In the setting of \eqref{eq:timedependent} and \eqref{eq:setting} with observations $D_N$, let $\delta_N$, $\varepsilon_N$, $n$ and $T$ be as in \eqref{eq:rates} and \eqref{eq:rates2} respectively, with $\varepsilon_N \lesssim \delta_N$.
    
    Let $\Pi_N$ be the rescaled prior as in \eqref{eq:priorrescaling}. Moreover, assume that Condition \ref{con:variancebound} holds, and one of Condition \ref{con:inversematrix} or \ref{con:inversematrix2} holds. If $\Pi_N(\cdot|D_N)$ is the posterior distribution, then for all $b >0$, we can choose $m$ large enough such that as $N\to \infty$, 

    \[
        \Prob^N_{\theta_0}\big(\Pi_N(\theta \in \Theta, \|\mathcal G(\theta) - \mathcal G(\theta_0)\|_{L^2(Q)} \leq m \delta_N|D_N)\leq 1 - e^{-bN\varepsilon_N^2}\big) \to 0.
    \]
    Moreover, assuming Condition \ref{con:stabilityestimate} for the current choice of $\mathcal R$ and some $\eta , L' >0$, we deduce that
    \[
        \Prob^N_{\theta_0}\big(\Pi_N(\theta \in \Theta, \|\theta - \theta_0\|_{L^2} \leq L'(m\delta_N)^\eta|D_N)\leq 1- e^{-bN\varepsilon^2_N}\big) \to 0.
    \]
\end{theorem}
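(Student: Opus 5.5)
The argument follows the general contraction scheme referred to as Theorem \ref{thm:PosteriorGeneral} (in the spirit of Ghosal--van der Vaart and \cite{N}), with the Hellinger tests replaced by the tests of Theorem \ref{thm:TestsExist}. Three ingredients are needed. The first is a small-ball (KL-neighbourhood) lower bound for the prior. The second is a regularization set $\Theta_N$ whose complement carries exponentially small prior mass. The third is the tests on $\Theta_N$. Once these are in place, the usual decomposition of the posterior mass of the complement into a ratio of integrals, with a lower bound on the denominator holding with high $\Prob^N_{\theta_0}$-probability, gives the first display. The second display then follows immediately from Condition \ref{con:stabilityestimate}. On the set $\Theta_N\subseteq B_{\mathcal R}(M)$, $\|\mathcal G(\theta)-\mathcal G(\theta_0)\|_{L^2}\le m\delta_N$ implies $\|\theta-\theta_0\|_{L^2}\le L'(m\delta_N)^\eta$. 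Since the posterior mass of $\Theta_N^c$ is already at most $e^{-bN\varepsilon_N^2}$, intersecting with $\Theta_N$ costs nothing beyond adjusting constants.

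\textbf{Small ball.} Consider the set $B_N=\{\theta: \|\theta-\theta_0\|_{(H^\kappa)^\ast}\le \varepsilon_N,\ \|\theta\|_{\mathcal R}\le M\}$. Condition \ref{con:forwardregularity} gives $\|\mathcal G(\theta)-\mathcal G(\theta_0)\|_{L^\infty_tL^2_x}\lesssim \varepsilon_N$ on $B_N$, together with uniform sup-norm bounds $U$. Proposition \ref{prop:inversematrix}, which uses Condition \ref{con:inversematrix} or \ref{con:inversematrix2}, then yields $KL\le CN\varepsilon_N^2$ and $V_{2,0}\le CN\varepsilon_N^2$. It remains to show $\Pi_N(B_N)\ge e^{-AN\varepsilon_N^2}$. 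For this I follow \cite{N}: since $\theta_0\in\mathcal H$, a Cameron--Martin shift costs $e^{-\|\theta_0\|_{\mathcal H}^2N^{\ell/(2(\alpha+\kappa)+\ell)}/2}=e^{-cN\varepsilon_N^2}$ under the rescaling \eqref{eq:priorrescaling}. The centred ball probability is controlled by metric entropy bounds for the unit ball of $H^\alpha$ (via Condition \ref{con:RKHS}) in $(H^\kappa)^\ast$, whose entropy is of order $\epsilon^{-\ell/(\alpha+\kappa)}$. The $\mathcal R$-norm constraint is handled by Gaussian concentration, because $\Pi'(\mathcal R)=1$. Once the rescaling exponent is matched, this is a routine computation.

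\textbf{Sieve and tests.} I would apply Borell's isoperimetric inequality to the rescaled prior, with the $(H^\kappa)^\ast$ ball of radius $M\varepsilon_N$ plus $M$ times the $\mathcal H$-unit ball. This gives $\Pi_N(\theta_1+\theta_2\notin\ldots)\le e^{-BN\varepsilon_N^2}$ with $B$ arbitrarily large once $M$ is large. The constraint $\|\theta\|_{\mathcal R}\le M$ is then intersected in using Fernique's theorem for the rescaled prior, since $N^{-\ell/(4(\alpha+\kappa)+2\ell)}\to0$. The resulting set is exactly the $\Theta_N$ of Theorem \ref{thm:TestsExist}, whose hypotheses (Condition \ref{con:variancebound}, the rates in \eqref{eq:rates}--\eqref{eq:rates2}, $\varepsilon_N\lesssim\delta_N$, and $\theta_0\in\Theta_N$ because $\theta_0\in\mathcal R\cap\mathcal H$) all hold. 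Taking $K>A+b+2$ in Theorem \ref{thm:TestsExist} yields the tests.

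\textbf{Main obstacle.} The delicate step is the denominator bound $\int p^N_\theta/p^N_{\theta_0}\,d\Pi_N\ge \Pi_N(B_N)e^{-2N\varepsilon_N^2}$ with probability tending to one. The data are not i.i.d. under the dependent Gaussian errors, so this step must go through the aggregate KL and variance bounds of Proposition \ref{prop:inversematrix} for the full $N$-sample, combined with Chebyshev's inequality, rather than through per-observation bounds. With the bound in hand, the standard argument finishes the proof. Write $\Theta_N^c\cup\{\theta\in\Theta_N:\|\mathcal G(\theta)-\mathcal G(\theta_0)\|_{L^2}>m\delta_N\}$ and bound the posterior mass by $\phi_N$ plus the ratio. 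Fubini's theorem and the type-II error bound give $\E_{\theta_0}[(1-\phi_N)\int\ldots]\le e^{-KN\varepsilon_N^2}$. Markov's inequality then gives posterior mass at most $e^{-bN\varepsilon_N^2}$ outside an event of vanishing probability, which proves the first display.
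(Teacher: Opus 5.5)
Your proposal is correct and follows essentially the same route as the paper. The paper likewise combines three results: the general contraction theorem (Theorem \ref{thm:PosteriorGeneral}, whose denominator bound is the non-i.i.d.\ Lemma 8.21 of Ghosal--van der Vaart, resting on the aggregate KL/variance bounds of Proposition \ref{prop:inversematrix}), the small-ball and sieve bounds of Theorem \ref{thm:smallball} (Steps 1--3 of Nickl's Theorem 2.2.2), and the tests of Theorem \ref{thm:TestsExist}, and then applies Condition \ref{con:stabilityestimate} on $\Theta_N$. The only thing to tidy is the order of constants: fix the $\mathcal R$-radius in your small-ball set first, so that $A$ is determined, and only then choose $B>A+b+2$ and the sieve radius $M=M(B)$, instead of using the same $M$ for both.
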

The proof is a combination of Theorem \ref{thm:PosteriorGeneral}, Theorem \ref{thm:smallball} and Theorem \ref{thm:TestsExist}. Finally, we show this theorem can be applied in PDE examples of interest, such as the ones arising from \eqref{eq:introRDE} and \eqref{eq:introNS}. See Theorems \ref{thm:HRDEMain} and \ref{thm:NSMain} for details.

\subsection{Summary of the Contributions}

We prove posterior contraction rates for non-linear inverse problems in a dependent data setting where Hellinger testing does not straightforwardly apply, see Theorems \ref{thm:Main}, \ref{thm:HRDEMain}, and \ref{thm:NSMain}. To do so, we construct ``plug-in'' tests via a wavelet-based estimator, see Theorem \ref{thm:TestsExist} and \textsection \ref{sec:theory}; this in turn requires wavelets which characterize the smoothness observed in solutions to parabolic PDEs, see \textsection \ref{sec:AnisotropicWavelets} and \textsection \ref{sec:besovIsSobolev}.

The same wavelet-based estimator constructed in \textsection \ref{sec:Construction} provides an initializer (or ``warm start'') for MCMC techniques in initial value problems for parabolic PDEs such as 2D Navier-Stokes, see \cite{NT}. In particular, such an initializer guarantees polynomial time computation of the posterior, which in the case of a ``cold start'' may take exponential time, see \cite{BMNW}. 

We also contribute a stability estimate for Darcy's operator in a time-evolution setting, and prove that the time-evolution structure can be exploited to prove stronger results compared to the elliptic Darcy problem, see \textsection \ref{sec:HRDE} and \textsection \ref{sec:findingphi}. We also prove a joint stability on viscosity and initial condition in the case of the 2D Navier-Stokes equations, see \textsection \ref{sec:NS}.

\section{Contraction Rate Result}\label{sec:theory}
Proofs for this section can be found in \textsection \ref{sec:ProofsforTheory}.

\subsection{Verifying Conditions \ref{con:inversematrix} and \ref{con:inversematrix2}}\label{sec:dependenceCheck}
We will show Condition \ref{con:inversematrix} holds for a wide range of covariances in the setting of \eqref{eq:timedependent}.
\begin{remark}\label{rmk:inversematrixconditions}
Note that Condition \ref{con:inversematrix2}.b is a weaker condition than Condition \ref{con:inversematrix}. Indeed, if Condition \ref{con:inversematrix} holds, then
\begin{align*}
    \tr(\Gamma_N^{-2}) &= \sum_{i, j = 1}^{Nd_V} [(\Gamma_N^{-1})_{ij}]^2 \leq Nd_V \max_{i} \sum_j [(\Gamma_N^{-1})_{ij}]^2 \leq Nd_V \max_{i} \bigg(\sum_j |(\Gamma_N^{-1})_{ij}|\bigg)^2 \leq Nd_VC_2^2,
\end{align*}
so Condition \ref{con:inversematrix2}.b holds. 
\end{remark}
\begin{remark}\label{rmk:simplifyingConds}
In the setting of \eqref{eq:timedependent}, with $M_{ij}$ and $\sigma_j^k$ as in the display after that, define $M_j \equiv M^{(N)}_j$ to be the $d_VT\times d_VT $ block matrix whose entries are
\[
    M_j = \begin{pmatrix}
        M^j_{11} - D_j & M^j_{12} & \cdots  & M^j_{1T} \\
        M^j_{21} & M^j_{22} - D_j& \cdots  & 0 \\
        \vdots & \vdots & \ddots  & \vdots \\
        M^j_{T1} & 0 & \cdots  & M^j_{TT} - D_j
    \end{pmatrix},
\]
where $D_j = \diag ((\sigma_j^1)^2, \dots, (\sigma_j^{d_V})^2)$.
Then, set
\[
    \Gamma^{(j)} = I_{T}\otimes D_j + M_j\quad \text{ for } j = 1, \dots n,
\]
where $\otimes$ is the Kronecker product of matrices.
Finally, we recall the $d_VN \times d_VN$ matrix from \eqref{eq:setting} is given in block-diagonal form by 
\[
    \Gamma_N = \diag\big(\Gamma^{(1)}, \Gamma^{(2)}, \dots, \Gamma^{(n)}\big)
\]
Due to the block structure of $\Gamma_N$, we have
\[
    \Gamma_N^{-1} = \diag\Big((\Gamma^{(1)})^{-1}, (\Gamma^{(2)})^{-1}, \dots (\Gamma^{(n)})^{-1}\Big)
\]
Thus, to show Condition \ref{con:inversematrix} holds, it suffices to show the condition holds for all the matrices $\Gamma^{(j)}$.
\end{remark}
The following result provides a sufficient condition in addition to Proposition \ref{prop:VARCheck} for Condition \ref{con:inversematrix} to be satisfied, which includes some time series.
\begin{proposition}\label{prop:longrange}
    Let $M_j$ and $\sigma_j^k$ be as in Remark \ref{rmk:simplifyingConds}. Suppose that there are positive constants $c_1$ so that $c_1 < (\sigma_j^k)^2$ for all $j$ and $k$. Also, suppose that for some $c < c_1$, $M_j$ satisfies $\max_j \| M_j \|_{\infty \to \infty} \leq c$ for sufficiently large $T$. Then, Condition \ref{con:inversematrix} is satisfied.

    In particular, if $\max_j \|M_j\|_\infty \leq c(d_VT)^{-1}$ or $\max_j \| M_j \| \leq c (d_VT)^{-1/2}$ for sufficiently large $T$, then Condition \ref{con:inversematrix} holds.
\end{proposition}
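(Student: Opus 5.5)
By Remark \ref{rmk:simplifyingConds}, $\Gamma_N$ is block diagonal with blocks $\Gamma^{(j)} = I_T\otimes D_j + M_j$. Since $\Gamma_N^{-1}$ is block diagonal too, $\|\Gamma_N^{-1}\|_{\infty\to\infty} = \max_j \|(\Gamma^{(j)})^{-1}\|_{\infty\to\infty}$. It therefore suffices to bound $\|(\Gamma^{(j)})^{-1}\|_{\infty\to\infty}$ uniformly in $j$ and $T$. The plan is a Neumann series argument in the operator norm $\|\cdot\|_{\infty\to\infty}$, which is submultiplicative because it is an induced norm.

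First factor $\Gamma^{(j)} = (I_T\otimes D_j)\big(I + (I_T\otimes D_j)^{-1}M_j\big)$. The matrix $I_T\otimes D_j$ is diagonal with entries $(\sigma_j^k)^2 > c_1$. Hence $\|(I_T\otimes D_j)^{-1}\|_{\infty\to\infty} < c_1^{-1}$, and
\[
\|(I_T\otimes D_j)^{-1}M_j\|_{\infty\to\infty} \le c_1^{-1}\|M_j\|_{\infty\to\infty} \le c/c_1 < 1 .
\]
The Neumann series $\sum_{m\ge 0}\big(-(I_T\otimes D_j)^{-1}M_j\big)^m$ then converges in $\|\cdot\|_{\infty\to\infty}$. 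This gives the estimate
\[
\|(\Gamma^{(j)})^{-1}\|_{\infty\to\infty} \le \frac{1}{1-c/c_1}\cdot\frac{1}{c_1} = \frac{1}{c_1-c},
\]
uniformly in $j$ and in $T$ large enough. Condition \ref{con:inversematrix} holds with $C_2 = (c_1-c)^{-1}$. Invertibility of $\Gamma^{(j)}$ is already assumed, and the series also reproves it.

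For the ``in particular'' statement we reduce to the first part using standard norm comparisons for $m\times m$ matrices with $m = d_VT$. Reading $\|M_j\|_\infty$ as the maximal absolute entry, each row has $d_VT$ entries, so
\[
\|M_j\|_{\infty\to\infty} \le d_VT\,\|M_j\|_\infty \le c .
\]
For the spectral norm, each row $r$ of $M_j$ satisfies $\sum_s |(M_j)_{rs}| \le \sqrt{d_VT}\,\big(\sum_s (M_j)_{rs}^2\big)^{1/2}$, and the row $\ell^2$ norm equals $\|e_r^T M_j\| \le \|M_j\|$. Hence
\[
\|M_j\|_{\infty\to\infty} \le \sqrt{d_VT}\,\|M_j\| \le c .
\]
In both cases the first part applies.

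There is no serious obstacle. The points needing care are that the constant is uniform in both $j$ and $N$, and that it is the strict inequality $c<c_1$ that makes the Neumann bound uniform.
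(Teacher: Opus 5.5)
Your proposal is correct and follows essentially the same route as the paper: a Neumann series for $(\Gamma^{(j)})^{-1}$ in the submultiplicative norm $\|\cdot\|_{\infty\to\infty}$, giving the uniform bound $(c_1-c)^{-1}$, followed by the comparisons $\|A\|_{\infty\to\infty}\le m\|A\|_\infty$ and $\|A\|_{\infty\to\infty}\le\sqrt m\,\|A\|$ for the ``in particular'' part. The only difference is that you treat general $d_V$ directly via the diagonal matrix $I_T\otimes D_j$ and prove the norm comparisons, whereas the paper specializes to $d_V=1$ and cites the inequalities from Horn--Johnson.
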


\subsection{Construction of Tests}\label{sec:ConOfTests}

\subsubsection{Anisotropic wavelets}\label{sec:AnisotropicWavelets}
We will construct wavelets in the case of the domain $Q = [0, 1]\times \Omega$, and we will show they characterize the parabolic Sobolev spaces from \textsection\ref{sec:notation}. As far as we are aware, this characterization has not been proven before for $Q$. It has been done on $\R^{d+1}$, see \textsection 5.2 in \cite{T} for details, and similar wavelets have been considered for the time-cylinder in \cite{SS}.

Let $\phi^{\bc}_{lk}$ (resp. $\psi^{\bc}_{lk}$) be boundary corrected, $S$-smooth Daubechies scaling functions (resp. wavelets) with base resolution $L_0 = L_0(S)$ from \cite{CDV93}. Note that at every level $l \geq L_0$, there are $2^l$ scaling functions and $2^{l}$ wavelets. Next, let $\Phi_{lk}$ (resp. $\Psi_{lk}$) be periodized $d$-dimensional Meyer scaling functions (resp. wavelets). Note that at level $l$, there are $2^{dl}$ scaling functions and $2^{(d+1)l}-2^{dl}$ wavelets. We recall a few facts in \textsection \ref{sec:WaveletFacts} of appendix \ref{sec:besovIsSobolev}, and a general reference is \textsection 4.3 in \cite{GN}.

Now, we will define on $(t, x) \in [0, 1]\times \Omega$
\[
    \Phi_{lk}^{\an}(t, x) = \phi^{\bc}_{l_tk_t}(t)\Phi_{l_xk_x}(x),
\]
where $l = (l_t, l_x)\in \N^2$ has $l_t, l_x\geq L_0$, and $k = (k_t, k_x) \in [2^{l_t}]\times [2^{dl_x}]$.
To introduce the anisotropy, we define the spaces
\begin{align*}
    V^{\an}_l &= \Span \big(\Phi^{\an}_{jk} : j_t =2(l+1), \, j_x = l+1, k \in [2^{j_t}]\times [2^{dj_x}]\big) \\
    W^{\an}_l &= V^{\an}_{l} \cap (V^{\an}_{l-1})^\perp.
\end{align*}
Next, we form a tensor basis
\[\Psi^{\an}_{lk}(t, x) = \psi^{\bc}_{l_tk_t}(t)\Psi_{l_xk_x}(x),\]
where $l = (l_t, l_x)\in \N^2$ has $l_t, l_x\geq L_0 - 1$, we set $\psi^{\bc}_{l_tk_t} = \phi^{\bc}_{L_0k_t}$ when $l_t = L_0 -1$, and $\Psi_{l_xk_x} = \Phi_{L_0 k_x}$ when $l_x = L_0-1$; moreover,
\begin{equation*}
    k = (k_t, k_x)\in \mathcal K(l) \equiv [2^{l_t}]\times [2^{d(l_x +1)}-2^{dl_x}]. 
\end{equation*}
We will also define, if $l_t = L_0 -1$ and $l_x \geq L_0$, that
\begin{equation*}
    \mathcal K(l) = [2^{L_0}]\times [2^{d(l_x +1)}-2^{dl_x}],
\end{equation*}
and if $l_t \geq L_0$ and $l_x = L_0 -1$, then
\begin{equation*}
    \mathcal K(l) = [2^{l_t}]\times [2^{dL_0}].
\end{equation*}
We define the ``level'' of such a wavelet as 
\begin{equation}\label{eq:leveldef}
\lfloor (l_t/2)\lor l_x\rfloor \equiv \mathbb L(l).
\end{equation}

\begin{remark}

    We note that 
    \[
        \{ \Psi_{lk}^{\an} : l_x, l_t \geq L_0-1, \text{ and }k \in \mathcal K (l)\}
    \]
    form an orthonormal basis of $L^2([0, 1]\times \Omega)$. Orthonormality is clear, and we know that this basis is dense in functions of the form $\sum f_i(t)g_i(x)$ for $f_i \in C([0, 1])$ and $g_i\in C(\Omega)$ in the supremum norm, which are in turn dense in $C([0, 1]\times \Omega)$ by Stone-Weierstrass, which are dense in $L^2([0, 1]\times \Omega)$.

    Moreover, we also have that
    \begin{align*}
        V^{\an}_{l} &= \Span(\Psi^{\an}_{jk}: \mathbb L(j) \leq l, k \in \mathcal K(j))\\
        W_l^{\an} &= \Span(\Psi^{\an}_{jk}: \mathbb L(j) = l, k \in \mathcal K(j)).
    \end{align*}
\end{remark}
\begin{definition}\label{def:AnisotropicSpace}
    For $s > 0$, fix a smoothness $S$ so that $\phi^{\bc}_{lk}, \psi^{\bc}_{lk} \in C^r$ for a natural number $r > s$. We define the anisotropic Sobolev spaces as
    \[H^{s, \an} = \{ f\in L^2(Q) : \|f\|_{H^{s, \an}} < \infty\},\]
    where 
    \[\|f\|_{H^{s, \an}} = \left( \sum_{\substack{l \in \N^2\\ l_t, l_x \geq L_0 -1}} \sum_{k \in \mathcal K(l)}2^{2s\lfloor \max(l_t/2, l_x)\rfloor }|\langle f, \Psi^{\an}_{lk}\rangle|^2\right)^{1/2}.\]
\end{definition}
\begin{remark}
    The definition of these Sobolev spaces can be extended to Besov spaces $B^{s, \an}_{pq}$, see Definition \ref{def:AnisotropicBesov} in the appendix \textsection \ref{sec:besovIsSobolev}, and a few of these spaces will be needed for the proofs in Appendix \ref{sec:besovIsSobolev}. We also quickly note that the definition of $H^{s, \an}$ depends only on the spaces $V_{L_0 -1}^{\an}$ and $W_{j}^{\an}$ for $j \geq L_0$. Indeed, suppose that $f\in L^2(Q)$. If we let $f_{L_0-1}$ be the orthogonal projection of $f$ onto $V_{L_0 -1}^{\an}$ and $f_j$ be the orthogonal projection on to $W_j^{\an}$, then the following norm is equivalent to $\|\cdot\|_{H^{s, \an}}$:
    \begin{equation}\label{eq:atomicdecomp}
        \|f\|^2 = \sum_{j \geq L_0 -1} 2^{2sj}\|f_j\|^2_{L^2(Q)}.
    \end{equation}
    These wavelet norms characterize the anisotropic Sobolev spaces $H^{s, s/2}$ mentioned in Remark \ref{rmk:besovIsSobolev}.
\end{remark}
\begin{proposition}\label{prop:besovIsSobolev}
    Let $m\in \N$, then 
    \begin{align*}
        \bigcap_{k=0}^{m} H^k([0, 1], H^{2m-2k}(\Omega)) &= H^{2m, \an},\\
        \bigcap_{k = 0}^{m+1} H^k([0, 1], H^{2m+1 - 2k}(\Omega)) &\subseteq H^{2m+1, \an},
    \end{align*}
    where $\subseteq$ indicates a continuous embedding, and equality indicates equivalent norms. As a consequence, we have $H^{s, \an} = H^{s, s/2}$ for all $s\geq 0$. 
\end{proposition}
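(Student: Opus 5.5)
The plan is to reduce the statement to one–dimensional wavelet characterizations on each factor, using the tensor structure of $\Psi^{\an}_{lk}$ and the block decomposition \eqref{eq:atomicdecomp}. On $[0,1]$, the boundary corrected Daubechies wavelets of \cite{CDV93} with $S$ large characterize $H^r([0,1])$ for $0\le r<S$. On $\Omega$, the periodized Meyer wavelets characterize $H^r(\Omega)$. Both facts are recorded in \textsection\ref{sec:WaveletFacts}. Given $l=(l_t,l_x)$, write $c_{lk}=\langle f,\Psi^{\an}_{lk}\rangle$. Since the system $\{\psi^{\bc}_{l_tk_t}\}$ is an orthonormal basis of $L^2([0,1])$, Fubini gives, for $F\in H^k([0,1],H^{j}(\Omega))$,
\[
\|\partial_t^kF\|^2_{L^2_tH^j_x}\simeq \sum_{l_x}2^{2jl_x}\sum_{k_x}\|\partial_t^k\langle F,\Psi_{l_xk_x}\rangle_x\|^2_{L^2([0,1])}\simeq\sum_{l,k}2^{2jl_x+2kl_t}|c_{lk}|^2 .
\]
The last step applies the one–dimensional characterization to each scalar function $t\mapsto\langle F(t),\Psi_{l_xk_x}\rangle$, summing over $k\le$ the relevant order. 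This characterizes $\bigcap_{k} H^k_tH^{2m-2k}_x$ by the weights $\sum_{k=0}^m 2^{2(2m-2k)l_x+2kl_t}$.

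The key elementary step is the comparison
\[
\sum_{k=0}^{m}2^{2(2m-2k)l_x+2kl_t}\simeq \max(2^{4ml_x},2^{2ml_t})\simeq 2^{4m\lfloor (l_t/2)\lor l_x\rfloor}.
\]
The intermediate terms are convex combinations in the exponent, so they are dominated by the two endpoints. This gives the equality $\bigcap_{k=0}^{m}H^k_tH^{2m-2k}_x=H^{2m,\an}$ with equivalent norms. For the odd case, the same computation with $s=2m+1$ and weights $\sum_{k=0}^{m+1}2^{2(2m+1-2k)l_x+2kl_t}$ is used; the $k=0$ term is $2^{2(2m+1)l_x}$ and the $k=m+1$ term is $2^{2(m+1)l_t}\ge 2^{(2m+1)l_t}$. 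The sum therefore dominates $2^{2(2m+1)\mathbb L(l)}$, which gives only the continuous embedding. A subtlety arises for $H^{-1}_x$ when $k=m+1$, since $2m+1-2k=-1$. I will handle it via the Meyer characterization of negative Sobolev norms on the torus, or simply drop that term and use $k=m$ with $H^1_x$ plus $k=0$. The latter already gives weights dominating $\max(2^{2(2m+1)l_x},2^{2ml_t+2l_x})$, but when $l_t\gg 2l_x$ this misses half a power. So I expect the $k=m+1$ term to be needed, with the negative-order characterization.

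The main obstacle is the one–dimensional time characterization with boundary-corrected wavelets for non-periodic $H^k([0,1])$. This holds without boundary conditions because the \cite{CDV93} basis reproduces polynomials up to degree $S-1$ at the boundary, which is a cited fact, so there is no real difficulty beyond bookkeeping of the coarse levels $L_0-1$.

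Finally, for $H^{s,\an}=H^{s,s/2}$ for all real $s\ge0$, both scales are interpolation scales. On one side, $H^{s,s/2}=L^2_tH^s_x\cap H^{s/2}_tL^2_x$ is the Lions–Magenes interpolation family (\textsection 2.1 of \cite{LM2}). On the other, $H^{s,\an}$ is a weighted $\ell^2$ space, so complex interpolation between $s=0$ and $s=2m$ gives $H^{\theta 2m,\an}$ by Stein–Weiss. The even integer identification, together with $H^{0,\an}=L^2=H^{0,0}$ and $H^{2m,m}=\bigcap_k H^k_tH^{2m-2k}_x$ (Lions–Magenes), then interpolates to all $s\in[0,2m]$. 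Since $m$ is arbitrary, this covers all $s\ge 0$.
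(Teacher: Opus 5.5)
Your argument is correct, but it takes a different and more economical route than the paper. The paper proves the two inclusions by different means. $H^{2m,\an}\subseteq X$ comes from Lemma~\ref{lem:rieszBlocks}, which shows by hand that $\partial_{x_i}$ costs one level of anisotropic regularity (using the disjoint Fourier supports of Meyer wavelets) and $\partial_t$ costs two (via a discrete Schur test, where boundary terms from the boundary-corrected wavelets must be absorbed by the weights, with several cases left to the reader). Only the converse $X\subseteq H^{2m,\an}$ uses the one-dimensional characterizations, applied in the alternate basis of Remark~\ref{rmk:equivalentnorm} to the two endpoint spaces $L^2_tH^{2m}_x$ and $H^m_tL^2_x$. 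The passage to real $s$ is then done by Lions--Magenes interpolation. You instead use that $\Psi^{\an}_{lk}$ is a full tensor product of two orthonormal bases. The \emph{two-sided} characterizations on each factor plus Fubini then give $\|F\|^2_{H^k_tH^j_x}\simeq\sum_{l,k}2^{2jl_x+2kl_t}|c_{lk}|^2$ at once, and both inclusions reduce to the elementary weight comparison. In effect, the paper's Schur-test lemma re-derives the part of the boundary-wavelet characterization of $H^k([0,1])$ that you simply cite. Your framework also gives more than you use. The two endpoint weights alone satisfy $2^{2sl_x}+2^{sl_t}\simeq 2^{2s\mathbb L(l)}$. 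So applying the fractional characterizations of $H^s(\Omega)$ and $H^{s/2}([0,1])$ directly to the defining norm of $H^{s,s/2}$ yields $H^{s,\an}=H^{s,s/2}$ for every real $s\ge 0$. This needs neither the intermediate-derivative theorem nor interpolation. The paper's route has, as a side product, the explicit mapping properties of $\partial_{x_i}$ and $\partial_t$ on the $H^{\alpha,\an}$ scale recorded in Lemma~\ref{lem:rieszBlocks}.

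Three small corrections.

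\textbf{The odd-case weight.} The $k=m+1$ weight is $2^{2(m+1)l_t-2l_x}$, not $2^{2(m+1)l_t}$, because the $H^{-1}_x$ factor contributes $2^{-2l_x}$. It dominates $2^{(2m+1)l_t}$ exactly when $l_t\ge 2l_x$. That is precisely the regime where the $k=0$ term is insufficient, so the embedding still follows.

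\textbf{Full norm versus seminorm.} Your first display should be stated for the full norm $\|F\|^2_{H^k_tH^j_x}$, not for $\|\partial_t^kF\|^2_{L^2_tH^j_x}$ alone. Polynomials in $t$ of degree $<k$ have vanishing $k$-th derivative but nonzero coarse coefficients.

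\textbf{Which $H^{-1}$.} State explicitly that $H^{-1}(\Omega)$ is the inhomogeneous space, which sees the spatial mean. With the homogeneous dual $(\dot H^1(\Omega))^\ast$ used in Appendix~\ref{sec:PDEReg}, take $f(t,x)=g(t)$ with $g\in H^m([0,1])\setminus H^{m+1/2}([0,1])$. It belongs to the left-hand side but not to $H^{2m+1,\an}$.
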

\begin{remark}
    For the proof of this proposition, see Appendix \ref{sec:besovIsSobolev}. Henceforth (except in Appendix \ref{sec:besovIsSobolev}), we will denote $H^{s, \an}$ by the usual notation of $H^{s, s/2}$.
\end{remark}

We also want to prove an $L^\infty$ embedding result for $H^{s, s/2}$, which is covered in the next proposition.

\begin{proposition}\label{prop:BesovEmbeddings}
    For $s > (d+2)/2$, we have that
    \[
        H^{s, s/2} \hookrightarrow C(Q).
    \]
\end{proposition}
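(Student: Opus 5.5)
The plan is to work directly with the wavelet characterization of Definition \ref{def:AnisotropicSpace}, using Proposition \ref{prop:besovIsSobolev} to identify $H^{s,s/2}$ with $H^{s,\an}$. Expand $f\in H^{s,\an}$ as $f=\sum_{l}\sum_{k\in\mathcal K(l)}\langle f,\Psi^{\an}_{lk}\rangle \Psi^{\an}_{lk}$. Each $\Psi^{\an}_{lk}$ is continuous, because the boundary-corrected Daubechies functions lie in $C^r$ and the periodized Meyer functions are smooth. It therefore suffices to show that this series converges absolutely and uniformly on $Q$, with $\sup_Q|f|\lesssim \|f\|_{H^{s,\an}}$. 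Continuity of the limit, and the identification of the limit with $f$ (the series converges to $f$ in $L^2$), then follow, so the embedding is continuous.

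\textbf{Step 1: per-level sup-norm bound.} Fix a multi-index $l=(l_t,l_x)$. The functions $\psi^{\bc}_{l_tk_t}$ satisfy $\|\psi^{\bc}_{l_tk_t}\|_\infty\lesssim 2^{l_t/2}$ and have compact support, with at most a bounded number of them overlapping at any $t$. Hence
\[
\Big\|\sum_{k_t}c_{k_t}\psi^{\bc}_{l_tk_t}\Big\|_\infty\lesssim 2^{l_t/2}\max_{k_t}|c_{k_t}|.
\]
The periodized Meyer wavelets have rapid decay, which gives the standard localization estimate
\[
\sup_x\sum_{k_x}|\Psi_{l_xk_x}(x)|\lesssim 2^{dl_x/2}.
\]
Combining the two in tensor form gives
\[
\Big\|\sum_{k\in\mathcal K(l)}c_k\Psi^{\an}_{lk}\Big\|_\infty\lesssim 2^{l_t/2+dl_x/2}\max_k|c_k|\le 2^{l_t/2+dl_x/2}\Big(\sum_k|c_k|^2\Big)^{1/2}.
\]
I expect this step to be the main obstacle. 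The difficulty is handling the Meyer functions, which are not compactly supported, in the sum over $k_x$ uniformly in $x$; the boundary-corrected functions at $l_t=L_0-1$ also need separate treatment. I would take both of these facts from \S 4.3 of \cite{GN} and \cite{CDV93}.

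\textbf{Step 2: summing over levels.} Group the multi-indices by the level $j=\mathbb L(l)$ from \eqref{eq:leveldef}. If $\mathbb L(l)=j$, then $l_x\le j$ and $l_t\le 2j+1$, so
\[
2^{l_t/2+dl_x/2}\lesssim 2^{j(d+2)/2}.
\]
The number of pairs $l$ with $\mathbb L(l)=j$ is $O(j)$, since either $l_x=j$ and $l_t\le 2j+1$, or $l_t\in\{2j,2j+1\}$ and $l_x\le j$. Applying Cauchy--Schwarz over these pairs, the sup norm of the level-$j$ block is bounded by
\[
\sqrt{j}\,2^{j(d+2)/2}\|f_j\|_{L^2}=\sqrt{j}\,2^{j((d+2)/2-s)}\,2^{sj}\|f_j\|_{L^2}.
\]
Summing over $j$ and applying Cauchy--Schwarz once more against the norm \eqref{eq:atomicdecomp} gives
\[
\sup_Q|f|\lesssim\Big(\sum_j j\,2^{-2j(s-(d+2)/2)}\Big)^{1/2}\|f\|_{H^{s,s/2}},
\]
and the sum is finite precisely because $s>(d+2)/2$.

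The uniform bound also shows that the partial sums form a Cauchy sequence in $C(Q)$. The limit is therefore continuous and agrees with $f$ almost everywhere, which completes the proof.
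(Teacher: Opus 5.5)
Your proposal is correct and follows essentially the same route as the paper's proof (given under Proposition \ref{prop:BesovEmbeddings2}): bound each level block pointwise by a constant times $2^{j(d+2)/2}\|f_j\|_{L^2}$, then apply Cauchy--Schwarz over levels using $s>(d+2)/2$. The differences are cosmetic. The paper works in the alternate basis of Remark \ref{rmk:equivalentnorm}, which has boundedly many $(\iota,l)$ per level and so avoids your harmless $\sqrt{j}$ factor, and it uses $\sum_k|\Psi_{lk}(t,x)|^2=O(2^{(d+2)j})$ with Cauchy--Schwarz in place of your $\ell^1$ localization bound; it also proves the estimate for finitely supported $f$ and extends by density, where you instead show uniform convergence of the series directly.
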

In addition, the usual embeddings between Besov spaces hold, see Proposition \ref{prop:BesovEmbeddings2}. We will use the following corollary to control the bias for taking $V^{\an}_j$ as an approximation space in Proposition \ref{prop:Mestimation}, and its proof follows from the wavelet characterization of the $L^2$ norm.

\begin{corollary}\label{cor:AnisotropicApprox}
    Suppose that $\alpha \geq 0$, $u \in H^{\alpha, \alpha/2}$ and let $u_L$ be the projection of $u$ onto $V_L^{\an}$. Then, there is a constant $c = c(\alpha)$ so that 
    \[
        \| u - u_L \|_{L^2} \leq c\|u\|_{H^{\alpha, \alpha/2}} 2^{-\alpha (L+1)}.
    \]
\end{corollary}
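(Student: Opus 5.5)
The plan is to work entirely in the wavelet coordinates of the anisotropic basis and use orthonormality. The remark after Definition \ref{def:AnisotropicSpace} gives the identification
\[
V_L^{\an} = \Span\big(\Psi^{\an}_{jk} : \mathbb L(j) \leq L,\ k\in\mathcal K(j)\big).
\]
Since $\{\Psi^{\an}_{jk}\}$ is an orthonormal basis of $L^2(Q)$, the orthogonal projection $u_L$ onto $V_L^{\an}$ is the truncated expansion over the indices with $\mathbb L(j)\le L$. By Parseval, the error is
\[
\|u-u_L\|_{L^2}^2 = \sum_{j:\ \mathbb L(j)\geq L+1}\ \sum_{k\in\mathcal K(j)} |\langle u,\Psi^{\an}_{jk}\rangle|^2 .
\]

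Next, for every index in this tail we have $2^{2\alpha \mathbb L(j)} \geq 2^{2\alpha(L+1)}$, because $\alpha\ge 0$ and $\mathbb L(j)\ge L+1$. Recall $\mathbb L(j) = \lfloor \max(j_t/2, j_x)\rfloor$, which is exactly the exponent appearing in Definition \ref{def:AnisotropicSpace}. Multiplying and dividing by this weight gives
\[
\|u-u_L\|_{L^2}^2 \leq 2^{-2\alpha(L+1)} \sum_{j,k} 2^{2\alpha\mathbb L(j)}|\langle u,\Psi^{\an}_{jk}\rangle|^2 = 2^{-2\alpha(L+1)}\|u\|_{H^{\alpha,\an}}^2 .
\]

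Finally, Proposition \ref{prop:besovIsSobolev} gives $H^{\alpha,\an}=H^{\alpha,\alpha/2}$ with equivalent norms, so $\|u\|_{H^{\alpha,\an}} \le c(\alpha)\|u\|_{H^{\alpha,\alpha/2}}$. Taking square roots yields the claim.

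The only point needing care is the identification of $V_L^{\an}$ with the span of wavelets at level at most $L$, which ensures the projection is a clean truncation. Since this identification is stated in the remark above, I will take it as given; the rest is routine bookkeeping.
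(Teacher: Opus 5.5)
Your proof is correct and is the argument the paper intends. The paper only remarks that the corollary ``follows from the wavelet characterization of the $L^2$ norm,'' which is exactly your Parseval truncation over the indices with $\mathbb L(j)\ge L+1$, combined with the weight $2^{2\alpha\mathbb L(j)}$ from Definition~\ref{def:AnisotropicSpace} and the norm equivalence of Proposition~\ref{prop:besovIsSobolev}. One minor slip: the identification of $V_L^{\an}$ with the span of the wavelets of level at most $L$ is stated in the remark just \emph{before} Definition~\ref{def:AnisotropicSpace}, not after it.
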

We will also need to know the metric complexity of a ball in $H^{s, s/2}$ for Proposition \ref{prop:Mestimation}, which we record here. It is a consequence of the ``transference method'' in \textsection 5.3 in \cite{T}.
\begin{corollary}\label{cor:metricentropy}
    Let $B(R) = B_{H^{s, s/2}}(R)$ be a ball of radius $R$ in $H^{s, s/2}$, and $s > (d+2)/2$. Then, 
    \[\log N(B(R), \|\cdot \|_{L^\infty}, \epsilon) \lesssim \bigg(\frac{R}{\epsilon}\bigg)^{\frac{d+2}{s}}.\]
\end{corollary}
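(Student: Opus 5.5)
The plan is to reduce the entropy estimate to the standard one for isotropic Besov/Sobolev balls via the wavelet characterization of Definition \ref{def:AnisotropicSpace}, in the spirit of the transference method of \textsection 5.3 in \cite{T}. By Proposition \ref{prop:besovIsSobolev} we may work with the sequence norm
\[
\|f\|_{H^{s,\an}}^2=\sum_{j\ge L_0-1}2^{2sj}\|f_j\|_{L^2(Q)}^2 ,
\]
where $f_j$ is the projection onto $W^{\an}_j$. The first thing I will record is the dimension count $\dim W^{\an}_j\asymp 2^{2j}\cdot 2^{dj}=2^{(d+2)j}$, since the time resolution is $2^{2(j+1)}$ and the space resolution is $2^{d(j+1)}$. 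The exponent $(d+2)/s$ then arises exactly as in the isotropic case on a $(d+2)$-dimensional domain, where the level $j$ block has dimension $2^{(d+2)j}$ and weight $2^{sj}$. I will also use a level-wise $L^\infty$ bound: for $g\in W^{\an}_j$,
\[
\|g\|_\infty\lesssim 2^{(d+2)j/2}\|g\|_{L^2}.
\]
This follows from the localization of the boundary-corrected Daubechies and periodized Meyer wavelets. The Meyer wavelets are not compactly supported, but their periodizations satisfy $\sum_k|\Psi_{l_xk}|\lesssim 2^{dl_x/2}$, and the same kind of bound holds in time. Combining these two facts, the $\ell^2$-coefficient norm at level $j$ controls the sup norm up to the factor $2^{(d+2)j/2}$.

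The covering then proceeds in three steps. First, I truncate at a level $J$. For $f\in B(R)$, the tail satisfies
\[
\Big\|\sum_{j>J}f_j\Big\|_\infty\le \sum_{j>J}2^{(d+2)j/2}2^{-sj}R\lesssim R\,2^{-(s-(d+2)/2)J},
\]
which is summable because $s>(d+2)/2$. This is the same computation that gives Proposition \ref{prop:BesovEmbeddings}.

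Second, I cover each level $j\le J$ separately. The level $j$ block lies in a Euclidean ball of radius $R2^{-sj}$ in dimension $D_j\asymp 2^{(d+2)j}$, and I cover it to sup-accuracy $\epsilon_j$. Using the $L^\infty$ bound above, this amounts to $\ell^2$-accuracy $\epsilon_j2^{-(d+2)j/2}$, which costs $D_j\log(3R2^{-sj}2^{(d+2)j/2}/\epsilon_j)$ in log-entropy. I will allocate the errors as $\epsilon_j=\epsilon\,c\,2^{-\gamma(J-j)}$ with a small $\gamma>0$, so that $\sum_{j\le J}\epsilon_j\lesssim\epsilon$.

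Third, I choose $J$. The natural choice is the one with $R2^{-sJ}2^{(d+2)J/2}\asymp\epsilon$, but that only yields a bound of order $2^{(d+2)J}\log$, with the wrong exponent. To avoid this, I will stop the full-dimensional covering at $J_0$ defined by $2^{J_0}\asymp(R/\epsilon)^{1/s}$, so that $2^{(d+2)J_0}\asymp(R/\epsilon)^{(d+2)/s}$. For levels above $J_0$ I use the standard refinement for Sobolev balls in $L^\infty$ with $s>(d+2)/2$. Levels in $(J_0,J]$ have radius $R2^{-sj}\ll\epsilon$ in $\ell^2$, but they are not negligible in sup norm. For these I will use a more efficient covering: either an $\ell^1$/$\ell^\infty$-type covering of coefficients (Maurey's lemma or Schütt's entropy bounds for $\ell^2\to\ell^\infty$ identity maps), or the bound $\|\cdot\|_\infty$ by the $B^{0,\an}_{\infty,1}$ norm together with known entropy numbers of the embedding $b^{s}_{2,2}\to b^{0}_{\infty,1}$ on sequence spaces.

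Concretely, I plan to invoke the sequence-space entropy estimate of the form
\[
e_k\big(\mathrm{id}:b^{s}_{2,2}\to b^{0}_{\infty,\infty}\big)\lesssim k^{-s/(d+2)},
\]
where the blocks have dimension $2^{(d+2)j}$. This is a standard result from \cite{T}'s transference framework, and it transfers verbatim because it only sees block dimensions, not geometry. Inverting gives $\log N\lesssim (R/\epsilon)^{(d+2)/s}$. The main obstacle is this last step: avoiding the logarithmic loss from the naive levelwise covering, and making sure the sup norm of the synthesized function is controlled by the weighted $\ell^\infty$ coefficient norm. The latter needs the localization bound for the Meyer factor, where there is extra room since $s>(d+2)/2$ allows a slightly smaller target smoothness.
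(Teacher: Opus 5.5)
Your route is the one the paper intends: it cites the transference method of \S 5.3 in \cite{T}, which passes to wavelet coefficients with level-$j$ blocks of dimension $\asymp 2^{(d+2)j}$ and then uses sequence-space entropy numbers. However, the step you commit to at the end does not give the stated bound.

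\textbf{The target space is wrong.} The estimate $e_k(\mathrm{id}: b^{s}_{2,2}\to b^{0}_{\infty,\infty})\lesssim k^{-s/(d+2)}$ controls only $\sup_j 2^{(d+2)j/2}\|\beta_j\|_{\ell^\infty}$. But $\|f\|_\infty$ is bounded by the \emph{sum} $\sum_j 2^{(d+2)j/2}\|\beta_j\|_{\ell^\infty}$, i.e.\ the $B^{0,\an}_{\infty,1}$ norm, and the maximum over levels does not dominate that sum. Your remedy, a positive target smoothness $\delta$ with $b^{\delta}_{\infty,\infty}\hookrightarrow b^{0}_{\infty,1}$, costs exactly $\delta$ in the exponent, since $e_k(b^{s}_{2,2}\to b^{\delta}_{\infty,\infty})\asymp k^{-(s-\delta)/(d+2)}$. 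It yields only $(R/\epsilon)^{(d+2)/(s-\delta)}$, and the sharp exponent is what feeds $J_\ast$ and the rate $\delta_N$ in Proposition \ref{prop:Mestimation}.

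\textbf{Step 2 also loses a logarithm.} Covering a level $j\le J_0$ through $\|g\|_\infty\lesssim 2^{(d+2)j/2}\|g\|_{L^2}$ puts $2^{(d+2)j/2}$ inside the logarithm. Level $J_0$ alone then costs $\asymp 2^{(d+2)J_0}\log(R/\epsilon)$.

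The fix is the alternative you listed and then dropped: take $b^{0}_{\infty,1}$ as the target. It controls $L^\infty$ by Proposition \ref{prop:BesovEmbeddings2}, and its entropy numbers are still $\asymp k^{-s/(d+2)}$ because $s>(d+2)/2$; outside the limiting case the fine indices play no role. By hand, this is your blockwise scheme with two changes. First, cover in the coefficient $\ell^\infty$ norm at every level, using $\|f_j\|_\infty\lesssim 2^{(d+2)j/2}\|\beta_j\|_{\ell^\infty}$ and Sch\"utt's bounds for $\ell_2^m\to\ell_\infty^m$. Second, allocate bits as follows.
For $j\le J_0$, spend $k_j\asymp 2^{(d+2)j}(J_0-j+1)$ bits (with constant exceeding $s$; this is the regime $k\ge m$). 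This gives sup-error $\lesssim\epsilon\,2^{-c(J_0-j)}$.
For $j>J_0$, spend $k_j\asymp 2^{(d+2)J_0}2^{-\gamma(j-J_0)}$ bits, which gives
\[
R\,2^{-(s-\frac{d+2}{2})j}\Big(\frac{\log(1+2^{(d+2)j}/k_j)}{k_j}\Big)^{1/2}\lesssim \epsilon\,\sqrt{j-J_0+1}\;2^{-(s-\frac{d+2}{2}-\frac{\gamma}{2})(j-J_0)}.
\]
This is summable for $0<\gamma<2s-(d+2)$. Beyond $J_1=sJ_0/(s-\frac{d+2}{2})$, spend no bits: the tail is $\lesssim\epsilon$ by your Step 1. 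Below $J_1$, this range of $\gamma$ guarantees $k_j\ge\log 2^{(d+2)j}$, so Sch\"utt's middle regime applies.

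The bits total $\lesssim 2^{(d+2)J_0}\asymp (R/\epsilon)^{(d+2)/s}$, and the errors add in $\ell^1$ over levels, exactly as the $b^{0}_{\infty,1}$ norm requires. This is where $s>(d+2)/2$ genuinely enters, rather than through changing the target smoothness.
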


\subsubsection{Construction of the Estimator}\label{sec:Construction}

We now use anisotropic wavelets to construct an estimator for our regression problem.
\begin{remark}
    Throughout this section, we will assume that $V$ is $1$-dimensional in the setting of \eqref{eq:setting}. In examples such as the Navier-Stokes equations, $V$ is indeed not $1$ dimensional, but in the vector valued case, we can estimate $u_\theta$ coordinate-wise and get the same results as in the real-valued case.
\end{remark}

Throughout this section, we will assume that the forward map $\mathcal G$ satisfies Conditions \ref{con:forwardregularity} and \ref{con:forwardmapping}, for a regularity level $a > (d+2)/2$, and $\theta \in \mathcal R$. We will also write $\mathcal G(\theta) = u_\theta$ for simplicity. Recall $\mathbb L$ from \eqref{eq:leveldef} giving the ``level'' of a wavelet.
Consider minimizing the penalized least squares loss
\begin{align*}
    L_N(\beta) = \frac{1}{N}\sum_{i=1}^T\sum_{j = 1}^n\bigg(Y_{ij} - \sum_{\mathbb L(l)\leq L}\sum_{k \in \mathcal K(l)}\beta_{lk}&\Psi_{lk}^{\an}(t_i,X_{ij})\bigg)^2 + \delta^2_N\|\beta\|^2_{h^a},
\end{align*}
over $\beta = (\beta_{lk})_{\mathbb L(l) = L_0 -1 , k\in \mathcal K(l)}^L$ a real-valued tuple (assumed to be zero for $\mathbb L(l) > L$), and with ellipsoidal Sobolev $h^\alpha$-norms
\[
    \|\beta\|^2_{h^\alpha} = \sum_{L_0 -1 \leq \mathbb L(l)}\sum_{k \in \mathcal K(l)}2^{2\mathbb L(l)\alpha}(\beta_{lk})^2,
\]
where the growth rates of $\delta_N$, $L$ are: $\delta_N \asymp N^{-a/(2a+d+2)}$ and 
\begin{equation}\label{eq:growthrates}
    2^{-L} \asymp N^{-\frac{1}{2a + d + 2}}, \quad \text{ i.e. } 2^{L(d+2)} \asymp N \delta_N^2.
\end{equation}
Because of the penalty, the loss function is strongly convex, so we can define $\hat \beta$ as the unique minimizer of $L_N(\beta)$. Then, define $\hat u = \hat u_{L}$ by
\begin{equation}\label{eq:estimator}
    \hat u_{L}(t, x) = \sum_{L_0 -1\leq \mathbb L(l)\leq L, k}\hat \beta_{lk}\Psi_{lk}^{\an}(t, x), \text{ for }t\in [0, 1], x\in\Omega.
\end{equation}
The projection of the true regression function $u_\theta$ on to the approximation space $V^{\an}_L$ is defined to be $u_{\theta, L}$ as
\begin{equation}\label{eq:projectionoftruth}
    u_{\theta, L} = \sum_{L_0 - 1\leq \mathbb L(l)\leq L, k} \beta_{lk}\Psi_{lk}^{\an}, \text{ where }\beta_{lk} = \langle u, \Psi_{lk}^{\an}\rangle.
\end{equation}
Next, for elements $v, v'\in V^{\an}_L$ with coefficients $\beta^1$ and $\beta^2$ respectively, let us define `empirical' norms
\[
    \|v-v'\|^2_{(N)} = \frac{1}{N}\sum_{i=1}^T\sum_{j=1}^n\left(\sum_{L_0-1\leq \mathbb L(l)\leq L, k}(\beta^{1}_{lk}-\beta_{lk}^{2})\Psi_{lk}^{\an}(t_i, X_{ij})\right)^2
\]
Also note that by Corollary \ref{cor:AnisotropicApprox} that if $\theta \in \mathcal H$, then $u_\theta \in H^{a, a/2}$ and
\begin{equation}\label{eq:biascontrol2}
    \|u_\theta - u_{\theta, L}\|_2 \lesssim 2^{-a L} \asymp \delta_N.
\end{equation}
Now we deduce a result from $M$-estimation:
\begin{proposition}\label{prop:Mestimation}
    In the setting of \eqref{eq:timedependent}, suppose that Conditions \ref{con:forwardregularity} and \ref{con:forwardmapping} hold, with $a > (d+2)/2$, and that Condition \ref{con:variancebound} holds for some constant $C_1$.
    Let $\Theta_N$ be the regularization set from Theorem \ref{thm:smallball} for some $B$ and sequence $\varepsilon_N$, and let $\delta_N$, $\hat u$ be as in \eqref{eq:growthrates} and \eqref{eq:estimator} respectively. Also, suppose that
    \[
        n \lesssim N\delta_N\text{ and } T \gtrsim \delta_N^{-1}, \text{ and } \varepsilon_N \lesssim \delta_N.
    \]
    Then, for any $K>0$ there is a constant $c$ so that for all $\theta\in \Theta_N$, we have
    \[
        \Prob^N_\theta\left( \|\hat u - u_{\theta}\|^2_{(N)} + \delta_N^2\|\hat \beta\|^2_{h^a} > c\delta_N^2\right) \leq e^{-KN\delta_N^2},
    \]
    where the constant $c$ depends only on $B, C_1, K$ and $d$.
\end{proposition}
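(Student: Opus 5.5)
Since $\hat\beta$ minimizes $L_N$, the basic inequality is $L_N(\hat\beta)\le L_N(\beta^\ast)$ with $\beta^\ast$ the coefficients of $u_{\theta,L}$ from \eqref{eq:projectionoftruth}. I would expand $Y_{ij}=u_\theta(t_i,X_{ij})+\epsilon_{ij}$ and write $\|\cdot\|_{(N)}$ also for the empirical norm of $u_\theta$ (not only for elements of $V^{\an}_L$). The standard rearrangement then gives
\begin{equation*}
\|\hat u-u_\theta\|_{(N)}^2+\delta_N^2\|\hat\beta\|_{h^a}^2\le \|u_{\theta,L}-u_\theta\|_{(N)}^2+\delta_N^2\|\beta^\ast\|_{h^a}^2+\frac{2}{N}\sum_{i,j}\epsilon_{ij}(\hat u-u_{\theta,L})(t_i,X_{ij}).
\end{equation*}
So there are three terms to control: the deterministic penalty term, the empirical bias, and the Gaussian multiplier process.

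\textbf{Penalty term.} For $\theta\in\Theta_N$ we have $\|u_\theta\|_{H^{a,a/2}}\lesssim 1$ uniformly. I would check this via Condition \ref{con:forwardmapping}, using $\theta=\theta_1+\theta_2$ with $\|\theta_2\|_{\mathcal H}\le M$, the Lipschitz bound in Condition \ref{con:forwardregularity}, and $\varepsilon_N\lesssim\delta_N$; at worst this costs an additional $O(\delta_N)$ in $L^2$. By Proposition \ref{prop:besovIsSobolev}, $\|\beta^\ast\|_{h^a}\lesssim\|u_\theta\|_{H^{a,a/2}}\lesssim 1$, so the penalty term is $O(\delta_N^2)$.

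\textbf{Bias term.} The population bias is $\|u_\theta-u_{\theta,L}\|_2\lesssim\delta_N$ by \eqref{eq:biascontrol2}. To pass to the empirical norm, I write $\|g\|_{(N)}^2=\frac1T\sum_i\frac1n\sum_j g(t_i,X_{ij})^2$, where $g=u_\theta-u_{\theta,L}$ is bounded in sup norm (Proposition \ref{prop:BesovEmbeddings}). Conditionally on the $t_i$, Bernstein's inequality over the $N$ independent $X_{ij}$ gives deviation $\delta_N^2$ with probability $1-e^{-cN\delta_N^2}$. Replacing the Riemann sum over $t_i$ by the time integral costs $O(T^{-1})\cdot\|\partial_t g^2\|$-type terms, which is $\lesssim\delta_N$ times a bounded quantity. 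This step is where I would use $T\gtrsim\delta_N^{-1}$ and the $H^{6,3}$ bound from Condition \ref{con:forwardmapping}.

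\textbf{Multiplier term (main obstacle).} Here the errors are dependent, so I would write $\epsilon=\Gamma_N^{1/2}g$ with $g$ standard normal. Conditionally on the design, the process $v\mapsto \frac1N\sum\epsilon_{ij}v(t_i,X_{ij})$ over $v\in V^{\an}_L$ is Gaussian with variance at most $\|\Gamma_N\|\,\|v\|_{(N)}^2/N\le C_1\|v\|_{(N)}^2/N$ by Condition \ref{con:variancebound}. I would then run a peeling/chaining argument over the class
\begin{equation*}
\{v\in V^{\an}_L:\|v\|_{(N)}\le r,\ \|v\|_{h^a}\le R\},
\end{equation*}
which has dimension $\asymp 2^{L(d+2)}\asymp N\delta_N^2$ by \eqref{eq:growthrates}. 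Its supremum is $\lesssim r\sqrt{\dim/N}\asymp r\delta_N$. Borell--TIS adds a tail $r\sqrt{C_1 x/N}$; taking $x=KN\delta_N^2$ makes this $\lesssim r\delta_N$ with failure probability $e^{-KN\delta_N^2}$.

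Peeling over dyadic shells of $r$ and of the $h^a$ norm, and absorbing $r\delta_N\le \frac14 r^2+4\delta_N^2$ into the left side, gives the claimed bound with $c$ depending on $B,C_1,K,d$. The condition $n\lesssim N\delta_N$ (so $T$ large) is used only in the time-discretization step of the bias term. The delicate point is making the peeling uniform over the random $\hat u$ while keeping the exponential rate $e^{-KN\delta_N^2}$ despite dependence. This is exactly why only $\|\Gamma_N\|$ enters, through the sub-Gaussian Lipschitz constant in Borell--TIS.
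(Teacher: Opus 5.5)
Your penalty step rests on the claim that $\|u_\theta\|_{H^{a,a/2}}\lesssim 1$ uniformly over $\theta\in\Theta_N$, and this does not follow from the hypotheses. For $\theta=\theta_1+\theta_2\in\Theta_N$, Condition \ref{con:forwardmapping} gives only two bounds: $\|u_\theta\|_{H^{6,3}}\le U$ (from $\|\theta\|_{\mathcal R}\le M$) and $\|u_{\theta_2}\|_{H^{a,a/2}}\le U$ (from $\|\theta_2\|_{\mathcal H}\le M$). The parameter $\theta$ itself lies in no $\mathcal H$-ball. The Lipschitz bound of Condition \ref{con:forwardregularity} controls $u_\theta-u_{\theta_2}$ only in $L^\infty_tL^2_x$, which says nothing about a high-order norm. (In the HRDE application, with $\mathcal R=H^\beta(\Omega)$, Proposition \ref{prop:HRDEReg} only gives $u_\theta\in H^{\beta+1,(\beta+1)/2}$, while $a=\alpha+1>\beta+1$.)

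An $O(\delta_N)$ perturbation in $L^2$ is harmless for the bias, but it does not bound $\|\beta^\ast\|_{h^a}$. So your estimate of $\delta_N^2\|\beta^\ast\|^2_{h^a}$ is unjustified as written. The same objection applies to citing \eqref{eq:biascontrol2} for $u_\theta$, which presupposes $\theta\in\mathcal H$; there, splitting $u_\theta-u_{\theta,L}$ through $u_{\theta_2}$ does repair it.

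The missing idea is the paper's: in the basic inequality, compare against the projection $u_{\theta_2,L}$ of the smooth part rather than $u_{\theta,L}$.
\begin{itemize}
\item The penalty becomes $\delta_N^2\|\beta^{(2)}\|_{h^a}^2\lesssim U^2\delta_N^2$, where $\beta^{(2)}$ is the coefficient vector of $u_{\theta_2,L}$.
\item The bias splits as $\|u_\theta-u_{\theta_2}\|_{(N)}+\|u_{\theta_2}-u_{\theta_2,L}\|_{(N)}$. The first term is $O(\varepsilon_N)$ in $L^\infty_tL^2_x$ by the Lipschitz bound, and the second is $O(\delta_N)$ in $L^2$ by \eqref{eq:biascontrol2}. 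Each is transferred to the empirical norm by Lemma \ref{lem:deterministicMconcentration}.
\end{itemize}
Alternatively, keep $u_{\theta,L}$ and add the inverse estimate $\|\beta^\ast-\beta^{(2)}\|_{h^a}\le 2^{aL}\|u_\theta-u_{\theta_2}\|_{L^2(Q)}\lesssim 2^{aL}\varepsilon_N\lesssim 1$. This uses that $\beta^\ast-\beta^{(2)}$ lives on levels $\le L$, that $2^{aL}\asymp\delta_N^{-1}$, and that $\varepsilon_N\lesssim\delta_N$.

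Two quantitative slips in your bias step also need fixing.
\begin{itemize}
\item A time-discretization error of order $T^{-1}$ times a bounded quantity is $O(\delta_N)$, but the squared empirical norm needs $O(\delta_N^2)$. One fix: note that $\int_0^1\bigl|\tfrac{d}{dt}\|g(t,\cdot)\|^2_{L^2(\Omega)}\bigr|\,dt\le 2\|g\|_{L^2(Q)}\|\partial_tg\|_{L^2(Q)}\lesssim\delta_N$, so the Riemann-sum error is $\lesssim T^{-1}\delta_N\lesssim\delta_N^2$. The other fix is the paper's (Lemma \ref{lem:approximationtheory}): the $t_i$ are midpoints and $t\mapsto\|g(t,\cdot)\|^2_{L^2(\Omega)}$ is $C^2$ by the $H^{6,3}$ bound, which gives $O(T^{-2})$.
\item Bernstein's inequality yields the rate $e^{-KN\delta_N^2}$ only through the variance bound $\Var(g^2)\le\|g\|_\infty^2\,\E g^2\lesssim\delta_N^2$. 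Boundedness of $g$ alone is not enough.
\end{itemize}

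Your multiplier argument, by contrast, is sound and is a genuinely different route. The paper feeds the entropy bound of Corollary \ref{cor:metricentropy} into a van de Geer-type theorem (Theorem \ref{thm:Mestimation2}) and localizes via the penalty. You instead exploit that the sieve $V_L^{\an}$ has dimension $\asymp N\delta_N^2$.

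You can even drop the peeling. Conditionally on the design, $\sup_{0\neq v\in V_L^{\an}}\frac{1}{N}\sum_{i,j}\epsilon_{ij}v(t_i,X_{ij})/\|v\|_{(N)}=N^{-1/2}|P\epsilon|$, where $P$ is the orthogonal projection onto the column space of the design matrix. Then $\E|P\epsilon|^2\le\|\Gamma_N\|\dim V_L^{\an}$, and Gaussian concentration with Lipschitz constant $\|\Gamma_N\|^{1/2}$ gives $N^{-1/2}|P\epsilon|\le C\delta_N$ with probability at least $1-e^{-KN\delta_N^2}$. This holds uniformly over the random $\hat u$.
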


\subsubsection{Proof of Theorem \ref{thm:TestsExist}}
Our goal is to leverage Proposition \ref{prop:Mestimation} to demonstrate the existence of tests in the $\|\mathcal G(\theta) - \mathcal G(\vartheta)\|_{L^2}$ metric on $\Theta$.
To do this, we need to show the empirical norm for our observation scheme concentrates around the $L^2(Q)$ norm. We cannot use Lemma \ref{lem:deterministicMconcentration} as in the proof of Proposition \ref{prop:Mestimation} on $\hat u$ as it depends on the $X_i$, and hence Bernstein's inequality does not immediately apply. 

\begin{proposition}\label{prop:Mconcentration}
    Let $K\in \N$ be some fixed positive constant. Assume the set-up and conditions of Proposition \ref{prop:Mestimation} hold with $a \geq 6$, and 
    \[
        n \gtrsim N^{\frac{2d+3}{2a + d + 2}} \quad \text{ and }\quad T \lesssim N^{\frac{2a - d - 1}{2a + d +2}}, 
    \]
    as well as that $\theta \in \Theta_N$ and $\varepsilon_N \lesssim \delta_N$. Then there exists some threshold $N_0 \in \N$ so that for $N \geq N_0$,
    \[
        \Prob_{\theta}^{N}(\| \hat u - u_{\theta, L} \|^2_{L^2([0, 1]\times \Omega)} \leq 2\| \hat u - u_{\theta, L}\|^2_{(N)} + \frac{c}{T^2} ) \geq 1 - e^{-KN\delta_N^2}
    \]
    where $N_0 = N_0(K, a, d, B)$, and $c$ depends only on $B$, $C_1, K$, and $d$.
\end{proposition}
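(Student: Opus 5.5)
The plan is to reduce the statement to a uniform concentration of the empirical norm over a small class of functions, and to handle the gap between the time grid and the continuous time integral by a quadrature argument in time. Write $w = \hat u - u_{\theta,L}\in V^{\an}_L$. By Proposition \ref{prop:Mestimation}, outside an event of probability $e^{-KN\delta_N^2}$ we have $\delta_N^2\|\hat\beta\|_{h^a}^2\le c\delta_N^2$, i.e. $\|\hat\beta\|_{h^a}\lesssim 1$. The wavelet characterization (Proposition \ref{prop:besovIsSobolev}) then gives $\|\hat u\|_{H^{a,a/2}}\lesssim 1$. Also $\|u_{\theta,L}\|_{H^{a,a/2}}\le \|u_\theta\|_{H^{a,a/2}}\lesssim U$; for $\theta\in\Theta_N$ this follows from Condition \ref{con:forwardmapping} combined with the forward regularity bounds, at least up to bias terms of size $\delta_N$. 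Hence on this event $w$ lies in a fixed ball $\mathcal F = B_{H^{a,a/2}}(R)\cap V^{\an}_L$, and this ball does not depend on the data. The randomness of $\hat u$ is thereby removed: it suffices to show
\[
\sup_{w\in\mathcal F}\Big(\|w\|_{L^2(Q)}^2 - 2\|w\|_{(N)}^2\Big)\le c/T^2
\]
with probability $1-e^{-KN\delta_N^2}$.

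I split $\|w\|_{L^2(Q)}^2 = \int_0^1\|w(t)\|_{L^2(\Omega)}^2dt$ into two pieces.

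\textbf{(i) Time quadrature.} The $t_i$ are the midpoints of a uniform grid, and $g(t)=\|w(t,\cdot)\|^2_{L^2(\Omega)}$ is smooth in $t$. Since $a\ge 6$, Proposition \ref{prop:besovIsSobolev} gives $w\in H^3_tL^2_x$ (indeed $H^{a/2}$ in time), so $g''$ is bounded by $C R^2$. The midpoint rule therefore gives
\[
\Big|\int_0^1 g - \tfrac1T\sum_i g(t_i)\Big|\lesssim T^{-2}.
\]
This is the source of the $c/T^2$ term.

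\textbf{(ii) Spatial concentration.} Conditionally on nothing (the $t_i$ are deterministic), the quantities $\|w\|_{(N)}^2 = \frac1N\sum_{i,j}w(t_i,X_{ij})^2$ are i.i.d. in $j$ averages. Their mean is $\frac1T\sum_i g(t_i)$. I want, uniformly over $\mathcal F$,
\[
\tfrac1T\sum_i g(t_i)\le 2\|w\|_{(N)}^2 + \text{(small)}.
\]
For a single $w$, I apply Bernstein's inequality (Lemma \ref{lem:deterministicMconcentration}) to $Z_j=\frac1T\sum_i w(t_i,X_{ij})^2$, using $n$ independent copies. The bounds are $\|Z_j\|_\infty\le\|w\|_\infty^2$ and $\Var Z_j\le\|w\|_\infty^2\,\E Z_j$. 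The standard ratio (peeling) argument then gives $\E Z\le 2\bar Z + C\|w\|_\infty^2 x/n$ with probability $1-e^{-x}$.

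Uniformity comes from a union bound over an $L^\infty$-net of $\mathcal F$, using Corollary \ref{cor:metricentropy} with entropy $(R/\epsilon)^{(d+2)/a}$. Because $\mathcal F\subset V^{\an}_L$, the dimension is $2^{L(d+2)}\asymp N\delta_N^2$. This allows taking $x\asymp KN\delta_N^2$ plus the entropy, which is of order $N\delta_N^2$. The net discretization error is $\epsilon$ at scale $\epsilon=T^{-2}$; its log-entropy must stay $\lesssim N\delta_N^2$, and this is exactly where $T\lesssim N^{(2a-d-1)/(2a+d+2)}$ enters.

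\textbf{Main obstacle.} The hard step is the remainder term $C\|w\|_\infty^2\,N\delta_N^2/n$, which must be $\lesssim T^{-2}$. Proposition \ref{prop:BesovEmbeddings} gives $\|w\|_\infty\lesssim R$, and crudely this needs $n\gtrsim N\delta_N^2T^2$, which is too strong. I would first refine the sup-norm using $w\in V^{\an}_L$ and an interpolation bound of the form $\|w\|_\infty\lesssim 2^{L(d+2)/2}\|w\|_{L^2}$ at low levels. I would then balance these estimates against the assumed lower bound $n\gtrsim N^{(2d+3)/(2a+d+2)}$ and the upper bound on $T$ to get the $T^{-2}$ remainder; checking these exponent conditions is where $a\ge 6$ should be used. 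Finally, combining (i) and (ii) and absorbing constants for $N\ge N_0$ gives the claim.
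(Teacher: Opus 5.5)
Your midpoint-rule step (i), carried out on the event $\|\hat\beta\|_{h^a}\lesssim 1$ from Proposition \ref{prop:Mestimation}, is exactly what the paper does. Step (ii), however, has a genuine gap, and it sits precisely at the point you flag as the main obstacle.

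\textbf{Why step (ii) fails as set up.} An additive Bernstein bound over a fixed-radius ball cannot deliver a $T^{-2}$ error. The remainder it produces is of order $R^2x/n$ with $x\gtrsim N\delta_N^2$. Since $N=nT$, this is $R^2T\delta_N^2$. The standing hypothesis $T\gtrsim\delta_N^{-1}$ then gives $T^3\delta_N^2\gtrsim T\to\infty$, so the remainder is never $\lesssim T^{-2}$. Even Bernstein over all $N$ independent summands only gives $R^2\delta_N^2$, which is $\lesssim T^{-2}$ only when $T\asymp\delta_N^{-1}$.

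Your union bound also does not close. At scale $T^{-2}$, Corollary \ref{cor:metricentropy} gives $(RT^2)^{(d+2)/a}\gtrsim\delta_N^{-2(d+2)/a}=(N\delta_N^2)^2\gg N\delta_N^2$. So the upper bound on $T$ is not what governs this step. In addition, for $\theta\in\Theta_N$ you only know $u_\theta\in B_{H^{6,3}}(U)$, not an $H^{a,a/2}$ bound, so the class $\mathcal F$ itself is not justified. The $H^{6,3}$ bound does suffice for the quadrature step.

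\textbf{The missing idea: homogeneity, one time slice at a time.}
\begin{enumerate}
\item For fixed $t_i$, the function $w(t_i,\cdot)$ lies in the spatial scaling space $\mathbf V_{L+1}$, whose dimension is $2^{(L+1)d}\asymp N^{d/(2a+d+2)}$.
\item The ratio $\frac1n\sum_j w(t_i,X_{ij})^2/\|w(t_i,\cdot)\|^2_{L^2(\Omega)}$ is scale invariant, so no Sobolev ball is needed. A bound uniform over all of $\mathbf V_{L+1}$ is equivalent to $\|\hat\Gamma-I\|\le 1/2$ for the empirical Gram matrix $\hat\Gamma_{kk'}=\frac1n\sum_j\Phi_{Lk}(X_{ij})\Phi_{Lk'}(X_{ij})$.
\item This follows from a net of the unit sphere at a \emph{fixed} scale, of log-size $\lesssim 2^{Ld}$, together with Bernstein and the bound $\|\sum_k v_k\Phi_{Lk}\|_\infty^2\lesssim 2^{Ld}$ for unit $v$. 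The failure probability is $e^{c_12^{Ld}-c_0n2^{-Ld}}$; this is Lemma \ref{lem:spatialconcentration}.
\item Because $N=nT$, your two hypotheses $n\gtrsim N^{(2d+3)/(2a+d+2)}$ and $T\lesssim N^{(2a-d-1)/(2a+d+2)}$ are one and the same condition. It is exactly what gives $n2^{-Ld}\gtrsim N^{(d+3)/(2a+d+2)}$, which dominates both $2^{Ld}$ and $N\delta_N^2$.
\item A union bound over the $T$ slices then yields $\frac1T\sum_i\|w(t_i,\cdot)\|^2_{L^2(\Omega)}\le 2\|w\|^2_{(N)}$ with \emph{no} additive remainder.
\end{enumerate}
The only additive error is then the $c/T^2$ from your step (i). That is where $a\ge 6$ enters, via $H^{6,3}\subset H^3_tL^2_x$, not through exponent balancing. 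Your proposed sup-norm refinement does point in this direction, since it turns the remainder into a multiple of $\|w\|^2$. To exploit it, though, you must replace the fixed-radius ball and the scale-$T^{-2}$ net by a normalized net on the linear space, which is the argument above.
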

\begin{remark}\label{rmk:whyscheme}
    Lemma \ref{lem:deterministicMconcentration} and Proposition \ref{prop:Mconcentration} are the reason why we specialize to the case of \eqref{eq:timedependent}, and any other specific choice of design in \eqref{eq:setting} will work provided similar facts can be established. 
    
    For the proof see the Appendix, \textsection \ref{sec:MconcentrationSupp}. With this, we can now construct tests.
\end{remark}

\begin{proof}[Proof of Theorem \ref{thm:TestsExist}]
    The tests we consider are
    \[
        \phi_N = \mathbbm{1}_{\{\|\hat u - u_{\theta_0}\|^2_{L^2(Q)}> E\delta_N^2 \text{ or } \|\hat \beta\|^2_{h^a} > c\}}
    \]
    for $c$ and $E$ appropriately chosen and large enough.
    For the type one and two error, we can apply Proposition \ref{prop:Mestimation} and then Proposition \ref{prop:Mconcentration} (noting that $\varepsilon_N \lesssim \delta_N$) and \eqref{eq:biascontrol2}.
\end{proof}

\section{Applications to PDEs}\label{sec:applications}
Proofs for this section can be found in \textsection \ref{sec:ProofsforApplications}.
\subsection{Heterogeneous Reaction Diffusion Equation}\label{sec:HRDE}
Throughout this section, we are in the setting of \eqref{eq:introRDE} with $d \leq 3$. In this case, we have
\begin{proposition}\label{prop:HRDEReg}
    Assume $D\in C^1(\Omega)$. For $\varphi \in H^1(\Omega)$, $D \in H^s(\Omega)$, $f \in C_c^\infty(\R)$ and $d \leq 3$, there exists a unique solution $u$ of \eqref{eq:introRDE}. Moreover, suppose that $\|\varphi\|_{H^\gamma}, \|D\|_{H^{\gamma}}\leq B$ for $0\leq \gamma \leq s$. Then we have
    \[
        u \in H^{\gamma + 1, (\gamma+1)/2}, \quad \text{ with }\quad \|u\|_{H^{\gamma + 1, (\gamma +1)/2}}\leq C
    \]
    where $C = C(B, f, \gamma, d, D_{\min}, \|D\|_{C^1}, S)$.
\end{proposition}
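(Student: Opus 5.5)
We will establish existence and uniqueness first at a low regularity level and then bootstrap to the stated parabolic Sobolev bounds.

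\textbf{Existence and uniqueness.} Since $f\in C^\infty_c(\R)$, the nonlinearity $u\mapsto f(u)$ is globally Lipschitz on $L^2(\Omega)$ and uniformly bounded by $\|f\|_\infty$. Moreover, $D\ge D_{\min}>0$ with $D\in C^1$ makes the operator $-\nabla\cdot(D\nabla\,\cdot)$ uniformly elliptic on $\mathbb T^d$, generating an analytic semigroup on $L^2$.

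We will first prove existence of a weak solution $u\in L^2_tH^1_x\cap H^1_t(H^1)^\ast$ by a Galerkin scheme in Fourier modes. The energy identity gives
\[\tfrac12\tfrac{d}{dt}\|u\|_2^2+D_{\min}\|\nabla u\|_2^2\le \|f\|_\infty\|u\|_1 .\]
Testing with $-\Delta u$ is also possible when $\varphi\in H^1$. Here $\nabla\cdot(D\nabla u)=D\Delta u+\nabla D\cdot\nabla u$, and the lower-order term is controlled by $\|D\|_{C^1}$, which gives $u\in L^\infty_tH^1_x\cap L^2_tH^2_x\cap H^1_tL^2_x$, i.e.\ $u\in H^{2,1}$ with a constant depending on $B,f,D_{\min},\|D\|_{C^1}$. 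Uniqueness follows by subtracting two solutions, testing with the difference and applying Gronwall with the Lipschitz constant of $f$. A fixed-point argument via Duhamel's formula for the linear semigroup is an alternative, but Galerkin plus the energy estimates seems simplest.

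\textbf{Higher regularity.} We then bootstrap inductively in $\gamma$. Given $u\in H^{\gamma,\gamma/2}$ with a bound, we view $u$ as the solution of the linear problem
\[\partial_t u-\nabla\cdot(D\nabla u)=F,\qquad F:=f(u),\]
and apply maximal parabolic regularity for divergence-form operators with $H^\gamma$ coefficients (Lions--Magenes, \cite{LM2}). This gives
\[\|u\|_{H^{\gamma+1,(\gamma+1)/2}}\lesssim \|F\|_{H^{\gamma-1,(\gamma-1)/2}}+\|\varphi\|_{H^\gamma},\]
where the implicit constant depends on $\|D\|_{H^\gamma}$ and $\|D\|_{C^1}$. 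Concretely, we commute $\gamma$ spatial derivatives through the equation and estimate the commutators $[\partial^\beta,D]\nabla u$ with Kato--Ponce/Moser product estimates. The restriction $d\le3$ enters here: $H^{2}(\Omega)\hookrightarrow L^\infty$ and $H^{2,1}\hookrightarrow C(Q)$ by Proposition \ref{prop:BesovEmbeddings} when $2>(d+2)/2$ fails only marginally, so we use the $L^\infty_tH^1_x\cap L^2_tH^2_x$ bound together with Sobolev multiplication in $d\le3$. The composition $f(u)$ is handled by Moser-type estimates $\|f(u)\|_{H^{k}}\le C(\|f\|_{C^{k}},\|u\|_\infty)(1+\|u\|_{H^k})$. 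Fractional $\gamma$ is obtained by interpolating between consecutive integers using Proposition \ref{prop:besovIsSobolev}.

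\textbf{Main obstacle.} We expect the main obstacle to be the commutator and coefficient terms at high $\gamma$ when $D$ is only in $H^\gamma$ rather than $C^\gamma$. Here we need $\|D\nabla u\|_{H^{\gamma}}\lesssim \|D\|_{C^1}\|u\|_{H^{\gamma+1}}+\|D\|_{H^\gamma}\|\nabla u\|_{L^\infty}$. The uniform bound on $\nabla u$ must itself come from an earlier step of the bootstrap, using $d\le3$ and the embedding of Proposition \ref{prop:BesovEmbeddings}. A related difficulty is the compatibility of the time regularity with the initial datum, which restricts the attainable $\gamma$ to the regularity of $\varphi$, consistent with the hypothesis $\|\varphi\|_{H^\gamma}\le B$.
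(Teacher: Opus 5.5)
Your outline is sound, but it follows a different route from the paper's at each stage.

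\textbf{Existence and uniqueness.} The paper (Theorem \ref{thm:strongsolsHRDE}) does not use Galerkin. It runs a contraction on $C([0,S],L^2(\Omega))$, sending $u$ to the strong solution of the linear problem with source $f(u)$ given by Proposition \ref{prop:LinearParabolicRegularity}. Your Galerkin/energy argument with Gronwall is equally valid, and it gives the $H^{2,1}$ base case directly from $D\in C^1$.

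\textbf{The bootstrap.} This is where the routes really differ. The paper (Proposition \ref{prop:VVSpatialReg}) tests the equation against $T_D^{\gamma}u$, with $T_D=-\nabla\cdot(D\nabla\,\cdot)$. The diffusion term then yields exactly $\|T_D^{(\gamma+1)/2}u\|_{L^2}^2$, so no commutator ever appears. All the roughness of $D$ is hidden in the norm equivalence $\bar H^k_D=\dot H^k$ of Proposition \ref{prop:EigenSobolevChar}, which is stated without proof and requires $D\in H^s$ with $s>d$. For the same reason the Galerkin scheme must be run in the eigenbasis of $T_D$ rather than in Fourier modes. Time regularity is handled separately: the paper differentiates the equation in $t$ (no commutator arises, since $D$ is time independent), applies Fa\`a di Bruno to $\partial_t^{k+1}f(u)$, and concludes with Proposition \ref{prop:besovIsSobolev}.

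Your Fourier/Kato--Ponce route is more self-contained. It tracks the dependence on $D$ through $\|D\|_{H^\gamma}$ and $\|D\|_{C^1}$, which is what the constant in the statement asserts. The price is commutator bookkeeping and the need for an $L^\infty$ bound on $\nabla u$. The paper's energy identity is exact at every spatial order, fractional ones included, but the argument is only as complete as Proposition \ref{prop:EigenSobolevChar}.

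Two steps of your plan need fixing.

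\textbf{The bound on $\nabla u$ at low levels.} $\|\nabla u\|_{L^\infty}$ does not come from the previous level when $\gamma$ is small. From $u\in H^{\gamma,\gamma/2}$, Lemma \ref{lem:rieszBlocks} and Proposition \ref{prop:BesovEmbeddings} give $\nabla u\in C(Q)$ only for $\gamma>2+d/2$. Likewise, $u\in L^2_tH^\gamma_x$ gives $\nabla u\in L^2_tL^\infty_x$ (which is all the time-integrated estimate needs) only for $\gamma>1+d/2$. Below that threshold, interpolate $\|\nabla u\|_{L^\infty}\lesssim\|u\|_{H^1}^{1-\vartheta}\|u\|_{H^{\gamma+1}}^{\vartheta}$ with $\vartheta<1$, which is possible when $\gamma>d/2$. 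Then absorb the resulting $\|u\|_{H^{\gamma+1}}^{1+\vartheta}$ into the dissipation by Young's inequality.

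Your remark that $H^{2,1}\hookrightarrow C(Q)$ fails ``only marginally'' is wrong for $d=3$, since $2<5/2$, but nothing depends on it. The reaction $f\in C^\infty_c$ has bounded derivatives, and the first-order composition estimate needs no $L^\infty$ control of $u$. Moreover, $L^\infty_tH^2_x\hookrightarrow L^\infty$ becomes available once you reach $H^{3,3/2}$.

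\textbf{Fractional $\gamma$.} You cannot reach fractional $\gamma$ by interpolating between consecutive integers. Even after freezing the source $F=f(u)$ to make the problem linear, you would need the estimate at level $\lceil\gamma\rceil$. That requires $D\in H^{\lceil\gamma\rceil}$, whereas only $\|D\|_{H^\gamma}\le B$ is assumed. Run the $\Lambda^\gamma$ commutator estimate directly at fractional order instead. Similarly, Lions--Magenes maximal regularity is proved for smooth coefficients. With $D\in H^\gamma$, the linear estimate must therefore come from your own commutator argument, and the time derivatives from differentiating in $t$ as in the paper.
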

\noindent
See Appendix \ref{sec:PDEReg}, Theorem \ref{thm:strongsolsHRDE}.
This proposition justifies defining the forward map $\mathcal G$ as in \eqref{eq:HRDEforwardmap}, with the parameter space taken to be 
\begin{equation}
    \Theta = \{ D\in C^1(\Omega) : \inf_{x\in \Omega} D(x) \geq D_{\min}\}, 
\end{equation}
where $D_{\min}$ is as in \eqref{eq:introRDE}, and we will consider regularization spaces $\mathcal R = H^\beta(\Omega)$ for $\beta$ large enough.

\textbf{Differentiation of the forward map.} As in \cite{CN}, to prove forward and backwards stability, we will compute the derivative of the forward map in a suitable space. To achieve this, we will first define the auxiliary map
\begin{equation}\label{eq:levelsetmap}
\begin{split}
    G: C^1(\Omega) \times F &\to L^2(Q)\times H^1(\Omega)\\
    (D, u) &\mapsto \left(\frac{\partial u}{\partial t} - \nabla \cdot (D\nabla u) - f(u),u(0) - \varphi \right), 
\end{split}
\end{equation}
where $F = L^2([0, 1], H^2(\Omega)) \cap H^1([0, 1], L^2(\Omega))$, with the usual norm. We note also by Theorem 4, \textsection 5.9 in \cite{Evans} that  $F \hookrightarrow C([0, 1], H^1(\Omega))$, so the second component of \eqref{eq:levelsetmap} is valid.
\begin{lemma}\label{lem:levelsetmap}
    The map $G$ from \eqref{eq:levelsetmap} is differentiable in the Fr\'echet sense, with derivative
    \[
        DG(D, u)[h, v] = \left(\frac{\partial v}{\partial t} - \nabla \cdot (D\nabla v) - \nabla \cdot(h\nabla u) - f'(u)v, v(0)\right).
    \]
\end{lemma}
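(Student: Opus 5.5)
The plan is to write the increment of $G$ as a linear part plus a remainder and show the remainder is $o(\|h\|_{C^1}+\|v\|_F)$. Fix $(D,u)\in C^1(\Omega)\times F$ and a perturbation $(h,v)\in C^1(\Omega)\times F$. The second component $u\mapsto u(0)-\varphi$ is affine. It is continuous from $F$ into $H^1(\Omega)$ by the embedding $F\hookrightarrow C([0,1],H^1(\Omega))$ quoted above, so its derivative is $v\mapsto v(0)$ with zero remainder. In the first component, $\partial_t u$ is linear and bounded $F\to L^2(Q)$. The term $(D,u)\mapsto \nabla\cdot(D\nabla u)$ is bilinear, and expanding gives
\[
\nabla\cdot((D+h)\nabla(u+v)) - \nabla\cdot(D\nabla u) = \nabla\cdot(D\nabla v)+\nabla\cdot(h\nabla u) + \nabla\cdot(h\nabla v).
\]
It therefore suffices to prove boundedness of the bilinear form, namely
\[
\|\nabla\cdot(h\nabla w)\|_{L^2(Q)} \le \|\nabla h\cdot\nabla w\|_{L^2(Q)} + \|h\Delta w\|_{L^2(Q)} \lesssim \|h\|_{C^1}\|w\|_{L^2_tH^2_x}\le \|h\|_{C^1}\|w\|_F.
\]
This shows both that the linear terms are bounded and that the cross term is $O(\|h\|_{C^1}\|v\|_F)=o(\|(h,v)\|)$.

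The main work is the Nemytskii term $u\mapsto f(u)$ as a map $F\to L^2(Q)$. Since $f\in C_c^\infty(\R)$, the functions $f'$ and $f''$ are bounded. Taylor's theorem gives, pointwise,
\[
|f(u+v)-f(u)-f'(u)v| \le \tfrac12\|f''\|_\infty |v|^2,
\]
so the remainder is bounded by $C\|v^2\|_{L^2(Q)}=C\|v\|_{L^4(Q)}^2$. We then need $F\hookrightarrow L^4(Q)$. For $d\le 3$, Sobolev embedding gives $H^1(\Omega)\hookrightarrow L^6(\Omega)\subseteq L^4(\Omega)$, so
\[
\|v\|_{L^4(Q)} \le \|v\|_{L^\infty_tL^4_x}\lesssim \|v\|_{C([0,1],H^1)}\lesssim\|v\|_F.
\]
Hence the remainder is $O(\|v\|_F^2)$. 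The candidate linear term $v\mapsto f'(u)v$ is bounded on $L^2(Q)$ because $f'$ is bounded.

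Adding the pieces, the total remainder is $O\big(\|h\|_{C^1}\|v\|_F+\|v\|_F^2\big)$, which is $o(\|h\|_{C^1}+\|v\|_F)$. The candidate derivative is bounded and linear, so it is the Fréchet derivative. The expected obstacle is only the Nemytskii estimate, that is, confirming that the quadratic remainder is controlled in $L^2(Q)$. This is exactly where $d\le3$ and the embedding $F\hookrightarrow C([0,1],H^1(\Omega))$ are used. If one wanted to avoid Sobolev embedding, $F\hookrightarrow L^\infty(Q)$ would not be available for $d=3$, so the $L^4$ route is the one to take.
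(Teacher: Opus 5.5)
Your proposal is correct and follows the paper's proof essentially step for step. It isolates the same remainder $\bigl(-\nabla\cdot(h\nabla v)-(f(u+v)-f(u)-f'(u)v),\,0\bigr)$, bounds the cross term by $\|h\|_{C^1}\|v\|_F$ via the product rule, and controls the Nemytskii remainder by Taylor's theorem with $H^1(\Omega)\hookrightarrow L^4(\Omega)$ and $F\hookrightarrow C([0,1],H^1(\Omega))$; the paper notes that this embedding already holds for $d\le 4$.
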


\noindent
See section \textsection \ref{sec:levelsetmap} for the proof. Note that the subset 
\begin{equation}\label{eq:defofU}
    \mathcal U = \{ D \in C^1 : \inf_{x \in \Omega} D(x) > D_{\min}/2 \}
\end{equation}
is an open subset of $C^1$. Also, by Theorem \ref{thm:strongsolsHRDE}, on this subset, $\mathcal G$ is the unique function such that
\[
    G(D, \mathcal G(D)) = 0.
\]
Finally, we know that the map
\[
    D_2G(D, \mathcal G(D))[v] = DG(D, \mathcal G(D))[0, v] = \left(\frac{\partial v}{\partial t} - \nabla \cdot(D\nabla v)-f'(u)v, v(0)\right)
\]
is a linear isomorphism from $F \to L^2(Q)\times H^1(\Omega)$ by Proposition \ref{prop:LinearParabolicRegularity}. Thus, by Theorem B.1 in \cite{CN} we have

\begin{proposition}
    Let $d\leq 3$. For $\mathcal U$ as in \eqref{eq:defofU} and $F$ from \eqref{eq:levelsetmap}, the map $\mathcal G: \mathcal U\to F$ defined as in \eqref{eq:HRDEforwardmap} is $C^1$ in the Fr\'echet sense, Moreover, for any $D \in \mathcal U$, its Fr\'echet derivative at $D$ is given by
    \begin{align*}
        D\mathcal G(D) : C^1(\Omega) &\to F \\
        h &\mapsto i_h
    \end{align*}
    where $i_h \in F$ is the unique strong solution to the linear parabolic PDE
    \[
        \begin{cases}
            \partial_t i_h - \nabla \cdot (D\nabla i_h) - f'(u_D)i_h &= -\nabla\cdot(h\nabla u_D) \text{ on } [0, 1]\times \Omega \\
            i_h(0)&=0\quad \text{ on } \Omega.
        \end{cases}
    \]
\end{proposition}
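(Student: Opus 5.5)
The plan is to apply the implicit function theorem in Banach spaces (Theorem B.1 in \cite{CN}) to the map $G$ from \eqref{eq:levelsetmap}, restricted to $\mathcal U\times F$. Three hypotheses have to be checked.

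First, $G$ must be $C^1$ on $\mathcal U\times F$. Lemma \ref{lem:levelsetmap} gives Fr\'echet differentiability, so it remains to show that $(D,u)\mapsto DG(D,u)$ is continuous into $\mathcal L(C^1\times F, L^2(Q)\times H^1)$. The terms that are linear in $(D,u)$ or bilinear cause no trouble:
\begin{itemize}
\item $\|\nabla\cdot(D\nabla v)\|_{L^2(Q)}\lesssim \|D\|_{C^1}\|v\|_{L^2_tH^2_x}$;
\item $\|\nabla\cdot(h\nabla u)\|_{L^2(Q)}\lesssim\|h\|_{C^1}\|u\|_{F}$.
\end{itemize}
The only genuinely nonlinear term is $f'(u)v$. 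For it we use
\[
\|(f'(u_1)-f'(u_2))v\|_{L^2(Q)}\le \|f''\|_\infty \|u_1-u_2\|_{L^\infty_t L^4_x}\|v\|_{L^\infty_tL^4_x},
\]
together with the embeddings $F\hookrightarrow C([0,1],H^1)$ and $H^1\hookrightarrow L^4$, which hold since $d\le3$. This gives continuity of the derivative.

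Second, we need $G(D,\mathcal G(D))=0$ with $\mathcal G(D)$ the unique zero in $F$. This follows from Theorem \ref{thm:strongsolsHRDE}: for $D\in\mathcal U$ the constant $D_{\min}/2$ plays the role of the ellipticity lower bound, and the solution is unique in $F$.

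Third, the partial derivative $D_2G(D,\mathcal G(D))$ must be a linear isomorphism $F\to L^2(Q)\times H^1(\Omega)$. The text preceding the statement already records this, via Proposition \ref{prop:LinearParabolicRegularity}. The zeroth-order coefficient $f'(u_D)$ is bounded because $f\in C_c^\infty$, so maximal parabolic regularity applies.

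With these in hand, the implicit function theorem yields, locally near each $D_0\in\mathcal U$, a $C^1$ map $\tilde{\mathcal G}$ with $G(D,\tilde{\mathcal G}(D))=0$. Uniqueness of zeros identifies $\tilde{\mathcal G}=\mathcal G$ on a neighbourhood. Differentiating $G(D,\mathcal G(D))=0$ gives
\[
D\mathcal G(D)h=-\big[D_2G\big]^{-1}D_1G[h]=-\big[D_2G\big]^{-1}\big(-\nabla\cdot(h\nabla u_D),0\big),
\]
and by Lemma \ref{lem:levelsetmap} this is exactly the stated equation for $i_h$: the right-hand side is $-\nabla\cdot(h\nabla u_D)$ and the initial datum is zero. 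The resulting $C^1$ property is local at each point, hence holds on all of $\mathcal U$.

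The main obstacle is the continuity of $u\mapsto f'(u)$ as a multiplier $F\to L^2(Q)$, since this is where the restriction $d\le3$ enters. If the $L^4$ route turns out to be too tight, the fallback is $L^\infty_tL^6_x\times L^2_tL^3_x$ Hölder, using $H^1\hookrightarrow L^6$ and $H^2\hookrightarrow L^\infty$.
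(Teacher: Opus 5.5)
Your route is the paper's: the implicit function theorem (Theorem B.1 in \cite{CN}) applied to $G$ on $\mathcal U\times F$, with differentiability from Lemma \ref{lem:levelsetmap}, uniqueness of zeros from Theorem \ref{thm:strongsolsHRDE}, and invertibility of $D_2G(D,\mathcal G(D))$ from Proposition \ref{prop:LinearParabolicRegularity}. Your explicit check that $(D,u)\mapsto DG(D,u)$ is continuous in operator norm is a genuine improvement. Lemma \ref{lem:levelsetmap} only gives Fr\'echet differentiability, and the paper leaves the $C^1$ hypothesis of the implicit function theorem unverified. Your three estimates in fact show the derivative is Lipschitz.

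There is one slip in your last step: the two minus signs cancel. Your own display gives
\[
D\mathcal G(D)h=-\big[D_2G\big]^{-1}\big(-\nabla\cdot(h\nabla u_D),0\big)=\big[D_2G\big]^{-1}\big(\nabla\cdot(h\nabla u_D),0\big),
\]
so $i_h$ solves the linearized equation with forcing $+\nabla\cdot(h\nabla u_D)$, not $-\nabla\cdot(h\nabla u_D)$.

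The plus sign is the correct derivative. Take $f\equiv 0$, $D\equiv 1$ and $h\equiv c$ constant. Differentiating $\partial_t u=(1+\epsilon c)\Delta u$ in $\epsilon$ gives $\partial_t i-\Delta i=c\Delta u_D=\nabla\cdot(h\nabla u_D)$. So the minus sign in the printed statement is a typo, and your argument proves the corrected version. You should not assert that it reproduces the equation as stated. The sign is harmless downstream, since Propositions \ref{prop:HRDEForward} and \ref{thm:HRDEBack} only use norms.
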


We can now derive forward and backward stability estimates.

\noindent \textbf{Forwards Regularity.} Recall that $Q = [0, 1]\times \Omega$. We know from Proposition \ref{prop:HRDEReg} that for $\beta > d/2 + 1$ and $M > 0$, there exists some $U$ such that
\[
    \sup_{D \in \Theta \cap B_{H^\beta(\Omega)}(M)} \|\mathcal G(D)\|_{L^\infty(Q)} \leq U, 
\]
which verifies the first part of Condition \ref{con:forwardregularity}, provided that the regularization space controls the $H^\beta(\Omega)$ norm. (Note that we have also used Proposition \ref{prop:BesovEmbeddings}). Condition \ref{con:forwardmapping} also holds by Proposition \ref{prop:HRDEReg} provided $\beta \geq 5$.

\begin{proposition}[Forward Regularity for HRDE]\label{prop:HRDEForward}
    Suppose that $\beta > d/2 +1$ and $\mathcal R = H^\beta(\Omega)$. Then, for all $M>0$, there exists constants $U$ and $L$ depending only on $M, D_{\min}, f, \varphi$, and $d$ such that
    \begin{equation*}
        \sup_{D\in \Theta \cap B_{H^\beta(\Omega)}(M)} \|\mathcal G(D)\|_{L^\infty(Q)} \leq U, 
    \end{equation*}
    and
    \begin{equation*}
        \|\mathcal G(D_1) - \mathcal G(D_2)\|_{L^\infty_t L^2_x} \leq L\|D_1 - D_2\|_{L^2} \, \text{ for all }D_1, D_2\in \Theta \cap B_{\mathcal R}(M).
    \end{equation*}
\end{proposition}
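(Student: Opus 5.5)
The first claim, the uniform $L^\infty$ bound, follows from results already stated. For $D\in\Theta\cap B_{H^\beta(\Omega)}(M)$ with $\beta>d/2+1$, Sobolev embedding gives $\|D\|_{C^1}\lesssim M$. Proposition \ref{prop:HRDEReg} with $\gamma=\beta$ then gives $\|u_D\|_{H^{\beta+1,(\beta+1)/2}}\le C(M,D_{\min},f,\varphi,d)$. Since $\beta+1>(d+2)/2$, Proposition \ref{prop:BesovEmbeddings} turns this into a bound on $\|u_D\|_{L^\infty(Q)}$, which we take as $U$.

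For the Lipschitz estimate, set $w=u_{D_1}-u_{D_2}$ and subtract the two equations. This gives
\[
\partial_t w-\nabla\cdot(D_1\nabla w)-c(t,x)\,w=\nabla\cdot\big((D_1-D_2)\nabla u_{D_2}\big),\qquad w(0)=0,
\]
where
\[
c=\int_0^1 f'\big(su_{D_1}+(1-s)u_{D_2}\big)\,ds,
\]
so $|c|\le\|f'\|_\infty$ because $f\in C_c^\infty$. Alternatively one could integrate the derivative $D\mathcal G$ along the segment $D_2+s(D_1-D_2)$, which stays in the convex set $\Theta$; the direct difference avoids needing convexity of $\mathcal U$ issues.

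The right-hand side has divergence form, and a naive energy estimate would cost $\|D_1-D_2\|_{L^\infty}$ or a derivative of $D_1-D_2$. To get only $\|D_1-D_2\|_{L^2}$, I would put the divergence onto $u_{D_2}$ by expanding:
\[
\nabla\cdot(h\nabla u_{D_2})=h\,\Delta u_{D_2}+\nabla h\cdot\nabla u_{D_2},\qquad h=D_1-D_2.
\]
The term $\nabla h$ is the problem. The way around it is duality in the energy estimate. Test the equation against $w$ and integrate by parts in space:
\[
\frac12\frac{d}{dt}\|w\|_2^2+\int D_1|\nabla w|^2=\int c\,w^2-\int h\,\nabla u_{D_2}\cdot\nabla w .
\]
Bound the last term by $\|h\|_{2}\|\nabla u_{D_2}(t)\|_\infty\|\nabla w\|_2$ and absorb $\|\nabla w\|_2^2$ with Young's inequality using $D_1\ge D_{\min}$. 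This leaves
\[
\frac{d}{dt}\|w\|_2^2\le 2\|f'\|_\infty\|w\|_2^2+D_{\min}^{-1}\|h\|_2^2\,\|\nabla u_{D_2}(t)\|_\infty^2 .
\]
Grönwall's inequality then gives
\[
\sup_t\|w(t)\|_2^2\le C\|h\|_2^2\int_0^1\|\nabla u_{D_2}(t)\|_\infty^2\,dt .
\]
This is exactly the $L^\infty_tL^2_x$ Lipschitz bound, with $L^2=C\,\|\nabla u_{D_2}\|^2_{L^2_tL^\infty_x}$.

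The main obstacle is bounding $\|\nabla u_{D_2}\|_{L^2_tL^\infty_x}$ uniformly over the ball, including near $t=0$, where only the regularity of $\varphi$ is available. For this I would use Proposition \ref{prop:HRDEReg} with $\gamma=\beta$: it gives $u_{D_2}\in L^2_tH^{\beta+1}_x$ with a bound depending on $M$. Since $\beta+1>d/2+1$, Sobolev embedding gives $\nabla u_{D_2}\in L^2_tL^\infty_x$. This assumes $\varphi\in H^\beta$, which we treat as known and smooth enough; the constants then depend on $M,D_{\min},f,\varphi,d$ as claimed. The remaining care is justifying the energy identity for strong solutions in $F$, which is standard by Proposition \ref{prop:LinearParabolicRegularity}.
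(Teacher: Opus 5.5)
Your proof is correct, but it reaches the Lipschitz bound by a different route than the paper. The paper writes $u_{D_2}-u_{D_1}=\int_0^1 D\mathcal G(D_w)[D_2-D_1]\,dw$, using the fundamental theorem of calculus for the Fr\'echet derivative along $D_w=(1-w)D_1+wD_2$. It then applies the weak ($L^2_tH^{-1}_x$) estimate of Proposition \ref{prop:LinearParabolicRegularity} to the linearized equation for $i_h$. This bounds $\|\nabla\cdot(h\nabla u_{D_w})\|_{L^2_tH^{-1}_x}$ by $\|\nabla u_{D_w}\|_{L^\infty(Q)}\|h\|_{L^2}$. The $L^\infty(Q)$ bound on $\nabla u_{D_w}$ comes from Proposition \ref{prop:HRDEReg} and the anisotropic embedding of Proposition \ref{prop:BesovEmbeddings}, and must hold uniformly in $w$.

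You instead subtract the two nonlinear equations, absorb the reaction term into the bounded mean-value coefficient $c$, and run the energy estimate and Gr\"onwall by hand. Your integration by parts on $\int w\,\nabla\cdot(h\nabla u_{D_2})$ is exactly the duality step hidden inside the paper's $H^{-1}$ estimate.

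Your version has three advantages. It does not need differentiability of $\mathcal G$ (Lemma \ref{lem:levelsetmap} and the implicit-function argument) or Banach-valued integration. It uses regularity of the single solution $u_{D_2}$ rather than of the whole family $u_{D_w}$. And it only needs $\nabla u_{D_2}\in L^2_tL^\infty_x$, which follows from spatial regularity (Proposition \ref{prop:VVSpatialReg}) plus ordinary Sobolev embedding, rather than the parabolic embedding.

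The paper's version has its own advantage: it reuses the linear parabolic theory and the derivative $D\mathcal G$ already set up in that section. That keeps the forward estimate inside the same linearization framework that the backward stability argument (Proposition \ref{thm:HRDEBack}) also works within.
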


\noindent \textbf{Backwards Stability.} We again utilize a trick from the differentiability of $G$. However, we first collect some facts about stability for the transport operator $\nabla \cdot(h\nabla u)$.

\begin{condition}\label{con:Darcy}
    Let $u$ be the solution to \eqref{eq:introRDE} for the diffusivity $D$. Suppose that 
    \[
        \inf_{x\in \Omega} \int_0^1 | \nabla u(t, x)|^2 \, dt \geq c_0
    \]
    holds for some positive constant $c_0$.
\end{condition}

\begin{condition}\label{con:Darcy2}
    Let $u$ be the solution to \eqref{eq:introRDE} with $D$ the diffusivity, and suppose that $\Omega' \subseteq \Omega$ is some open set such that
    \begin{itemize}
        \item $D \equiv D_{\min} + 1$ on $\Omega \setminus \Omega'$, and 
        \item For some positive constants $\mu, c_0 >0$, and time $t \in [0, 1]$,
            \[
                \inf_{x \in \Omega'} \frac{1}{2} \Delta u(t, x) + \mu |\nabla u(t,x)|^2 \geq c_0.
            \]
    \end{itemize}
\end{condition}
\begin{remark}
    Conditions \ref{con:Darcy} and \ref{con:Darcy2} are incomparable conditions, and we will show that both can be satisfied with a specific choice of $\varphi$ the initial condition depending on $D_0$ only through $D_{\min}$ and $\|D\|_{C^1}$, see Propositions \ref{prop:RDvarphi} and \ref{prop:NSvarphi}. In assuming Condition \ref{con:Darcy}, we get stability on the entire domain $\Omega$.\footnote{Indeed, for a general bounded domain $\Omega \subseteq \R^d$, we need only assume that $D_0$ is known at $\partial \Omega$.} However, we prove stability in the $L^1$ norm, which leads to a worse rate in $L^2$. In assuming Condition \ref{con:Darcy2} we have to assume more knowledge about $D_0$, but obtain a better rate.

    In the case of Condition $\ref{con:Darcy2}$ it is not possible to take any $\Omega'$ such that $\overline {\Omega'} = \Omega$. At any local maximum $x^\ast\in \Omega$ of $u(t, \cdot)$, we have $\nabla u(t, x^\ast) = 0$, and $\Delta u(t, x^\ast) \leq 0$.\footnote{And then take some $x_n\in \Omega'$ such that $x_n \to x^\ast$.}
    However, we will show that Condition \ref{con:Darcy2} can be satisfied for any $\overline V \subsetneq \Omega$ for a specific choice of $\varphi$ in Appendix \textsection \ref{sec:findingphi}.
    
    The difficulty with stability for Darcy's transport operator lies in trying to recover information about $h$ from $\nabla \cdot (h\nabla u)$ at critical points of $u$. Condition \ref{con:Darcy} translates into the condition that critical points of $u(t, \cdot)$ change with time. We show that this condition is generic in Proposition \ref{prop:opendense}.
\end{remark}

\begin{lemma}[Stability for the Transport Operator]\label{lem:transportstability}
    Suppose that $D_1$ is such that $\|D_1\|_{H^\beta} \leq M$, for $\beta > d/2 + 1$. Then,
    \begin{enumerate}[label = (\alph*)]
        \item If Condition \ref{con:Darcy} holds for $u_{D_1}$ and some constant $c_0$, we have for all $h \in H^1(\Omega)$,
            \begin{align*}
                \|h\|_{L^1(\Omega)} &\leq C(M, d, c_0, \beta, D_{\min}, f)\int_0^1 \| \nabla \cdot(h\nabla u_{D_1})(t, \cdot)\|_{L^2(\Omega)}\, dt \\
                &\leq C \|\nabla \cdot(h \nabla u_{D_1})\|_{L^2(Q)}.
            \end{align*}
        \item If Condition \ref{con:Darcy2} holds for $u_{D_1}$ and some open set $\Omega' \subsetneq \Omega$, then for all $h\in H^1_c(\Omega')$, 
            \[
                \|h\|_{L^2(\Omega)} \leq C(M, d, c_0, \mu, \beta, D_{\min}, f) \|\nabla \cdot(h\nabla u_{D_1})\|_{L^2(Q)}.
            \]
    \end{enumerate}
\end{lemma}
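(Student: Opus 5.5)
Both parts rest on the classical Darcy-operator argument, in which one integrates $\nabla\cdot(h\nabla u)$ against a suitable test function and integrates by parts. The difference here is that the lower bound comes from accumulating over $t\in[0,1]$ rather than from a single time slice. Throughout, write $u = u_{D_1}$ and $g(t,\cdot) = \nabla\cdot(h\nabla u(t,\cdot))$. Proposition \ref{prop:HRDEReg}, together with $\beta > d/2+1$ and Sobolev embedding, gives uniform bounds on $\|u\|_{L^\infty_t W^{2,\infty}_x}$, or at least on $\|u\|_{L^2_t W^{2,\infty}_x}$, depending only on $M, d, \beta, D_{\min}, f$. All constants below are taken from these bounds.

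\emph{Part (a).} I would test $g(t,\cdot)$ against a regularized sign of $h$ multiplied by $u$-dependent weights. The cleanest route is to test against $\psi_\epsilon(h)$, where $\psi_\epsilon$ is a smooth approximation of $\operatorname{sign}$ and $\Psi_\epsilon' = \psi_\epsilon$. Integrating by parts gives
\[
\int_\Omega g\,\psi_\epsilon(h)\,dx = -\int_\Omega h\,\psi_\epsilon'(h)\nabla h\cdot\nabla u\,dx.
\]
This term tends to zero, but it only bounds a term containing $\nabla h$, so it is not enough. A better choice is the test function $\operatorname{sign}(h)\,u$, or more generally $\psi_\epsilon(h)\,w$ with $w = u$:
\[
\int_\Omega g\,\psi_\epsilon(h)\,u\,dx = -\int_\Omega h\,\psi_\epsilon(h)\,|\nabla u|^2\,dx - \int_\Omega h\,\psi_\epsilon'(h)\,u\,\nabla h\cdot\nabla u\,dx.
\]
The second term vanishes as $\epsilon\to0$, since $h\psi'_\epsilon(h)\to0$ boundedly and $\nabla h\in L^2$. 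The first term tends to $-\int_\Omega |h|\,|\nabla u(t,x)|^2\,dx$. Hence
\[
\int_\Omega |h(x)|\,|\nabla u(t,x)|^2\,dx \le \|u\|_\infty\,\|g(t)\|_{L^1} \lesssim \|g(t)\|_{L^2}.
\]
Integrating in $t$ over $[0,1]$ and applying Fubini gives
\[
\int_\Omega |h(x)|\int_0^1|\nabla u(t,x)|^2\,dt\,dx \le C\int_0^1 \|g(t)\|_{L^2}\,dt.
\]
Condition \ref{con:Darcy} then bounds the left-hand side below by $c_0\|h\|_{L^1}$. The second inequality in (a) is Cauchy--Schwarz in $t$. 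Since $u$ might not be positive, one may replace $u$ by $u$ plus a constant; this is harmless because only $\nabla u$ enters the main term, and the constant is controlled by $\|u\|_\infty \le U$. Justifying the limit for $h\in H^1$ is routine via dominated convergence.

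\emph{Part (b).} This is the usual weighted identity. Take the time $t$ given by Condition \ref{con:Darcy2}, set $w = e^{2\mu u(t,\cdot)}$, and test $g(t)$ against $h\,w$. Since $h\in H^1_c(\Omega')$, there are no boundary terms. We get
\[
\int g\,h\,w = -\int w\,h\,\nabla h\cdot\nabla u - 2\mu\int w\,h^2|\nabla u|^2.
\]
Rewrite $h\nabla h = \tfrac12\nabla(h^2)$ and integrate by parts once more:
\[
-\int w\,h\,\nabla h\cdot\nabla u = \tfrac12\int h^2\,\nabla\cdot(w\nabla u) = \int h^2 w\big(\tfrac12\Delta u + \mu|\nabla u|^2\big).
\]
The sign conventions need care, and I would fix $\mu$'s sign to match the condition. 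The result is
\[
\Big|\int g\,h\,w\Big| \ge c\,c_0\int h^2 w - \text{(lower order)},
\]
and ideally the two $|\nabla u|^2$ terms combine into exactly the quantity bounded below by the condition. Using $w\asymp 1$ and Cauchy--Schwarz gives $\|h\|_2^2 \lesssim \|g(t)\|_2\|h\|_2$, so $\|h\|_{L^2} \lesssim \|g(t)\|_{L^2}$.

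The main obstacle is passing from the single time $t$ to $\|g\|_{L^2(Q)}$, because a pointwise-in-time value is not controlled by the $L^2$ norm in time. I would first observe that the lower bound in Condition \ref{con:Darcy2} persists on a time interval $[t-\tau,t+\tau]$. This follows from time-continuity of $\Delta u$ and $\nabla u$ given by the $H^{\gamma+1,(\gamma+1)/2}$ regularity of Proposition \ref{prop:HRDEReg} and Proposition \ref{prop:BesovEmbeddings}, with $\tau$ depending only on the constants. Averaging the estimate over that interval and applying Cauchy--Schwarz then gives $\|h\|_{L^2} \le C\tau^{-1/2}\|g\|_{L^2(Q)}$. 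Obtaining a uniform modulus of continuity in time for $\Delta u$ is the delicate step.
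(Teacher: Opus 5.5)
Part (a) is the paper's argument. The paper invokes Lemma 3.6 of \cite{ABJN} to replace $h$ by $|h|$ and then tests $\nabla\cdot(|h|\nabla u(t))$ against $-u(t)$. Your regularized test function $\psi_\epsilon(h)\,u$ does the same thing by hand, since $\operatorname{sign}(h)\,\nabla\cdot(h\nabla u)=\nabla\cdot(|h|\nabla u)$ a.e. Your remark about shifting $u$ by a constant is unnecessary, because only $\|u\|_{L^\infty(Q)}$ enters.

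\textbf{Part (b): the weight has the wrong sign.} With $w=e^{2\mu u}$ your two identities add up to $\int g h w=\int h^2 w\bigl(\tfrac12\Delta u-\mu|\nabla u|^2\bigr)$. The term $-\mu|\nabla u|^2$ is of the same order as the main term and has the wrong sign; it is not a lower-order remainder, so your displayed lower bound does not follow. Take $w=e^{-2\mu u(s,\cdot)}$ instead. The identity is then exact, and using $h=0$ off $\Omega'$,
\[
\int g(s)\,h\,w=\int h^2 w\bigl(\tfrac12\Delta u(s)+\mu|\nabla u(s)|^2\bigr)\ge c_0e^{-2\mu\|u\|_{L^\infty(Q)}}\|h\|_{L^2}^2 .
\]
This is the classical weighted argument, and it is all the paper's proof of (b) provides: it simply cites \cite{N}, \S2.1.2.1. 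That elliptic argument only bounds $\|h\|_{L^2}$ by $\|g(t)\|_{L^2(\Omega)}$ at the single time in Condition \ref{con:Darcy2}. The paper never explains how to reach $\|g\|_{L^2(Q)}$.

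\textbf{Your time-propagation step supplies the missing link.} Spreading the condition over a time interval of length $\tau$ and averaging is the right fix. Comparing $g(t)$ with $g(s)$ directly would cost $\|\nabla h\|_{L^2}$, which is not controlled.

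Be precise about the regularity this step needs. Since $\Delta u\in H^{\beta-1,(\beta-1)/2}$, Proposition \ref{prop:BesovEmbeddings} applies only when $\beta>d/2+2$. For $\tau$ to depend only on the listed constants you also need a quantitative modulus of continuity, not just continuity. One way to get it:
\begin{enumerate}
\item Interpolate $u\in H^1_tH^{\beta-1}_x$ with $u\in L^\infty_tH^{\beta}_x$ (both from Proposition \ref{prop:VVSpatialReg}) to get $\|u(s)-u(t)\|_{H^{\beta-\delta}}\lesssim|s-t|^{\delta/2}$.
\item Embed $H^{\beta-\delta}\hookrightarrow C^2$, which holds when $\beta-\delta>d/2+2$.
\end{enumerate}
This exceeds the lemma's stated hypothesis $\beta>d/2+1$. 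It does hold in the range $\beta>d/2+4$ where the lemma is applied (Proposition \ref{thm:HRDEBack}), and Condition \ref{con:Darcy2} already presupposes that $\Delta u$ is defined pointwise.
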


\begin{proposition}[Backwards Stability]\label{thm:HRDEBack}
    Suppose that the Condition \ref{con:Darcy}, or respectively \ref{con:Darcy2}, is satisfied for $u_{D_1}$ where $D_1 \in \Theta$ is the true parameter, and some domain $\Omega'$ in the case of Condition \ref{con:Darcy2}. Then, for the choice of regularization space $\mathcal R = H^\beta(\Omega)$, or $H^\beta_c(\Omega')$, so that $\beta > d/2+ 4$, and
    \[
        \eta = \begin{cases}
            \frac{\beta -1}{\beta +1}\frac{\beta}{\beta + d/2 + 1} & \text{ if Condition \ref{con:Darcy} holds,} \\
            \frac{\beta -1}{\beta + 1} & \text{ if Condition \ref{con:Darcy2} holds,}
        \end{cases}
    \]
    we have that for all $M >0$, there is a constant $L'$ depending on $M, d, c_0, D_{\min}, \beta,$ and also $\mu$ in the case of Condition \ref{con:Darcy2}, such that
    \[
        \|D_1- D_2\|_{L^2(\Omega)} \leq L'\|\mathcal G (D_1) - \mathcal G(D_2) \|_{L^2(Q)}^{\eta} \quad \text{ for all }D_1, D_2 \in B_{\mathcal R}(M)\cap \Theta.
    \]
\end{proposition}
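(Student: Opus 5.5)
Set $h = D_1 - D_2$, $u_i = \mathcal G(D_i)$ and $w = u_1 - u_2$. I would derive an equation in which $h$ enters only through the transport term $\nabla\cdot(h\nabla u_{D_1})$. Subtracting the two PDEs \eqref{eq:introRDE} gives
\[
    \nabla\cdot(h\nabla u_1) = \partial_t w - \nabla\cdot(D_2\nabla w) - \big(f(u_1)-f(u_2)\big) \quad \text{on } Q,
\]
with $w(0)=0$. This is the exact form of the linearisation in Lemma \ref{lem:levelsetmap}, so no Taylor remainder is needed. By Lemma \ref{lem:transportstability}, it then suffices to bound $\|\partial_t w - \nabla\cdot(D_2\nabla w) - (f(u_1)-f(u_2))\|_{L^2(Q)}$ by a power of $\|w\|_{L^2(Q)}$.

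The reaction term is easy. Since $f$ is smooth and compactly supported, $\|f(u_1)-f(u_2)\|_{L^2(Q)}\le \|f'\|_\infty\|w\|_{L^2(Q)}$. For the remaining terms, I would bound $\|w\|_{H^{2,1}}$, which controls $\partial_t w$ and $\nabla\cdot(D_2\nabla w)$ in $L^2(Q)$. Here $\|D_2\|_{C^1}\lesssim \|D_2\|_{H^\beta}\le M$ because $\beta>d/2+1$. Proposition \ref{prop:HRDEReg} gives $\|u_i\|_{H^{\beta+1,(\beta+1)/2}}\le C(M)$, taking $\varphi$ smooth enough and using $\beta > d/2+4$. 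Interpolating the parabolic Sobolev scale via the wavelet characterisation (Proposition \ref{prop:besovIsSobolev} and Definition \ref{def:AnisotropicSpace}, by Hölder on the level-weighted sums) gives
\[
    \|w\|_{H^{2,1}} \le \|w\|_{L^2}^{1-2/(\beta+1)}\|w\|_{H^{\beta+1,(\beta+1)/2}}^{2/(\beta+1)} \lesssim \|w\|_{L^2(Q)}^{\frac{\beta-1}{\beta+1}}.
\]
Hence $\|\nabla\cdot(h\nabla u_1)\|_{L^2(Q)}\lesssim \|w\|_{L^2(Q)}^{(\beta-1)/(\beta+1)}$, using $\|w\|_{L^2}\lesssim 1$ to absorb the linear term.

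Under Condition \ref{con:Darcy2}, $\mathcal R=H^\beta_c(\Omega')$. Then $h$ is supported in $\Omega'$ and lies in $H^1_c(\Omega')$, so Lemma \ref{lem:transportstability}(b) gives $\|h\|_{L^2}$ directly, with $\eta=(\beta-1)/(\beta+1)$.

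Under Condition \ref{con:Darcy}, Lemma \ref{lem:transportstability}(a) gives only $\|h\|_{L^1}\lesssim \|w\|_{L^2}^{(\beta-1)/(\beta+1)}$, and the extra factor $\beta/(\beta+d/2+1)$ must come from upgrading $L^1$ to $L^2$ using $\|h\|_{H^\beta}\le 2M$. I would use $\|h\|_{L^2}^2\le \|h\|_{L^1}\|h\|_{L^\infty}$ together with Gagliardo–Nirenberg, $\|h\|_\infty\lesssim \|h\|_{L^2}^{1-d/(2\beta)}\|h\|_{H^\beta}^{d/(2\beta)}$. This yields $\|h\|_{L^2}^{1+d/(2\beta)}\lesssim\|h\|_{L^1}$, that is $\|h\|_{L^2}\lesssim \|h\|_{L^1}^{2\beta/(2\beta+d)}$. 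That exponent differs from the stated $\beta/(\beta+d/2+1)$, so I would instead interpolate via a negative-order space: the $L^1\hookrightarrow H^{-s}$ embedding with $s>d/2$ (taking $s\approx d/2+1$ for a clean constant) plus $L^2$ interpolation between $H^{-s}$ and $H^\beta$ gives $\|h\|_{L^2}\lesssim\|h\|_{L^1}^{\beta/(\beta+s)}$. With $s=d/2+1$ this matches the stated $\eta$, and any sharper exponent is harmless.

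The main obstacle is the interpolation step in anisotropic spaces. The constant must not depend on $D_1$, and the difference $w$ has to lie in $H^{\beta+1,(\beta+1)/2}$ with uniform bounds, including compatibility at $t=0$, which holds here because $w(0)=0$. I would handle this via the wavelet norms from \textsection\ref{sec:AnisotropicWavelets}.
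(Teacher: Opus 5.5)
Your proposal is correct and is essentially the paper's proof. Both use the same exact identity for $\nabla\cdot(h\nabla u_{D_1})$ in terms of $w$, the bound by $\|w\|_{H^{2,1}}$, H\"older interpolation of the wavelet norms against $H^{\beta+1,(\beta+1)/2}$, then Lemma \ref{lem:transportstability}, and in case (a) the interpolation of $L^1\hookrightarrow H^{-d/2-1}$ against $H^\beta$. Your first Gagliardo--Nirenberg route already suffices on its own, since its exponent $2\beta/(2\beta+d)$ exceeds $\beta/(\beta+d/2+1)$ and $\|h\|_{L^1}$ is bounded on $B_{\mathcal R}(M)$.
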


Now we have established the properties of the forward map necessary to verify all the conditions of Theorem \ref{thm:Main}. The only issue is the parameter space is not linear because the diffusivity be everywhere positive. To fix this, we need to introduce a linking function
\[
    g(\theta) = D_{\min} + \exp(\theta).
\]
Note that from Proposition 3.9 in \cite{Taylor}, $g$ maps $L^\infty \cap H^k$ to $H^k$ and takes bounded sets to bounded sets.
    
\begin{theorem}\label{thm:HRDEMain}
    Assume that Condition \ref{con:Darcy} or \ref{con:Darcy2} is satisfied for the ground truth $D_0 \equiv g(\theta_0)$. 
    Suppose that a Gaussian process prior $\Pi'$ verifying Condition \ref{con:RKHS} with $\mathcal R = H^\beta(\Omega)$ or $H^\beta_c(\Omega')$ resp. for some $\beta > 4 + d/2$ and RKHS $\mathcal H \subseteq H^\alpha(\Omega)$ for some $\alpha > \beta + d/2$. Let the prior $\Pi_N$ be the image measure under $g$ of the probability measure from \eqref{eq:priorrescaling} for $\kappa = 0$. 
    Consider the forward map $\mathcal G : \Theta \to L^2(Q)$ defined in \eqref{eq:HRDEforwardmap}, and assume that $\theta_0\in H^\alpha\cap \mathcal H$ or $H^\alpha_c(\Omega')\cap \mathcal H$ resp. 

    Let $\Pi(\cdot | (Y_{ij}, X_{ij}))$ be the posterior distribution arising from observations of the model \eqref{eq:timedependent} where Condition \ref{con:variancebound} holds, and suppose one of Condition \ref{con:inversematrix} or \ref{con:inversematrix2} holds. Let $\delta_N = N^{-(\alpha+1)/(2(\alpha+1) + d +2)}$, $\varepsilon_N = N^{-\alpha/(2\alpha + d)}$. Then for all $b>0$, we can choose $m$ large enough such that as $N\to \infty$
    \[
        \Prob^N_{D_0} \big( \Pi_N(D\in \Theta: \|\mathcal G(D)-\mathcal G(D_0)\|_{L^2(Q)} \leq m\delta_N | D_N) \leq 1 - e^{-bN\varepsilon_N^2}\big) \to 0.
    \]
    Moreover, letting $\eta$ be as in Proposition \ref{thm:HRDEBack}, and constant $M$ large enough, we also have
    \[
        \Prob_{D_0}^N\big(\Pi_N(D \in \Theta: \|D - D_0\|_{L^2(\Omega)} \leq M\delta_N^\eta | D_N) \leq 1 - e^{-bN\varepsilon_N^2}\big) = o(1).
    \]
\end{theorem}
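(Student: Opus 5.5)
The plan is to reduce Theorem \ref{thm:HRDEMain} to Theorem \ref{thm:Main}, applied to the composite forward map $\tilde{\mathcal G} = \mathcal G\circ g$ on the linear space $\Theta' = H^\beta(\Omega)$ (respectively $H^\beta_c(\Omega')$). Concretely, I take $\kappa = 0$, $\ell = d$, $\mathcal R = H^\beta$, prior $\Pi'$ on $\theta$, and $a = \alpha+1$. This choice of $a$ is consistent with the stated rates: $\delta_N = N^{-a/(2a+d+2)}$ and $\varepsilon_N = N^{-\alpha/(2\alpha+d)}$. It also gives $\varepsilon_N\lesssim\delta_N$, since $\alpha/(2\alpha+d) \ge (\alpha+1)/(2\alpha+d+4)$ reduces to $\alpha \ge d/2$.

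For the requirement $a \ge (d+1)\vee 6$, I need $\alpha+1\ge 6$ and $\alpha \ge d$. Both follow from $\alpha>\beta+d/2>4+d$. The constraints \eqref{eq:rates2} on $n,T$ are implicitly assumed as in Theorem \ref{thm:Main}.

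The steps, in order, are as follows.

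\begin{enumerate}
\item \emph{Condition \ref{con:RKHS}.} This holds by hypothesis.

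\item \emph{Condition \ref{con:forwardregularity}.} For $\|\theta\|_{H^\beta}\le M$ with $\beta>d/2$, Sobolev embedding gives $\|\theta\|_\infty\lesssim M$. By Proposition 3.9 of \cite{Taylor}, $\|g(\theta)\|_{H^\beta}\le C(M)$. Also $\|g(\theta_1)-g(\theta_2)\|_{L^2}\le e^{C(M)}\|\theta_1-\theta_2\|_{L^2}$ by the mean value theorem. Feeding these into Proposition \ref{prop:HRDEForward} gives both the uniform bound $U$ and the Lipschitz estimate in $L^\infty_tL^2_x$ with $\kappa=0$. The $C^1$ control of $D$ required in Proposition \ref{prop:HRDEReg} comes from $\beta>d/2+1$.

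\item \emph{Condition \ref{con:forwardmapping}.} Apply Proposition \ref{prop:HRDEReg} with $\gamma=\beta\ge5$ on $\mathcal R$-balls, giving $H^{6,3}$ bounds. Apply it with $\gamma=\alpha$ on $\mathcal H$-balls, giving $H^{\alpha+1,(\alpha+1)/2}$ bounds. On $\mathcal H$-balls, $\mathcal H\hookrightarrow H^\alpha\hookrightarrow L^\infty$, so $g$ preserves bounded sets. This yields the first conclusion directly from Theorem \ref{thm:Main}.

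\item \emph{Contraction in the parameter.} I would not verify Condition \ref{con:stabilityestimate} for $\theta$ but work directly with $D$. The first conclusion, combined with the fact (from the proof of Theorem \ref{thm:Main} via Theorem \ref{thm:PosteriorGeneral}) that the posterior concentrates on the regularization set $\Theta_N$, shows that with posterior mass at least $1-e^{-bN\varepsilon_N^2}$ we have $\|\theta\|_{H^\beta}\le M$. Hence $D=g(\theta)\in B_{H^\beta}(M')\cap\Theta$ and $\|\mathcal G(D)-\mathcal G(D_0)\|_{L^2(Q)}\le m\delta_N$. Proposition \ref{thm:HRDEBack}, applied with $D_1=D_0$, then gives $\|D-D_0\|_{L^2}\le L'(m\delta_N)^\eta$. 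In the compactly supported case, $g(\theta)=D_{\min}+1$ off $\Omega'$, which matches Condition \ref{con:Darcy2}, and the difference $D-D_0$ lies in $H^\beta_c(\Omega')$.
\end{enumerate}

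The main obstacle is this last step. The elements of $\Theta_N$ are $\theta_1+\theta_2$ with $\theta_1$ small only in $L^2$ (since $\kappa=0$) and $\theta_2$ in an $\mathcal H$-ball. I need an $\mathcal R$-norm bound, not merely an $L^2$ bound, in order to invoke Proposition \ref{thm:HRDEBack}.

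Since $\Theta_N$ includes $\|\theta\|_{\mathcal R}\le M$ by definition, I would check that the small-ball and excess-mass argument (Theorem \ref{thm:smallball}) indeed yields $\Pi_N(\Theta_N^c)\le e^{-(b+4)N\varepsilon_N^2}$ with this $\mathcal R$-constraint. This uses the Gaussian tail of $\|\theta'\|_{\mathcal R}$ scaled by $N^{-d/(4\alpha+2d)}$, together with $\Pi'(\mathcal R)=1$ and Fernique's theorem. It works because $N\varepsilon_N^2 = N^{d/(2\alpha+d)}$ is dominated by $N^{2d/(4\alpha+2d)}\cdot M^2$ once $M$ is large.
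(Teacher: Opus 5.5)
Your proposal is correct. The forward part coincides with the paper's proof: you apply Theorem \ref{thm:Main} to $\mathcal G\circ g$ on $\Theta'=\mathcal R$ with $\kappa=0$, $\ell=d$, $a=\alpha+1$, and verify Conditions \ref{con:forwardregularity} and \ref{con:forwardmapping} from Propositions \ref{prop:HRDEForward} and \ref{prop:HRDEReg} together with the mapping properties of $g$.

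You depart from the paper at the parameter-level statement.
\begin{itemize}
\item \emph{The paper's route.} It verifies Condition \ref{con:stabilityestimate} for the composite map, using Proposition \ref{thm:HRDEBack} and the fact that $g$ is bi-Lipschitz on bounded sets. It then invokes the second conclusion of Theorem \ref{thm:Main} to get contraction for $\theta$, and pushes forward under $g$.
\item \emph{Your route.} You bypass Condition \ref{con:stabilityestimate}. You apply Proposition \ref{thm:HRDEBack} directly to $D=g(\theta)$, on the set $\Theta_N\cap\{\|\mathcal G(g(\theta))-\mathcal G(D_0)\|_{L^2(Q)}\le m\delta_N\}$. This is exactly the set on which Theorem \ref{thm:PosteriorGeneral} places the posterior mass, since its conclusion is stated on $\Theta_N$ rather than on $\Theta$.
\end{itemize}

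What each route buys:
\begin{itemize}
\item Your route never needs the lower (inverse) Lipschitz bound for $g$, and it lands directly on the $D$-statement.
\item The paper's route, after obtaining $\theta$-contraction, must still convert $\|\theta-\theta_0\|_{L^2}$ into $\|g(\theta)-g(\theta_0)\|_{L^2}$. That step again needs an $L^\infty$ bound on $\theta$, i.e.\ the same restriction to $\Theta_N$ that you make explicit.
\item In exchange, the paper's route fits Theorem \ref{thm:Main} as stated, and it also yields contraction for $\theta$ itself.
\end{itemize}

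Three small points.
\begin{itemize}
\item You should say explicitly that the posterior on $D$ is the image under $g$ of the posterior on $\theta$. This holds because the likelihood depends on $\theta$ only through $g(\theta)$, and the paper relies on it to pass to $D$.
\item What you need is $\Pi_N(\Theta_N^c)\le e^{-BN\varepsilon_N^2}$ for some $B>A+2+b$, with $A$ the small-ball constant; the specific exponent $b+4$ is not the right requirement. This is harmless, because Theorem \ref{thm:smallball} allows any $B$.
\item For the $H^{6,3}$ bound you need $\beta\ge 5$. This does not follow from $\beta>4+d/2$ when $d=1$; the paper carries the same caveat.
\end{itemize}
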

\begin{proof}
    The proof uses Theorem \ref{thm:Main}, where the parameter space is $\Theta' = \mathcal R$ and the forward map is $\mathcal G' = \mathcal G \circ g$. To satisfy the Conditions \ref{con:forwardregularity} and \ref{con:stabilityestimate}, we use Propositions \ref{prop:HRDEForward} and \ref{thm:HRDEBack} coupled with the fact that $g$ is bi-Lipschitz on bounded sets, see the ideas in \textsection 2.1.1.3 and Proposition 2.1.5 in \cite{N}. Then, push forward the prior via the map $g$ using the fact that the Bayesian updates commute with taking the image measure to deduce the result.
\end{proof}

\subsection{$2$D Navier-Stokes Equations with unknown viscosity}\label{sec:NS}
Throughout this section, we are in $2$-dimensions, and are in the setting of \eqref{eq:introNS}, with forward map $\mathcal G$ given in \eqref{eq:NSforwardmap}. We set the parameter space to be $\Theta = [\nu_{\min}, \infty) \times L^2(\Omega)^2 \subseteq L^2(\Omega, \R^3)$. Similarly to the case of the HRDE, the parameter space is not a linear space, and we will resolve this problem by considering an appropriate link function. 
The scale of regularization spaces we will consider are 
\begin{equation}\label{eq:regularizationNS}
    \mathcal R^\beta = \R \times \dot H^\beta_{\diamond}, 
\end{equation}
which imbeds into $H^\beta(\Omega)^3$. 

We assume that the viscosity is constant to leverage existing regularity theory for the $2$D Navier-Stokes equations, and a ``variable viscosity'' term could be considered following the framework of \textsection \ref{sec:HRDE}. 

\begin{proposition}\label{prop:NSReg}
    Fix an integer $a \geq 1$. For $\varphi \in \dot H^a_\diamond(\Omega)$, $\nu \in [\nu_{\min}, \infty)$ and some forcing function $f \in \dot H_\diamond^{a-1}$. Let $u$ be a strong solution of \eqref{eq:introNS}. Suppose that $\nu, \|\varphi\|_{H^a} \leq B$. Then we have
    \[
        u\in H^{a + 1, (a+1)/2}, \quad \text{ with }\quad \|u\|_{H^{a+1, (a+1)/2}}\leq C,
    \]
    where $C = C(B, f, a, d, \nu_{\min}, S)$.
\end{proposition}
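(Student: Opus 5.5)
By Proposition \ref{prop:besovIsSobolev}, the space $H^{a+1,(a+1)/2}$ contains the intersection of the mixed spaces $H^k([0,1], H^{a+1-2k}(\Omega))$, with equality of norms when $a+1$ is even. So I will prove the regularity through energy estimates in time: bound $\partial_t^k u$ in $L^2_t H^{a+1-2k}_x$ for $0 \le k \le \lceil (a+1)/2\rceil$, with constants depending only on $B$, $f$, $a$, $\nu_{\min}$. Existence and uniqueness of strong solutions come from \textsection 9 of \cite{R}. The argument is an induction on $a$.

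\textbf{Step 1 (spatial regularity, $k=0$).} Test \eqref{eq:introNS} against $A^{a} u$. On the torus $A$ commutes with derivatives and $A = -\Delta$ on $H$, so
\[
\tfrac12 \tfrac{d}{dt}\|u\|_{H^a}^2 + \nu \|u\|_{H^{a+1}}^2 \le |\langle B(u,u), A^a u\rangle| + \|f\|_{H^{a-1}}\|u\|_{H^{a+1}}.
\]
For $a=1$, the $2$D identity $\langle B(u,u), Au\rangle = 0$ (periodic case) gives uniform $H^1$ bounds immediately. For $a \ge 2$, I write $\langle B(u,u),A^a u\rangle$ as a commutator $\langle \Lambda^{a}(u\cdot\nabla u) - u\cdot\nabla \Lambda^a u, \Lambda^a u\rangle$, using $\langle u\cdot\nabla \Lambda^a u,\Lambda^a u\rangle=0$ by incompressibility. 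The Kato--Ponce estimate then bounds this by $C\|\nabla u\|_\infty\|u\|_{H^a}^2$. Instead I will use the Ladyzhenskaya/Agmon-type bound $\|\nabla u\|_{L^\infty}\lesssim \|u\|_{H^{a}}^{1/2}\|u\|_{H^{a+1}}^{1/2}$-style interpolation together with the already established lower-order bound ($H^{a-1}$, uniform in time, from the induction hypothesis). Young's inequality absorbs $\|u\|_{H^{a+1}}$ into $\nu_{\min}\|u\|_{H^{a+1}}^2$, and Gronwall gives $\sup_t\|u\|_{H^a}+\|u\|_{L^2_tH^{a+1}_x}\le C$. The constant is uniform in $\nu\in[\nu_{\min},B]$ because $\nu$ enters only through the coercive term.

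\textbf{Step 2 (time derivatives).} From the equation, $\partial_t u = -\nu Au - B(u,u) + f$. The algebra property of $H^{s}$ for $s>1$, in the form $\|B(u,u)\|_{H^{a-1}}\lesssim \|u\|_{H^{a}}\|u\|_{H^{a}}$ (valid for $a\ge2$; for $a=1$ use $L^4$ Ladyzhenskaya), gives $\partial_t u \in L^2_t H^{a-1}_x$. Then I differentiate the equation $k$ times in time, $\partial_t^k u$ solving $\partial_t w+\nu Aw = -\sum_j \binom{k}{j} B(\partial_t^j u,\partial_t^{k-j}u)$, and bootstrap $\partial_t^k u\in L^2_tH^{a+1-2k}_x$ from the bounds at lower $k$ via the same product estimates. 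This is purely algebraic once Step 1 holds. Note that $f$ is time independent, so it disappears after one time derivative.

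\textbf{Obstacle.} For $a+1$ odd, Proposition \ref{prop:besovIsSobolev} needs $k$ up to $m+1$ with $H^{-1}$ in space at the top level. So I must estimate $\partial_t^{m+1}u$ in $L^2_tH^{-1}_x$, using duality for the nonlinear term. The main difficulty is doing this without compatibility conditions at $t=0$: the quantities $\partial_t^k u(0)$ are determined by $\varphi$ through the equation, and they must lie in the appropriate spaces. I would check that $\partial_t^k u(0)\in H^{a-2k}$ follows from $\varphi\in H^a$ with product estimates. Where this fails, I would fall back on the integrated (maximal-regularity) form, which uses only $L^2_t$ data. The $H^{-1}$/ negative-index product estimates at the top level are where care is needed.
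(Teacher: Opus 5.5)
Your proposal is correct and follows essentially the paper's route. You use higher-order spatial energy estimates (which the paper simply imports from Proposition 3.6 of \cite{KN}), then read off $\partial_t^{k+1}u$ from the $k$-times time-differentiated equation via the product rule on $B(u,u)$, and finally apply Proposition \ref{prop:besovIsSobolev}.

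The obstacle you flag does not actually arise on the torus. Your algebraic bootstrap never uses $\partial_t^k u(0)$, so no compatibility conditions enter. At the top level ($a=2m$, where $\partial_t^{m+1}u$ is needed in $L^2_tH^{-1}_x$), every term $B(\partial_t^j u,\partial_t^{m-j}u)$ already lies in $L^2_tL^2_x$. For instance, $u\in L^\infty_tL^\infty_x$ with $\nabla\partial_t^m u\in L^2_tL^2_x$, and $\partial_t^m u\in L^2_tL^4_x$ with $\nabla u\in L^\infty_tL^4_x$. So no negative-index product estimates are needed either.
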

Solutions to \eqref{eq:introNS} exist and are unique, see Appendix \ref{sec:PDEReg}, \textsection \ref{sec:NSRegTheory} for details.

\begin{proposition}[Forward Regularity for 2D N-S]\label{prop:ForwardRegNS}
    Let $\beta  > d/2 +1$ and $\mathcal R = \mathcal R^\beta$ from \eqref{eq:regularizationNS}. Then, for all $M>0$, there exists constants $U$, $L$ depending only on $\nu_{\min}$, $M$, $d$, and $f$, such that 
    \begin{equation*}
        \sup_{(\nu, \varphi)\in \Theta \cap B_{\mathcal R}(M)} \|\mathcal G(\nu, \varphi)\|_{L^\infty(Q)}\leq U,
    \end{equation*}
    and 
    \begin{equation*}
        \|\mathcal G(\nu_1, \varphi_1) - \mathcal G(\nu_2, \varphi_2)\|_{L^\infty_tL^2_x}\leq L\big(\|\varphi_1 - \varphi_2\|_{L^2} + |\nu_1 - \nu_2|\big),
    \end{equation*}
    for all $(\nu_i, \varphi_i)\in \Theta \cap B_{\mathcal R}(M)$, where $i = 1, 2$.
\end{proposition}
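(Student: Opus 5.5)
The uniform bound follows from Proposition \ref{prop:NSReg} together with Proposition \ref{prop:BesovEmbeddings}. The Lipschitz bound comes from a standard energy estimate for the difference of two solutions, closed with Gronwall's inequality.

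\textbf{Uniform bound.} For $(\nu,\varphi)\in\Theta\cap B_{\mathcal R}(M)$ with $\beta>d/2+1=2$, we may assume $\beta\ge 2$. Apply Proposition \ref{prop:NSReg} with an integer $a\le\beta$, $a\ge 2$, so that $\|\varphi\|_{H^a}\le M$. This gives
\[
\|u\|_{H^{a+1,(a+1)/2}}\le C(M,f,\nu_{\min}).
\]
Since $a+1\ge 3>(d+2)/2=2$, Proposition \ref{prop:BesovEmbeddings} gives $\|u\|_{L^\infty(Q)}\le U$. The same estimate bounds $\sup_t\|\nabla u(t)\|_{L^\infty}$, and hence also $\sup_t\|\nabla u(t)\|_{L^2}$ and $\sup_t\|Au(t)\|_{L^2}$. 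For the gradient bound in $L^\infty$ we can instead use $L^\infty_tH^{a}_x$ with $a>2$ together with Sobolev embedding; the regularity theory in the appendix provides $u\in C([0,1],H^a)$.

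\textbf{Lipschitz bound.} Let $u_i=u_{\nu_i,\varphi_i}$ and $w=u_1-u_2$. Subtracting the two equations gives
\[
\partial_t w+\nu_1 Aw+(\nu_1-\nu_2)Au_2+B(u_1,w)+B(w,u_2)=0,\qquad w(0)=\varphi_1-\varphi_2 .
\]
We test with $w$ in $H$. Since $\langle B(u_1,w),w\rangle=0$ by incompressibility, and $|\langle B(w,u_2),w\rangle|\le\|\nabla u_2\|_\infty\|w\|_2^2$, and $|(\nu_1-\nu_2)\langle Au_2,w\rangle|\le|\nu_1-\nu_2|\,\|Au_2\|_2\|w\|_2$, we obtain
\[
\tfrac12\tfrac{d}{dt}\|w\|_2^2\le C_M\|w\|_2^2+C_M|\nu_1-\nu_2|^2 ,
\]
after dropping the nonnegative term $\nu_1\|\nabla w\|^2$ and applying Young's inequality. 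Gronwall on $[0,1]$ then yields
\[
\sup_t\|w(t)\|_2^2\le e^{2C_M}\bigl(\|\varphi_1-\varphi_2\|_2^2+|\nu_1-\nu_2|^2\bigr),
\]
which is the claim.

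\textbf{Main obstacle.} The delicate step is justifying the uniform bounds on $\|\nabla u_2\|_{L^\infty}$ and $\|Au_2\|_{L^2}$ over bounded sets using only $\beta>2$. If $L^\infty$ control of the gradient is not available, I would fall back on Ladyzhenskaya's inequality in 2D:
\[
|\langle B(w,u_2),w\rangle|\le C\|w\|_2\|\nabla w\|_2\|\nabla u_2\|_2 .
\]
I would absorb $\|\nabla w\|_2^2$ into the dissipation $\nu_1\|\nabla w\|_2^2\ge\nu_{\min}\|\nabla w\|_2^2$, which leaves $C\|\nabla u_2\|_2^2\|w\|_2^2$. This route needs only $\sup_t\|u_2\|_{H^2}$, which Proposition \ref{prop:NSReg} provides.
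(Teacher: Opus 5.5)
Your proposal is correct and is essentially the paper's proof. The uniform bound comes from Proposition \ref{prop:NSReg} and Proposition \ref{prop:BesovEmbeddings}, and the Lipschitz bound from an $L^2$ energy estimate for $w=u_1-u_2$ in which the transport term cancels, closed by Young and Gronwall; your fallback (Ladyzhenskaya, absorbing $\|\nabla w\|_2^2$ into the dissipation $\nu_1\|\nabla w\|_2^2$) is exactly the bound (9.26) of \cite{R} that the paper uses.

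The only soft spot is the bound on $\sup_t\|\nabla u_2(t)\|_\infty$ claimed in your primary route. When $\beta\in(2,3)$, Proposition \ref{prop:NSReg} (stated for integer $a$) only yields $u_2\in H^{3,3/2}$, which does not provide that bound. However, $H^{3,3/2}$ does give $\nabla u_2\in L^2_tH^2_x\subset L^2_tL^\infty_x$, and that is enough, since Gronwall only needs a time-integrable coefficient.
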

The proof uses standard techniques from PDE analysis, e.g. Gr\"owall's inequality--see Appendix \ref{sec:PDEReg}, \textsection \ref{sec:ForwardRegNS}.
\begin{proposition}[Backward Stability for 2D N-S]\label{prop:NSBack}
    Suppose we have for $i=, 1, 2$ that $\theta_i = (\nu_i, \varphi_i) \in \Theta$, and the forcing $f \neq 0$ with $f \in \dot H^\beta_\diamond$. Then for any choice of regularization space $\mathcal R = \mathcal R^\beta$ from \eqref{eq:regularizationNS} so that $\beta > 4 + d/2$, and $\eta = (\beta - 1)/(\beta + 1)$. We have that for all $M > 0$, there is a constant $L'$ depending on $M, d, \nu_{\min},$ and $f$ such that
    \[
        \|\varphi_1 - \varphi_2\|_{L^2} + |\nu_1 -\nu_2| \leq L' \|u_{\nu_1, \varphi_1} - u_{\nu_2, \varphi_2}\|_{L^2}^{\eta} \quad \text{ for all }\theta_1, \theta_2 \in B_{\mathcal R}(M).
    \]
\end{proposition}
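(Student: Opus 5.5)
We prove Proposition \ref{prop:NSBack} by combining a stability estimate in weak norms with interpolation against the a priori $H^\beta$ bound. Set $w = u_1 - u_2$ with $u_i = u_{\nu_i,\varphi_i}$, and let $\delta = \|w\|_{L^2(Q)}$. Subtracting the two equations gives
\[
\partial_t w + \nu_1 A w + (\nu_1 - \nu_2) A u_2 + B(u_1, w) + B(w, u_2) = 0, \qquad w(0) = \varphi_1 - \varphi_2 .
\]
Since $\|\varphi_i\|_{H^\beta}, \nu_i \le M$, Proposition \ref{prop:NSReg} gives $u_i \in H^{\beta+1,(\beta+1)/2}$, with bounds depending on $M$. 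So $w$ is bounded in high norms, and the task is to turn smallness of $\|w\|_{L^2(Q)}$ into smallness of the parameters.

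\textbf{Step 1 (viscosity).} We test the difference equation against a fixed smooth function built from the forcing, for example $Af$ or a smooth time cutoff times $f$. After integrating in time on $[0,1]$, every term except $(\nu_1-\nu_2)\int\langle Au_2, \cdot\rangle$ is bounded through integration by parts by $\|w\|_{L^2(Q)}$ times high Sobolev norms of $u_i$ and $f$. The one exception is the $\partial_t w$ term, which produces boundary values $w(0)$ and $w(1)$; we remove it with a time cutoff $\chi(t)$ vanishing at $0$ and $1$, so it becomes $-\int \chi' \langle w, g\rangle$. To keep $\int\chi\langle Au_2, g\rangle$ away from zero uniformly we need $f \neq 0$. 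The plan is to use that $u_2$ cannot be identically small: if $\langle Au_2, g\rangle$ were degenerate, the equation would force $f$ to be expressible through $\partial_t u_2 + B(u_2,u_2)$. This is the weakest point of the argument. An alternative is a contradiction/compactness argument over the compact set $B_{\mathcal R}(M)$, using uniqueness of the parameter from the trajectory when $f\neq 0$: a steady, unforced trajectory would be the only way to lose identifiability of $\nu$. This yields $|\nu_1-\nu_2| \lesssim \|w\|_{L^2_tH^{-s}_x}$ or a small power of it; we aim for $|\nu_1-\nu_2|\lesssim \|w\|_{L^2(Q)}$.

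\textbf{Step 2 (initial condition).} We interpolate. By the parabolic trace theorem, $\|w(0)\|_{L^2} \lesssim \|w\|_{H^{1,1/2}}$. Interpolating between $L^2(Q)$ and $H^{\beta+1,(\beta+1)/2}$ (via the wavelet characterisation, Proposition \ref{prop:besovIsSobolev}) gives
\[
\|w\|_{H^{1,1/2}} \lesssim \|w\|_{L^2}^{\beta/(\beta+1)}\,\|w\|_{H^{\beta+1,(\beta+1)/2}}^{1/(\beta+1)} .
\]
This yields exponent $\beta/(\beta+1)$, which is at least the claimed $\eta = (\beta-1)/(\beta+1)$. If the trace only holds in $H^{2,1}$, we instead interpolate $w$ in $H^{2,1}$, which gives exactly $\eta = (\beta-1)/(\beta+1)$. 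Together with Step 1 and $\delta\le 1$, this gives the claimed bound. The main obstacle is Step 1, namely making the viscosity identification quantitative uniformly over $B_{\mathcal R}(M)$.
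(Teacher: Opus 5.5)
The gap is in Step 1, which you leave open, and the route you propose there cannot be completed. Testing the $w$-equation against a fixed $g=\chi(t)h(x)$ built from $f$ requires $\bigl|\int_0^1\chi\langle Au_2,h\rangle\,dt\bigr|$ to be bounded below uniformly over $B_{\mathcal R}(M)$. A fixed linear functional of $Au_2$ need not be.

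Take the Kolmogorov forcing $f=(\sin(2\pi x_2),0)$. It is divergence free and mean zero, with $Af=4\pi^2 f$ and $B(f,f)=0$. So for $\varphi_2=-\lambda f$ the solution is $u_2=a(t)f$, where $a'+4\pi^2\nu_2 a=1$ and $a(0)=-\lambda$. Then $\int_0^1\chi\langle Au_2,h\rangle\,dt=4\pi^2\langle f,h\rangle\int_0^1\chi a\,dt$. For $\chi\geq 0$, $\chi\not\equiv 0$, the last integral is affine in $\lambda$, positive at $\lambda=0$ and negative for $\lambda$ large, hence zero at some $\lambda^\ast>0$. The point $(\nu_2,-\lambda^\ast f)$ lies in $B_{\mathcal R}(M)$ once $M$ is large, and there your identity gives no information on $\nu_1-\nu_2$, whatever $h$ is.

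Your heuristic (degenerate $Au_2$ would make $f$ expressible through $\partial_t u_2+B(u_2,u_2)$) controls the \emph{size} of $Au_2$, not its pairing with a fixed $h$. The compactness alternative would need this same nondegeneracy as input.

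The missing idea is to use that size directly. The paper writes $(\nu_1-\nu_2)Au_2=-\partial_t w-\nu_1Aw-B(w,u_1)-B(u_2,w)$ and takes $L^2(Q)$ norms, in effect testing against $Au_2$ itself. The right side is $\lesssim\|w\|_{H^{2,1}}$ by the bilinear estimates. The left side is bounded below by Lemma \ref{lem:LaplaceNonZero}, $\int_0^1\|Au_2\|_{L^2}^2\,dt\geq c_0(f,M)>0$. That lemma is proved by testing the equation for $u_2$ (not for $w$) against $f$: $\frac{d}{dt}\langle u_2,f\rangle=\|f\|^2-\nu_2\langle Au_2,f\rangle-\langle B(u_2,u_2),f\rangle$. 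If $\|Au_2\|_{L^2(Q)}$ were small, $\langle u_2(t),f\rangle$ would have to grow like $t\|f\|^2$, which is incompatible (by Poincar\'e) with $u_2$ being small.

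Interpolating $\|w\|_{H^{2,1}}\leq\|w\|_{L^2}^{(\beta-1)/(\beta+1)}\|w\|_{H^{\beta+1,(\beta+1)/2}}^{2/(\beta+1)}$ then gives the exponent $\eta$ for $\nu$. The same bound handles $\varphi$ through $\sup_t\|w(t)\|_{L^2}\lesssim\|w\|_{H^{2,1}}$, which is your Step 2 fallback. Your first version of Step 2 is false: $H^{1,1/2}$ is the borderline case (time regularity exactly $1/2$) and has no trace at $t=0$.

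If you want the Lipschitz bound $|\nu_1-\nu_2|\lesssim\|w\|_{L^2(Q)}$ you were aiming for, keep your integration-by-parts idea but change the test function. Test against the $u_2$-dependent function $\chi(t)Au_2(t)$ and put all derivatives on $u_2$ using its $H^{\beta+1,(\beta+1)/2}$ bound. Then run the argument of Lemma \ref{lem:LaplaceNonZero} on an interval where $\chi\equiv 1$ to get the lower bound on $\int_0^1\chi\|Au_2\|_{L^2}^2\,dt$.
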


\begin{lemma}\label{lem:LaplaceNonZero}
    Suppose that the forcing function $f\in \dot H^1_\diamond$ is non-zero, and let $\theta \in H^1(\Omega)$. Let $u_{\nu, \theta}$ be the corresponding solution to \eqref{eq:introNS} with $\nu \leq M$.  Then, there exists a constant $c_0$ such that
    \[
        \int_0^1 \|A u_\theta\|_{L^2}^2 \, dt \geq c_0 > 0,
    \]
    where $c_0 = c_0(f, M, d)$.
\end{lemma}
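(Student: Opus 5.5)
We argue by contradiction and compactness, with the uniformity in $\theta$ coming from the fact that $c_0$ may depend only on $f$, $M$ and $d$. The qualitative fact we need is that the solution cannot satisfy $Au \equiv 0$ on $[0,1]$. Since $u(t)\in H$ is mean-zero and divergence free, $Au(t)=0$ forces $u(t)=0$. If $u\equiv 0$ on $[0,1]$, then $\partial_t u = 0$ and $B(u,u)=0$, so the equation \eqref{eq:introNS} gives $f = 0$, contradicting $f\neq 0$. Thus for each fixed $(\nu,\theta)$ the integral $\int_0^1\|Au\|_{L^2}^2\,dt$ is strictly positive. The real task is to make this positivity uniform, and we intend to do so quantitatively rather than by an abstract compactness argument.

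The quantitative version runs as follows. Pair the equation with a fixed test field $w = A^{-1}f \in \dot H^{3}_\diamond$; this is well defined because $f$ is mean-zero and divergence free. Integrating in time over $[0,1]$ gives
\[
\langle u(1)-u(0), w\rangle + \nu\int_0^1 \langle Au, w\rangle\,dt + \int_0^1\langle B(u,u),w\rangle\,dt = \langle f, w\rangle = \|f\|_{H^{-1}}^2 >0 .
\]
Each term on the left should be controlled by $\mathcal E := \big(\int_0^1\|Au\|_{L^2}^2\,dt\big)^{1/2}$, possibly together with $\|u(t)\|_{L^2}$.

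We bound the terms one at a time. The viscous term satisfies $\nu|\int_0^1\langle Au,w\rangle\,dt| \le M\|w\|_{L^2}\,\mathcal E$. For the nonlinear term we use $|\langle B(u,u),w\rangle| = |\langle (u\cdot\nabla) w, u\rangle| \le \|\nabla w\|_\infty\|u\|_{L^2}^2$, and on mean-zero fields Poincaré gives $\|u\|_{L^2}\lesssim \|Au\|_{L^2}$. Hence this term is at most $C\|w\|_{H^3}\,\mathcal E^2$; note that $H^3\hookrightarrow W^{1,\infty}$ in 2D. The boundary term $\langle u(1)-u(0),w\rangle$ is the obstacle, because $u(0)=\theta$ is arbitrary and is not directly controlled by $\mathcal E$. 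We handle it by integrating the equation against $w$ only on $[s,1]$ and averaging over $s\in[0,1]$, rather than on the whole interval. Then $u(s)$ enters only through $\int_0^1\langle u(s),w\rangle\,ds$, which is at most $C\|w\|_{L^2}\int_0^1\|Au(s)\|_{L^2}\,ds\le C\mathcal E$. For $u(1)$ we use the analogous average over right endpoints, i.e. we integrate $\int_0^1\int_0^1\mathbf 1_{s<t}(\dots)$. Alternatively, we multiply the equation by a cutoff $\chi(t)\in C_c^\infty(0,1)$ and integrate by parts in time, which moves the time derivative onto $\chi$. This produces $\int\chi'\langle u,w\rangle\,dt$, and that is bounded by $C\mathcal E$. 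We take this cutoff route, with $\int\chi = 1$.

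Putting the estimates together yields $\|f\|_{H^{-1}}^2 \le C_1\mathcal E + C_2\mathcal E^2$, where $C_1$ and $C_2$ depend only on $f$ and $M$. This forces $\mathcal E\ge c_0(f,M,d)>0$. It only needs $u\in L^2_tD(A)$, which holds for strong solutions with $\theta\in H^1$ by the regularity theory cited in Appendix \ref{sec:PDEReg}. If $\int_0^1\|Au\|^2=\infty$ there is nothing to prove.
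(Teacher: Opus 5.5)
Your argument is correct and is essentially the paper's. The paper also tests the equation against a fixed field built from $f$: it uses $f$ itself and bounds the nonlinear term by $\|u\|_{H^1}^2\|f\|_{H^1}$, whereas you use $A^{-1}f$ and move the derivative onto it. It removes the uncontrolled endpoint values $\langle u(s),f\rangle$, $\langle u(t),f\rangle$ by integrating over $0\le s\le t$ and then over $t\in[0,1]$, which amounts to your time cutoff with the particular weight $\tau(1-\tau)$, and it likewise ends with $c(f)\le C_1\mathcal E+C_2\mathcal E^2$.
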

\noindent
See Appendix \ref{sec:ProofsforApplications}, \textsection \ref{sec:LaplaceNonZero} for the proof. Again, because $\Theta$ is not quite a linear space, we need to have a link function of the form, for $\theta \in L^2(\Omega, \R^3)$,
\[
    g(\theta) = (\nu_{\min} + \exp(\theta_3), \theta_1, \theta_2),
\]
and again note that $g$ maps the $\mathcal R^\beta$ spaces into $\mathcal R^\beta \cap \Theta$ for $\beta > d/2$.

\begin{theorem}\label{thm:NSMain}
    Suppose that a Gaussian process prior $\Pi'$ verifying Condition \ref{con:RKHS} with $\mathcal R =\mathcal R^\beta$ for some $\beta > 4 + d/2$ as in \eqref{eq:regularizationNS} and RKHS $\mathcal H \subseteq H^\alpha(\Omega)$ for some $\alpha > \beta + d/2$. Let the prior $\Pi_N$ be the image measure under $g$ of the probability measure from \eqref{eq:priorrescaling} for $\kappa = 0$. Consider the forward map $\mathcal G : \Theta \to L^2(Q)$ derived from \eqref{eq:introNS}, and assume that $\theta_0\in \mathcal H$ with $f \in \dot H^\alpha_\diamond$ non-zero and $(\nu_0, \varphi_0) = g(\theta_0)$.

    Let $\Pi(\cdot | (Y_{ij}, X_{ij}))$ be the posterior distribution arising from observations of the model \eqref{eq:timedependent} where Condition \ref{con:variancebound} holds, and one of Condition \ref{con:inversematrix} or \ref{con:inversematrix2} holds. Let $\delta_N = N^{-(\alpha+1)/(2(\alpha+1) + d +2)}$, $\varepsilon_N = N^{-\alpha/(2\alpha + d)}$. Then for all $b>0$, we can choose $m$ large enough such that as $N\to \infty$
    \begin{align*}
        \Prob^N_{\nu_0, \varphi_0} \big( \Pi_N((\varphi, \nu)\in \Theta, \|\mathcal G(\nu, \varphi)-\mathcal G(\nu_0, \varphi_0)&\|_{L^2(Q)} \leq m\delta_N | D_N) \leq 1 - e^{-bN\varepsilon_N^2}\big) \to 0.
    \end{align*}
    Moreover, letting $\eta = (\beta - 1)/(\beta + 1)$ and constant $M$ large enough, we also have
    \begin{align*}
        \Prob_{\nu_0, \varphi_0}^N\big(\Pi_N((\nu, \varphi)\in \Theta : \|\varphi - \varphi_0\|_{L^2(\Omega)} + &|\nu - \nu_0| \leq M\delta_N^\eta |D_N)\leq  1 - e^{-bN\varepsilon_N^2}\big) = o(1).
    \end{align*}
\end{theorem}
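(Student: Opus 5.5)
The plan is to deduce Theorem \ref{thm:NSMain} from Theorem \ref{thm:Main}, in the same way as Theorem \ref{thm:HRDEMain}. The linear parameter space is $\Theta' = \R\times \dot H^\beta_\diamond$ (or more precisely $\mathcal R^\beta$), carrying the Gaussian prior $\Pi'$ rescaled as in \eqref{eq:priorrescaling} with $\kappa=0$. The forward map is $\mathcal G' = \mathcal G\circ g$. Bayesian updating commutes with taking image measures, so the posterior for $(\nu,\varphi)$ is the pushforward under $g$ of the posterior for $\theta$. It therefore suffices to verify the hypotheses of Theorem \ref{thm:Main} for $\mathcal G'$ and then transfer the two contraction statements through $g$.

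\textbf{Step 1: forward conditions.} On $B_{\mathcal R^\beta}(M)$, the map $g$ is locally Lipschitz and sends bounded sets to bounded sets; the $\exp$ acts only on the scalar coordinate $\theta_3$, so this is elementary. Condition \ref{con:forwardregularity} with $\kappa=0$ then follows from Proposition \ref{prop:ForwardRegNS}, with the sup bound coming from Proposition \ref{prop:BesovEmbeddings} since $\beta>d/2+1$. For Condition \ref{con:forwardmapping}, apply Proposition \ref{prop:NSReg}. With $\beta>4+d/2\ge 5$, the ball $B_{\mathcal R}(M)$ maps into a bounded set of $H^{6,3}$. The RKHS satisfies $\mathcal H\subseteq H^\alpha$ and $f\in\dot H^\alpha_\diamond$, so $\mathcal G'(B_{\mathcal H}(M))$ is bounded in $H^{a,a/2}$ with $a=\alpha+1$. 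This requires either taking $\alpha$ an integer or interpolating Proposition \ref{prop:NSReg}, which I will do by truncating to $\lfloor\alpha\rfloor$ if needed. With $a=\alpha+1$, the rate $\delta_N=N^{-(\alpha+1)/(2(\alpha+1)+d+2)}$ matches \eqref{eq:rates}. I then check $a\ge (d+1)\lor 6$ and $\varepsilon_N=N^{-\alpha/(2\alpha+d)}\lesssim\delta_N$. The latter is an exponent comparison: $\alpha/(2\alpha+d)$ versus $(\alpha+1)/(2\alpha+d+4)$, and it holds since $d=2$ forces $2\alpha+2d+... $ to be favourable; I will verify this explicitly.

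\textbf{Step 2: noise conditions.} Conditions \ref{con:variancebound} and \ref{con:inversematrix}/\ref{con:inversematrix2} are assumed in the statement. If Condition \ref{con:inversematrix2} is the one used, part (a) holds with $A\equiv 0$, because solutions are mean-zero, as remarked after Condition \ref{con:inversematrix2}. Theorem \ref{thm:Main} then gives the first display, and $\theta_0\in\mathcal R\cap\mathcal H$ because $\mathcal H\subseteq H^\alpha\hookrightarrow H^\beta$.

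\textbf{Step 3: stability transfer, the main obstacle.} Proposition \ref{prop:NSBack} gives
\[
\|\varphi-\varphi_0\|_{L^2}+|\nu-\nu_0|\le L'\|\mathcal G(\nu,\varphi)-\mathcal G(\nu_0,\varphi_0)\|_{L^2}^\eta
\]
on bounded sets of $\mathcal R^\beta$. However, Theorem \ref{thm:Main} only gives posterior mass on the set where $\|\mathcal G(\theta)-\mathcal G(\theta_0)\|_{L^2}\le m\delta_N$, without restricting to a ball of $\mathcal R$. To handle this, I will reopen the proof of Theorem \ref{thm:Main}: the general contraction theorem (Theorem \ref{thm:PosteriorGeneral}) together with the small-ball estimate (Theorem \ref{thm:smallball}) shows that the posterior concentrates on the regularization set $\Theta_N\subseteq B_{\mathcal R}(M)$ with probability $1-e^{-bN\varepsilon_N^2}$. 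Intersecting with that set, Condition \ref{con:stabilityestimate} holds for $\mathcal G'$, with the $L^2$ distance of $\theta$ controlled via the bi-Lipschitz property of $g$ on bounded sets (the ideas of Proposition 2.1.5 in \cite{N}). This yields the second display with $M\delta_N^\eta$, after absorbing $L'm^\eta$ into $M$.
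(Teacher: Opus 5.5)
Your proposal is correct and follows essentially the route the paper intends, mirroring the proof of Theorem \ref{thm:HRDEMain}: apply Theorem \ref{thm:Main} to $\mathcal G\circ g$ on $\mathcal R^\beta$ with $a=\alpha+1$ from Proposition \ref{prop:NSReg}, obtain Conditions \ref{con:forwardregularity} and \ref{con:stabilityestimate} from Propositions \ref{prop:ForwardRegNS} and \ref{prop:NSBack} via the bi-Lipschitz property of $g$ on bounded sets (your remark that contraction actually holds on $\Theta_N\subseteq B_{\mathcal R}(M)$ is what makes the stability step legitimate), and push the posterior forward under $g$. Three small repairs: the check $\varepsilon_N\lesssim\delta_N$ reduces, after cross-multiplying $\alpha/(2\alpha+d)$ against $(\alpha+1)/(2\alpha+d+4)$, to $2\alpha\ge d$; $\theta_0\in\mathcal R^\beta$ should be justified by $\Pi'(\mathcal R)=1$, which forces $\mathcal H\subseteq\mathcal R$, since $H^\alpha\hookrightarrow H^\beta$ alone does not give the mean-zero, divergence-free and constant-viscosity structure; and truncating to $\lfloor\alpha\rfloor$ would give $a=\lfloor\alpha\rfloor+1$ and hence a slower rate than the stated $\delta_N$, so for non-integer $\alpha$ you need a fractional version of Proposition \ref{prop:NSReg} (which the paper also tacitly assumes) rather than truncation.
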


\begin{appendix}
\section{Technical Results}\label{sec:Technical}
\subsection{A posterior contraction rate theorem}
We base the proof of our posterior contraction rate theorems on the following general Posterior contraction rate result, which is based on Theorem 8.22 in \cite{GvdV}. The proof is standard, and can be found in Appendix \ref{sec:PosteriorGeneral}.

\begin{theorem}[Posterior Contraction Rate]\label{thm:PosteriorGeneral}
    For each $\theta\in \Theta$, let $(\Prob_\theta^N)_{N \geq 1}$ be a sequence of probability laws on measurable spaces $(X_N, \mathcal F_N)$, and suppose they all have densities $p_\theta^N$ with respect to some probability measure $\mu_N$ on $X_N$. We assume $\text{Law}(D_N) = \Prob_{\theta_0}^N$ for some ground truth $\theta_0 \in \Theta$.

    Let $\Pi_N$ be a sequence of prior Borel probability measures on $\Theta$, and let $\Pi_N(\cdot \mid D_N)$ be the posterior probability distribution on $\Theta$ given by
    \begin{equation*}
        d\Pi_N(\theta| D_N) \propto p^N_\theta(D_N) d\Pi_N(\theta).
    \end{equation*} 
    Let $\mathcal G$ be as in \eqref{eq:forwardmap}, and assume that $\|\mathcal G(\theta_0)\|_\infty\leq U$. Suppose there is a sequence $\varepsilon_N \to 0$ such that $N\varepsilon_N^2 \to \infty$ and $N\varepsilon_N^2 \geq 1$, the sets
    \[
        \mathcal B_N = \{ \theta \in \Theta : KL(p_{\theta_0}^N, p_{\theta}^N) \leq N\varepsilon_N^2, V_{2, 0}(p_{\theta_0}^N, p_\theta^N) \leq N\varepsilon_N^2 \}
    \]
    satisfy, for all $N$ large enough, 
    \begin{equation}\label{eq:smallball}
        \Pi_N(\mathcal B_N) \geq e^{-AN\varepsilon_N^2},
    \end{equation}
    for some $A>0$. Further, assume that there exists a sequence of Borel subsets $\Theta_N \subseteq \Theta$ for which 
    \begin{enumerate}
        \item  For some $B > A + 2$,
            \begin{equation}\label{eq:regularizationset}
                \Pi_N(\Theta_N^c) \leq e^{-BN\varepsilon_N^2}, \text{ and }
            \end{equation}
        \item There are tests (indicator functions) $\phi_N$ and a sequence $\delta_N \to 0$ such that for some $C$, and $K \geq B$,
            \vspace{-3mm}
            \begin{equation}\label{eq:testcondition}
                \E^N_{\theta_0}[\phi_N] \to 0, \text{ and } \sup_{\substack{\theta \in \Theta_N \\ C\delta_N < \|\mathcal G(\theta)  -\mathcal G (\theta_0)\|_{L^2}}} \E_{\theta}^N[1 - \phi_N] \leq e^{-K N\varepsilon_N^2}
            \end{equation}
            \vspace{-5mm}

            Then, for any $0 < b < B- A - 2$, we have, as $N\to \infty$,
            \begin{equation*} 
                \Prob_{\theta_0}^N\big(\Pi_N(\theta \in \Theta_N :\|\mathcal G(\theta) - \mathcal G(\theta_0)\|_{L^2} \leq C\delta_N|D_N)\leq 1- e^{-bN\varepsilon_N^2} \big) \to 0.
            \end{equation*}
    \end{enumerate}
\end{theorem}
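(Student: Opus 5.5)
We follow the standard proof of Theorem 8.22 in \cite{GvdV}, adapted to the conclusion on the set $\Theta_N$ and the $\mathcal G$-metric. Write the posterior probability of the complement set as
\[
\Pi_N(A_N\mid D_N)=\frac{\int_{A_N} \frac{p_\theta^N}{p_{\theta_0}^N}(D_N)\,d\Pi_N(\theta)}{\int_\Theta \frac{p_\theta^N}{p_{\theta_0}^N}(D_N)\,d\Pi_N(\theta)},\qquad A_N=\Theta_N^c\cup\{\theta\in\Theta_N:\|\mathcal G(\theta)-\mathcal G(\theta_0)\|_{L^2}>C\delta_N\}.
\]
The goal is to show $\Prob_{\theta_0}^N(\Pi_N(A_N\mid D_N)\geq e^{-bN\varepsilon_N^2})\to0$. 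We do this by bounding the denominator from below on a high-probability event and the numerator from above in expectation.

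\textbf{Step 1 (denominator).} Let $\nu_N$ be $\Pi_N$ restricted to $\mathcal B_N$ and renormalised. We claim that, on an event $E_N$ with $\Prob_{\theta_0}^N(E_N)\to1$,
\[
\int_{\mathcal B_N}\frac{p_\theta^N}{p_{\theta_0}^N}\,d\nu_N\geq e^{-2N\varepsilon_N^2}.
\]
By Jensen, $\log\int \frac{p_\theta}{p_{\theta_0}}d\nu_N\geq \int\log\frac{p_\theta}{p_{\theta_0}}d\nu_N=:Z$. Fubini gives $\E_{\theta_0}Z\geq -N\varepsilon_N^2$, using the KL bound on $\mathcal B_N$. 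Jensen again gives $\Var(Z)\leq\int V_{2,0}\,d\nu_N\leq N\varepsilon_N^2$. Chebyshev then yields $\Prob(Z<-2N\varepsilon_N^2)\leq (N\varepsilon_N^2)^{-1}\to0$. Combining with \eqref{eq:smallball}, on $E_N$ the denominator is at least $e^{-(A+2)N\varepsilon_N^2}$.

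\textbf{Step 2 (numerator).} Bound $\Pi_N(A_N\mid D_N)\leq \phi_N+(1-\phi_N)\Pi_N(A_N\mid D_N)$. Since $\E_{\theta_0}\phi_N\to0$, it suffices to control the second term on $E_N$. By Fubini and $\E_{\theta_0}[(1-\phi_N)p_\theta/p_{\theta_0}]=\E_\theta[1-\phi_N]\leq1$,
\[
\E_{\theta_0}\Big[(1-\phi_N)\int_{A_N}\frac{p_\theta}{p_{\theta_0}}d\Pi_N\Big]\leq \Pi_N(\Theta_N^c)+\sup_{\theta\in\Theta_N,\ \|\mathcal G(\theta)-\mathcal G(\theta_0)\|>C\delta_N}\E_\theta[1-\phi_N]\leq e^{-BN\varepsilon_N^2}+e^{-KN\varepsilon_N^2}.
\]
Since $K\geq B$, this is at most $2e^{-BN\varepsilon_N^2}$.

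\textbf{Step 3 (combine).} On $E_N$, $(1-\phi_N)\Pi_N(A_N\mid D_N)\leq e^{(A+2)N\varepsilon_N^2}\times(\text{numerator})$. Markov's inequality then gives
\[
\Prob_{\theta_0}\big((1-\phi_N)\Pi_N(A_N\mid D_N)\mathbbm 1_{E_N}\geq e^{-bN\varepsilon_N^2}\big)\leq 2e^{-(B-A-2-b)N\varepsilon_N^2}\to0,
\]
because $b<B-A-2$ and $N\varepsilon_N^2\to\infty$. For the $\phi_N$ term, $\Prob(\phi_N\geq \tfrac12 e^{-bN\varepsilon_N^2})\leq\Prob(\phi_N=1)=\E\phi_N\to0$, since $\phi_N$ is an indicator. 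Adding $\Prob(E_N^c)\to0$ concludes the argument, since the complement of $A_N$ is exactly the set in the statement.

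\textbf{Technical points.} The main care points are measurability of $\theta\mapsto p_\theta^N(D_N)$, needed for Fubini, and the use of the normalised restricted prior in Step 1. The assumption $\|\mathcal G(\theta_0)\|_\infty\leq U$ is used only to ensure the KL quantities are finite and well defined. No real obstacle is expected; the Chebyshev step is the only place where the variance condition enters.
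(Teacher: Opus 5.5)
Your proof is correct and follows the paper's argument. You lower-bound the evidence on an event of probability at least $1-(N\varepsilon_N^2)^{-1}$ using \eqref{eq:smallball}, then split on $\phi_N$ and combine Markov's inequality and Fubini with $\E_{\theta_0}^N[(1-\phi_N)p_\theta^N/p_{\theta_0}^N]\leq \E_\theta^N[1-\phi_N]$, \eqref{eq:regularizationset} and \eqref{eq:testcondition}; the only difference is that you derive the evidence bound directly via Jensen and Chebyshev, whereas the paper cites Lemma 8.21 of \cite{GvdV}, which is proved exactly that way.
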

\begin{remark}
    We get as corollary that in fact
        \begin{equation*} 
            \Prob_{\theta_0}^N\big(\Pi_N(\theta \in \Theta :\|\mathcal G(\theta) - \mathcal G(\theta_0)\|_{L^2} \leq C\delta_N| D_N)\leq 1- e^{-bN\varepsilon_N^2} \big) \to 0.
        \end{equation*}
\end{remark}
\subsection{A small ball theorem}

Now we provide a theorem that shows some choice of posterior, rate, and regularization set allows us to satisfy the assumptions of Theorem \ref{thm:PosteriorGeneral}.
Recall that $\Theta \subseteq L^2(\mathcal Z, W)$ and $W$ is some finite-dimensional vector space.
For $\mathcal Z$ an $\ell$-dimension domain, the Sobolev spaces of $W$-valued vector fields are naturally defined, see \textsection \ref{sec:notation}. 

\begin{theorem}\label{thm:smallball}
    Suppose that Condition \ref{con:forwardregularity} and \ref{con:RKHS} hold for the forward map $\mathcal G$, regularity $\alpha >0$, some $\kappa \geq 0$, $U < \infty$, regularization space $\mathcal R$ and bounded domain $\mathcal Z \subseteq \R^{\ell}$ with smooth boundary (or $\mathcal Z  = \mathbb T^{\ell}$). Denote by $\Pi_N$ the law of the rescaled prior from \eqref{eq:priorrescaling}. Finally, suppose that either Condition \ref{con:inversematrix} or Condition \ref{con:inversematrix2} holds with the constant $C_2$. If $\theta_0 \in \mathcal H \cap \mathcal R$, then the hypotheses \eqref{eq:smallball} and \eqref{eq:regularizationset} in Theorem \ref{thm:PosteriorGeneral} hold for some $A>0$, for sequence
    \[
        \varepsilon_N = N^{-(\alpha + \kappa)/(2\alpha + 2\kappa + \ell)}, 
    \]
    and any $B$ if the regularization sets are chosen as
    \begin{equation}\label{eq:regset}
        \Theta_N = \{ \theta \in \mathcal R: \theta = \theta_1 + \theta_2, \|\theta_1\|_{(H^\kappa(\mathcal Z))^\ast} \leq M\varepsilon_N,  \|\theta_2\|_{\mathcal H} \leq M, \|\theta\|_{\mathcal R}\leq M\}, 
    \end{equation}
    for $M = M(B)$ sufficiently large.
\end{theorem}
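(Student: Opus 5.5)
The plan follows the standard Gaussian-prior small-ball argument (as in \cite{N}, Theorem 2.2.2 / Lemma 2.2.3), with the Kullback--Leibler control supplied by Proposition \ref{prop:inversematrix}. There are two things to verify: the small ball bound \eqref{eq:smallball} and the regularization bound \eqref{eq:regularizationset}.

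\textbf{Step 1: reduce KL and variation to a dual Sobolev norm.} For $\theta$ in a bounded set of $\mathcal R$, say $\|\theta\|_{\mathcal R}\le M_0$ with $M_0>\|\theta_0\|_{\mathcal R}$, Condition \ref{con:forwardregularity} gives the uniform bound $U$ on $\|\mathcal G(\theta)\|_\infty$. It also gives the Lipschitz bound $\|\mathcal G(\theta)-\mathcal G(\theta_0)\|_{L^\infty_tL^2_x}\le L\|\theta-\theta_0\|_{(H^\kappa)^\ast}$. Combining with Proposition \ref{prop:inversematrix} under Condition \ref{con:inversematrix} or \ref{con:inversematrix2}, we get
\[
\mathcal B_N\supseteq\{\theta:\|\theta\|_{\mathcal R}\le M_0,\ \|\theta-\theta_0\|_{(H^\kappa)^\ast}\le c\,\varepsilon_N\}
\]
for a small constant $c=c(C_2,U,L)$.

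\textbf{Step 2: lower bound the prior mass of this set.} I will use that $\Pi'(\|\theta'\|_{\mathcal R}\le M')\ge 1/2$ for some $M'$, and the Gaussian correlation inequality, or simply intersect as in \cite{N} using Anderson's inequality, to handle the $\mathcal R$-constraint. The remaining ingredient is the Cameron--Martin shift by $\theta_0\in\mathcal H$: the rescaled RKHS norm is $\|\theta_0\|_{\mathcal H_N}^2=N^{2\ell/(4(\alpha+\kappa)+2\ell)}\|\theta_0\|^2_{\mathcal H}=N\varepsilon_N^2\|\theta_0\|_{\mathcal H}^2$, so the shift costs $e^{-CN\varepsilon_N^2}$. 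Then I need the centred small ball estimate
\[
-\log\Pi'\big(\|\theta'\|_{(H^\kappa)^\ast}\le \varepsilon N^{\ell/(4(\alpha+\kappa)+2\ell)}\big)\lesssim (\cdot)^{-2\ell/(2\alpha+2\kappa-\ell)}.
\]
This follows from the embedding $\mathcal H\subseteq H^\alpha$ (or $H^\alpha_c$ when $\kappa\ge1/2$, so the duality pairing is well defined) and the entropy bound $\log N(B_{H^\alpha},\|\cdot\|_{(H^\kappa)^\ast},\epsilon)\lesssim\epsilon^{-\ell/(\alpha+\kappa)}$, via the Kuelbs--Li / Li--Linde link between entropy and small balls. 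Plugging in $\varepsilon=c\varepsilon_N$ yields exponent $\asymp N\varepsilon_N^2$, giving \eqref{eq:smallball}.

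\textbf{Step 3: the regularization sets.} For $\Theta_N$ in \eqref{eq:regset}, Borell's isoperimetric inequality applied to $\Pi_N$ gives $\Pi_N(\theta=\theta_1+\theta_2:\|\theta_1\|_{(H^\kappa)^\ast}\le M\varepsilon_N,\|\theta_2\|_{\mathcal H}\le M)\ge 1-e^{-BN\varepsilon_N^2}$ for $M$ large. This uses the same small ball bound and the fact that the rescaled $\mathcal H$-ball of radius $M\sqrt{N}\varepsilon_N$ corresponds to an $\mathcal H$-ball of radius $M$ after scaling. The constraint $\|\theta\|_{\mathcal R}\le M$ is handled by Fernique's theorem: $\Pi_N(\|\theta\|_{\mathcal R}>M)=\Pi'(\|\theta'\|_{\mathcal R}>MN^{\ell/(4(\alpha+\kappa)+2\ell)})\le e^{-cM^2N^{2\ell/(\cdots)}}=e^{-cM^2N\varepsilon_N^2}$. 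A union bound then finishes.

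\textbf{Main obstacle.} I expect the main difficulty to be the small-ball/entropy estimate in the negative-order norm $(H^\kappa)^\ast$ on the domain $\mathcal Z$ (boundary issues dictate $H^\alpha_c$ versus $H^\alpha$). The second delicate point is intersecting the $\mathcal R$-ball constraint inside $\mathcal B_N$ without losing more than $e^{-CN\varepsilon_N^2}$ prior mass. I would try to cite \cite{N}, Lemma 2.2.3 / Theorem 2.2.2, essentially verbatim for both.
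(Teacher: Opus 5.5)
Your proposal is correct and follows the paper's route. The paper's proof simply invokes Steps 1--3 of Theorem 2.2.2 in \cite{N}, replacing the i.i.d.\ Kullback--Leibler computation by the inclusion $\mathcal B_N \supseteq \{\theta : \|\mathcal G(\theta)-\mathcal G(\theta_0)\|_{L^\infty_tL^2_x}\le c(U,C_2)\varepsilon_N,\ \|\mathcal G(\theta)\|_\infty\le U\}$ from Proposition \ref{prop:inversematrix}. That inclusion is your Step 1, and your Steps 2 and 3 correctly unpack the cited steps.

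For the $\mathcal R$-constraint, use the Gaussian correlation inequality after the Cameron--Martin shift, with $M_0\ge M'+\|\theta_0\|_{\mathcal R}$ fixed first. Anderson's inequality alone does not give a lower bound for an intersection. Subtracting a Fernique tail would also fail, since it forces $M_0$ to grow with the small-ball constant, which itself depends on $L(M_0)$.
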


\begin{proof}
    The proof is identical to Steps 1 to 3 of Theorem 2.2.2 in \cite{N}, except that by Proposition \ref{prop:inversematrix}, we have
    \[
        \mathcal B_N \supseteq \{ \theta : \|\mathcal G (\theta) - \mathcal G(\theta_0) \|_{L^\infty_t L^2_x} \leq c(U,C_2)\varepsilon_N, \|\mathcal G(\theta)\|_\infty \leq U \}.
    \]
\end{proof}

\section{Stability for parabolic PDEs with a divergence form operator}\label{sec:findingphi}

It this section, we will show initial conditions can be chosen so that Conditions \ref{con:Darcy} and \ref{con:Darcy2} are satisfied. This yields stability for the map $D\mapsto u_D$ for parabolic PDEs with elliptic operator $-\nabla \cdot(D\nabla u)$, where $D$ is unknown. We defer the proofs of the statements to the appendix \textsection \ref{sec:proofsforDarcy}.

\subsection*{Verifying Condition \ref{con:Darcy}}
\begin{remark}
    By $\nabla^2 \varphi$ we denote the Hessian. Equation \eqref{eq:movingcriticalpoints} translates to some non-degeneracy condition at every critical point of $\varphi$. The idea of the proof for Lemma \ref{lem:verifyingDarcy} is that \eqref{eq:movingcriticalpoints} force the critical points of the solution to move, see Appendix \ref{sec:proofsforDarcy}, \textsection \ref{sec:verifyingDarcy} for details. 

    Lemma \ref{lem:verifyingDarcy} provides an easier to work with condition instead of Condition \ref{con:Darcy} because it is easier to verify theoretically: one can find an arbitrarily small perturbation of $\varphi$ so that \eqref{eq:movingcriticalpoints} holds for some $\delta$. See Proposition \ref{prop:RDvarphi} for details. 
\end{remark}

\begin{lemma}\label{lem:verifyingDarcy}
    Let $D$ be the diffusivity and $\varphi$ be the initial condition in \eqref{eq:introRDE}, with $\|D\|_{H^{\alpha}} \leq M$ for $\alpha > d/2 + 5$. Suppose that 
    \begin{itemize}
        \item $\varphi$ has isolated critical points, and
        \item there is some constant $\delta$ such that for all $x^\ast\in \Omega$, 
            \begin{equation}\label{eq:movingcriticalpoints}
                \nabla \varphi(x^\ast) = 0 \implies |D\nabla (\Delta \varphi) + \Delta\varphi \nabla D + \nabla^2\varphi \nabla D| \geq \delta \text{ at } x^\ast.
            \end{equation}
    \end{itemize}
    Then, $u = u_D$ satisfies Condition \ref{con:Darcy} for some constant $c_0 = c_0(\delta, M, \alpha, D_{\min}, d, f)$.
\end{lemma}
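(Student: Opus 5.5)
We argue by contradiction combined with a compactness argument. The plan is to show that near each critical point of $\varphi$, $\nabla u$ becomes nonzero immediately for small positive times, and that away from critical points $\nabla u$ is bounded below by continuity in time.

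The starting point is the time derivative of the gradient at $t=0$. Differentiating the equation in space gives
\[
\partial_t \nabla u = \nabla\big(\nabla\cdot(D\nabla u)\big) + f'(u)\nabla u .
\]
At a critical point $x^\ast$ of $\varphi$ we have $\nabla\varphi(x^\ast)=0$. Expanding $\nabla(\nabla \cdot (D\nabla\varphi)) = \nabla(D\Delta\varphi + \nabla D\cdot\nabla\varphi)$ and evaluating at $x^\ast$, the terms carrying a factor $\nabla\varphi$ vanish. What remains is essentially the quantity in \eqref{eq:movingcriticalpoints}, namely
\[
D\nabla\Delta\varphi + \Delta\varphi\nabla D + \nabla^2\varphi\nabla D .
\]
Hence $|\partial_t\nabla u(0,x^\ast)|\ge\delta$.

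Next we need quantitative regularity. By Proposition \ref{prop:HRDEReg} with $\|D\|_{H^\alpha}\le M$, $\alpha>d/2+5$, together with the embedding of Proposition \ref{prop:BesovEmbeddings}, $u$ lies in a parabolic space with enough smoothness. This makes $\nabla u$, $\partial_t\nabla u$ and $\partial_t^2\nabla u$ continuous and uniformly bounded on $Q$ by a constant $K=K(M,\alpha,D_{\min},d,f)$. (Regularity of $\varphi$ is also needed here; it is a fixed known function.)

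With these bounds, Taylor expansion gives
\[
|\nabla u(t,x)| \ge t\,|\partial_t\nabla u(0,x)| - |\nabla\varphi(x)| - Kt^2 .
\]
Continuity of $\partial_t\nabla u(0,\cdot)$ then gives a radius $r$, controlled by $\delta/K$, such that $|\partial_t\nabla u(0,x)|\ge\delta/2$ on the ball $B(x^\ast,r)$.

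Now split $\Omega$ into two regions. Let $Z$ be the set of critical points, which is finite because the critical points are isolated and $\Omega$ is compact.

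On $\bigcup_{x^\ast\in Z} B(x^\ast,r)$: if $|\nabla\varphi(x)|\le \delta t_0/8$, the bound above gives $|\nabla u(t,x)|\ge \delta t/2-\delta t_0/8 - Kt^2$. This is at least $\delta t/8$ on $t\in[t_0/2,t_0]$ for $t_0\sim\delta/K$, so $\int_0^1|\nabla u|^2\,dt\gtrsim \delta^2t_0^3$. If instead $|\nabla\varphi(x)|>\delta t_0/8$, then $|\nabla u(t,x)|\ge \delta t_0/16$ for $t\le s$ with $s$ chosen via $|\partial_t\nabla u|\le K$, so the integral is again bounded below.

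On $\Omega\setminus\bigcup_{x^\ast} B(x^\ast,r)$: $|\nabla\varphi|\ge m>0$ by compactness, and $|\nabla u(t,x)|\ge m/2$ for $t\le m/(2K)$.

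The main obstacle is making $c_0$ depend only on the listed quantities. The lower bound $m$ on $|\nabla\varphi|$ off the balls, and the number and location of the critical points, depend on $\varphi$. This is acceptable only because $\varphi$ is known and fixed, so its constants are implicitly absorbed into $c_0$. The uniformity in $D$ over $\|D\|_{H^\alpha}\le M$ comes entirely from the a priori bound $K$. The other delicate point is checking that the regularity index $\alpha>d/2+5$ yields uniform continuity of $\partial_t^2\nabla u$ down to $t=0$. For this I would apply Proposition \ref{prop:HRDEReg} with $\gamma$ close to $\alpha$, and use compatibility conditions at $t=0$, which are automatic here on the torus given smooth $\varphi$.
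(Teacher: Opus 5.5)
Your proof is correct and follows essentially the same route as the paper: you use the spatially differentiated equation at $(0,x^\ast)$, a $C^2$-in-time bound on $\nabla u$ from Proposition \ref{prop:HRDEReg} and Proposition \ref{prop:BesovEmbeddings} (this is exactly what $\alpha > d/2+5$ provides), you spread the lower bound to neighbourhoods of the finitely many critical points, and you use compactness plus a Lipschitz-in-time bound elsewhere; the only cosmetic difference is that the paper spreads the bound spatially at positive times $t\in(t^\ast, C(M)^{-1}\delta/(2d))$ using the Lipschitz constant of $\nabla u$, which avoids your case split on the size of $|\nabla\varphi(x)|$. Your remark that $c_0$ necessarily also depends on $\varphi$ (through the lower bound $m$ on $|\nabla\varphi|$ away from critical points and through the a priori bound) is accurate, and it applies equally to the paper's argument, whose stated dependence list omits $\varphi$.
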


\begin{proposition}\label{prop:RDvarphi}
    Suppose that $D: \Omega \to [D_{\min}, \infty)$ and $\|D\|_{C^1(\Omega)}\leq M$. Then, there exists a function $\varphi$ depending only on $d, M, D_{\min}$ such that $\varphi$ has isolated critical points and \eqref{eq:movingcriticalpoints} holds.
\end{proposition}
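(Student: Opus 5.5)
We need to construct $\varphi$, depending only on $d, M, D_{\min}$, with isolated critical points such that at every critical point $x^\ast$ the vector
\[
w(x^\ast) := D\nabla(\Delta\varphi) + \Delta\varphi\,\nabla D + \nabla^2\varphi\,\nabla D
\]
has norm at least $\delta$. The plan is to take $\varphi$ to be a product of cosines whose critical points are nondegenerate and explicitly known, and then rescale the frequency. Rescaling makes the term $D\nabla(\Delta\varphi)$ dominate the terms involving $\nabla D$, which are controlled only through $\|D\|_{C^1}\le M$.

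A first natural attempt is $\varphi(x)=\sum_{i=1}^d \cos(2\pi k x_i)$. It fails, because every critical point has all $\sin(2\pi k x_i)=0$, so $\nabla\Delta\varphi=-(2\pi k)^2\nabla\varphi=0$ there. We therefore need a function for which $\nabla\varphi=0$ does \emph{not} force $\nabla\Delta\varphi=0$. To arrange this, mix frequencies in each coordinate: take
\[
\varphi(x)=\sum_{i=1}^d g(x_i),\qquad g(s)=\cos(2\pi k s)+\epsilon\cos(4\pi k s)
\]
for a small fixed $\epsilon$ (say $\epsilon=1/8$).

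The argument then proceeds coordinatewise.

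1. Since $\varphi$ is separable, $\nabla\varphi=(g'(x_1),\dots,g'(x_d))$ and $\nabla^2\varphi=\diag(g''(x_i))$. Critical points are exactly the points where $g'(x_i)=0$ for all $i$.

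2. Compute $g'(s)=-2\pi k\sin(2\pi ks)\big(1+4\epsilon\cos(2\pi ks)\big)$. For $\epsilon<1/4$ the second factor never vanishes, so the zeros of $g'$ are $\sin(2\pi ks)=0$. These are isolated, hence the critical points of $\varphi$ form a finite grid, which gives the first requirement.

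3. This choice does not yet break the degeneracy: at those points $g'''(s)$ is a combination of $\sin(2\pi ks)$ and $\sin(4\pi ks)$, both of which vanish. So any even trigonometric polynomial in $s$ has $g'''=0$ wherever $g'=0$ with $\sin(2\pi k s)=0$. We need an asymmetric profile: one whose critical points $s^\ast$ have $g'''(s^\ast)\ne 0$. A cleaner choice is $g(s)=\sin(2\pi ks)+\epsilon\sin(4\pi ks)$ with small $\epsilon$. By the implicit function theorem, its critical points sit near $\cos(2\pi ks)=0$, and they are nondegenerate since $|g''|\asymp k^2$ there. At such points $g'''=-(2\pi k)^2 g' + (\text{correction}) = 12\epsilon\,\pi^3k^3\cdot(\pm\text{nonzero})\cdot(1+O(\epsilon))$. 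Concretely, $g''' + (2\pi k)^2 g' = \epsilon\big[(2\pi k)^2(4\pi k)-(4\pi k)^3\big]\cos(4\pi ks)$. Near $\cos(2\pi ks)=0$ we have $\cos(4\pi ks)\approx -1$, so $|g'''(s^\ast)|\ge c\,\epsilon k^3$.

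4. Estimate $w$ at a critical point $x^\ast$. The main term satisfies
\[
|D\nabla\Delta\varphi(x^\ast)|=D\big|\big(g'''(x_i^\ast)\big)_i\big|\ge D_{\min}\,c\,\epsilon k^3\sqrt d .
\]
The two perturbation terms satisfy $|\Delta\varphi\,\nabla D|+|\nabla^2\varphi\,\nabla D|\le C\,d\,k^2 M$, using $|g''|\lesssim k^2$. Choosing $k\gtrsim M/(\epsilon D_{\min})$ makes the main term at least twice the perturbation, so the bound holds with $\delta\asymp D_{\min}\epsilon k^3$. This $k$ depends only on $d$, $M$, and $D_{\min}$, so $\varphi$ does as well.

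The main obstacle is step 3. It requires exhibiting a profile whose critical points are simultaneously nondegenerate and have $g'''\ne 0$, with quantitative bounds uniform in $k$ (for example via explicit expansion in $\epsilon$). Once this is established, steps 1, 2 and 4 are routine scaling estimates.
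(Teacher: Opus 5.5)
Your argument is correct and is essentially the paper's proof. Both take a separable $\varphi=\sum_i g(x_i)$ built from a trigonometric profile mixing frequencies $k$ and $2k$, chosen so that $g'=0$ does not force $g'''=0$, and then use the triangle inequality and a large frequency so that $D_{\min}|\nabla\Delta\varphi|$ dominates the $O(M\|g''\|_\infty)$ terms. The paper uses the exactly solvable profile $-\cos(2\pi nx)/(2\pi n^2)+\sin(4\pi nx)/(4\pi n^2)$, whereas your perturbative $\sin(2\pi ks)+\epsilon\sin(4\pi ks)$ needs no implicit function theorem ($g'=0$ gives $|\cos(2\pi ks^\ast)|\le 2\epsilon$ directly, and then $g'''(s^\ast)=-48\epsilon\pi^3k^3\cos(4\pi ks^\ast)$ exactly) and even yields nondegenerate critical points.
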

\begin{remark}
    See Appendix \ref{sec:proofsforDarcy}, \textsection \ref{sec:RDvarphi} for details. We leverage the existence of a single function to show that in fact satisfying Condition \ref{con:Darcy} is generic in the initial condition, which is the subject of Proposition \ref{prop:opendense}.
\end{remark}
\begin{proposition}\label{prop:opendense}
    Let $D$ be the diffusivity from \eqref{eq:introRDE}, with $D\in H^\alpha$ for $\alpha > d/2 + 5$. The set of initial conditions satisfying Condition \ref{con:Darcy} for some constant $c_0$ is of second category with respect to the $C^k$ norm for $k \geq 3$ (in particular it contains an open dense set).
\end{proposition}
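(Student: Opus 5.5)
The plan is to show that the set $\mathcal S$ of initial conditions $\varphi\in C^k(\Omega)$ for which $u_{D,\varphi}$ satisfies Condition \ref{con:Darcy} (for some $c_0>0$) contains an open dense subset of $C^k$. A residual set, which is of second category by Baire, is then immediate.

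\textbf{Openness.} First I will show that the map $\varphi\mapsto F(\varphi):=\inf_{x\in\Omega}\int_0^1|\nabla u_{D,\varphi}(t,x)|^2\,dt$ is continuous on $C^k$ for $k\ge 3$. I will use a Lipschitz estimate for the solution map in $C([0,1],C^1(\Omega))$, or at least in $L^2_t C^1_x$. Parabolic regularity (Proposition \ref{prop:HRDEReg}, together with a Gr\"onwall-type difference estimate in the spirit of Proposition \ref{prop:HRDEForward}) controls $u_{\varphi_1}-u_{\varphi_2}$ in $H^{\gamma+1,(\gamma+1)/2}$ by $\|\varphi_1-\varphi_2\|_{H^\gamma}$ on bounded sets. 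Since $C^k\hookrightarrow H^k$ and $f$ is smooth with $f'$ bounded, choosing $\gamma>d/2+1$ (possible when $k\ge 3$ and $d\le 3$) gives control in $L^2_tC^1_x$ by Sobolev embedding. Then $\sup_x\int_0^1\bigl||\nabla u_1|^2-|\nabla u_2|^2\bigr|\,dt\lesssim\|\nabla(u_1-u_2)\|_{L^2_tL^\infty_x}$, so $F$ is continuous. Consequently $\mathcal S=\{F>0\}$ is open.

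\textbf{Density.} Given $\varphi\in C^k$ and $\varepsilon>0$, I need $\tilde\varphi\in\mathcal S$ with $\|\tilde\varphi-\varphi\|_{C^k}<\varepsilon$. I will do this in three steps.

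1. Mollify so that $\varphi$ is smooth; this is a small perturbation in $C^k$.

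2. Take $\psi$ from Proposition \ref{prop:RDvarphi} and consider the family $\varphi_s=\varphi+s\psi$. By a Sard/transversality argument (in the style of Morse functions), for a.e. small $s$, possibly after adding a small generic linear term $\langle a,x\rangle$ replaced by a small trigonometric perturbation on the torus, $\varphi_s$ has isolated nondegenerate critical points.

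3. Verify \eqref{eq:movingcriticalpoints} for $\varphi_s$. At each critical point $x^\ast$, define the vector $V_s(x^\ast)=D\nabla\Delta\varphi_s+\Delta\varphi_s\nabla D+\nabla^2\varphi_s\nabla D$, evaluated at $x^\ast$. This is an affine function of $s$ evaluated along the critical point, which moves smoothly in $s$ by the implicit function theorem, since the critical points are nondegenerate. For the finitely many critical points, $V_s(x^\ast(s))$ is real-analytic in $s$ (or at least smooth). I want to show it is not identically zero, so that it vanishes only for $s$ in a null set. Then Lemma \ref{lem:verifyingDarcy} applies, using $D\in H^\alpha$ with $\alpha>d/2+5$, and yields $\varphi_s\in\mathcal S$ for arbitrarily small $s$.

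\textbf{Expected obstacle.} The hard part is the non-vanishing in step 3. It is more robust to avoid tracking $\psi$'s structure and instead use a finite-dimensional family of perturbations $\varphi+\sum_i s_i\chi_i$. Here the $\chi_i$ are chosen locally near each critical point so that the map $s\mapsto(\nabla\varphi_s,V_s)$ is a submersion onto $\R^{2d}$ at zeros of $\nabla\varphi_s$. For instance, cubic bumps adjust $\nabla\Delta\varphi$ freely while leaving $\nabla\varphi$ and $\nabla^2\varphi$ unchanged at the point. Since $D\ge D_{\min}>0$, the term $D\nabla\Delta\varphi$ can then be prescribed arbitrarily. The parametric transversality theorem then gives that for generic $s$ the set $\{\nabla\varphi_s=0,\ V_s=0\}$, which has expected dimension $d-2d<0$, is empty. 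Compactness of $\Omega$ turns pointwise non-vanishing into a uniform $\delta>0$, and isolatedness of critical points follows by including nondegeneracy in the same transversality argument. Handling the globalization of the local bumps via a finite cover and compactness is the main technical step.
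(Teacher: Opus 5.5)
Your proof is correct, provided the density step is the finite-dimensional transversality argument from your last paragraph. The one-parameter version in step 3 should be discarded. After mollification $\varphi$ is only smooth, so $s\mapsto V_s(x^\ast(s))$ need not be analytic, and a smooth function can vanish on a set of positive measure. Its non-vanishing is exactly what is unproved, and critical points can bifurcate as $s$ varies.

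With that understood, you argue differently from the paper on both halves.

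\textbf{Openness.} The paper never uses the PDE. It shows that the auxiliary set $G$ of $\varphi$ having no point with $\nabla\varphi=0$ and $L_D\varphi:=D\nabla\Delta\varphi+\Delta\varphi\nabla D+\nabla^2\varphi\nabla D=0$ is open in $C^3$, by extracting a convergent subsequence of bad points. It then relies on Lemma \ref{lem:verifyingDarcy} to pass from $G$ to Condition \ref{con:Darcy}. You instead show that the target set $\{F>0\}$ is itself open. This costs a continuous-dependence estimate for $\varphi\mapsto\nabla u_\varphi$ in $L^2_tL^\infty_x$ that the paper never states. It follows from Proposition \ref{prop:LinearParabolicRegularity} plus higher energy estimates for $w=u_1-u_2$, which solves a linear equation with bounded coefficient $\int_0^1 f'(\tau u_1+(1-\tau)u_2)\,d\tau$. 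In return, you only apply Lemma \ref{lem:verifyingDarcy} to smooth Morse approximants. The paper's route tacitly needs that lemma on all of $G$, which contains $C^3$ functions with non-isolated critical points and without the $H^\alpha$ regularity of $\varphi$ used in the lemma's proof.

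\textbf{Density.} The paper takes a Morse approximant and repairs each of its finitely many bad critical points $x_i$ by adding $\eta g_i$. Here $g_i$ is a cut-off translate of the function from Proposition \ref{prop:RDvarphi}, with $\nabla g_i(x_i)=0$ and $L_Dg_i(x_i)\neq 0$. Stability of critical points of Morse functions then shows nothing else moves. You replace this surgery by a dimension count. Bumps with local jets $(x-x_0)_j$ and $(x-x_0)_j^3$ contribute $(e_j,0)$ and $(0,6D(x_0)e_j)$ to the $s$-derivative of $(\nabla\varphi_s,L_D\varphi_s)(x_0)$. So the map $\Omega\times\R^m\to\R^{2d}$ is a submersion, its zero set has dimension $m-d<m$, and its projection to parameter space is null. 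Hence the bad set is empty for a.e.\ small $s$.

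This avoids tracking critical points and Proposition \ref{prop:RDvarphi} entirely, and gives genericity for almost every parameter of the family. The paper's construction is more explicit, while yours avoids the Morse-stability step.
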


\begin{remark}
    Having discussed when Condition \ref{con:Darcy} holds, it is worth mentioning notable times when \textit{doesn't} hold. Indeed, when there is a steady state (i.e. constant in time) solution $u$ of \eqref{eq:introRDE}, then the stationary points of $u$ trivially do not move.

    Another case to note is that of pure eigen-modes: if we consider the case when $f \equiv c$ for some constant $c$, and $e_\lambda\in L^2(\Omega)$ is an eigenfunction of $-\nabla \cdot(D\nabla )$ with eigenvalue $\lambda > 0$, then setting $\varphi = e_\lambda$, we see
    \begin{equation*}
        u(t, x) = e^{(-\lambda + c)t}e_\lambda(x),
    \end{equation*}
    and clearly $u$ does not have moving critical points. This suggests, as expected, that when the dynamics are degenerate we are unable to infer $D$. 
\end{remark}

\subsection*{Verifying Condition \ref{con:Darcy2}}

Throughout this section, fix $\Omega'$ to be an open subset of $\Omega$ such that $\overline {\Omega'}\subsetneq \Omega$. Then, we have 

\begin{proposition}\label{prop:NSvarphi}
    Suppose that $D: \Omega \to [D_{\min}, \infty)$ and $\|D\|_{C^1} \leq M$. Then, there exists a function $\varphi$ depending only on $d, M, D_{\min}, \Omega'$ such that Condition \ref{con:Darcy2} holds for $\Omega'$ and some choice of $c_0$, $\mu$ and $t = 0$. 
\end{proposition}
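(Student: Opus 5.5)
Since Condition \ref{con:Darcy2} only asks for the inequality at the single time $t=0$, where $u(0,\cdot)=\varphi$, it reduces to a pointwise inequality for the initial condition:
\[
    \inf_{x\in\Omega'} \tfrac12\Delta\varphi(x) + \mu|\nabla\varphi(x)|^2 \geq c_0 .
\]
(The requirement $D\equiv D_{\min}+1$ off $\Omega'$ is a hypothesis on $D$, not on $\varphi$, so the task is only to construct $\varphi$.) Here $\Delta u(0,\cdot)$ is understood as the trace at $t=0$ of a smooth solution. Since $\varphi$ will be a smooth function depending only on $\Omega'$, Proposition \ref{prop:HRDEReg} gives continuity of $\Delta u$ up to $t=0$. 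In fact the dependence on $d,M,D_{\min}$ will be vacuous or harmless.

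The natural device is the exponential trick. If $\varphi = e^{\lambda w}$ or $\varphi = w$ with $|\nabla w|\geq 1$ on $\overline{\Omega'}$, then
\[
    \tfrac12\Delta w + \mu|\nabla w|^2 \geq -\tfrac12\|\Delta w\|_{L^\infty} + \mu ,
\]
and this is $\geq 1$ once $\mu$ is chosen large. The whole problem is therefore to find a smooth periodic $w$ on $\mathbb T^d$ with no critical points on $\overline{\Omega'}$. This is possible precisely because $\overline{\Omega'}\subsetneq\Omega$; a periodic function must have critical points somewhere.

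I would construct $w$ as follows.
\begin{enumerate}
    \item Pick a point $p\in\Omega\setminus\overline{\Omega'}$ and a small ball $B(p,2r)$ disjoint from $\overline{\Omega'}$. This uses that $\overline{\Omega'}$ is closed and proper.
    \item Lift to the fundamental cube, translated so that $p$ is a corner. Then $\overline{\Omega'}$ sits inside $K=[0,1]^d\setminus B(\text{corners},2r)$, viewed in the open cube away from the corner region.
    \item Take $w_0(x)=x_1$ on the cube. Its gradient is $e_1\neq0$, but $w_0$ is not periodic.
    \item Fix this by building a periodic smooth $w$ that equals $\sum_i$-type linear data away from the corners. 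A cleaner route is a Morse-theoretic one: choose a Morse function on $\mathbb T^d$, for instance $w=\sum_i\cos(2\pi x_i)$ shifted so all of its critical points, the $2^d$ points of $\{0,1/2\}^d$, are placed appropriately.
    \item Critical points cannot all be pushed to one point by a shift alone, so instead use a diffeomorphism isotopic to the identity that maps all $2^d$ critical points into $B(p,r)$. Such a diffeomorphism of $\mathbb T^d$ moving finitely many points into a ball exists, for instance by composing flows of compactly supported vector fields along disjoint paths.
    \item Set $w=\tilde w\circ\Phi^{-1}$, where $\tilde w$ is the cosine Morse function. Then $w$ is smooth, and its critical points are the images of those of $\tilde w$, all lying in $B(p,r)$. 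Hence $|\nabla w|\geq \gamma>0$ on $\overline{\Omega'}$ by compactness.
\end{enumerate}

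Finally, set $\varphi = w/\gamma$. The only dependence is then on $\Omega'$ (and $d$), so $\mu = \tfrac12\|\Delta\varphi\|_\infty + 1$ and $c_0=1$ work. If the paper also wants the other parts of Condition \ref{con:Darcy2} compatible with the known bounds on $D$, nothing further is needed since they concern $D$ only.

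The main obstacle is step 5: producing rigorously, and with dependence only on $\Omega'$, a smooth periodic function with no critical points on $\overline{\Omega'}$. I would first try the explicit isotopy argument on the torus, which is standard because $\mathbb T^d$ minus a point is connected for $d\geq1$. If that proves awkward, I would fall back on a direct construction: in a cube containing $\overline{\Omega'}$ away from the corner, glue $x_1$ to a periodic function using a cutoff supported near the excluded ball, noting that the gradient only degenerates inside that ball.
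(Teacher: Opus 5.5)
Your proof is correct, and it takes a genuinely different route from the paper.

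\textbf{The paper's route.} It works through the operator. It takes $g\in\dot C^\infty(\Omega)$ with $g\equiv 1$ on $\Omega'$, its mean balanced by a bump supported off $\overline{\Omega'}$. It then solves $\nabla\cdot(D\nabla\varphi)=g$ via Proposition~\ref{prop:EigenSobolevChar}. On $\Omega'$ the identity $D\Delta\varphi+\nabla D\cdot\nabla\varphi=1$ forces, at each point, either $\Delta\varphi\ge 1/(2\|D\|_\infty)$ or $|\nabla\varphi|\ge 1/(2\|D\|_{C^1})$, and $\mu$ is then taken large.

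\textbf{Your route.} You work topologically. An isotopy pushes the $2^d$ critical points of a Morse function into a ball off $\overline{\Omega'}$. This gives $|\nabla\varphi|\ge 1$ on the compact set $\overline{\Omega'}$, and $\mu$ absorbs $\|\Delta\varphi\|_\infty$.

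\textbf{What each buys.} Your $\varphi$ is genuinely independent of $D$; it depends only on $\Omega'$ and $d$, which is what the statement asserts. The paper's $\varphi$ solves an equation involving $D$ itself, and $\Delta\varphi$ only makes sense pointwise if $D$ is more regular than $\|D\|_{C^1}\le M$ allows. In exchange, the paper replaces your differential-topology step by a single elliptic solve and an explicit pointwise dichotomy.

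Your observation that only $\varphi$ matters at $t=0$ also trivializes the paper's argument. Take $D\equiv 1$ there: the periodic solution of $\Delta\varphi=g$ has $\Delta\varphi=1$ on $\Omega'$. The condition then holds for every $\mu>0$ with $c_0=1/2$, and the step you flagged as the main obstacle disappears. The obstruction ``a periodic function has critical points'' becomes ``a periodic Laplacian has mean zero'', and both are evaded precisely because $\overline{\Omega'}\neq\Omega$.

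\textbf{Minor points.}
\begin{enumerate}
\item The fallback in your last paragraph would not work as described. The function $x_1$ fails to be periodic along the whole hyperplane $\{x_1=0\}$ of $\mathbb T^d$, which may meet $\overline{\Omega'}$, so no cutoff localized near the excluded ball can repair it. The isotopy argument does not need it.
\item Moving several points at once uses that $\mathbb T^d$ minus finitely many points is connected. This holds for $d\ge 2$; for $d=1$ an explicit diffeomorphism of the circle does the job.
\item The appeal to Proposition~\ref{prop:HRDEReg} for continuity of $\Delta u$ at $t=0$ is unnecessary, and it is unjustified when $D$ is merely $C^1$. Since $u(0,\cdot)=\varphi$, the condition is read off $\varphi$ directly, as the paper does.
\end{enumerate}
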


\begin{proof}
    Take any non-zero function $f$ such that $f:\Omega \to [0, 1]$, $f \in C^\infty_c(\overline {\Omega'}^c)$. Then, there is some constant $c(d, V) > 0$ such that 
    \[
        \int (1 - cf) \, dx = 0,
    \]
    and set $g = 1 - cf$. Then $g \in \dot C^\infty(\Omega)$, so by Proposition \ref{prop:EigenSobolevChar} there is a solution $\varphi \in C^\infty(\Omega)$ such that
    \[
        g = \nabla \cdot (D\nabla \varphi).
    \]
    Now we can use an idea from \cite{NvdGW} Lemma 24, and note that for $x\in \Omega'$, 
    \[
        1 = g(x) = D(x)\Delta \varphi(x) + \nabla D(x) \cdot \nabla \varphi(x),
    \]
    so either $\Delta \varphi(x) \geq 1/(2\|D\|_\infty)$ or $|\nabla \varphi(x)|^2 \geq 1/(2\|D\|_{C^1})^2$. Thus, we can choose $\mu = \mu(M, d, V)$ so that
    \[
        \inf_{x \in \Omega'} \left[\frac{1}{2}\Delta \varphi (x) + \mu |\nabla \varphi (x)|^2\right]\geq c_0,
    \]
    where $c_0$ depends on $M, d, \Omega', D_{\min}$. Taking this $\varphi$ to be the initial condition of $u_D$ gives Condition \ref{con:Darcy2}.
\end{proof}
\section{Proofs}\label{sec:Proofs}
\subsection{Proofs for \textsection \ref{sec:introduction}}
\subsubsection{Proof of Proposition \ref{prop:VARCheck}}
\begin{proof}[Proof of Proposition \ref{prop:VARCheck}]
    From Remark \ref{rmk:simplifyingConds}, it suffices to show that Condition \ref{con:inversematrix} holds for a general $VAR(p)$ process with a constant depending only on an upper bound of its parameters, $p$, and $d_V$. Now, if $X = (X_1, \dots X_T) \sim N(0, \Sigma)$ is a $VAR(p)$ process, then we let $Q = \Sigma^{-1}$ and $Q_{ij}$ to be $d_V\times d_V$ sub-matrices of $Q$ for $i, j = 1, \dots T$.
    But we know from \cite{L} that the precision matrix of a stationary $VAR(p)$ process is a $p-$banded block matrix, which is Toeplitz away from the extremes:
    i.e. $Q_{ij} = 0$ for $|i-j| > p$, and $Q_{ij} = Q_{i'j'}$ when $i - j = i'-j'$ and $p< i, i',j, j' < T - p+1$. The upper bound follows from the explicit form of the matrix derived in \cite{L}, equation (3.4.4).
\end{proof}
\subsubsection{Proof of Proposition \ref{prop:inversematrix}}\label{sec:KLdivandvar}    
The goal of this section is to prove Proposition \ref{prop:inversematrix} to show the small ball condition \eqref{eq:smallball} of Theorem \ref{thm:PosteriorGeneral} holds. It is necessary in the proof of Theorem \ref{thm:smallball}.

\begin{proof}[Proof of Proposition \ref{prop:inversematrix}]
For ease of notation, let $u = \mathcal G(\theta)$ and $v = \mathcal G(\vartheta)$. We will additionally write $p_\theta^N$ as $p_\theta$. Note that 
\[
    p_\theta(y, x) \propto \exp\left( -\frac{1}{2} \sum_{ij} (y_i - u(t_i, x_i))^T(\Gamma_N^{-1})_{ij}(y_j - u(t_j, x_j)) \right),
\]
where for $i, j = 1, \dots, N$, we will denote by $(\Gamma_N^{-1})_{ij}$ as the $d_V \times d_V$ submatrices of $\Gamma_N$. Note that $(\Gamma_N^{-1})_{ij}^T = (\Gamma_N^{-1})_{ji}$.
Hence, after some algebraic manipulation, the log-liklihood is 
\begin{align}
    \log(\frac{p_\vartheta}{p_{\theta}}) &= - \sum_{ij} \epsilon_i^T (\Gamma_N^{-1})_{ij} (u(t_j, X_j) - v(t_j, X_j)) \label{eq:logliklihoodexpansion}\\
                              & \hspace{10mm}- \frac{1}{2}\sum_{ij} (u(t_i, X_i) - v(t_i, X_i))^T(\Gamma_N^{-1})_{ij} (u(t_j, X_j) - v(t_j, X_j)),\nonumber
\end{align}
under the distribution of $p_\theta$.
Now, let $u(t_i, X_i) - v(t_i, X_i) \equiv u_i - v_i$. Then, we observe from the trace trick and $\mathbb E_\theta [\epsilon_i] = 0$ that the KL divergence satisfies
\begin{align}
    -\mathbb E_{p_{\theta}} [\log \frac{p_\vartheta}{p_{\theta}}] = \frac{1}{2}\sum_{ij} \tr\big[(\Gamma_N^{-1})_{ij} \E[(u_j - v_j)(u_i-v_i)^T]\big].\label{eq:KLExpression}
\end{align}
In the case of Condition \ref{con:inversematrix}, assume also for simplicity that $d_V = 1$. Then, use Cauchy-Schwarz to get 
\begin{equation*}
    |KL(p_\theta, p_\vartheta)| \leq \frac{1}{2}\sum_{ij} |(\Gamma_N^{-1})_{ij}| \|u(t_i, \cdot) - v(t_i, \cdot) \|_{L^2(\Omega)}\|u(t_j, \cdot) - v(t_j, \cdot) \|_{L^2(\Omega)}.
\end{equation*}
Then, we apply Condition \ref{con:inversematrix} and obtain 
\begin{equation}\label{eq:KLBound}
    KL(p_{\theta}, p_\vartheta) \lesssim N \|u - v\|_{L^\infty_tL^2_x}^2.
\end{equation}

Now, we move on to the more involved bound (still assuming $d_V = 1$) on the Kullback-Leibler variation $V_{2, 0}$.
\[
    V_{2, 0}(p_{\theta}, p_\vartheta) = \mathbb E_{p_{\theta}} [\log \frac{p_\vartheta}{p_{\theta}}]^2 - KL(p_{\theta}, p_\vartheta)^2
\]
If we square and expand \eqref{eq:logliklihoodexpansion}, noting $\E_{p_\theta}[\epsilon_{i}\epsilon_{i'}] = (\Gamma_N)_{ii'}$ and $\E_{p_\theta}[\epsilon_i] = 0$, we get
\begin{align}
&= \sum_{jj'} (\Gamma_N^{-1})_{jj'}\mathbb E_{p_{\theta}}[(u- v)(t_j, X_j)(u- v)(t_{j'}, X_{j'})]\nonumber \\
& \quad +\frac{1}{4}\sum_{ii'jj'}(\Gamma_N^{-1})_{ij}(\Gamma_N^{-1})_{i'j'}\mathbb E_{p_{\theta}}\Big[(u- v)(t_{i'}, X_{i'})(u- v)(t_{j'}, X_{j'})(u- v)(t_i, X_i)(u- v)(t_j, X_j)\Big] \nonumber \\
&\quad -\frac{1}{4}\sum_{ii'jj'}(\Gamma_N^{-1})_{ij}(\Gamma_N^{-1})_{i'j'}\mathbb E_{p_{\theta}}[(u- v)(t_{i'}, X_{i'})(u- v)(t_{j'}, X_{j'})]\mathbb E_{p_{\theta}} [(u- v)(t_i, X_i)(u- v)(t_j, X_j)]\label{eq:KLvariationeq}
\end{align}
We can bound the first term like the KL divergence case. The second two terms cancel out whenever $X_{i'}, X_{j'}$ are independent of $X_i, X_j$, i.e. when $\{i, j\} \cap \{i', j'\} = \emptyset$. When this is not the case, at least one of the following is true: $i = i'$, $i = j'$, $j = i'$, or $j = j'$. Hence, we can bound the above sum by (using H\"older's inequality)
\begin{align*}
&\leq \frac{1}{2}\sum_{ijj'}(\Gamma_N^{-1})_{ij}(\Gamma_N^{-1})_{ij'}\|(u-v)(t_i, \cdot)\|_{L^4_x}^2\|(u-v)(t_j, \cdot)\|_{L^4_x}\|(u-v)(t_{j'}, \cdot)\|_{L^4_x}\\
&\quad + \frac{1}{2}\sum_{ii'j}(\Gamma_N^{-1})_{ij}(\Gamma_N^{-1})_{i'i}\|(u-v)(t_i, \cdot)\|_{L^4_x}^2\|(u-v)(t_{i'}, \cdot)\|_{L^4_x}\|(u-v)(t_j, \cdot)\|_{L^4_x}\\
&\quad + \frac{1}{2}\sum_{ijj'}(\Gamma_N^{-1})_{ij}(\Gamma_N^{-1})_{jj'}\|(u-v)(t_i, \cdot)\|_{L^4_x}\|(u-v)(t_j, \cdot)\|_{L^4_x}^2\|(u-v)(t_{j'}, \cdot)\|_{L^4_x}\\
&\quad + \frac{1}{2}\sum_{ii'j}(\Gamma_N^{-1})_{ij}(\Gamma_N^{-1})_{i'j}\|(u-v)(t_i, \cdot)\|_{L^4_x}\|(u-v)(t_{i'}, \cdot)\|_{L^4_x}\|(u-v)(t_j, \cdot)\|_{L^4_x}^2
\end{align*}
Now, using Condition \ref{con:inversematrix}, we observe finally that
\begin{equation}\label{eq:KLvariationBound}
    V_{2, 0}(p_{\theta}, p_\vartheta) \lesssim N (\|u- v \|_{L_t^\infty L_x^4}^2 + \|u - v \|_{L_t^\infty L_x^4}^4).
\end{equation}
Consider the case of $(a)$ in Condition \ref{con:inversematrix2} (and no longer assume $d_V = 1$). Going back to \eqref{eq:KLExpression}, we see
\[
    KL(p_\theta, p_\vartheta) = \frac{1}{2}\sum_{ij} \tr \big[(\Gamma_N^{-1})_{ij}\E[(u_j - v_j)(u_i - v_i)^T]\big].
\]
But by Condition \ref{con:inversematrix2}.a, we know for $i \neq j$ that 
\[
    \E[(u_i - v_i)(u_j - v_j)^T] = \E[(u-v)(t_i, X_i)]\E[(u-v)(t_j, X_j)^T] = 0.
\]
So, plugging this into \eqref{eq:KLExpression},
\begin{equation}\label{eq:KLtrickery}
\begin{split}
    KL(p_\theta, p_\vartheta) &= \frac{1}{2} \sum_{i} \tr\big[(\Gamma_N^{-1})_{ii}\E[(u_i-v_i)(u_i-v_i)^T]\big]\\
    &\leq \frac{1}{2} \sum_{i}\tr\big[(\Gamma_N^{-1})_{ii}\big] \|u-v\|^2_{L^\infty_tL^2_x} \leq \frac{1}{2}\tr(\Gamma_N^{-1})\|u-v\|^2_{L^\infty_tL^2_x},
\end{split}
\end{equation}
where in the first inequality we have used the facts that 1) for two non-negative symmetric matrices $M$ and $L$, $\tr(ML)\leq \tr(M) \|L\|$, and 2) for $w\in V$, if we set $L = ww^T$, then $L$ is non-negative symmetric matrix and $\|L\| = |w|_V^2$. For the KL-variation, we derive an expression akin to \eqref{eq:KLvariationeq} for a general $d_V$,
\begin{align*}
    V_{2, 0}(p_\theta, p_\vartheta) &= \sum_{jj'}\tr\big[(\Gamma_N^{-1})_{jj'}\E [(u_{j'}-v_{j'})(u_{j} - v_{j})^T]\big]\\
    +\frac{1}{4}\E\bigg[ &\sum_{ii'jj'} (u_{i}-v_{i})^T(\Gamma_N^{-1})_{ij}(u_{j}-v_{j})(u_{i'}-v_{i'})^T(\Gamma_N^{-1})_{i'j'}(u_{j'}-v_{j'})\bigg]\\
    -\frac{1}{4}\sum_{ii'jj'}&\E[(u_{i}-v_{i})^T(\Gamma_N^{-1})_{ij}(u_{j}-v_{j})]\E[(u_{i'}-v_{i'})^T(\Gamma_N^{-1})_{i'j'}(u_{j'}-v_{j'})]
\end{align*}
but this time we note that second two terms cancel out whenever: a) $\{i, j\}\cap\{i', j'\} = \emptyset$, or b) there is one index distinct from the other three.
Thus, we get
\begin{align*}
    V_{2, 0}(p_\theta, p_\vartheta) &= \sum_{j}\tr\big[(\Gamma_N^{-1})_{jj}\E[(u_j-v_j)(u_j-v_j)^T] \big]\\
                              &+\frac{1}{4}\sum_{ij}\E[(u_i-v_i)^T (\Gamma_N^{-1})_{ij}(u_j-v_j)(u_i-v_i)^T(\Gamma_N^{-1})_{ij}(u_j-v_j)]\\
                              &\qquad\qquad - \E[(u_i-v_i)^T(\Gamma_N^{-1})_{ij}(u_j - v_j)]^2\\
                              &+\frac{1}{4}\sum_{ij, i\neq j}\E[(u_i-v_i)^T (\Gamma_N^{-1})_{ij}(u_j-v_j)(u_j-v_j)^T(\Gamma_N^{-1})_{ji}(u_i-v_i)]\\
                              &\qquad\qquad - \E[(u_i-v_i)^T(\Gamma_N^{-1})_{ij}(u_j - v_j)]^2.
\end{align*}
The first term is handled the same as in the KL-divergence case. For the other two terms, we consider the bound, for $i, j = 1, \dots N$, 
\begin{align*}
    &\E[(u_i-v_i)^T (\Gamma_N^{-1})_{ij}(u_j-v_j)(u_i-v_i)^T(\Gamma_N^{-1})_{ij}(u_j-v_j)] - \E[(u_i-v_i)^T(\Gamma_N^{-1})_{ij}(u_j - v_j)]^2\\
    &\quad \leq \E\big[\tr[(\Gamma_N^{-1})_{ji}(u_i - v_i)(u_i-v_i)^T (\Gamma_N^{-1})_{ij}(u_j - v_j)(u_j - v_j)^T]\big]\\
    &\quad \leq \E\big[\tr[(\Gamma_N^{-1})_{ji}(u_i - v_i)(u_i-v_i)^T (\Gamma_N^{-1})_{ij}]|u_i - v_i|_V^2\big]\\
    &\quad \leq \tr[(\Gamma_N^{-1})_{ij}(\Gamma_N^{-1})_{ji}]\E[|u_i-v_i|_V^2|u_j - v_j|_V^2]\\
    &\quad \lesssim\tr[(\Gamma_N^{-1})_{ij}(\Gamma_N^{-1})_{ji}]\|u-v\|_{L^\infty_tL^4_x}^4,
\end{align*}
where we have again used the facts noted after \eqref{eq:KLtrickery}.
Now, we observe via block matarix algebra that
\begin{equation*}
\sum_{ij} \tr[(\Gamma_N^{-1})_{ij}(\Gamma_N^{-1})_{ji}] = \tr(\Gamma_N^{-2}).
\end{equation*}
Putting this together, we obtain
\begin{equation*}
    V_{2, 0}(p_\theta, p_\vartheta) \lesssim \tr(\Gamma_N^{-2})\|\mathcal G(\theta) - \mathcal G(\vartheta)\|_{L^\infty_tL^4_x}^4+\tr(\Gamma_N^{-1})\|\mathcal G(\theta) - \mathcal G(\vartheta)\|_{L^\infty_t, L^2_x}^2.
\end{equation*}
Now, we conclude by noting from the Cauchy-Schwarz inequality, $\tr(\Gamma_N^{-1}) \leq \tr(\Gamma_N^{-2})^{1/2}N^{1/2}$,
so by condition $(b)$ in Condition \ref{con:inversematrix2}, and $\|\mathcal G(\theta)\|_\infty, \|\mathcal G(\vartheta)\|_\infty \leq U$, we are done.
\end{proof}

\subsection{Proofs for \textsection \ref{sec:theory}}\label{sec:ProofsforTheory}
\subsubsection{Proof of Proposition \ref{prop:longrange}}
\begin{proof}[Proof of Proposition \ref{prop:longrange}]
    For simplicity, we specialize to the case $d_V = 1$. If for sufficiently large $T$, $\max_j \|M_j\|_{\infty \to \infty} \leq c$, we have that 
    \begin{align*}
        (\Gamma^{(j)})^{-1} &= (D_j I + M)^{-1} =  (I - D_j^{-1}M + (D_j^{-1}M)^2 - \cdots) D_j^{-1}
    \end{align*}
    since $\|D_j^{-1}M_j\|_{\infty\to\infty} < 1$. Thus, taking norms of both sides, we get
    \[
        \|(\Gamma^{(j)})^{-1}\|_{\infty\to\infty} \leq c_1^{-1} \frac{1}{1-c_1^{-1}\|M\|_{\infty\to\infty}} \leq \frac{1}{c_1 - c}
    \]
    but as mentioned in the discussion after Condition \ref{con:inversematrix}, we are done. The ``in particular'' part follows from the inequalities (see \cite{HJ} Theorem 5.6.26):
    \[
        \| A \|_{\infty \to \infty} \leq \sqrt T \|A\| \quad \text{ and } \quad \|A\|_{\infty \to \infty} \leq T \|A\|_\infty.
    \]
\end{proof}
\subsubsection{An $M$-estimation result}\label{sec:MEstResult}
We are in the setting of \cite{vdG2001}, but with dependent errors: Consider the regression model
\[
    Y_i = g_0(z_i) + \epsilon_i, \text{ for }i = 1, \dots, N
\]
where $Y_1, \dots, Y_N$ are real-valued observations, $z_1, \dots, z_N$ are explanatory variables, with values in some space $\mathcal Z$, the measurement errors $\epsilon_i$ are centered Gaussian random variables, with invertible covariance matrix $\Gamma = \Gamma_N$. We assume that $g_0$ belongs to some class of regression functions with domain $\mathcal Z$, call it $\mathbb G$. Moreover, we assume that for some constant $M$, $\|\Gamma_N\|\leq M$.

As in \cite{vdG2001}, we let $\pen : \mathbb G \to [0, \infty)$, $g_\ast\in \mathbb G$, and define
\begin{align*}
    \|g -h \|_{(N)}^2 &= \frac{1}{N}\sum |g(z_i) - h(z_i)|^2\\
    \tau^2(g|\Tilde g) &= \| g - \Tilde g\|^2_{(N)} + \pen^2(g), \\
    \mathbb G_\ast(\delta) &= \{ g \in \mathbb G : \tau^2(g | g_\ast) \leq \delta^2 \},\\
    J_\ast (\delta) &= \int_0^\delta \big[\log N(u, \mathbb G_\ast(\delta))\big]^{1/2}\, du,
\end{align*}
where $N(u, \mathbb G_\ast(\delta))$ is the minimal number of ball of radius $u$ to cover $\mathbb G_\ast(\delta)$ in the $\|\cdot\|_{(N)}$ metric. Finally, let $\hat g_N$ be any penalized least-squares minimizer. 
\begin{theorem}\label{thm:Mestimation2}
    Let $\Lambda_\ast(\delta)\geq J_\ast(\delta)\lor \delta$ be some upper bound. Suppose that $\Lambda_\ast(\delta)/\delta^2$ decreases as $\delta$ increases. There exists a constant $c$ depending only on $M$, such that for
    \[
        \sqrt N \delta_N^2 \geq c \Lambda_\ast(\delta_N), 
    \]
    we have for all $\delta \geq \delta_N$, 
    \[
        \Prob\big(\tau^2(\hat g_N|g_0) \geq 2(\tau^2(g_\ast|g_0)+\delta^2) | z_1, \dots z_N\big) \leq c\exp[-\frac{N\delta^2}{c^2}].
    \]
\end{theorem}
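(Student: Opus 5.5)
We follow the proof of Theorem 2.1 in \cite{vdG2001}, conditionally on $z_1,\dots,z_N$, and change only the empirical-process step so that it handles correlated Gaussian errors.

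\textbf{Basic inequality.} Since $\hat g_N$ minimizes $\frac1N\sum_i (Y_i-g(z_i))^2+\pen^2(g)$, comparing its loss with that of $g_\ast$ gives
\[
\tau^2(\hat g_N|g_0)\le \tau^2(g_\ast|g_0)+\frac{2}{N}\sum_{i}\epsilon_i\big(\hat g_N(z_i)-g_\ast(z_i)\big).
\]
Thus it suffices to control the Gaussian process $W(g)=\frac{1}{\sqrt N}\sum_i \epsilon_i (g-g_\ast)(z_i)$, indexed by $g\in\mathbb G_\ast(\delta)$.

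\textbf{Covariance of the process.} Because the $\epsilon_i$ are jointly Gaussian, $W$ is a centered Gaussian process conditionally on the design. Its increments satisfy
\[
\Var(W(g)-W(h))=\frac1N (g-h)(z)^T\Gamma_N (g-h)(z)\le M\|g-h\|_{(N)}^2 ,
\]
using the standing assumption $\|\Gamma_N\|\le M$. Hence $W$ is sub-Gaussian with respect to the metric $\sqrt M\|\cdot\|_{(N)}$. This is the only place where the dependence enters, and it costs only a factor depending on $M$.

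\textbf{Chaining and concentration.} Dudley's entropy bound gives
\[
\E\sup_{g\in\mathbb G_\ast(\delta)}|W(g)|\lesssim \sqrt M\, J_\ast(\delta).
\]
The Borell--TIS inequality, with $\sup\Var W(g)\le M\delta^2$ on $\mathbb G_\ast(\delta)$, then gives
\[
\Prob\Big(\sup_{g\in\mathbb G_\ast(\delta)}|W(g)|\ge c(\Lambda_\ast(\delta)+\sqrt N\delta^2)\Big)\le \exp(-N\delta^2/c^2).
\]
This replaces the sub-Gaussian-errors concentration used in \cite{vdG2001}.

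\textbf{Peeling.} Apply this bound on the shells $\{g : 2^{s}\delta\le\tau(g|g_\ast)\le 2^{s+1}\delta\}$, $s\ge0$. Because $\Lambda_\ast(\delta)/\delta^2$ is decreasing and $\sqrt N\delta_N^2\ge c\Lambda_\ast(\delta_N)$, we get $\Lambda_\ast(2^{s+1}\delta)\le \sqrt N(2^{s+1}\delta)^2/c$ for all $\delta\ge\delta_N$. On each shell the cross term is then at most a small multiple of $\tau^2(g|g_\ast)$.

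\textbf{Conclusion.} Bound $\tau^2(\hat g_N|g_\ast)\le 2\tau^2(\hat g_N|g_0)+2\tau^2(g_\ast|g_0)$, where the penalty term is harmless, as in \cite{vdG2001}. Substituting into the basic inequality, with the shell bound absorbed on the left, yields $\tau^2(\hat g_N|g_0)\le 2(\tau^2(g_\ast|g_0)+\delta^2)$ outside an event whose probability is the sum over shells $\sum_s \exp(-4^sN\delta^2/c^2)\le c\exp(-N\delta^2/c^2)$.

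\textbf{Main obstacle.} The delicate point is the peeling step. The constants have to be arranged so that the factor $2$ appears in the conclusion, and the absorption argument must be checked carefully, since $\hat g_N$ is random and depends on the $\epsilon_i$. Working with suprema over deterministic shells, as above, avoids this dependence.
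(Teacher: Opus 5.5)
Your proposal is correct and follows the paper's route: rerun the proof of Theorem 2.1 in \cite{vdG2001} and change only the empirical-process tail bound, where the one new input is $\gamma^T\Gamma_N\gamma\le \|\Gamma_N\|\,|\gamma|^2\le M|\gamma|^2$, which makes the noise process sub-Gaussian with respect to $\sqrt M\|\cdot\|_{(N)}$. The paper feeds this into the sub-Gaussian chaining bound of Lemma 3.2 in \cite{vdG2000} instead of Dudley plus Borell--TIS; in your peeling step you should apply Borell--TIS at deviation level $\eta\sqrt N\delta^2$ with $\eta$ small, not $c\sqrt N\delta^2$, since that is what makes the cross term a small multiple of $\tau^2(g|g_\ast)$, and it only changes the constant in the exponent.
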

\begin{proof}

    The proof is the same as Theorem 2.1 in \cite{vdG2001}, except that instead of using Corollary 8.4 in \cite{vdG2000}, we use Lemma 3.2 in the same reference. To satisfy the hypothesis of Lemma 3.2, note that $\epsilon = (\epsilon_i)_{i}$ is an $N(0, \Gamma)$ random variable, and hence for $\gamma \in \R^N$, $\gamma^T\epsilon \sim N(0, \gamma^T\Gamma\gamma)$, so by the standard tail bound for a Gaussian, 
    \[
        \Prob\left(\sum_{i = 1}^N \epsilon_i\gamma_i \geq a\right) \leq \exp\left(-\frac{a^2}{2\gamma^T\Gamma\gamma}\right) \leq \exp\left(-\frac{a^2}{2\|\Gamma\||\gamma|^2}\right).
    \]
    Then the rest of the proof continues as normal.
\end{proof}

\subsubsection{Proof of Proposition \ref{prop:Mestimation}}
\begin{proof}[Proof of Proposition \ref{prop:Mestimation}]
    The proof is similar to that of Proposition B.7 in \cite{NW24}. We use a slightly modified version of Theorem 2.1 in \cite{vdG2001}, see \textsection \ref{sec:MEstResult} for details. We note that the empirical norm $\| \cdot\|_{(N)}$ is bounded by $\| \cdot \|_\infty$, and is further bounded by the metric entropy of a ball of radius $m$ in $H^{a, a/2}$, which by Corollary \ref{cor:metricentropy} has metric entropy of order $H(\tau) \lesssim (m/\tau)^{(d+2)/a}$ for every $m >0$. To control the bias term, we first observe that for $\theta\in \Theta_N$, we have $\theta = \theta_1 + \theta_2$, with $\|\theta_1\|_{(H^{\kappa}(\mathcal Z))^\ast}\leq M\varepsilon_N$ and $\|\theta_2\|_{\mathcal H} \leq M$, $\|\theta\|_{\mathcal R}\leq M$ for some $M = M(B)$.
    By Condition \ref{con:forwardmapping}, we have that there is some $U = U(B)$ such that $\|u_\theta\|_{H^{6, 3}} \leq U$ and $\|u_{\theta_2}\|_{H^{6, 3}} \leq U$ and $\|u_{\theta_2}\|_{H^{a, a/2}} \leq U$. The bias term is then
    \[
        \|u_\theta - u_{\theta_2, L}\|_{(N)} \leq \|u_\theta - u_{\theta_2}\|_{(N)} + \|u_{\theta_2} - u_{\theta_2, L}\|_{(N)}.
    \]
    We next make use Lemma \ref{lem:deterministicMconcentration} with $v = u_\theta - u_{\theta_2}$ and $u_{\theta_2} - u_{\theta_2, L}$ to specialize to the event 
    \[
    A_N = \big\{ |\|v\|_{(N)}^2 - \|v\|_{L^2}^2| \leq \lambda \delta_N^2 \text{ for } v = u_\theta - u_{\theta_2}, u_{\theta_2} - u_{\theta_2, L}\big\},
    \]
   which has $\Prob_\theta^N(A_N) \geq 1 - \exp(-KN\delta_N^2)$ for $\lambda$ large enough. (Note that the event $A_N$ depends only on the design $(t_i, X_{ij})_{ij}$.) Then, we use \eqref{eq:biascontrol2} to see on $A_N$,
    \[
        \|u_\theta - u_{\theta_2, L} \|_{(N)} \leq \|u_\theta - u_{\theta_2}\|_{L^2} + C\delta_N,
    \]
    but now we use Condition \ref{con:forwardregularity} and $\|\theta_1\|_{(H^\kappa(\mathcal Z))^\ast} \leq M\varepsilon_N$ to get that 
    \[
        \|u_\theta - u_{\theta_2, L} \|_{(N)} \leq C(\varepsilon_N + \delta_N).
    \]
We finish by applying Theorem \ref{thm:Mestimation2} on all the designs $(t_i, X_{ij})_{ij}$ belonging to $A_N$.
\end{proof}

\begin{lemma}\label{lem:deterministicMconcentration}
    In the setting of \eqref{eq:timedependent} and \eqref{eq:setting}, suppose that $v = v_N \in H^{6, 3}\cap L^\infty$ is a (deterministic) sequence with $\|v_N\|_{H^{6, 3}}\leq U$ and $\|v_N\|_\infty\leq U$ for all $N$, and suppose that 
    \[
        n\lesssim N\delta_N\text{ and } T \gtrsim \delta_N^{-1},
    \]
    as well as $\|v_N\|_{L^\infty_tL^2_x} \lesssim \delta_N$.
    Then for any $K > 0$, there is a $\lambda(U)  > 0$ so that 
    \[
        \Prob^N_\theta\big( |\|v \|_{(N)}^2 - \|v \|_{L^2}^2 | \leq \lambda\delta_N^2\big) \geq 1 - e^{-KN\delta_N^2}.
    \]
\end{lemma}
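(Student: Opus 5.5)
We split the deviation into a stochastic part and a deterministic quadrature part. Write
\[
\|v\|_{(N)}^2 = \frac{1}{T}\sum_{i=1}^T \frac1n\sum_{j=1}^n v(t_i,X_{ij})^2 ,
\qquad
\bar v_T := \frac1T\sum_{i=1}^T \|v(t_i,\cdot)\|_{L^2(\Omega)}^2 ,
\]
so that $\E\|v\|_{(N)}^2 = \bar v_T$ because the $X_{ij}$ are uniform on $\Omega$. Then
\[
\big|\|v\|_{(N)}^2-\|v\|_{L^2}^2\big|\le \big|\|v\|_{(N)}^2-\bar v_T\big| + \big|\bar v_T - \|v\|^2_{L^2(Q)}\big|,
\]
and we aim to bound each term by $\lambda\delta_N^2/2$.

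\textbf{Deterministic term.} With $t_i=(2i-1)/2T$, the quantity $\bar v_T$ is the midpoint rule for $g(t)=\|v(t,\cdot)\|^2_{L^2(\Omega)}$ on $[0,1]$. The error of the midpoint rule is at most $C T^{-2}\|g''\|_{L^1}$; even a first-order bound $T^{-1}\|g'\|_{L^1}$ would suffice. We have $g' = 2\langle v,\partial_t v\rangle$, so by Cauchy--Schwarz
\[
\|g'\|_{L^1}\le 2\|v\|_{L^2}\|\partial_t v\|_{L^2} .
\]
Since $H^{6,3}\subseteq H^1_tL^2_x$ by Proposition~\ref{prop:besovIsSobolev}, we get $\|\partial_t v\|_{L^2}\lesssim U$, and $\|v\|_{L^2}\le\|v\|_{L^\infty_tL^2_x}\lesssim\delta_N$. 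Using $T\gtrsim\delta_N^{-1}$, the error is $\lesssim U\delta_N\cdot\delta_N=U\delta_N^2$. If needed, the second-order bound gives $T^{-2}\lesssim \delta_N^2$ directly, using $\partial_t^2 v\in L^2$ (again from $H^{6,3}$).

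\textbf{Stochastic term.} The random variables $Z_{ij}=v(t_i,X_{ij})^2-\|v(t_i,\cdot)\|_{L^2}^2$ are independent (the $t_i$ are deterministic), centred, and bounded by $U^2$. Their variances satisfy
\[
\E Z_{ij}^2\le \E v(t_i,X)^4\le U^2\|v(t_i,\cdot)\|_{L^2}^2\lesssim U^2\delta_N^2 .
\]
Apply Bernstein's inequality to $\sum_{ij}Z_{ij}$ with threshold $N\lambda\delta_N^2/2$:
\[
\Prob\Big(\Big|\sum Z_{ij}\Big|>\tfrac{\lambda}{2}N\delta_N^2\Big)\le 2\exp\Big(-\frac{cN^2\lambda^2\delta_N^4}{N U^2\delta_N^2+U^2\lambda N\delta_N^2}\Big) = 2\exp\big(-c'(\lambda)N\delta_N^2\big),
\]
where $c'(\lambda)\asymp \lambda^2/(U^2(1+\lambda))$ grows linearly in $\lambda$. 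Choosing $\lambda=\lambda(U,K)$ large therefore makes this at most $e^{-KN\delta_N^2}$; the factor 2 is absorbed since $N\delta_N^2\to\infty$.

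\textbf{Main obstacle.} The delicate point is the variance: without the hypothesis $\|v\|_{L^\infty_tL^2_x}\lesssim\delta_N$, the variance is only $O(U^4)$ and Bernstein would give exponent $N\delta_N^4$ instead of $N\delta_N^2$. We expect the hypothesis $n\lesssim N\delta_N$ to be unnecessary in the argument above, since it only encodes $T\gtrsim\delta_N^{-1}$ via $N=nT$. If instead we grouped by sensor, we would use it to keep the per-block bounded increments compatible. Combining the two bounds gives the claimed probability $1-e^{-KN\delta_N^2}$.
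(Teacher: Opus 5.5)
Your proof is correct and follows the paper's argument. You apply Bernstein's inequality to the independent centred variables $v(t_i,X_{ij})^2-\|v(t_i,\cdot)\|_{L^2(\Omega)}^2$ with the variance bound $U^2\delta_N^2$, and bound the midpoint-rule error for $t\mapsto\|v(t,\cdot)\|^2_{L^2(\Omega)}$ by $O(T^{-2})\lesssim\delta_N^2$ (the paper gets this via the $C^2$ bound from $H^3_tL^2_x$); your first-order variant using $\|v\|_{L^2}\lesssim\delta_N$ also works, and you are right that $n\lesssim N\delta_N$ is just $T\gtrsim\delta_N^{-1}$ restated.
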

\begin{proof}
    First, consider the collection of independent, mean-zero random variables 
    \[
        |v(t_i, X_{ij})|^2  - \|v(t_i, \cdot)\|^2_{L^2(\Omega)} \quad \text{ for }i = 1, \dots, T, j = 1, \dots, n
    \]
    Since $\|v\|_\infty \leq U$, this is a collection of independent random variables which are uniformly bounded and have mean zero. Also, we have that 
    \begin{align*}
        \Var(|v(t_i, X_{ij})|^2) &\leq \E^N(|v(t_i, X_{ij})|^4) \leq U^2\E^N(|v(t_i, X_{ij})|^2) \leq U^2\delta_N^2.
    \end{align*}
    Thus, applying Bernstein's inequality, we obtain
    \begin{align*}
        \Prob^N_\theta \left(\left|\frac{1}{N}\sum_{i, j} |v(t_i, X_{ij})|^2 -\frac{1}{T}\sum_{i=1}^T \|v(t_i, \cdot) \|^2_{L^2(\Omega)} \right| \geq \lambda \delta_N^2\right) &\leq 2 \exp\left(\frac{-N^2\delta_N^4\lambda^2}{2(NU^2\delta_N^2 + \frac{1}{3}U^2N\delta_N^2\lambda)}\right)\\
        &\leq 2\exp\left( - \frac{\lambda^2}{2U^2(1 + \lambda/3)} N\delta_N^2\right).
    \end{align*}
    Hence, taking $\lambda \geq 0$ large enough, we get
    \begin{align*}
        \Prob^N_\theta\left(\left|  \|v\|^2_{(N)} - \frac{1}{T}\sum_{i = 1}^T\|v(t_i, \cdot) \|^2_{L^2(\Omega)} \right| \leq \lambda \delta_N^2 \right) \geq 1 - \exp\left( -K N\delta_N^2\right).
    \end{align*}
    Now, we know that $\|v\|_{H^{6, 3}} \leq U$. Also, by Proposition \ref{prop:besovIsSobolev}, $v \in H^3([0, 1], L^2(\Omega))$. Then, from \textsection 5.9.2 Theorem 3 in \cite{Evans}, this means the function
    \[
        t \mapsto \|v(t, \cdot)\|_{L^2(\Omega)}^2
    \]
    is $C^2([0, 1])$ with its norm depending only on $U$. Now, consider
    \begin{lemma}\label{lem:approximationtheory}
       Now, we know that for $g\in C^2([0, 1])$, and for $t_i = (2i-1)/2T$, $i = 1, \dots T$,  we have
       \[
           \left|\frac{1}{T}\sum_{i = 1}^T g(t_i) - \int_0^1 g(t) \, dt \right| \leq \frac{ \| g \|_{C^2}}{24T^2}.
       \]
   \end{lemma}
   See \cite{MS}, p.286. We use the fact $T \gtrsim \delta_N^{-1}$ to finish.
\end{proof}

\subsubsection{Proof of Proposition \ref{prop:Mconcentration}}\label{sec:MconcentrationSupp}
First, we will show that for a fixed $i$, the empirical norm w.r.t. $\{X_{ij}: j = 1, \dots n\}$ concentrates around the $L^2(\Omega)$ norm.
\begin{lemma}\label{lem:spatialconcentration}
    With the setup from Proposition \ref{prop:Mestimation}, assume that $a > (d+1)/2$, and suppose that
    \[
        n \gtrsim N^{\frac{2d+3}{2a + d + 2}} \quad \text{ and }\quad T \lesssim N^{\frac{2a - d - 1}{2a + d +2}}.
    \]
    Then there exists a sequence $\gamma_N$ so that $N\delta_N^2 = o(\gamma_N)$ and for every $i = 1, \dots T$,
    \[
        \Prob^N_\theta\left(\left| \frac{\frac{1}{n}\sum_{j =1}^n |(\hat u - u_{\theta, L})(t_i, X_{ij})|^2}{\|(\hat u - u_{\theta, L})(t_i, \cdot ) \|^2_{L^2(\Omega)}} - 1\right|\leq \frac{1}{2}\right) \geq 1 - c'e^{-\gamma_N},
    \]
    where $u_{\theta, L}$ is as in \eqref{eq:projectionoftruth}, and $c$ is some positive constant.
\end{lemma}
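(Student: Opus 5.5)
The difficulty is that $\hat u - u_{\theta,L}$ depends on the design $(X_{ij})$, so Bernstein's inequality cannot be applied to it directly. I plan to work uniformly over the finite-dimensional space in which it lives. Fix $i$. Both $\hat u$ and $u_{\theta,L}$ lie in $V^{\an}_L$. Hence the slice $w(\cdot) = (\hat u - u_{\theta,L})(t_i,\cdot)$ lies in the space
\[
E_i = \Span\big(\Psi_{l_x k_x}(\cdot) : l_x \leq L\big),
\]
obtained by freezing $t = t_i$ in the tensor basis. This is a space of periodized Meyer scaling functions on $\Omega$ at spatial resolution $L$, of dimension $D_L \asymp 2^{dL}$, and it does not depend on the data. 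So it suffices to prove the uniform statement
\[
\sup_{w \in E_i,\ \|w\|_{L^2(\Omega)} = 1} \Big| \tfrac{1}{n}\textstyle\sum_{j} w(X_{ij})^2 - 1 \Big| \leq \tfrac12
\]
with probability at least $1 - c' e^{-\gamma_N}$. This is a matrix concentration statement for the empirical Gram matrix $\hat G = \frac1n \sum_j \Phi(X_{ij})\Phi(X_{ij})^T$, where $\Phi$ is an orthonormal basis vector of $E_i$. Its expectation is $I_{D_L}$.

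To prove it, I would use the matrix Bernstein (or Chernoff) inequality. The key input is the Christoffel bound: for orthonormal scaling functions at level $L$ (localized Meyer functions),
\[
\sup_x |\Phi(x)|^2 = \sup_x \sum_k \Phi_{Lk}(x)^2 \lesssim 2^{dL}.
\]
Each summand then satisfies $\|\Phi\Phi^T\| \lesssim 2^{dL}$, with matrix variance proxy $\lesssim 2^{dL}/n$. Matrix Bernstein gives
\[
\Prob\big(\|\hat G - I\| > 1/2\big) \leq 2D_L \exp\big(-c\, n 2^{-dL}\big).
\]
So I take $\gamma_N = c\, n 2^{-dL} - \log(2D_L)$.

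It remains to check the rates. With $2^L \asymp N^{1/(2a+d+2)}$, we have $2^{dL} \asymp N^{d/(2a+d+2)}$. The lower bound $n \gtrsim N^{(2d+3)/(2a+d+2)}$ gives
\[
n 2^{-dL} \gtrsim N^{(d+3)/(2a+d+2)},
\]
up to a large constant. The target is $N\delta_N^2 = N^{(d+2)/(2a+d+2)}$, so $n 2^{-dL}$ exceeds it by a polynomial factor $N^{1/(2a+d+2)}$, which dominates the $\log D_L$ term. Hence $N\delta_N^2 = o(\gamma_N)$, as required. The constraint on $T$ and the assumption $a > (d+1)/2$ are there only to make the range of $n$ nonempty given $N = nT$. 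They are not otherwise needed here.

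The main obstacle is the Christoffel-type bound for the space $E_i$. If $E_i$ is really spanned by the level-$L$ spatial scaling functions, the bound follows from the uniform decay of periodized Meyer scaling functions, $\sum_k |\Phi_{Lk}(x)| \lesssim 2^{dL/2}$. I would verify this from the standard facts in \textsection\ref{sec:WaveletFacts}. A secondary point is the case where the slice of $\hat u - u_{\theta,L}$ vanishes in $L^2(\Omega)$, so that the ratio in the statement is undefined. I would interpret the ratio as $1$ in that case, since the uniform bound then forces the empirical norm to vanish as well.
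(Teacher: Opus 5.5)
Your proposal is correct. It differs from the paper only in the concentration tool applied to the empirical Gram matrix on the fixed space $\mathbf V_{L+1}$.

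The paper follows step IV of Proposition B.7 in \cite{NW24}. It fixes a $\kappa$-net $v_1,\dots,v_{M_L}$ of the unit ball in $\R^{2^{(L+1)d}}$ with $\kappa^2+2\kappa<1/4$. It then applies scalar Bernstein to each quadratic form $v_m^T(\hat\Gamma-I)v_m$, using the same sup-norm (Christoffel) bound $U\asymp 2^{dL}\asymp N^{d/(2a+d+2)}$ that you use. A union bound over $M_L\lesssim e^{c_0 2^{Ld}}$ points gives $2e^{c_1N^{d/(2a+d+2)}-c_0N^{(d+3)/(2a+d+2)}}$. You instead apply matrix Bernstein directly, which replaces the net cardinality $e^{c2^{dL}}$ by the dimensional factor $2D_L$.

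Your route is shorter and self-contained. Its dependence on the dimension is logarithmic rather than exponential: the net argument needs $n2^{-dL}\gg 2^{dL}$, whereas you only need $n2^{-dL}\gg \log D_L$. In this lemma the gain is immaterial, because in both arguments the binding requirement is $n2^{-dL}\gg N\delta_N^2$. Since $N\delta_N^2\gg 2^{dL}$, that requirement already implies the net condition, and the hypothesis $n\gtrsim N^{(2d+3)/(2a+d+2)}$ meets it with the margin $N^{1/(2a+d+2)}$ you identify. The paper's route has the modest advantage of needing only scalar Bernstein and of matching the cited reference.

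Two further remarks of yours are accurate. Given $N=nT$, the $T$-condition is equivalent to the $n$-condition, so it and $a>(d+1)/2$ play no further role here. The case of a vanishing slice, where the ratio is undefined, does need a convention, and the paper passes over it.
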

\begin{proof}
    We know that both $\hat u (t_i, \cdot)$ and $u_{\theta, L}(t_i, \cdot)$ are in $\mathbf V_{L+1}$ (because every spatial wavelet in the sum for fixed $t$ has level $\leq L$)\footnote{See \textsection \ref{sec:WaveletFacts} for details}. Hence, we can apply the argument from step IV in Proposition B.7 in \cite{NW24}, for $\Gamma = I$, and $\hat \Gamma$ being
    \[
        \hat \Gamma_{kk'} = \frac{1}{n}\sum_{j = 1}^n \Phi_{Lk}(X_{ij})\Phi_{Lk'}(X_{ij}),
    \]
    where $\Phi^{\iota}_{Lk}$ are tensorized Meyer scaling functions, and we note that $\hat \Gamma$ is a $2^{(L+1)d} \times 2^{(L+1)d}$ matrix. Then, we let $M_L$ be the covering number of a ball of radius $\kappa$ for some fixed $\kappa^2 + 2\kappa < 1/4$ in $\R^{2^{(L+1)d}}$, with $\{ v_1, \dots, v_{M_L}\}$ the covering vectors, and we define $U \equiv c N^{d/(2a + d +2)} $. Then, arguing as in \cite{NW24}, we deduce
    \[
        M_J \lesssim e^{c_0 2^{Ld}} \leq e^{c_1 N^{d/(2a + d + 2)}} 
    \]
    where the constants are absolute. 
    Finally, in the application of Bernstein's inequality, we get
    \[
        \Prob(n|v_m^T(\hat \Gamma - I)v_m| > n/4) \leq 2 \exp\left(-\frac{n^2/16}{2nU + (2/12)nU}\right) \leq 2 e^{-c_0n/U}
    \]
    Now, we now that $n/U\gtrsim N^{\frac{d+3}{2a + d + 2}},$ and hence we can bound
    \begin{align*}
        \Prob \left( \max_{m = 1, \dots, M_J} |v^T_m (\hat \Gamma - I)v_m| \geq 1/4\right) &\leq M_J \max_m \Prob ( |v_m^T(\hat\Gamma - I)v_m| > 1/4) \\
        & \leq 2e^{c_1 N^{d/(2a + d +2)} - c_0N^{(d+3)/(2a + d + 2)}} \\
        &\leq c'e^{-c'' N^{(d+2.5)/(2a + d +2)}} = c'e^{-\gamma_N}.
    \end{align*}
    This probability bounds the complement of the probability in question, so we are done.
\end{proof}
\begin{proof}[Proof of Proposition \ref{prop:Mconcentration}]
    We observe from Lemma \ref{lem:spatialconcentration} and a union bound that 
    \[
        \Prob^N_\theta\left(\left| \frac{\frac{1}{n}\sum_{j =1}^n |(\hat u - u_{\theta, L})(t_i, X_{ij})|^2}{\|(\hat u - u_{\theta, L})(t_i, \cdot ) \|^2_{L^2(\Omega)}} - 1\right|\leq \frac{1}{2} \text{ for }i = 1, \dots T\right) \geq 1 - Tc'e^{-\gamma_N}.
    \]
    But since $T$ grows algebraically in $N$, and $N\delta_N^2 = o(\gamma_N)$, for sufficiently large $N \geq N_0$ we have after averaging over the $t_i$,
    \begin{align}
        \Prob^N_\theta\bigg(\frac{1}{T}\sum_{i=1}^T\|(\hat u - u_{\theta, L})(t_i, \cdot ) \|^2_{L^2(\Omega)} \leq \frac{2}{Tn}\sum_{i = 1}^T&\sum_{j =1}^n |(\hat u - u_{\theta, L})(t_i, X_{ij})|^2 \bigg) \geq 1 - \frac{1}{2}e^{-KN\delta_N^2}.\label{eq:timeaveragebound}
    \end{align}
    Next, from Proposition \ref{prop:Mestimation} by taking $c$ large enough, we have 
    \[
        \Prob^N_\theta( \|\hat \beta \|_{h^a}^2 \leq c | ((t_i, X_{ij}))_{ij}) \geq 1 - \frac{1}{2}e^{-KN\delta_N^2}.
    \]
    But we've defined $h^a$ so that $\| \hat \beta \|_{h^a} \asymp \| \hat u \|_{H^{a, a/2}}$, and since $a \geq 6$, we have $\|\hat u\|_{H^{6, 3}} \leq \|\hat u\|_{H^{a, a/2}}$.
    We know that additionally $u_{\theta, L} \in H^{6, 3}$ with $\| u_{\theta, L} \|_{H^{6, 3}}\leq U = U(B)$, so we know that
    \begin{equation}\label{eq:hatinsobolevball}
        \Prob^N_\theta( \| \hat u - u_{\theta, L} \|^2_{H^{6, 3}} \leq 2c + 2U^2) \geq 1 - \frac{1}{2}e^{-KN\delta_N^2}.
    \end{equation}
    On this event, from the Sobolev characterization of $H^{6, 3}$, we get that $\hat u - u_{\theta, L} \in H^{3}([0, 1], L^2(\Omega))$, with $\|\hat u - u_{\theta, L}\|_{H^3_tL^2_x} \leq c_0$, where $c_0$ depends on $u_\theta$ only through $U$. Then, from \textsection 5.9.2 Theorem 3 in Evans, this means the function 
    \[
        t \mapsto \| \hat u - u_{\theta, L}(t, \cdot) \|_{L^2(\Omega)}^2
    \]
    is $C^2([0, 1])$ with its norm depending only on $c$ and $U$. Now, consider Lemma \ref{lem:approximationtheory} but with $g = \hat u - u_{\theta - L}$. Then, combining both \eqref{eq:timeaveragebound} and \eqref{eq:hatinsobolevball}, we get
    \[
        \Prob^N_\theta(\| \hat u - u_{\theta, L} \|_{L^2([0, 1]\times \Omega)}^2 \leq 2 \| \hat u - u_{\theta, L}\|_{(N)}^2 + \frac{c}{T^2}) \geq 1 -  e^{-K N\delta_N^2},
    \]
    as desired.
\end{proof}

\subsection{Proofs for \textsection \ref{sec:applications}}\label{sec:ProofsforApplications}
\subsubsection{Proof of Lemma \ref{lem:levelsetmap}}\label{sec:levelsetmap}
\begin{proof}[Proof of Lemma \ref{lem:levelsetmap}]
    The map $DG$ we have defined is continuous and linear for any $(D, u) \in C^1(\Omega) \times F$.
    We can write
    \begin{align*}
        G(D+h, u+v) - G(D, u) &- DG(D, u)[h, v] \\
        &= \left(-\nabla \cdot(h\nabla v) - (f(u+v) - f(u) - f'(u)v), 0 \right),
    \end{align*}
    and we simply need so show the RHS is $o(\|h\|_{C^1} + \|v\|_F)$. Now, we have that
    \begin{align*}
        \|\nabla \cdot (h \nabla v ) \|_{L^2(Q)}&\leq \| \nabla h \cdot \nabla v \|_{L^2} + \|h\Delta v\|_{L^2} \\
                                                         &\leq d \|h\|_{C^1}\|v\|_{L^2_tH^1_x} + d\|h\|_\infty \|v\|_{L^2_tH^2_x}\\
                                                         &\leq 2d\|h\|_{C^1}\|v\|_F \leq d(\|h\|_{C^1} + \|v\|_F)^2,
    \end{align*}
    which bounds the first term in the sum. Next, note that from Taylor's theorem applied to $f$, for every $(t, x) \in Q$, there is some value $\xi( u, v) = \xi(u(t, x), v(t, x))$ such that
    \[
        f(u+v) = f(u) + f'(u)v + \frac{1}{2}f''(\xi(u, v))v^2.
    \]
    Hence, we have (also using the fact that $H^1(\Omega) \hookrightarrow L^4(\Omega)$ for $d\leq 4$):
    \begin{align*}
        \|f(u+v) - f(u) - f'(u)v\|_{L^2(Q)}^2 &= \int_{Q} \frac{1}{4} f''(\xi)^2 v^4\, dtdx \leq \frac{1}{4}\|f''\|_\infty^2 \int_0^1 \|v\|_{L^4}^4 \, dt\\
                                                       &\leq C \int_0^1 \|v\|_{H^1}^4\, dt \leq C \|v\|_{C([0, 1], H^1(\Omega))}^4.
    \end{align*}
    Thus, since $F \hookrightarrow C([0, 1], H^1(\Omega))$, we have
    \[
        \|f(u+v) - f(u) - f'(u)v\|_{L^2(Q)} \leq C\|v\|F^2 = o(\|h\|_{C^1} + \|v\|_F).
    \]
\end{proof}
\subsubsection{Proof of Proposition \ref{prop:HRDEForward}}
\begin{proof}[Proof of Proposition \ref{prop:HRDEForward}]
    The first part follows from the remark above Proposition \ref{prop:HRDEForward}. Next, by the fundamental theorem of calculus for Fr\'echet derivatives in Banach spaces (see e.g. \cite{LR} Proposition A.2.3) we can write, for $D_1, D_2 \in U$, and $D_w = (1-w)D_1 + wD_2$, 
\[
    u_{D_2}-u_{D_1} = \int_0^1 D\mathcal G(D_w)[D_2 - D_1] \, dw,
\]
with convergence of the integral in $F$, and hence also in $L^\infty_tL^2_x$. Thus, by triangle inequality, 
\[
    \|u_{D_2} - u_{D_1} \|_{L^\infty_tL^2_x} \leq \int_0^1 \|D\mathcal G(D_w)[D_2 - D_1]\|_{L^\infty_tL^2_x}.
\]
But now we can estimate, by Proposition \ref{prop:LinearParabolicRegularity} and $L^2_tH^1_x \cap H^1_t H^{-1}_x \hookrightarrow C([0, 1], L^2(\Omega))$ (Theorem 3 in \textsection 5.9 in \cite {Evans}),
\begin{align*}
    \|D\mathcal G(D_w)[D_2 - D_1]\|_{L^\infty_tL^2_x} &\lesssim \|\nabla \cdot ([D_2 - D_1]\nabla u_{D_w})\|_{L^2_t H^{-1}_x} \leq \| \nabla u_{D_w} \|_{L^\infty(Q)} \|D_2-D_1\|_{L^2(Q)}
\end{align*}

But now we apply Proposition \ref{prop:HRDEReg}, and since $\beta > d/2 + 1$ and $\nabla u \in H^{\beta, \beta/2}$, by Proposition \ref{prop:BesovEmbeddings}, $\nabla u \in L^\infty(Q)$. Note that the upper bound is uniform in $w$ as $\|D_w\|_{C^1} \leq U$, and $D_{\min}$ is the same for $D_1, D_2$ and $D_w$. Hence, 
\[
    \|u_{D_2} - u_{D_1} \|_{L^\infty_t L^2_x} \lesssim \|D_2 - D_1 \|_{L^2(Q)}.
\]
\end{proof}

\subsubsection{Proof of Lemma \ref{lem:transportstability}}
\begin{proof}[Proof of Lemma \ref{lem:transportstability}]
    The proof of $(b)$ is the same as in \textsection 2.1.2.1 in \cite{N}, noting that we can extend $h$ to an $H^1(\Omega)$ function by $0$, and using periodic divergence theorem. The proof of $(a)$ uses a variant of Lemma 3.7 in \cite{ABJN}. To start, note that the second inequality is an application of H\"older's inequality. Also, write $u$ for $u_{D_1}$. For any $t \in [0, 1]$, we have from Lemma 3.6 in \cite{ABJN} that
    \[
        \| \nabla \cdot(h\nabla u(t, \cdot))\|_{L^2(\Omega)}\geq \| \nabla \cdot(|h|\nabla u(t, \cdot))\|_{L^2(\Omega)}.
    \]
    From Proposition \ref{prop:HRDEReg} we have $\|u\|_{H^{\beta + 1, (\beta+1)/2}} \leq C(M, D_{\min}, \beta, d, f)$. Now, we use the duality characterization of the $L^2$ norm:
    \[
        C\| \nabla \cdot(|h|\nabla u(t, \cdot))\|_{L^2(\Omega)} \geq \sup_{\|v\|_{L^2(\Omega)}\leq C}\int_\Omega v\nabla \cdot(|h|\nabla u(t, \cdot))\, dx
    \]
    Thus, choosing $v = -u$ and using the divergence theorem 
    \[
        \sup_{\|v\|_{L^2(\Omega)}\leq C}\int_\Omega v\nabla \cdot(|h|\nabla u(t, \cdot)\, dx \geq \int_{\Omega} |h| |\nabla u(t, \cdot)|^2 \, dx.
    \]
    Putting the chain of inequalities together, and integrating in time (note that $h$ is independent of time) yields the result.
\end{proof}
\subsubsection{Proof of Proposition \ref{thm:HRDEBack}}
\begin{proof}[Proof of Proposition \ref{thm:HRDEBack}]
    We employ the idea of \textsection 2.2.2 in \cite{CN} on $G$ as defined in \eqref{eq:levelsetmap} to obtain
   \begin{align*}
       &\int_0^1 \left( -\nabla \cdot([D_1 - D_2]\nabla u_{D_1}), 0 \right) \, dw \\
       &\quad = -\int_0^1 \left( \frac{\partial}{\partial t}[u_{1} - u_{2}] - \nabla \cdot (D_2 \nabla(u_{1} - u_{2})) - f'(\mathcal G_w)(u_{1} - u_{2}), 0\right)\, dw,
   \end{align*}
   where $u_i = \mathcal G(D_i)$, and $\mathcal G_w = (1-w)u_1 + w u_2$.
   Hence, applying $L^2(Q)$ norms to both sides, and using that the RHS $\lesssim \|u_1 - u_2\|_{H^{2, 1}}$, we get 
   \[
       \| \nabla \cdot([D_1 - D_2]\nabla u_{D_1}) \|_{L^2(Q)} \lesssim \|u_1 -u_2\|_{H^{2, 1}}.
   \]
   Now, if Condition \ref{con:Darcy} holds, we have by Lemma \ref{lem:transportstability} for $h = D_1 - D_2$ that
   \[
       \|D_1 - D_2\|_{L^1(\Omega)} \lesssim \| \nabla \cdot([D_1 - D_2]\nabla u_{D_1}) \|_{L^2(Q)}.
   \]
   To finish, we use the following interpolation inequalities:\footnote{For interpolation between $H^{\beta, \beta/2}$ spaces, this follows from H\"older's inequality for sequence spaces. For Sobolev spaces, see p.44 in \cite{LM}.}
   \begin{align*}
       \|u_1 - u_2\|_{H^{2, 1}} &\leq \|u_1 - u_2\|^{\frac{\beta-1}{\beta +1}}_{L^2} \|u_1 - u_2 \|^{\frac{2}{\beta + 1}}_{H^{\beta + 1, (\beta+1)/2}} \\
       \|D_1 - D_2\|_{L^2} &\lesssim \|D_1 - D_2\|_{H^{-d/2-1}}^{\frac{\beta}{\beta + d/2 + 1}}\|D_1 - D_2\|_{H^{\beta}}^{\frac{d/2 + 1}{\beta + d/2 + 1}}\lesssim \|D_1 - D_2\|_{L^1}^{\frac{\beta}{\beta + d/2 + 1}}
   \end{align*}
If Condition \ref{con:Darcy2} holds, we are again finished by Lemma \ref{lem:transportstability} and interpolation.
\end{proof}

\subsubsection{Proof of Proposition \ref{prop:NSBack}}
\begin{proof}[Proof of Proposition \ref{prop:NSBack}]
    For $u_i = u_{\theta_i}$ and $w = u_1 - u_2$, $w$ satisfies (from subtracting the equations for $u_1, u_2$)
    \begin{align*}
        (\nu_1 - \nu_2)Au_2 &= -B(w, u_1) - B(u_2, w) - \nu_1Aw - \frac{dw}{dt},\\
        w(0) &= \varphi_1 - \varphi_2.
    \end{align*}
    Taking $L^2(Q)$ norms on both sides, we see that the RHS is bounded above by a constant multiple of $\|w\|_{H^{2, 1}}$ by standard estimates on the bi-linear terms see \cite{KN} Proposition 3.1.iii. On the LHS, we apply Lemma \ref{lem:LaplaceNonZero} and finally obtain
    \[
        |\nu_1 - \nu_2| \leq C(M, d, \nu_{\min}, f) \|w\|_{H^{2, 1}}.
    \]
    From \cite{Evans} \textsection 5.9.2 Theorem 3, we have that
    \begin{align*}
        \|\varphi_1 - \varphi_2\|_{L^2(\Omega)} &\leq \sup_{t\in [0, 1]} \|w(t)\|_{L^2(\Omega)} \lesssim \|w\|_{H^{2, 1}}.
    \end{align*}
    Now, we simply interpolate to obtain the result.
\end{proof}
\subsubsection{Proof of Lemma \ref{lem:LaplaceNonZero}}\label{sec:LaplaceNonZero}
\begin{proof}[Proof of Lemma \ref{lem:LaplaceNonZero}]
    The idea is that if $Au$ starts out bounded below, then the integral will be bounded below, and when $Au$ is sufficiently small, the dynamics of the equation force $u$ to grow in the direction of $f$.

    We start by testing \eqref{eq:introNS} against the forcing function $f$ to obtain
    \begin{equation*}
        \frac{d}{dt}\big[\langle u_\theta, f\rangle\big] = \|f\|^2 - \nu\langle Au, f\rangle - \langle B(u, u), f\rangle.
    \end{equation*}
    Rearranging this equation and applying Cauchy-Schwarz with the usual bounds on the bilinear form, we get 
    \begin{equation*}
        \|f\|^2_{L^2}\leq \frac{d}{dt} \big[\langle u_\theta, f \rangle\big] + C(\|u_\theta\|_{H^1} + \|u_\theta\|^2_{H^1})\|f\|_{H^1} \text{ on }[0, 1].
    \end{equation*}
    Integrate this equation from $s$ to $t$ ($0\leq s \leq t\leq 1$) to get 
    \begin{align*}
        (t-s)\|f\|^2_{L^2} &\leq \langle u_\theta(t), f\rangle_{L^2} - \langle u_\theta(s), f\rangle_{L^2} + C\|f\|_{H^1}\int_{s}^t \bigg(\|u_\theta\|_{H^1} + \|u_\theta\|^2_{H^1}\bigg) \\
        &\leq \|f\|_{L^2}\|u_\theta(t)\|_{L^2} + \|f\|_{L^2}\| u_\theta(s)\|_{L^2} + C\|f\|_{H^1}\int_{s}^t \bigg(\|u_\theta\|_{H^1} + \|u_\theta\|^2_{H^1}\bigg).
    \end{align*}
    Now we integrate $s$ from $0$ to $t$, obtaining
    \begin{align}
        \frac{t^2}{2}\|f\|^2_{L^2}\leq t\|f\|_{L^2}&\|u_\theta(t)\|_{L^2} + \|f\|_{L^2}\int_0^t \|u_\theta(s)\|_{L^2}\, ds\label{eq:weirdbound} \\
        &+ C\|f\|_{H^1} \int_0^t \, ds\int_s^t \bigg(\|u_\theta(\tau)\|_{H^1} + \|u_\theta(\tau)\|^2_{H^1}\bigg)\, d\tau.\label{eq:weirdintegral}
    \end{align}
    If we consider the region of integration for \eqref{eq:weirdintegral}, we see it is over all $\tau$ and $s$ such that $0\leq s\leq \tau\leq t$ Thus, we can change it to the integral
    \begin{align*}
        \int_0^t \, ds\int_s^t \bigg(\|u_\theta(\tau)\|_{H^1} + \|u_\theta(\tau)\|^2_{H^1}\bigg)\, d\tau &= \int_{0}^t\, d\tau \int_0^\tau \, ds \bigg(\|u_\theta(\tau)\|_{H^1} + \|u_\theta(\tau)\|^2_{H^1}\bigg)\\
        &=\int_{0}^t\tau \bigg(\|u_\theta(\tau)\|_{H^1} + \|u_\theta(\tau)\|^2_{H^1}\bigg) \, d\tau\\
        &\leq t\int_0^t \bigg(\|u_\theta(\tau)\|_{H^1} + \|u_\theta(\tau)\|^2_{H^1}\bigg)\, d\tau.
    \end{align*}
    Returning to \eqref{eq:weirdbound}, using $\|f\|_{L^2} \lesssim \|f\|_{H^1}$ and $t\in[0, 1]$, we get 
    \begin{equation*}
        \frac{t^2}{2}\frac{\|f\|_{L^2}^2}{\|f\|_{H^1}} \leq C\bigg(t\|u_\theta(t)\|_{H^1}+ t\int_0^1 \|u_\theta(s) \|_{H^1}^2\, ds + (1+t)\int_0^1 \|u_\theta\|_{H^1}\, ds\bigg), 
    \end{equation*}
    and then we integrate this from $0$ to $1$ again and use Cauchy-Schwarz as well as the Poincar\'e inequality to get the result.
\end{proof}

\subsection{Proof of Theorem \ref{thm:PosteriorGeneral}}\label{sec:PosteriorGeneral}
\begin{proof}[Proof of Theorem \ref{thm:PosteriorGeneral}]
    We know from Lemma 8.21 in \cite{GvdV} that
    \[
        \Prob^N_{\theta_0}\left( \int_\Theta \frac{p^N_\theta}{p^N_{\theta_0}}\, d\Pi_N(\theta) \leq \Pi_N(\mathcal B_N)e^{-2N\varepsilon_N^2}\right) \leq \frac{1}{N\varepsilon_N^2}.
    \]
    So if we consider events
    \[
        A_N = \{ \int_\Theta \frac{p^N_\theta}{p^N_{\theta_0}}\, d\Pi_N(\theta) \geq e^{-(A+2)N\varepsilon_N^2} \}, 
    \]
    then we see that $\Prob^N_{\theta_0}(A_N) \to 1$ as $N \to \infty$. Now, consider
    \[
        \bar \Theta_N = \Theta_N \cap \{ C\delta_N \geq  \|\mathcal G(\theta) - \mathcal G(\theta_0)\|_{L^2} \}.
    \]
    Then, by the definition of the posterior, and \eqref{eq:testcondition}, we have 
    \begin{align}
        \Prob^N_{\theta_0}\big( \Pi(\bar \Theta_N^c|D_N)\geq e^{-bN\varepsilon_N^2}\big) &= \Prob_{\theta_0}^N\left( \frac{\int_{\bar \Theta_N^c}p^N_{\theta}/p^N_{\theta_0} \, d\Pi(\theta)}{\int_\Theta p^N_\theta/p^N_{\theta_0} \, d\Pi(\theta)} \geq e^{-bN\varepsilon_N^2}, \phi_N = 0, A_N\right) + o(1)\nonumber \\
        &\leq \Prob_{\theta_0}^N\left( \int_{\bar \Theta_N^c}p^N_{\theta}/p^N_{\theta_0} \, d\Pi(\theta) (1-\phi_N) \geq e^{-(b + A + 2)N\varepsilon_N^2}\right) + o(1).\label{eq:posteriorlowerbound}
    \end{align}
    By Markov's inequality and Fubini's theorem, we can bound \eqref{eq:posteriorlowerbound} by
    \begin{align*}
        e^{(b+A + 2)N\varepsilon_N^2} \mathbb \int_{\bar \Theta_N^c}\mathbb E^N_{\theta_0}\big[\frac{p_\theta^N}{p_{\theta_0}^N}(1 - \phi_N)\big] \, d\Pi(\theta) \leq e^{(b+A + 2)N\varepsilon_N^2} \mathbb \int_{\bar \Theta_N^c}\mathbb E^N_{\theta}\big[(1 - \phi_N)\big] \, d\Pi(\theta).
    \end{align*}
    Now, we use the fact $\bar \Theta_N^c = \Theta_N^c \cup \{\|\mathcal G(\theta) - \mathcal G(\theta_0)\|_{L^2} > C\delta_N\}$ and the bound on type two errors \eqref{eq:testcondition} to get
    \begin{align*}
        \eqref{eq:posteriorlowerbound} &\leq e^{(b+A+2)N\varepsilon_N^2} \bigg(2 \Pi(\Theta_N^c) + \int_{\Theta_N \cap \{ C\delta_N < \| \mathcal G(\theta) - \mathcal G(\theta_0)\|_{L^2}\}} \hspace{-5mm}\mathbb E_\theta^N [ 1 -\phi_N] \, d\Pi(\theta)\bigg) \\
        &\lesssim e^{(b+A+2 - B)N\varepsilon_N^2} + e^{(b+A+2-K)N \varepsilon_N^2} \to 0,
    \end{align*}
    as $N \to \infty$.
\end{proof}

\subsection{Proofs for Appendix \ref{sec:findingphi}}\label{sec:proofsforDarcy}

\subsubsection{Proof of Lemma \ref{lem:verifyingDarcy}}\label{sec:verifyingDarcy}
\begin{proof}[Proof of Lemma \ref{lem:verifyingDarcy}]
    First, we show that all the critical points must move at some rate away from their original locations. For $\alpha > d/2 + 5$ the following computations are justified. Taking the gradient of \eqref{eq:introRDE}, we see
    \begin{align*}
        \nabla u_t &= \nabla(\nabla \cdot (D\nabla u)) + f'(u)\nabla u\\
                   &= \nabla D \Delta u + D\nabla \Delta u+\nabla^2 u\nabla D+\nabla^2 D \nabla u + f'(u)\nabla u,
    \end{align*}
    but if we evaluate this at the point $(0, x^\ast)$ for a critical point $x^\ast \in \Omega$ of $\varphi$, then we have $\nabla \varphi(x^\ast) = 0$, so the last two terms cancel, and we have
    \[
        \nabla u_t(0, x^\ast) = \nabla D\Delta\varphi + D \nabla \Delta\varphi + \nabla^2 \varphi \nabla D.
    \]
    Thus, by \eqref{eq:movingcriticalpoints}, we get that $|(d/dt)\nabla u(0, x^\ast)| \geq \delta$, so in particular for some $i = 1, \dots d$, $|\partial_t \partial_i u(0, x^\ast)| \geq \delta/d$. Now, using Proposition \ref{prop:HRDEReg}, we see
    \[
        \|\nabla u(\cdot, x^\ast)\|_{C^2([0, 1])}\leq C\|u\|_{H^{\alpha +1, \an}} \leq C(M, d, \alpha, D_{\min}, f).
    \]
    Now, we have from the mean-value theorem that there is some $0 < \Tilde t < t $ so that 
    \begin{align*}
        t\partial_t\partial_i u(\Tilde t, x^\ast)&= \partial_i u(t, x^\ast) - \partial_i u(0, x^\ast) \\
        &= \partial_i u(t, x^\ast).
    \end{align*}
    So, $|\partial_t\partial_i u(\Tilde t, x^\ast) - \partial_t\partial_i u(0, x^\ast)| \leq t \|\nabla u(\cdot,x^\ast)\|_{C^2}\leq tC(M) $, and hence for $t < C(M)^{-1}\delta/(2d)$, we see
    \begin{equation*}
        |\nabla u(t, x^\ast)| \geq |\partial_i u(t, x^\ast)| =  t|\partial_t \partial_i u(\Tilde t, x^\ast)| \geq t \frac{\delta}{2d},
    \end{equation*}
    so, if we fix some $C(M)^{-1}\delta/(2d)> t^\ast >0$ (which depends only on $\delta, M, d$) we get for all $t^\ast < t < C(M)^{-1}\delta/(2d)$
    \[
        |\nabla u(t, x^\ast)| > c(\delta, d, M).
    \]
    Now, using the fact that the Lipschitz constant of $\nabla u$ is bounded by some constant depending on $M, d, D_{\min}, \alpha, f$, we see that there is some $\eta = \eta(\delta, d, M, D_{\min}, \alpha, f)$ so that for all $t^\ast < t < C(M)^{-1}\delta/(2d)$ and $|x - x^\ast| < \eta$, 
    \[
        |\nabla u(t, x)| > c(\delta, d, M)/2.
    \]
    Then, we can integrate this in time to get that
    \begin{equation}\label{eq:DarcyCritical}
        \int_0^1 |\nabla u(t, x)|^2 \, dt \geq c_1 >0
    \end{equation}
    for any $x$ within distance $\eta$ of a critical point $x^\ast$. Now, if we let 
    \[
        R = \{ x \in \Omega : \text{ for all critical points }x^\ast, |x - x^\ast| \geq \eta/2\},
    \]
    then we know that $|\nabla \varphi| > 0$ on $R$, and $R$ is compact, so $|\nabla \varphi|$ is uniformly lower bounded on $R$. Thus, utilizing the Lipschitz constant of $\nabla u$, we get that there is some constant $c_2 = c_2(M, d, D_{\min}, \alpha, f)$ such that
    \begin{equation}\label{eq:DarcyNonCritical}
        \int_0^1 |\nabla u(t, x)|^2 \, dt \geq c_2 >0 \quad \text{for all}\,\, x\in R.
    \end{equation}
    But combining \eqref{eq:DarcyCritical} and \eqref{eq:DarcyNonCritical} verifies Condition \ref{con:Darcy} for $c_0 = \min(c_1, c_2)$.
\end{proof}

\subsubsection{Proof of Proposition \ref{prop:RDvarphi}}\label{sec:RDvarphi}
\begin{proof}[Proof of Proposition \ref{prop:RDvarphi}]
    Note that by the triangle inequality, $D \geq D_{\min}$, and $\|\nabla D\|\leq M$, we have
    \begin{align}
        |D\nabla(\Delta \varphi) + \Delta \varphi \nabla D + \nabla^2 \varphi \nabla D| &\geq |D| |\nabla(\Delta \varphi)| - |\Delta\varphi||\nabla D| - \|\nabla^2 \varphi\||\nabla D|\nonumber \\
                                                                                        &\geq D_{\min} |\nabla (\Delta \varphi)| - M(|\Delta \varphi| + \|\nabla^2 \varphi\|),\label{eq:criticallb}
    \end{align}
    where $\|\nabla^2 \varphi\|$ is the operator norm. The idea is then to construct a function that has a mild second derivative, but large third derivative. We search for a function of the form 
    \[
        \varphi(x) = \sum_{i = 1}^d g(x_i).
    \]
    Then, we compute that
    \begin{align}\label{eq:varphiformula}
        \begin{split}
        \nabla \varphi(x) &= (g'(x_1), \dots, g'(x_d))\\
        \nabla^2 \varphi (x) &= \diag(g''(x_1), \dots, g''(x_d)) \\
        \Delta \varphi(x) &= \sum_{i = 1}^d g''(x_i) \\
        \nabla \Delta \varphi(x) &= (g'''(x_1), \dots, g'''(x_d)).
    \end{split}
    \end{align}
    Thus, we get the bound
    \begin{equation}\label{eq:criticallb2}
        (\ref{eq:criticallb}) \geq D_{\min} |\nabla \Delta \varphi| - 2dM\|g''\|_\infty,
    \end{equation}
    which needs to be greater than some $\delta$ at every critical point of $\varphi$. Now, consider the functions $g_n: \mathbb T \to \R$:
    \[
        g_n(x) = -\frac{1}{2\pi n^2} \cos(2\pi nx) + \frac{1}{4\pi n^2}\sin(4\pi nx).
    \]
    Then, 
    \[
        g_n'(x) = \frac{1}{n} \sin( 2\pi nx) + \frac{1}{n}\cos(4\pi nx).
    \]
    Then, using the double angle formula, we see that $g_n'(x_c) = 0$ if and only if
    \begin{align*}
        0 &= \sin(2 \pi nx_c) + 1 - 2\sin^2(2\pi nx_c) \\
        &= (1 + 2\sin(2\pi nx_c))(1 - \sin(2\pi nx_c)),
    \end{align*}
    so let $C_n$ be the set of points on $\mathbb T$ such that $\sin( 2\pi n x_c) = -1/2$ or $1$. Next, note that
    \[
        |g_n''(x)| = |2\pi\cos(2\pi n x) - 4\pi \sin(4\pi nx)| \leq 6\pi,
    \]
    so $\|g''\|_\infty \leq 6\pi$. Finally, we have
    \[
        g_n'''(x) = -4\pi^2 n \sin(2 \pi n x) - 16\pi^2 n\cos(4\pi nx).
    \]
    Now, at any critical point, we have $\sin(2 \pi n x_c) = 1/2$ or $-1$. But 
    \begin{align*}
        \sin(2\pi n x_c) = -1/2 &\implies \cos(4 \pi n x_c) = 1/2, \\
        \sin(2 \pi n x_c) = 1 &\implies \cos(4 \pi n x_c) = -1.
    \end{align*}
    Hence, $g_n'''(x_c) = -6\pi^2 n$ or $12\pi^2 n$. Now, fix an $n$ large enough so that $6\pi^2 n  D_{\min}\sqrt d - 12dM\pi > 1$, noting that $n$ depends only on $M, D_{\min}$, and $d$. Then, from \eqref{eq:varphiformula}, we see that the critical points of $\varphi$ with this choice of $g = g_n$ is $(C_n)^d\subseteq \Omega$, and hence for $x_c \in (C_n)^d$
    \[
        |\nabla \Delta \varphi(x_c)| \geq 6\pi^2 n\sqrt d,
    \]
    so by \eqref{eq:criticallb2} we are done.
\end{proof}

\subsubsection{Proof of Proposition \ref{prop:opendense}}
\begin{proof}[Proof of Proposition \ref{prop:opendense}]
    We will show using Lemma \ref{lem:verifyingDarcy} that the set of $\varphi$ satisfying Condition \ref{con:Darcy} for some $c_0$ contains an open dense set. To start, set $L_D(\varphi) = D\nabla(\Delta \varphi) + \Delta \varphi \nabla D + \nabla^2 \varphi \nabla D$.
    Then, let $G$ be the set 
    \[
        G = \{ \varphi \in C^k(\Omega): \nexists x^\ast \in \Omega \text{ s.t. at } x^\ast,\, \nabla \varphi = 0 \text{ and } L_D(\varphi) = 0\}.
    \]
    First, we will show that $G^c$ is closed. Suppose that $\varphi_n\in G^c$ with $\varphi_n \to \varphi$ in $C^k$. Then, $\varphi_n \to \varphi$ in $C^3$. Also, there exists $x_n\in \Omega$ such that
    \[
        \nabla \varphi_n(x_n) = 0 \text{ and } L_D(\varphi_n)(x_n) = 0.
    \]
    But since $\Omega$ is compact, there exists a subsequence $n_k$ such that $x_{n_k} \to x$ for some $x\in \Omega$. But since $\varphi_n\to \varphi$ and $L_D(\varphi_n)\to L_D(\varphi)$ uniformly as $n\to \infty$, we have 
    \[
        \nabla \varphi(x) = 0 \text{ and } L_D(\varphi)(x) = 0,
    \]
    and hence $\varphi \in G^c$ so $G$ is open. Now we show that $G$ is dense. First, let $\varphi \in C^k$ and $\epsilon > 0$. Then, by Theorem 6.1.2 in \cite{H} there exists some $\Tilde \varphi\in C^k$ such that $\|\varphi - \Tilde \varphi \|_{C^k} < \epsilon/2$ and $\Tilde \varphi$ is Morse, i.e. has isolated critical points, all of which are non-degenerate. Since $\Omega$ is compact there can only be finitely many of them.

    Let $x_1, \dots, x_m$ be the set of points for which
    \[
        \nabla \Tilde \varphi(x_i) = 0 \text{ and } L_D(\Tilde \varphi)(x_i) = 0.
    \]
    Now, by translation and Proposition \ref{prop:RDvarphi}, we know there exists $g_i\in C^\infty$ such that 
    \begin{equation}\label{eq:fixingfunction}
        \nabla g_i(x_i) = 0 \text{ and } L_D(g_i)(x_i) \neq 0
    \end{equation}
    By multiplying by a cutoff function $\equiv 1$ on a neighborhood of $x^\ast$ and $\equiv 0$ around all other critical points, we can also assume that the support of each $g_i$ only contains a single critical point of $\Tilde \varphi$ (namely $x_i$). Then consider the function 
    \[
        \hat \varphi = \Tilde \varphi + \eta \sum_{i = 1}^m g_i
    \]
    for sufficiently small $\eta$. It is known that a sufficiently small perturbation of a Morse function does not change the number of critical points it has,\footnote{
    This fact is a corollary of the Stack of records theorem (Exercise 1.4.7 in \cite{GP}) applied to $F: \Omega \times S^1 \to \R^d\times S^1$ given by $F(x, t) = (\nabla(\Tilde \varphi + \sin(2\pi t) g)(x), t)$.} but we have chosen the $g_i$ such that any critical point of $\Tilde \varphi$ is also a critical point of $\hat \varphi$, and so the critical points of $\hat \varphi$ are the same as $\Tilde \varphi$ for sufficiently small $\eta$. Then \eqref{eq:fixingfunction} ensures that $\hat\varphi \in G$.
\end{proof}

\section{Wavelet Characterization of Parabolic Sobolev spaces}\label{sec:besovIsSobolev}
\subsection{Facts about wavelets}\label{sec:WaveletFacts}
We will recap a few necessary facts about wavelets to set notation. A reference for this material is \textsection 4.1 to \textsection 4.3 in \cite{GN}. First, we recall the \textit{boundary-corrected wavelets}, first introduced in \cite{CDV93} and used to measure regularity of functions on $[0, 1]$. 

Fix an $S\in \N$ and take any $L = L(S)$ large enough so that $2^{-L}(2S - 1)\leq 1$, where $L$ is called the \textit{base resolution} of the basis. Then, the \textit{boundary-corrected scaling functions} (at some level $l \geq L$) is a collection
\[\{ \phi^{\bc}_{lk} : k = 1, \dots, 2^ l\},\]
where when $k = S+1, S+2, \dots, 2^l - S$, $\phi_{lk}^{\bc}(x) = 2^{l/2}\phi(2^l x - k)$ for $\phi$ the $S$-regular Daubechies scaling function. The other values of $k$, a boundary correction is introduced, see \cite{CDV93} for details. We note the following properties:
\begin{enumerate}
    \item The collection $\{\phi^{\bc}_{lk} : k = 1, \dots 2^l\}$ is orthonormal,
    \item The linear span of the $2^l$ functions $\{ \phi^{\bc}_{lk} \}$, call it $V^{\bc}_l$, contains all polynomials of degree $\leq S - 1$,
    \item $V^{\bc}_l \subset V^{\bc}_{l+1}$ for all $l \geq L$, and $\bigcup_{l \geq L} V_{l}^{\an}$ is dense in $L^2([0, 1])$.
    \item $\supp \phi^{\bc}_{lk} \subseteq [0, 2^{-l}(S+k)]$ for $k = 1, \dots S$, and
    \item $\supp \phi^{\bc}_{lk} \subseteq [2^{-l}(k - S + 1), 1]$ for $k = 2^l - S +1 , \dots, 2^l $.
\end{enumerate}
And further \textit{boundary-corrected wavelets} can be constructed
\[\{ \psi^{\bc}_{lk} : k = 1, \dots, 2^l\},\]
where when $k = S+1, \dots, 2^l - S$, $\psi^{\bc}_{lk}(x) = 2^{l/2}\psi(2^l x - k)$ for $\psi$ the $S$-regular Daubechies wavelet, and the following holds.
\begin{enumerate}
    \item Let $W_l^{\bc}$ be the orthogonal complement of $V_l^{\bc}$ in $V_{l+1}^{\bc}$, which has dimension $2^l$. Then $\{\psi^{\bc}:k = 0, \dots 2^l - 1\}$ is an orthonormal basis of $W_l^{\bc}$.
    \item The following scaling properties hold:
        \begin{align*}
            \psi_{lk}^{\bc}(x) &= 2^{(l-L)/2}\psi_{Lk}(2^{l-L}x)\text{ for }k = 1, \dots, S, \\ 
            \psi_{lk}^{\bc}(x) &= 2^{(l-L)/2}\psi_{L,k+2^L-2^l}(2^{l-L}(x-1)+1)\\ 
                                  &\hspace{34.5mm}\text{ for }k = 2^l -S + 1, \dots, 2^l,\nonumber
        \end{align*}
    \item $\supp\psi_{lk}^{\bc}\subseteq \big[0, 2^{-l}(S+k)\big]$ for $k = 1, \dots S$, and
    \item $\supp\psi_{lk}^{\bc}\subseteq \big[2^{-l}(k-S+1), 1\big]$ for $k = 2^l-S+1, \dots, 2^l$.
    \item For all $j = 0, \dots, S-1$, 
\begin{equation}\label{eq:BCWaveletsOrthProp}
    \int_0^1 \psi^{\bc}_{lk}(x)x^j\, dx = 0.
\end{equation}
\end{enumerate} 
And from this we have as usual that 
\[
    \{\phi^{\bc}_{Lk} : k = 1, \dots 2^L\} \cup\bigcup_{l\geq L}\{\psi^{\bc}_{lk} : k = 1, \dots, 2^l\}
\]
is an orthonormal basis of $L^2([0, 1])$. 
Next, we set notation for wavelets on $\Omega = \mathbb T^d$. For $l \in \N$ and $k \in [2^{dl}]$, we have the periodic Meyer scaling functions on $\R^d$ written as $\Phi_{lk}$, and we set
\[\mathbf{V}_l = \Span(\Phi_{lk} : k \in [2^{dl}]).\]
As usual, we also have the corresponding periodic Meyer wavelets written as $\Psi_{l k}$ for $k \in [2^{d(l+1)}-2^{dl}]$, and 
\[\mathbf{W}_j = \mathbf{V}_{j+1} \cap \mathbf{V}^\perp_{j},\]
with $\{ \Psi_{lk} : k \in [2^{d(l+1)}-2^{dl}]\}$ form an orthonormal basis for $\mathbf{W}_l$.

\subsection{Wavelet Characterization of Parabolic Sobolev Spaces}
\begin{definition}\label{def:AnisotropicBesov}
    For $s > 0$, fix a smoothness $S$ so that $\phi^{\bc}_{lk}, \psi^{\bc}_{lk} \in C^r$ for a natural number $r> s$. We define the Besov spaces
    \[
        B^{s, \an}_{pq} = \{ f \in L^p([0, 1]\times \Omega) : \|f\|_{B^{s, \an}_{pq}}<\infty\},
    \]
    where
    \[
        \|f\|_{B^{s, \an}_{pq}} = \left( \sum_{j = L_0-1}^\infty2^{qj(s+\frac{d+2}{2}-\frac{d+2}{p})}\bigg( \sum_{\substack{\mathbb L(l)= j\\ k \in \mathcal K(l)}} |\langle f, \Psi^{\an}_{lk}\rangle|^p\bigg)^{q/p} \right)^{1/q},
    \]
    with the usual adjustments when $p$ or $q = \infty$.
    In the special case where $p = q = 2$, we note 
    \[
        B^{s, \an}_{22} = H^{s,\an}.
    \]
    The definition is also valid for $s = 0$, $p = \infty$, and $q = 1$. 
\end{definition}
We introduce some terminology that is helpful when dealing with these spaces:
\begin{definition}
    Let $p \in [1, \infty]$. Then, we say that $f\in L^p(Q)$ has \textit{finite wavelet support} if 
    \[\langle f, \Psi^{\an}_{lk}\rangle = 0\]
    for all but finitely many indices $l, k$. 
\end{definition}
\begin{proposition}\label{prop:BesovEmbeddings2}
   The usual embeddings between Besov spaces hold, where the effective dimension is $d+2$. For example, if $s > (d+2)(1/p-1/r)$, with $1\leq p, r\leq \infty$, 
   \[
       B^{s, \an}_{pq} \hookrightarrow L^r([0, 1]\times \Omega).
   \]
\end{proposition}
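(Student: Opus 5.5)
The natural route is to mimic the classical isotropic proof on $\mathbb R^{n}$, with the level-$j$ block $\{\Psi^{\an}_{lk}:\mathbb L(l)=j\}$ playing the role of the usual dyadic block. The key observation is a counting/scaling fact. At level $j$, the wavelets have spatial scale roughly $2^{-j}$ and temporal scale roughly $2^{-2j}$. There are $\asymp 2^{(d+2)j}$ of them, and they satisfy $\|\Psi^{\an}_{lk}\|_{L^\infty}\lesssim 2^{(d+2)j/2}$ (the tensor product of a $2^{l_t/2}$ and a $2^{dl_x/2}$ normalisation, with $l_t\le 2j+1$, $l_x\le j$). The partition-of-unity property also holds: $\sup_{(t,x)}\sum_{k}|\Psi^{\an}_{lk}(t,x)|\lesssim 2^{(l_t+dl_x)/2}$ for each fixed $l$. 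This uses bounded overlap of the compactly supported boundary-corrected Daubechies wavelets in $t$ and the rapid decay of the periodized Meyer wavelets in $x$. Summing over the $O(j)$ pairs $l$ with $\mathbb L(l)=j$ only costs harmless constants, at worst a polynomial factor in $j$ that the strict inequality absorbs.

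First I would prove the block estimate
\[
\Big\|\sum_{\mathbb L(l)=j,\,k}c_{lk}\Psi^{\an}_{lk}\Big\|_{L^r(Q)}\lesssim 2^{j(d+2)(\frac12-\frac1r)}\|c\|_{\ell^r},
\]
at least for $r=\infty$ and $r=2$ (where it is Parseval), and then interpolate (Riesz–Thorin) for intermediate $r$. The $r=\infty$ case follows from the $L^\infty$ bound combined with bounded overlap. Next I would pass from $\ell^r$ to $\ell^p$ coefficients. If $p\le r$, use $\|c\|_{\ell^r}\le\|c\|_{\ell^p}$. If $p>r$, use Hölder, $\|c\|_{\ell^r}\le (\#\text{block})^{1/r-1/p}\|c\|_{\ell^p}\lesssim 2^{j(d+2)(1/r-1/p)}\|c\|_{\ell^p}$. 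Since $Q$ has finite measure, the case $p\ge r$ can in fact be reduced to $r=p$ directly.

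Writing $f=\sum_j f_j$ with $f_j$ the level-$j$ block, we obtain $\|f_j\|_{L^r}\lesssim 2^{j(d+2)(1/p-1/r)}2^{-j(s+\frac{d+2}2-\frac{d+2}p)}\cdot 2^{j\frac{d+2}{2}}\cdots$. After collecting exponents, this gives $\|f_j\|_{L^r}\lesssim 2^{-j(s-(d+2)(1/p-1/r))}\,a_j$, where $a_j$ is the $j$-th term of the Besov sequence (so $\sup_j a_j\le\|f\|_{B^{s,\an}_{pq}}$). Because $s>(d+2)(1/p-1/r)$, the geometric series converges for any $q$, and the triangle inequality gives $\|f\|_{L^r}\lesssim\|f\|_{B^{s,\an}_{pq}}$. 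To make this rigorous, I would first prove the bound for $f$ with finite wavelet support and then extend by density and Fatou's lemma, since the series converges absolutely in $L^r$. The other standard embeddings ($B^{s}_{pq}\hookrightarrow B^{s'}_{rq}$ with $s-\frac{d+2}{p}=s'-\frac{d+2}{r}$, and monotonicity in $q$ and in $s$) follow from the same Hölder counting on coefficients, with no function-space input at all.

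The main obstacle is the $L^\infty$ overlap bound for the spatial part. Periodized Meyer wavelets are not compactly supported, so I would invoke their Schwartz-type decay, $|\Psi(y)|\lesssim(1+|y|)^{-N}$, and bound $\sum_k|\Psi_{l_xk}(x)|$ by a lattice sum that is uniform in $l_x$ (cf.\ §4.3 in \cite{GN}). The mild anisotropy, namely several $(l_t,l_x)$ pairs per level, needs only a bounded count, since $l_t\in\{2j,2j+1\}$ or $l_x=j$. With the coarse-level conventions at $L_0-1$ handled separately as finitely many terms, the count contributes $O(j)$ pairs, which is absorbed by the strict inequality.
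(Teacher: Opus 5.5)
The gap is the regime $p,r<2$. For $\max(p,r)\ge 2$ your argument is correct, and there it genuinely extends the paper's proof. The paper only treats the cases it uses: $p=q=2$ with $s>(d+2)/2$ and $r=\infty$, plus $B^{0,\an}_{\infty 1}\hookrightarrow L^\infty$. It does so by a pointwise Cauchy--Schwarz against $\sum_{\mathbb L(l)=j,\,k}|\Psi^{\an}_{lk}(t,x)|^2\lesssim 2^{(d+2)j}$. Your $\ell^\infty\to L^\infty$ block bound combined with $\|c\|_{\ell^\infty}\le\|c\|_{\ell^2}$ is the same estimate in $\ell^1$-overlap form.

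Below $2$ your argument breaks. Riesz--Thorin between your endpoints $\ell^2\to L^2$ and $\ell^\infty\to L^\infty$ gives the block estimate only for $r\in[2,\infty]$. Your reduction of $p\ge r$ to $r=p$ only helps when $p\ge 2$. To go below $2$ you would need the endpoint
\[
\Big\|\sum_{\mathbb L(l)=j,\,k}c_{lk}\Psi^{\an}_{lk}\Big\|_{L^1(Q)}\lesssim 2^{-(d+2)j/2}\|c\|_{\ell^1},
\]
and this fails. Your heuristic that level-$j$ wavelets live at scales $(2^{-2j},2^{-j})$ is not true for the basis in Definition~\ref{def:AnisotropicBesov}. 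The block $\mathbb L(l)=j$ contains every pair $(l_t,j)$ with $L_0-1\le l_t\le 2j+1$, and every pair $(2j,l_x)$, $(2j+1,l_x)$ with $l_x<j$. These are wavelets that are coarse in one of the two variables.

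You cannot close this gap, at least for $p\le r<2$, because the statement itself fails there for the paper's sequence norm. Take $f=\Psi^{\an}_{lk}=\phi^{\bc}_{L_0k_t}(t)\Psi_{jk_x}(x)$ with $l=(L_0-1,j)$. Then $\mathbb L(l)=j$, so $\|f\|_{B^{s,\an}_{pq}}=2^{j(s+\frac{d+2}{2}-\frac{d+2}{p})}$, while $\|f\|_{L^r(Q)}\asymp 2^{jd(\frac12-\frac1r)}$. The ratio is unbounded in $j$ whenever $s<(d+2)(\frac1p-\frac1r)+\frac2r-1$, which for $p\le r<2$ lies strictly above the claimed threshold. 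For $p=r=1$, this says $B^{s,\an}_{11}$ does not embed into $L^1(Q)$ for $0<s<1$.

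Put differently, your block constant $2^{j(d+2)(\frac12-\frac1r)}$ is the $L^r$-norm of a wavelet at scales $(2^{-2j},2^{-j})$. A coarse-in-time wavelet in the same block has $L^r$-norm $2^{jd(\frac12-\frac1r)}$. That is smaller when $r\ge 2$, which is why your argument survives there, and larger when $r<2$.

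You have two ways out:
\begin{itemize}
\item Restrict the claim to $\max(p,r)\ge 2$, which covers everything the paper needs.
\item Define $B^{s,\an}_{pq}$ through the alternate basis $\Psi^{\iota,\alt}_{lk}$ of Remark~\ref{rmk:equivalentnorm}, whose level-$j$ elements really are at scales $\asymp(2^{-2j},2^{-j})$. Then the $L^1$ endpoint holds and your interpolation runs over all $r\in[1,\infty]$. The two definitions coincide for $p=2$ by \eqref{eq:atomicdecomp}, but not for $p\neq 2$.
\end{itemize}

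A minor point: $\sum_{\mathbb L(l)=j}2^{(l_t+dl_x)/2}$ is a sum of two geometric series dominated by $2^{(d+2)j/2}$, so there is no polynomial loss in $j$. You need this for the endpoint $B^{0,\an}_{\infty 1}\hookrightarrow L^\infty$, which the paper uses and where no strict inequality is available to absorb a loss.
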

\begin{proof}
    We only use the cases when $p = q =2$, $s>(d+2)/2$, and $p = \infty$, $q = 1$, $s = 0$, so we will only prove these here. 
    See \textsection 5.3.2 in \cite{T} for the case of anisotropy on $\R^n$ spaces. Suppose that $u\in H^{s, s/2}$ for $s > (d+2)/2$ has finite wavelet support. Then, we can write $u$ as
    \[
        u(t, x) = \sum_{j \geq L_0} \sum_{(\iota, l), k} \langle u, \Psi^{\iota, \an}_{lk}\rangle \Psi^{\iota, \an}_{lk}(t, x),
    \]
    and denote by $u_j$ the projection of $u$ onto $W_{j-1}$ for $j > L_0$ and onto $V_{L_0}$ for $j = L_0$. We use Cauchy-Schwarz to obtain
    \begin{align*}
        |u(t, x)| &\leq \sum_{j \geq L_0} \sum_{(\iota, l), k} |\langle u, \Psi^{\iota, \an}_{lk} \rangle| |\Psi^{\iota, \an}_{lk}(t, x)| \leq \sum_{j \geq L_0} \|u_j\|_{L^2}\left(\sum_{(\iota, l), k} |\Psi^{\iota, \an}_{lk}(t, x)|^2\right)^{1/2}.
    \end{align*}
    Now, by standard wavelet theory one can show that
    \[
        \sum_{(\iota, l), k} |\Psi^{\iota, \an}_{lk}(t, x)|^2 = O(2^{(d+2)j}),
    \]
    so again by Cauchy-Schwarz we have
    \begin{align*}
        |u(t, x)| &\leq \sum_{j \geq L_0} 2^{(d+2)j/2}\|u_j\|_{L^2} \leq \left(\sum_{j \geq L_0} 2^{(d+2 - 2s)j} \right)^{1/2} \left(\sum_{j \geq L_0} 2^{2js}\|u_j\|^2_{L^2}\right)^{1/2} = C\|u\|_{H^{s, s/2}},
    \end{align*}
    and we apply the usual extension arguments to obtain the result. The proof is similar for the $s = 0$, $p = \infty$, and $q = 1$ case. 
\end{proof}
\begin{remark}\label{rmk:equivalentnorm}
    In light of \eqref{eq:atomicdecomp}, we see that indeed any choice of bases for the $W_{j}^{\an}$ and $V_{L_0-1}^{\an}$ would lead to an equivalent definition. From this we can deduce that an alternate choice of tensor basis yields an equivalent space. Indeed, for the proofs we sometimes need to consider finer properties of the periodic Meyer wavelets. This aligns with the notation of \cite{T} more closely. Throughout this section, we will write $\phi^{\bc}$ and $\psi^{\bc}$ for the boundary corrected scaling functions and wavelets, and $\phi$ and $\psi$ as the periodized Meyer scaling functions and wavelets. 

    Let $\mathcal I$ be the set of $d+1$-tuples consisting of $0$s and $1$s with not all entries $0$. Next, we define $\mathcal I^{j, \an}\subseteq \mathcal I \times \N^2$ where $\mathcal I^{L_0, \an} = \{(0, (2L_0, L_0))\}$, and for $j > L_0$, we define $(\iota, l) \in \mathcal I^{j, \an}$ if and only if
    \begin{align*}
        l_0 &= 2(j-1) \quad \text{if }\iota_0 = 0,\\
        l_0 &= 2(j-1) \text{ or }2j-1 \quad \text{if } \iota_0 = 1,\\
        l_1 &= j-1.
    \end{align*}
    Then, we define an alternate basis by
    \[\Psi^{\iota, \alt}_{lk}(t, x) = \psi^{\iota_t, \bc}_{l_tk_t}(t)\Psi^{\iota_x}_{l_x k_x}(x),\]
    where $\psi^{0, \bc} = \phi^{\bc}$, $\psi^{1, \bc} = \psi^{\bc}$, $\Psi^{\iota}$ are tensorized periodic wavelets defined as in \textsection 4.3.6 in \cite{GN}, $\iota = (\iota_t, \iota_x)$, $k = (k_t, k_x)\in [2^{l_0}]\times [2^{l_1}]^d\equiv \mathcal K'(l)$. Then, we have
    \[W^{\an}_j = \Span(\Psi^{\iota, \alt}_{lk} : (\iota, l)\in \mathcal I^{j+1}, k \in \mathcal K'(l)).\]
    Thus, using \eqref{eq:atomicdecomp} we see that an equivalent norm for $H^{s, \an}$ is
    \begin{equation}\label{eq:oldway}
        \|u\|_{H^{s, \an}}^2 = \sum_{j = L_0}^\infty 2^{2js}\sum_{(\iota, l)\in \mathcal I^{j, \an}}\sum_{k \in \mathcal K'(l)} |\langle u, \Psi^{\iota, \alt}_{lk}\rangle|^2,
    \end{equation}
    and we mainly use this characterization in this section.
\end{remark}
In order to show Proposition \ref{prop:besovIsSobolev}, we first need the following auxiliary lemma. Recall that $u_j$ is the orthogonal projection onto $V_{L_0 -1}^{\an}$ or $W_{j}^{\an}$ from \eqref{eq:atomicdecomp}.
\begin{lemma}\label{lem:rieszBlocks}
    Let $u\in L^2(Q)$ have finite wavelet support, and $j \geq L_0$. Then,
    \begin{enumerate}[label = (\alph*)]
        \item for any $i = 1, \dots d$, there is a constant independent of $j$ and $u$ such that
            \[
                \bigg\|\bigg(\frac{\partial u}{\partial x_i}\bigg)_j\bigg\|^2_{L^2(Q)} \leq C 2^{2j}\big(\|u_{j-1}\|^2_{L^2(Q)} + \|u_{j}\|^2_{L^2(Q)} + \|u_{j+1}\|^2_{L^2(Q)}\big), \text{ and }
            \]
        \item for any $\alpha \geq 2$, there is a constant independent of $j, j'$ and $u$ such that
            \[
                \bigg\|\frac{d u}{dt}\bigg\|^2_{H^{\alpha - 2, \an}} \leq C \|u\|_{H^{\alpha, \an}}^2,
            \]
            where we pick $S = S(\alpha)$ the smoothness of the boundary corrected Daubechies wavelets is large enough.
    \end{enumerate}
    In particular, we have that if $v\in H^{\alpha, \an}$  then
    \begin{enumerate}[label = (\roman*)]
        \item if $\alpha \geq 1$, $\partial v/\partial x_i \in H^{\alpha -1, \an}$, and
        \item if $\alpha \geq 2$, $d v/dt \in H^{\alpha -2, \an}$
    \end{enumerate}
\end{lemma}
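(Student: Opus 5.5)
Both parts rest on the alternate tensor basis $\Psi^{\iota,\alt}_{lk}$ of Remark \ref{rmk:equivalentnorm}, together with the equivalent block norm \eqref{eq:atomicdecomp}/\eqref{eq:oldway}. In each block $W^{\an}_j$ the spatial factor lives in $\mathbf V_{j}$ (Meyer, level $\approx j$), and the temporal factor lives in $V^{\bc}_{2j}$ or $W^{\bc}_{2j-1}$ (level $\approx 2j$). Both (a) and (b) are "Bernstein-type" inequalities: a derivative costs $2^{l}$ at level $l$, and it only moves a block to neighbouring blocks.

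\textbf{Part (a).} The operator $\partial/\partial x_i$ acts only on the spatial factor and commutes with the $t$-tensor structure. I would decompose $u=\sum_{j'}u_{j'}$ and check which blocks $(\partial_{x_i}u_{j'})_j$ can be nonzero. The periodic Meyer wavelets are band-limited in Fourier: $\Psi_{lk}$ has frequency support in an annulus $|\xi|\sim 2^{l}$ (roughly $[\tfrac{2\pi}{3}2^{l},\tfrac{8\pi}{3}2^l]$). Hence $\partial_{x_i}$ maps $\mathbf W_l$ into $\mathbf W_{l-1}\oplus\mathbf W_l\oplus\mathbf W_{l+1}$, since the Fourier multiplier $i\xi_i$ preserves the frequency support. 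The norm of this map is at most $C2^{l}$ by Plancherel. Since the time factor is unchanged, $(\partial_{x_i}u)_j$ receives contributions only from $u_{j-1},u_j,u_{j+1}$. The bound $\|(\partial_{x_i}u)_j\|^2\le C2^{2j}\sum_{|j'-j|\le1}\|u_{j'}\|^2$ then follows by the triangle inequality and $(a+b+c)^2\le 3(a^2+b^2+c^2)$. One caveat: the time index in a block is tied to $j$, but differentiation in $x$ leaves $l_t$ fixed while $l_x$ shifts by one. I would handle this by working with the full tensor space $V^{\bc}\otimes\mathbf W$ and using that $W^{\an}_j$ is a sum of such tensor pieces. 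The projection onto $W^{\an}_j$ of $\phi\otimes w$, with $w\in\mathbf W_{l_x\pm1}$, is then still controlled in norm.

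\textbf{Part (b).} The time derivative is the harder case, because Daubechies wavelets are compactly supported rather than band-limited, so $d/dt$ is not block-local. Instead I would prove a one-dimensional estimate for $g\in V^{\bc}_{m}$ in the Besov/Sobolev scale on $[0,1]$, and then tensorize. First, the inverse (Bernstein) inequality $\|g'\|_{L^2}\le C2^{m}\|g\|_{L^2}$ holds for $g\in V^{\bc}_m$, by scaling of the boundary-corrected functions and finite dimensionality on the reference level. Second, the Jackson inequality, which uses the vanishing moments \eqref{eq:BCWaveletsOrthProp}, shows that coefficients of $g'$ against $\psi^{\bc}_{l k}$ with $l>m$ decay like $2^{-(l-m)r}$ for some $r$ related to $S$. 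Combining these gives almost-diagonal decay of the matrix of $d/dt$ in the wavelet basis, with off-diagonal decay $2^{-|l-l'|(S-1)}$ relative to $2^{\max(l,l')}$.

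Since temporal level $2j$ corresponds to block $j$, the derivative costs $2^{2j}$, which is a loss of exactly two units of $s$. A Schur test applied to the weighted matrix $2^{(\alpha-2)j}\,[\text{entries}]\,2^{-\alpha j'}$ then bounds $\|du/dt\|_{H^{\alpha-2,\an}}\le C\|u\|_{H^{\alpha,\an}}$. This requires $S$ large relative to $\alpha$ so that the Schur sums converge. This almost-diagonality, and especially the behaviour near the boundary functions, is where I expect the main obstacle. If it resists, I would fall back on an interpolation argument: identify $H^{\alpha,\an}$ with $\bigcap_k H^k_tH^{\alpha-2k}_x$ at even integer $\alpha$, using the one-dimensional Daubechies characterization of $H^k([0,1])$ (\cite{GN}, \S4.3), and then interpolate.

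\textbf{Consequences.} Finally, (i) and (ii) follow by density of finite-wavelet-support functions. For (i), multiply (a) by $2^{2(\alpha-1)j}$ and sum over $j$; each $\|u_{j'}\|^2$ is counted at most three times with comparable weight. For (ii), it is (b) directly. Since derivatives are closed operators on $L^2$, the estimates extend to all of $H^{\alpha,\an}$.
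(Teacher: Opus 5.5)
Your part (a) and the deduction of (i)--(ii) are fine and essentially follow the paper. You use the Fourier-support locality of the Meyer wavelets, which does give $\partial_{x_i}W^{\an}_j\subseteq W^{\an}_{j-1}\oplus W^{\an}_j\oplus W^{\an}_{j+1}$, so your caveat about $l_t$ resolves. You then apply a Bernstein bound, with Plancherel in place of the paper's explicit matrix estimate.

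The gap is in (b). Your claim that every entry satisfies $|\langle\partial_t\psi^{\bc}_{lk},\psi^{\bc}_{l'k'}\rangle|\lesssim 2^{\max(l,l')}2^{-(S-1)|l-l'|}$ is false.

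- For $l\le l'$, Bernstein plus the vanishing moments of the test function do give this.
- For $l>l'$ you must integrate by parts onto the coarse function. For boundary-corrected functions near the same endpoint this leaves the term $\psi^{\bc}_{lk}(0)\psi^{\bc}_{l'k'}(0)$ (or its analogue at $1$).
- That term has size $2^{(l+l')/2}=2^{\max(l,l')}2^{-|l-l'|/2}$, and this does not improve as $S$ grows.

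This is not cosmetic. Suppose you used this worst-case bound for every pair. Then the Schur column sum over the $\sim 2^{l-l'}$ fine indices meeting a fixed coarse one behaves like $\sum_{l>l'}2^{-(\alpha-3)(l-l')/2}$, in the regime where the time level fixes the block index. That requires $\alpha>3$, not $\alpha\ge 2$.

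The missing idea is the paper's splitting of index pairs.

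- Pairs in which at least one function is interior admit integration by parts with no boundary term, so they have the full decay.
- Boundary--boundary pairs number only $O(S^2)$ per pair of levels. For them the weights alone suffice: $2^{-\alpha l/2}2^{(\alpha-2)l'/2}2^{(l+l')/2}=2^{-(\alpha-1)(l-l')/2}$, which is summable for $\alpha>1$.
- The other regimes of $\max(l_t/2,l_x)$ are handled the same way.

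Your fallback cannot simply cite Proposition \ref{prop:besovIsSobolev}. The paper derives the inclusion $H^{2m,\an}\subseteq\bigcap_k H^k([0,1],H^{2m-2k}(\Omega))$ from this very lemma, so that would be circular. It can be made independent, though:

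- Take $u=\sum c_{lk}\psi^{\bc}_{l_tk_t}\Psi_{l_xk_x}$ with finite wavelet support.
- Apply the one-dimensional characterization of $H^b([0,1])$ to each coefficient function $t\mapsto\langle u(t,\cdot),\Psi_{l_xk_x}\rangle$, then the wavelet characterization of $H^{|\beta|}(\Omega)$.
- This gives $\|\partial_t^b\partial_x^\beta u\|_{L^2(Q)}^2\lesssim\sum 2^{2bl_t+2|\beta|l_x}|c_{lk}|^2\le\sum 2^{2(2b+|\beta|)\max(l_t/2,l_x)}|c_{lk}|^2$, up to the floor in the weight.

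The boundary difficulty is then absorbed by the cited one-dimensional theorem. Combine this with the paper's proof of the reverse inclusion, which does not use the lemma, and with interpolation of the diagonal weighted spaces. That proves (b) for all $\alpha\ge 2$. It would be a legitimate and shorter route, but your proposal supplies neither this argument nor the boundary estimate above.
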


\begin{proof}
    We do the in particular part first. For $(i)$, note that for $u \in H^{\alpha, \an}$ such that $\frac{\partial u}{\partial x_i}$ exists, we can compute using $(a)$
    \begin{align*}
        \bigg\|\frac{\partial u}{\partial x_i}\bigg\|_{H^{\alpha -1, \an}}^2 &= \sum_{j\geq L_0} 2^{2j(\alpha - 1)}\bigg\|\frac{\partial u}{\partial x_i}\bigg\|_{L^2}^2 \leq 3C \sum_{j \geq L_0}2^{2j\alpha}\|u_j\|_{L^2}^2 = 3C \|u\|_{H^{\alpha, \an}}^2,
    \end{align*}
    and since such $u$ are dense in $H^{\alpha, \an}$ we can extend via the usual arguments. For $(ii)$ again take $u\in H^{\alpha, \an}$ such that $\frac{d u}{dt}$ exists, and using $(b)$, extend via the usual arguments.

    First, we claim that
    \[
        \bigg(\frac{\partial u}{\partial x_i}\bigg)_j = \bigg(\frac{\partial u_{j-1}}{\partial x_i}\bigg)_j +\bigg(\frac{\partial u_j}{\partial x_i}\bigg)_j+ \bigg(\frac{\partial u_{j+1}}{\partial x_i}\bigg)_j.
    \]
    This is because we have
    \[
        \frac{\partial u}{\partial x_i} = \sum_{j' \geq L_0} \sum_{(\iota, l), k} \langle u, \Psi^{\iota, \alt}_{lk}\rangle \frac{\partial \Psi^{\iota, \alt}_{lk}}{\partial x_i},
    \]
    and we note that for some other $(\iota', l')\in \mathcal I^{j, \an}$ and $k \in \mathcal K(l)$, 
    \[
        \bigg\langle \frac{\partial \Psi^{\iota, \alt}_{lk}}{\partial x_i}, \Psi^{\iota', \alt}_{l'k'}\bigg\rangle = \langle \psi^{\iota_t, \bc}_{l_tk_t}, \psi^{\iota_t', \bc}_{l_t'k_t'}\rangle \bigg\langle \frac{d \psi^{\iota_i}_{j'-1, k_i}}{dx}, \psi^{\iota_i'}_{j-1,k_i'}\bigg\rangle\prod_{\substack{m = 1 \\ m \neq i}}^d\langle \psi^{\iota_m}_{j'-1, k_m}, \psi^{\iota_m'}_{j-1, k_m'}\rangle,
    \]
    so if $|j' - j| > 1$, then this inner product is $0$ since the Meyer wavelets have disjoint Fourier support in that case, and at least one term in the product will have a Meyer wavelet at the higher level. See Theorem 4.2.9 in \cite{GN}.
    Thus, to bound $(\partial_i u)_j$ in $L^2$ we can bound
    \begin{align*}
        \|(\partial_i u)_j\|_{L^2} &\leq \|(\partial_i u_{j-1})_j\|_{L^2} + \|(\partial_i u_{j})_j\|_{L^2} + \|(\partial_i u_{j+1})_j\|_{L^2} \\
                                   &\leq \|\partial_i u_{j-1}\|_{L^2} + \| \partial_i u_{j}\|_{L^2} + \|\partial_i u_{j+1}\|_{L^2}.
    \end{align*}
    Thus, to deduce $(a)$, it suffices to prove that for some constant independent of $j$, 
    \[
        \|\partial_i u_{j}\|_{L^2}^2 \leq C2^{2j}\|u_j\|^2_{L^2}.
    \]
    Now, consider the fact that
    \begin{align*}
        \|\partial_i u_j\|^2_{L^2} = \sum_{(\iota, l), (\iota', l') \in \mathcal I^{j, \an}} \sum_{k, k'} \langle u, \Psi^{\iota, \alt}_{lk}\rangle \langle u, \Psi^{\iota', \alt}_{l'k'} \rangle \langle \partial_i \Psi^{\iota, \alt}_{lk}, \partial_i \Psi^{\iota', \alt}_{l'k'}\rangle.
    \end{align*}
    So, consider for $(\iota, l), (\iota', l')\in \mathcal I^{j, \an}$ and $k \in \mathcal K'(l)$, $k' \in \mathcal K'(l')$:
    \[
        \bigg\langle \partial_i \Psi^{\iota, \alt}_{lk}, \partial_i \Psi^{\iota', \alt}_{l'k'}\bigg\rangle = \langle \psi^{\iota_t, \bc}_{l_tk_t}, \psi^{\iota_t', \bc}_{l_t'k_t'}\rangle \bigg\langle \frac{d \psi^{\iota_i}_{j-1, k_i}}{dx}, \frac{ d \psi^{\iota_i'}_{j-1,k_i'}}{dx}\bigg\rangle\prod_{\substack{m = 1 \\ m \neq i}}^d\langle \psi^{\iota_m}_{j-1, k_m}, \psi^{\iota_m'}_{j-1, k_m'}\rangle.
    \]
    The only times when this is non-zero is when $(\iota, l) = (\iota', l')$, and $k = k'$ except at the $i$-th index, so we restrict to this case. Now, from standard wavelet theory using the fact that Meyer wavelets and scaling functions are Schwartz, we have the bound
    \[
        \bigg|\bigg\langle \frac{d \psi^{\iota_i}_{j-1, k_i}}{dx}, \frac{ d \psi^{\iota_i'}_{j-1,k_i'}}{dx}\bigg\rangle\bigg| \leq C 2^{2j}\sum_{m\in \Z} \frac{1}{(1+|k_i - k_i' + m'2^{j-1}|)^2}.
    \]
    Thus, we see that 
    \begin{align*}
        \|\partial_i u_j \|^2_{L^2} &= \sum_{\substack{(\iota, l) \in \mathcal I^{j, \alt} \\\iota_i'}}\sum_{\substack{k\in \mathcal K'(l)\\ k_i' \in [2^{j-1} -1]}}\langle u, \Psi^{\iota, \alt}_{lk}\rangle \langle u, \Psi^{\iota', \alt}_{l'k'} \rangle \bigg\langle \frac{d \psi^{\iota_i}_{j-1, k_i}}{dx}, \frac{ d \psi^{\iota_i'}_{j-1,k_i'}}{dx}\bigg\rangle \\
                                    &\leq C2^{2j}\sum_{\substack{(\iota, l) \in \mathcal I^{j, \alt}\\ \iota_i'}}\sum_{\substack{k\in \mathcal K'(l)\\ k_i' \in [2^{j-1} -1]}}|\langle u, \Psi^{\iota, \alt}_{lk}\rangle||\langle u, \Psi^{\iota', \alt}_{l'k'} \rangle| \sum_{m\in \Z} \frac{1}{(1 + |k_i - k_i' + m2^{j-1}|)^2}\\
                                    &\leq C2^{2j}\|u_j\|^2_{L^2},
    \end{align*}
    where in the final inequality we have used Young's inequality and the fact that
    \[
        \sum_{k'_i \in [2^{j-1} - 1]}\sum_{m\in \Z}\frac{1}{(1+|k_i - k_i' + m2^{j-1}|)^2} \leq \frac{\pi^2}{3}.
    \]
    Hence, we have established $(a)$. We prove $(ii)$ using a similar idea. However, we will need the original basis for this part. As before, we can write
    \[
    \frac{ du}{dt} = \sum_{j, j' \geq L_0 -1}\sum_{k_t, k_x} \langle u, \Psi^{\an}_{jj'k_tk_x}\rangle \frac{d \Psi^{\an}_{jj'k_tk_x}}{dt}.
    \]
    Thus, if we re-expand this into the alternate basis, we see
    \[
        \frac{d u}{dt} = \sum_{ii'jj'} \sum_{k_tk_t'k_xk_x'} \langle u, \Psi^{\an}_{jj'k_tk_x}\rangle \bigg\langle \frac{d \Psi^{\an}_{jj'kk_tk_x}}{dt}, \Psi^{\an}_{ii'k_t'k_x'} \bigg\rangle \Psi^{\an}_{ii'k_t'k_x'}.
    \]
    Now, if we examine the inner product
    \[
        \bigg\langle \frac{d \Psi^{\an}_{jj'k_tk_x}}{dt}, \Psi^{\an}_{ii'k_t'k_x'} \bigg\rangle = \bigg\langle \frac{d \psi^{\bc}_{jk_t}}{dt}, \psi^{\bc}_{ik_t'}\bigg\rangle \langle \Psi_{j'k_x}, \Psi_{i' k_x'}\rangle,
    \]
    we see that it is zero unless $j' = i'$ and $k_x = k_x'$ (in which case it is $1$). This means the action of the time derivative decouples from space. So, if we wish to show that 
    \[
        \bigg\| \frac{ du}{dt} \bigg\|_{H^{\alpha - 2, \an}}^2 \leq C \|u\|^2_{H^{\alpha, \an}},
    \]
    it's equivalent to show that the matrices from $\ell^2 \to \ell^2$ given by
    \[
        M^{j'}_{(j, k_t), (i, k_t')} = 2^{-\alpha\max(j/2, j')}2^{(\alpha - 2)\max(i/2, j')}\langle \partial_t \psi^{\bc}_{jk_t}, \psi^{\bc}_{ik_t'} \rangle
    \]
    are bounded uniformly in $j'$. For this, we will use a discrete version of Schur's test, see \cite{HS} Theorem 5.2. First, we note that when $j \leq i$, we can use \eqref{eq:BCWaveletsOrthProp} to see that, for $S$ the smoothness of the boundary-corrected Daubechies wavelets and Taylor's theorem, there is a constant depending only on $S$ such that
    \[
        |\langle \partial_t \psi^{\bc}_{jk_t}, \psi_{i k_t'}^{\bc}\rangle| \leq C 2^{j}2^{-(S-1)(i-j)} = C2^{j}2^{-(S-1)|i-j|}.
    \]
    Then, we see that for $2j' \leq j \leq i$,
    \begin{align}
        |M^{j'}_{(j, k_t), (i, k_t')}| &\leq C2^{-\alpha\max(j/2, j')}2^{(\alpha - 2)\max(i/2, j')}2^{j}2^{-(S-1)|i-j|}\leq C2^{(-S+\alpha/2)|i-j|}.\label{eq:schurtailbound}
    \end{align}
    Now, since we have an eye towards using Schur's test, for a fixed $(j, k_t)$, we want
    \[
        \sum_{(i, k_t')} |M_{(j, k_t), (i, k_t')}^{j'}| \leq C,
    \]
    where $C$ does not depend on $j'$, $j$ or $k_t$. We observe that since the boundary corrected wavelets have compact support, for a fixed $i$, there are only $2^{|i-j|}$ many $k_t'$ such that the supports of $\psi^{\bc}_{jk_t}$ and $\psi^{\bc}_{jk_t'}$ intersect. Hence, by \eqref{eq:schurtailbound}, we get
    \begin{equation}\label{eq:schurtailcase1a}
        \sum_{\substack{(i, k_t') \\ i\geq j}} |M_{(j, k_t), (i, k_t')}^{j'}| \leq C\sum_{i\geq j}  2^{(-S + \alpha/2 + 1)|i-j|}.
    \end{equation}
    From this, we now know fix $S$ large enough so that $S > \alpha/2 + 1$. This is the heart of the argument, and we'd ideally like to simply perform the same estimate in the regime where $i \leq j$. However, doing this requires the use of integration by parts. In the case where either $\psi^{\bc}_{jk_t}$ or $\psi^{\bc}_{ik_t'}$ is an internal wavelet, integration by parts is justified, we get the usual estimate
    \[
        |M_{(j, k_t), (i, k_t')}^{j'}| \leq C 2^{-\alpha \max(j/2, j')}2^{(\alpha - 2)\max(i/2, j')}2^{i}2^{-(S-1)|i-j|}.
    \]
    So, if we take a fixed interior index $(j, k_t)$ with $2j' \leq j$, we see
    \begin{align}
        \sum_{\substack{(i, k_t') \\ i\leq j}}|M_{(j, k_t), (i, k_t')}^{j'}| &\leq C\sum_{i\leq j} 2^{|i-j|}2^{-\alpha j/2}2^{(\alpha - 2)\max(i/2, j')}2^{i}2^{-(S-1)|i-j|}\leq C\sum_{i\leq j} 2^{-(S-2)|i-j|} \leq C. \label{eq:schurtailcase1b}
    \end{align}
    Combining \eqref{eq:schurtailcase1a} and \eqref{eq:schurtailcase1b} together we get the needed bound. In the case where both $\psi^{\bc}_{jk_t}$ and $\psi^{\bc}_{ik_t'}$ are boundary wavelets (note that for each $i$ and $j$ there are only $2S^2 = O(1)$ such pairs), this boundary term is unavoidable. But in this case we use an estimate leveraging the weighting factors of in $M$.
Now, consider the case where $(j, k_t)$ is fixed to be a boundary index (say the left boundary for simplicity) and $i\leq j$ with $(i, k_t')$ also a boundary index. Then, we still integrate by parts and obtain
    \begin{align*}
        |\langle \partial_t \psi^{\bc}_{j k_t}, \psi^{\bc}_{ik_t'}\rangle| &\leq |\psi^{\bc}_{jk_t}(0)\psi^{\bc}_{ik_t'}(0)| + |\langle \psi^{\bc}_{jk_t}, \partial_t \psi^{\bc}_{ik_t'}\rangle| \leq C(2^{(i+j)/2} + 2^{i}2^{-(S-1)|i-j|}).
    \end{align*}
    Now the second term is well-behaved as we've seen, so it remains to bound
    \begin{align*}
        \sum_{i\leq j} 2^{-\alpha j/2}&2^{(\alpha - 2)\max(i/2, j')}2^{(i+j)/2} = 2^{-(\alpha -1)j/2}\sum_{i\leq j}2^{(\alpha - 2)\max(i/2, j')}2^{i/2} \\
                                                                               &\leq 2^{-(\alpha - 1)j/2}\left(\sum_{i\leq 2j'}2^{(\alpha -2)j'}2^{i/2} + \sum_{2j' < i \leq j}2^{(\alpha - 1)i/2}\right) \leq C,
    \end{align*}
    as required. This handles shows the maximum row sum is bounded in the case where $(j, k_t)$ is such that $2j'\leq j$. The remaining cases, i.e. 
    \begin{itemize}
        \item the maximum row sum is bounded when $(j, k_t)$ is such that $j \leq 2j'$,
        \item the maximum column sum is bounded when $(i, k_t')$ is such that $2j' \leq i$, and
        \item the maximum column sum is bounded when $(i, k_t')$ is such that $2j' \geq i$,
    \end{itemize}
    follow from the same techniques, and we won't do them here. Note that we require $\alpha \geq 2$ because in the case where $(i, k_t')$ is fixed, and $2j' \leq i \leq j$, we need the sum (coming from the boundary wavelets)
    \[
        \sum_{i\leq j} 2^{-\alpha j/2}2^{(\alpha -2)i/2}2^{(i+j)/2} = \sum_{i \leq j}2^{-(\alpha - 1)(j-i)/2}
    \]
    to converge (so indeed any $\alpha > 1$ would do).
\end{proof}

We now prove the main claim, Proposition \ref{prop:besovIsSobolev}.
\begin{proposition*}[\textbf{\ref{prop:besovIsSobolev}}]
    Let $m\in \N$, then 
    \begin{align*}
        \bigcap_{k=0}^{m} H^k([0, 1], H^{2m-2k}(\Omega)) &= H^{2m, \an},\\
        \bigcap_{k = 0}^{m+1} H^k([0, 1], H^{2m+1 - 2k}(\Omega)) &\subseteq H^{2m+1, \an},
    \end{align*}
    where $\subseteq$ indicates a continuous embedding, and equality indicates equivalent norms. In particular, we have $H^{s, \an} = H^{s, s/2}$ for $s\geq 0$.
\end{proposition*}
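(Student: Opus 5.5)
We prove the two embeddings through the equivalent norm \eqref{eq:oldway}, working throughout with $u$ of finite wavelet support and extending by density at the end.

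\textbf{Inclusion $H^{2m,\an}\subseteq \bigcap_{k=0}^m H^k_tH^{2m-2k}_x$.} This direction follows from Lemma \ref{lem:rieszBlocks}. Apply part (ii) $k$ times and then part (i) $2m-2k$ times to get
\[
\|\partial_t^k\partial_x^\gamma u\|_{L^2(Q)}\lesssim \|u\|_{H^{2m,\an}},\qquad |\gamma|\le 2m-2k,
\]
since $H^{0,\an}=L^2(Q)$. This holds once the smoothness $S$ is chosen large, depending on $m$. Summing over $k$ and $\gamma$ gives the continuous embedding.

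\textbf{Inclusion $\bigcap_k H^k_tH^{s-2k}_x\subseteq H^{s,\an}$, for $s=2m$ or $2m+1$.} Here we use a Jackson-type estimate on each block $u_j$. By \eqref{eq:atomicdecomp} and orthogonality, it suffices to show
\begin{equation*}
\|u_j\|_{L^2(Q)}\lesssim 2^{-sj}\,a_j,\qquad \sum_j a_j^2\lesssim \sum_{k}\|u\|^2_{H^k_tH^{s-2k}_x}.
\end{equation*}
Split $W_j^{\an}$ according to the decomposition in Remark \ref{rmk:equivalentnorm}. Write $Q_j^x$ for the projection onto spatial Meyer wavelets at level $j-1$, and $Q_i^t$ for the projection onto boundary-corrected wavelets at time level $i$. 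The pieces of $W_j^{\an}$ are of two types:
\begin{enumerate}[label=(\alph*)]
\item pieces where the space index sits at level $j-1$ and the time index at level at most $2j$;
\item pieces where the time index is a wavelet at level $2j-2$ or $2j-1$ and the space index is a scaling function at level $j-1$.
\end{enumerate}

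For type (a) we use the spatial Fourier localization of Meyer wavelets: $\|Q^x_j g\|_{L^2}\lesssim 2^{-rj}\|g\|_{H^r_x}$. Applied with $k=0$ and $r=s$, this gives
\[
\sum_j 2^{2sj}\|Q_j^x u\|^2\lesssim \|u\|^2_{L^2_tH^s_x}.
\]
This is a genuine square-summable bound, because the spatial bands are disjoint and the estimate comes from Plancherel.

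For type (b) we use the vanishing moments \eqref{eq:BCWaveletsOrthProp} of the time wavelets together with Taylor's theorem in $t$. This gives
\[
\sum_i 2^{2\sigma i}\|Q_i^t g\|^2_{L^2_t}\lesssim \|g\|^2_{H^\sigma_t}\qquad (\sigma\le S),
\]
where $\sigma$ is an integer. This is the standard wavelet characterization of $H^\sigma([0,1])$ via the boundary-corrected basis of \cite{CDV93}. Apply it with $\sigma=s/2$: when $s=2m$ this is $\sigma=m$ with $k=m$. Since $2^{2\sigma i}\asymp 2^{2sj}$ at $i\approx 2j$, the type (b) pieces are controlled by $\|u\|_{H^{m}_tL^2_x}$.

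\textbf{Odd case $s=2m+1$.} Now $s/2$ is not an integer, so we interpolate. Control time derivatives of order $m+1$ by $\|u\|_{H^{m+1}_tH^{-1}_x}$, which is the $k=m+1$ term and is why the intersection runs up to $k=m+1$. Use spatial frequency $2^{j}$ to trade the factor $2^{-j}$ coming from $H^{-1}_x$ against one half-order in time. Concretely, for type (b) pieces at spatial level $j-1$,
\[
2^{(2m+1)j}\|u_{j}\|\lesssim 2^{(2m+2)j}\,2^{-j}\|\cdot\|,
\]
and the right-hand side is bounded by the $H^{m+1}_tH^{-1}_x$ seminorm localized to that spatial band. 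Since the time levels $2j-2$ and $2j-1$ correspond to $2^{(m+1)\cdot 2j}$, the powers match.

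\textbf{Final claim $H^{s,\an}=H^{s,s/2}$.} For even integers $s$ this follows because the intersection spaces coincide with $H^{s,s/2}$ by \cite{LM2}. For general $s$ we use interpolation. Both scales interpolate complexly: the wavelet side is a weighted $\ell^2$ space, and the $H^{s,s/2}$ side interpolates by \cite{LM2}. Interpolating between consecutive even integers gives all $s\ge0$.

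\textbf{Main obstacle.} The hardest step is controlling the boundary-corrected time wavelets near $t=0,1$. There integration by parts produces boundary terms, as in the proof of Lemma \ref{lem:rieszBlocks}, and the direct Jackson estimate needs the boundary wavelets to reproduce polynomials up to degree $S-1$. We would rely on the characterization result of \cite{CDV93} rather than redo it. The odd-order case, which must pair spatial and temporal levels carefully, is secondary.
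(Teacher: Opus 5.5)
Your proposal is correct and follows essentially the paper's proof: Lemma~\ref{lem:rieszBlocks} gives $H^{2m,\an}\subseteq\bigcap_k H^k_tH^{2m-2k}_x$. The reverse inclusion comes from splitting each block into spatial-wavelet pieces, controlled by the wavelet characterization of $H^{s}(\Omega)$ at a.e.\ time, and spatial-scaling/time-wavelet pieces, controlled by the boundary-corrected characterization of Sobolev spaces on $[0,1]$; finally $H^{s,\an}=H^{s,s/2}$ follows by interpolation combined with \cite{LM2}. The one place you go beyond the paper is the odd case, which the paper dismisses as ``the same''. Your use of the $k=m+1$ term $H^{m+1}_tH^{-1}_x$ with the Bernstein bound $\|g\|_{L^2}\lesssim 2^{j}\|g\|_{H^{-1}}$ on $\mathbf V_{j-1}$ is a valid direct estimate, not an interpolation despite your wording. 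One correction: the spaces $\mathbf V_{j-1}$ are nested rather than disjoint bands, so after discarding the spatial projection (uniformly bounded on $H^{-1}$), you must sum over $j$ through the disjoint time levels $2j-2,2j-1$.
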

\begin{remark}
    We note that one containment being continuous, and the equality as sets automatically implies the norms are equivalent by standard functional analysis. See Remark 4.3.1 in \cite{GN}. 
\end{remark}
\begin{proof}
    Firstly, let $X = \bigcap H^k([0, 1], H^{2m-2k}(\Omega))$, and let $Q = [0, 1]\times \Omega$.  We will first show that $H^{2m, \an} \subseteq X$, and then $X\subseteq H^{2m, \an}$.

    Let $f \in H^{2m, \an}$. Then to show that $f \in X$ it suffices to show that $D_t^bD_x^\beta f \in L^2(Q)$, where $2b + |\beta| \leq 2m$. In our setting, what we mean by this is that for any function $\chi \in C_c((0, 1)\times \Omega)$, we have
    \[
        \int_{Q} D^b_tD^\beta_x f (t, x) \chi(t, x)\, dtdx = (-1)^{b+ |\beta|}\int_{Q}f(t, x) D^b_tD^\beta_x \chi (t, x)\, dtdx.
    \]
    Then, we use Lemma \ref{lem:rieszBlocks} to see that indeed $f\in X$. 
    
    Next, let $f \in X$. Then we wish to show that $f \in H^{2m, \an}$, i.e.
    \begin{equation}\label{eq:besovNormToBound}
        \sum_{j > L_0} 2^{2j(2m)}\sum_{(\iota, l), k} |\langle f, \Psi_{lk}^{\iota, \alt}\rangle|^2 < \infty
    \end{equation}
    Now, since $f \in L^2([0, 1], H^{2m}(\Omega))$, we know that $f(t, \cdot) \in H^{2m}(\Omega)$ for almost every $t$. Thus, by the wavelet characterization of $H^{2m}(\Omega)$ (see \textsection 4.3.6 in \cite{GN}), we get the following. See Theorem 2.55 in \cite{T2} for the corresponding wavelet characterization of $H^{m}([0, 1])$.
    \[
        \langle f(t, \cdot), \Psi^{\iota_x}_{j k_x}\rangle = 2^{-j(2m)}a^{\iota_x}_{jk_x}(t)
    \]
    for some sequence of functions $a^{\iota_x}_{jk_x}$ such that 
    \[
        \sum_{j = 0}^\infty \sum_{\substack{\iota_x\in \mathcal I\\ k_x\in [2^{j}-1]^d}} |a^{\iota_x}_{j k_x}(t)|^2 < \infty \quad \text{ for almost every }t.
    \]
    But we again observe from the wavelet characterization that 
    \begin{align*}
        C \|f\|_{L^2([0, 1], H^{2m}(\Omega))}^2 &\geq \int_{[0, 1]} C\|f(t, \cdot)\|^2_{H^{2m}(\Omega)} \, dt \geq \int_{[0, 1]}\sum_{j = 0}^\infty \sum_{\substack{\iota_x\in\mathcal I \\ k_x \in [2^j-1]^d}} |a^{\iota_x}_{jk_x}(t)|^2 \, dt\\
                                                  &\geq \sum_{j = 0}^\infty \sum_{\substack{\iota_x\in\mathcal I \\ k_x \in [2^j-1]^d}} \int_{[0, 1]}|a^{\iota_x}_{jk_x}(t)|^2 \, dt.
    \end{align*}
Thus, if we now consider
\begin{align*}
    \sum_{j >L_0} \sum_{\substack{(\iota, l) \in \mathcal I^{j, \an}\\ k \in \mathcal K(l)\\ \iota_x \neq 0}} 2^{2j(2m)} |\langle f, \Psi^{\iota, \alt}_{lk} \rangle |^2 &= \sum_{j >L_0} \sum_{\substack{(\iota, l) \in \mathcal I^{j, \an}\\ k \in \mathcal K(l)\\ \iota_x \neq 0}} 2^{2j(2m)} |\langle \langle f, \Psi^{\iota_x}_{l_1k_x}\rangle, \psi^{\iota_t}_{l_0k_t} \rangle |^2 \\
    &= \sum_{j >L_0} \sum_{\substack{(\iota, l) \in \mathcal I^{j, \an}\\ k \in \mathcal K(l)\\ \iota_x \neq 0}} |\langle a^{\iota_x}_{l_1k_x}, \psi^{\iota_t}_{l_0k_t} \rangle |^2.\\
\end{align*}
\vspace{-15mm}

But for a fixed $(\iota_t, l_0)$, the collection $(\psi^{\iota_t, \bc}_{l_0k_t})$ for $k_t \in [2^{l_0}-1]$ is orthonormal, and there are only three possibilities for $(\iota_t, l_0)$, namely $(0, 2(j-1)$, $(1, 2(j-1))$, or $(1, 2j-1)$, so
\begin{align*}
    \sum_{j >L_0} \sum_{\substack{(\iota, l) \in \mathcal I^{j, \an}\\ k \in \mathcal K(l)\\ \iota_x \neq 0}} 2^{2j(2m)} |\langle f, \Psi^{\iota, \alt}_{lk} \rangle |^2 \leq 3 \sum_{j >L_0} \sum_{\substack{\iota_x \in \mathcal I\\ k_x \in [2^{j-1}-1]}} \|a^{\iota_x}_{j-1, k_x}\|^2_{L^2([0, 1])}\leq 3C\|f\|^2_{L^2([0, 1], H^{2m}(\Omega))}.
\end{align*}
This is nearly the complete Besov space norm. However, we are missing the summands where $(\iota, l)\in \mathcal I^{j, \an}$ satisfies $\iota_x = 0$. Note that in such a case, $\iota_t = 1$. To handle this, we will use the wavelet characterization of Sobolev spaces on the interval, see Theorem 2.55 in \cite{T2}. We know that $f \in H^{m}([0, 1], L^2(\Omega))$, and again from Fubini and the fact wavelets are $L^\infty$ functions, we deduce that $\langle f, \Psi^{\iota_x}_{j k_x}\rangle_\Omega \in H^{m}([0, 1])$, with 
    \[
        D^b_t \langle f, \Psi^{\iota_x}_{j k_x} \rangle_\Omega = \langle D^b_t f, \Psi^{\iota_x}_{j k_x} \rangle_\Omega
    \]
    Now, if we define the functions 
    \[
        a^{\iota_x}_{jk_x}(t) = \langle f, \Psi^{\iota_x}_{jk_x}\rangle,
    \]
    then as stated earlier, $a^{\iota_x}_{j k_x} \in H^m([0, 1])$, and moreover for $b \leq m$,
    \begin{align*}
        \sum_{j = 0}^\infty \sum_{\substack{ \iota_x \in \mathcal I \\ k_x \in [2^j-1]^d}} \| D^b_t a^{\iota_x}_{j k_x}\|^2_{L^2([0, 1])} &= \sum_{j = 0}^\infty \sum_{\substack{\iota_x \in \mathcal I \\ k_x \in [2^j-1]^d}} \int_0^1 |\langle D^b_t f, \Psi_{jk_x}^{\iota_x} \rangle_\Omega |^2 \, dt = \int_0^1 \| D^b_t f \|^2_{L^2(\Omega)} \, dt.
    \end{align*}
    Hence, we deduce
    \[
        \sum_{j = 0}^\infty \sum_{\substack{ \iota_x \in \mathcal I \\ k_x \in [2^j-1]^d}} \|a^{\iota_x}_{j k_x}\|^2_{H^{m}([0, 1])} = \|f\|^2_{H^{m}([0, 1], L^2(\Omega))} < \infty.
    \]
    Next, note that we are trying to show that the terms in the Besov norm with $\iota = (1, 0)$ are finite, i.e.
    \begin{align*}
        \sum_{j >L_0} \sum_{k_x \in [2^{j-1}-1]^d} 2^{2j(2m)}\Big( &\sum_{k_t \in [2^{2(j-1)}-1]}|\langle f, \Psi^{(1, 0), \alt}_{(2(j-1), j-1), (k_t, k_x)}\rangle|^2  \\
                                                                     &+\quad \sum_{k_t \in [2^{2j-1}-1]} |\langle f, \Psi^{(0,1), \alt}_{(2j-1, j-1), (k_t, k_x)}\rangle|^2\Big) \\
            = \sum_{j >L_0} \sum_{k_x \in [2^{j-1}-1]^d} 2^{2j(2m)}\Big( &\sum_{k_t \in [2^{2(j-1)}-1]}|\langle \langle f, \psi^{bc}_{2(j-1), k_t}\rangle, \Psi^{0}_{j-1, k_x}\rangle|^2  \\
                                                                     &+\quad \sum_{k_t \in [2^{2j-1}-1]} |\langle \langle f, \psi^{bc}_{2j-1, k_t}\rangle, \Psi^{0}_{j-1, k_x}\rangle|^2 \Big).
    \end{align*}
    But, here we note that the collection $(\Psi^0_{j-1, k_x})$ for $k_x \in [2^{j-1}-1]^d$ is an orthonormal basis of $\mathbf V_{j-1}$ from \textsection \ref{sec:WaveletFacts}, so the above sum equals
    \begin{align*}
        =\sum_{j >L_0} \sum_{l \leq j-1}\sum_{\substack{\iota_x \in \mathcal I \\ k_x \in [2^{l}-1]^d}} 2^{2j(2m)}\Big( &\sum_{k_t \in [2^{2(j-1)}-1]}|\langle a^{\iota_x}_{lk_x}, \psi^{bc}_{2(j-1), k_t}\rangle|^2  \\
                                                                                                                          &+\quad \sum_{k_t \in [2^{2j-1}-1]} |\langle a^{\iota_x}_{lk_x}, \psi^{\bc}_{2j-1, k_t}\rangle|^2 \Big) \\
        \leq  \sum_{l =0}^\infty\sum_{\substack{\iota_x \in \mathcal I \\ k_x \in [2^{l}-1]^d}} \sum_{j >L_0}2^{2j(2m)}\Big( &\sum_{k_t \in [2^{2(j-1)}-1]}|\langle a^{\iota_x}_{lk_x}, \psi^{bc}_{2(j-1), k_t}\rangle|^2  \\
                                                                                                                          &+\quad \sum_{k_t \in [2^{2j-1}-1]} |\langle a^{\iota_x}_{lk_x}, \psi^{\bc}_{2j-1, k_t}\rangle|^2 \Big) \\
            \leq C \sum_{l = 0}^\infty \sum_{\substack{\iota_x \in \mathcal I \\ k_x \in [2^l-1]^d}} \| a^{\iota_x}_{lk_x}\|^2_{H^{m}([0, 1])} &= C\|f\|^2_{H^m([0, 1], L^2(\Omega))},
    \end{align*}
    where the last inequality follows from the wavelet characterization of the space $H^{m}([0, 1])$. Thus, we have shown that $f \in H^{2m, \an}$, (and $\|\cdot\|_{H^{2m, \an}} \lesssim \|\cdot \|_X$), so we are done. The argument is the same for the $2m+1$ case. The in particular part follows from an interpolation argument. For any $s\geq 0$, let $m\in \N$ be such that $2m > s$. Then, from Proposition 2.1 in \cite{LM2} we have for $2m(1-\theta) = s$
    \[
        [H^{2m, m}, L^2]_{\theta} = H^{s, s/2}.
    \]
    But we also have
    \[
        [H^{2m, \an}, L^2]_{\theta} = H^{s, \an},
    \]
    so indeed these spaces must coincide. To see the interpolation result for the wavelet spaces, consider the definition of interpolation given in \cite{LM} Definition 2.1: The $\Lambda$ operator is given by
    \[
        \Lambda(u) = \sum_{j \geq L_0} \sum_{(\iota, l), k} 2^{2mj}\langle u, \Psi^{\iota, \alt}_{lk}\rangle\Psi^{\iota, \alt}_{lk}.
    \]
    It is diagonal with respect to the anisotropic basis, and so
    \[
        \Lambda^{1-\theta}(u) =\sum_{j \geq L_0} \sum_{(\iota, l), k} 2^{2m(1-\theta)j}\langle u, \Psi^{\iota, \alt}_{lk}\rangle\Psi^{\iota, \alt}_{lk}.
    \]
\end{proof}

\section{PDE Regularity}\label{sec:PDEReg}
\subsection{Heterogeneous Reaction-Diffusion Equation}\label{sec:HRDERegTheory}

Consider solution $u: Q\to \R$ to the equation
\begin{align}
    \frac{\partial u}{\partial t} - \nabla \cdot (D \nabla u) &= f(u) \quad \text{ on } (0, 1]\times \Omega, \label{eq:HRDE}\\
    u(0, \cdot) &= \theta \quad \text{ on } \Omega, \nonumber
\end{align}
where $f: \R \to \R$ is in $C^\infty_c(\R)$, and $\theta \in H^1(\Omega)$ is an initial condition. We moreover have $D : \Omega \to [D_{\min}, \infty)$, with $D_{\min} >0$, and $\|D\|_\infty < \infty$. A weak solution to \eqref{eq:HRDE} is a map $u \in L^2([0, 1], H^1(\Omega))\cap C([0, 1], L^2(\Omega)$ such that $u' \in L^2([0, 1], H^{-1}(\Omega))$ satisfies
\begin{equation}
    \langle u', v\rangle + \langle D\nabla u, \nabla v \rangle = \langle f(u), v\rangle, \quad \forall v \in H^1(\Omega)\text{ and a.e. } t\in [0, 1].
\end{equation}
A strong solution to \eqref{eq:HRDE} is a weak solution $u$ with, in addition, 
\[
    u \in L^2([0, 1], H^2(\Omega)) \cap C([0, 1], H^1(\Omega))\qquad u' \in L^2([0, 1], L^2(\Omega)), 
\]
in which case $u$ solves \eqref{eq:HRDE} in $L^2([0, 1], L^2(\Omega))$.

\subsubsection{The Transport operator $\nabla \cdot(D\nabla \cdot)$}
In proving results involving a variable diffusivity term, we first need to record some facts about the operator $\nabla \cdot (D\nabla\cdot)$. Throughout, we will assume that $D\in C^1(\Omega)$, $D: \Omega \to [D_{\min}, \infty)$ with $D_{\min} >0$.

Recall that for any function space $X \subseteq L^1(\Omega)$, we define
\[
    \dot X = \bigg\{ f \in X : \int_\Omega f(x) \, dx = 0\bigg\}.
\]
Moreover, for Sobolev spaces we define $H^{-s}(\Omega) = (\dot H^s(\Omega))^\ast$.

Then, following arguments similar to \textsection 3.1.2 in \cite{N24}, we get that there exists an orthonormal basis of $L^2(\Omega)$ of eigenfunctions $e_{0, D} = 1$, and $e_{1, D}, \dots, e_{j, D}, \dots \in H^2(\Omega)$ with corresponding Eigenvalues $\lambda_0 = 0 < \lambda_{1, D} \leq \lambda_{2, D}\leq \cdots \uparrow \infty$ such that
\[
    -T_De_{j, D} = \nabla \cdot (D \nabla e_{j, D}) = -\lambda_{j, D}e_{j, D}\quad \text{ for }j \in \N.
\]
We can define spaces, for $k \in \N$
\[
    \bar H^k_D(\Omega) = \left\{ \phi \in \dot L^2(\Omega) : \sum_{j \geq 1} \lambda^k_{j, D} \langle \phi, e_{j, D}\rangle^2_{L^2} \equiv \|\phi\|_{\bar H^k}^2 < \infty\right\}.
\]
They satisfy properties as stated in the following theorem:

\begin{proposition}\label{prop:EigenSobolevChar}
    Assume that $\| D \|_{H^s(\Omega)} \leq U$ for some $s > d$. Then we have for $k \leq s+1$, 
    \[
        \bar H^k_{D}(\Omega) = \dot H^k(\Omega) \quad \text{ and } \| \phi\|_{H^k} \simeq \|\phi\|_{\bar H^k_D}.
    \]
\end{proposition}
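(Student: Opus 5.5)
We need to show that for $D\in H^s$ with $s>d$ (so $D\in C^1$ and $H^s$ is an algebra controlling products), the spectral spaces $\bar H^k_D$ coincide with $\dot H^k$ with equivalent norms for $k\le s+1$. The plan is induction on $k$, using elliptic regularity for $T_D=-\nabla\cdot(D\nabla\cdot)$ on mean-zero functions. Throughout we use the identity
\[
\|\phi\|_{\bar H^k_D}=\|T_D^{k/2}\phi\|_{L^2},
\]
which holds by the spectral definition. This reduces the problem to two facts about $T_D$: that it is an isomorphism $\dot H^{m+2}\to\dot H^m$ with uniform bounds, and the base cases $k=0,1$.

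\textbf{Base cases.} For $k=0$ the claim is trivial, since $\bar H^0_D=\dot L^2$. For $k=1$ we have
\[
\|\phi\|_{\bar H^1_D}^2=\langle T_D\phi,\phi\rangle=\int D|\nabla\phi|^2 .
\]
This lies between $D_{\min}\|\nabla\phi\|^2$ and $\|D\|_\infty\|\nabla\phi\|^2$. On mean-zero functions, Poincaré's inequality makes $\|\nabla\phi\|_{L^2}$ equivalent to $\|\phi\|_{H^1}$. Density of finite eigen-expansions in both spaces, where the $e_{j,D}\in H^2$, then gives equality of the spaces.

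\textbf{Induction step $k\mapsto k+2$.} Suppose $\bar H^k_D=\dot H^k$ with equivalent norms, where $k+2\le s+1$. Then
\[
\|\phi\|_{\bar H^{k+2}_D}=\|T_D\phi\|_{\bar H^k_D}\simeq\|T_D\phi\|_{H^k},
\]
so it suffices to show $\|T_D\phi\|_{H^k}\simeq\|\phi\|_{H^{k+2}}$ for mean-zero $\phi$.

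The upper bound comes from writing $T_D\phi=-D\Delta\phi-\nabla D\cdot\nabla\phi$ and applying Moser/product estimates in $H^k$. These need $D\in H^{k+1}$ and $\nabla D\in H^k$, with some multiplier control, and this is where $k+1\le s$ together with $s>d$ (so that $H^{s}$ and $H^{s-1}$ are algebras and multipliers on the relevant ranges) is used.

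The lower bound is elliptic regularity on the torus. Write
\[
\Delta\phi=-D^{-1}\bigl(T_D\phi+\nabla D\cdot\nabla\phi\bigr).
\]
We have $\|\Delta\phi\|_{H^k}\simeq\|\phi\|_{H^{k+2}}$ for mean-zero $\phi$, and $D^{-1}\in H^s$ by the composition result from Proposition 3.9 in \cite{Taylor} already cited. Bootstrapping gives
\[
\|\phi\|_{H^{k+2}}\lesssim\|T_D\phi\|_{H^k}+\|\phi\|_{H^{k+1}}.
\]
Interpolation, together with the injectivity of $T_D$ on mean-zero functions (which gives $\|\phi\|_{L^2}\lesssim\|T_D\phi\|_{L^2}$ via $\lambda_{1,D}>0$), absorbs the lower-order term. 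Running this induction from both base cases $k=0,1$ covers all integers $k$.

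\textbf{Main obstacle.} The delicate point is the product and commutator bookkeeping at the top of the range $k=s+1$ (or $k$ near it). There $\nabla D$ only lies in $H^{s-1}$, and we multiply it against $\nabla\phi\in H^{k+1}$ while measuring in $H^{k}$ with $k$ close to $s-1$. I would first try the standard estimate
\[
\|fg\|_{H^r}\lesssim\|f\|_{H^r}\|g\|_{L^\infty}+\|f\|_{L^\infty}\|g\|_{H^r}.
\]
Its $L^\infty$ factors are controlled by $s>d$ via Sobolev embedding, and the inequality is applied with $r=k\le s-1$. If this falls short by a derivative at the endpoint, I would instead use the weaker range or a duality argument via the symmetric form $\langle D\nabla\phi,\nabla\psi\rangle$. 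All constants depend only on $U$, $D_{\min}$, $k$, $d$.
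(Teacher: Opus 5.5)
The paper states this proposition without proof. It only remarks that equality, rather than strict inclusion, comes from the periodic boundary conditions, by analogy with the periodic Laplacian. So there is no argument to compare against. Your induction $k\mapsto k+2$ from $k=0,1$, built on $\|\phi\|_{\bar H^{k+2}_D}=\|T_D\phi\|_{\bar H^k_D}$ plus elliptic regularity on $\mathbb T^d$, is the natural way to supply one, and it is sound.

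There are three points to tighten.

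\textbf{Equality of the spaces.} Norm equivalence on the intersection is not enough; you need the spectral $T_D$ to agree with $-\nabla\cdot(D\nabla\,\cdot)$. For $\phi\in\dot H^{k+2}$, use $\langle -\nabla\cdot(D\nabla\phi),e_{j,D}\rangle=\lambda_{j,D}\langle\phi,e_{j,D}\rangle$. For $\phi\in\bar H^{k+2}_D$, note that $\phi\in\dot H^{k+1}$ (run the induction in increasing $k$). Then $\phi$ is a weak solution of $-\nabla\cdot(D\nabla\phi)=g$ with $g\in\bar H^k_D=\dot H^k$. Your bootstrap then gives genuine regularity, not just an a priori bound. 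On the torus the identity $D\Delta\phi=-g-\nabla D\cdot\nabla\phi$ holds distributionally for $\phi\in H^1$ and $D\in C^1$, and for mean-zero $\phi$ one has $\Delta\phi\in H^r$ if and only if $\phi\in H^{r+2}$.

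\textbf{Absorbing the lower-order term.} This is cleaner via the induction hypothesis than via interpolation: $\|\phi\|_{H^{k+1}}\simeq\|\phi\|_{\bar H^{k+1}_D}\le\lambda_{1,D}^{-1/2}\|\phi\|_{\bar H^{k+2}_D}$. The Rayleigh quotient gives $\lambda_{1,D}\ge 4\pi^2 D_{\min}$, and this is what makes your constants uniform over $\|D\|_{H^s}\le U$.

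\textbf{The endpoint obstacle.} The obstacle you flag at $k=s+1$ is not real. However, the tame estimate with an $L^\infty$ factor is the wrong tool for the lower bound at small $k$, because $\|\nabla\phi\|_{L^\infty}$ is not controlled by $\|\phi\|_{H^{k+1}}$ unless $k>d/2$. Use the plain multiplier estimates instead, both for $0\le k\le s-1$. For the upper bound, $\|\nabla D\cdot\nabla\phi\|_{H^k}\lesssim\|D\|_{H^s}\|\phi\|_{H^{k+2}}$ holds as soon as $s>d/2$. For the lower bound, $\|\nabla D\cdot\nabla\phi\|_{H^k}\lesssim\|D\|_{H^s}\|\phi\|_{H^{k+1}}$ holds as soon as $s-1>d/2$, which $s>d$ guarantees when $d\ge 2$. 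For $d=1$, your remark that $s>d$ forces $D\in C^1$ is false. But in the problematic range $s\in(1,3/2]$ only $k\le 2$ occurs, and the step to $k=2$ needs just $\nabla D\in L^\infty$. That is the paper's standing assumption $D\in C^1(\Omega)$.
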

We will not prove this proposition here, but note the reason we get equality of the spectrally defined spaces (and not a strict containment) is because we are working with periodic boundary conditions and not with Neumann or Dirichlet boundary conditions. The is fundamentally similar to how the domain of the Laplacian coincides with Sobolev spaces in the periodic case, see \textsection 6.3 in \cite{R}.

This allows us to define the operator $T_D^{1/2}$ via
\[
    T_D^{1/2}(u) = \sum_{j \geq 1} \lambda_{j, D}^{1/2} \langle u, e_{j, D} \rangle e_{j, D}, 
\]
which is an operator from $H^k(\Omega) \to H^{k-1}(\Omega)$ for all $k \leq s + 1$. Finally, we note that for $\phi \in H^k$, 
\[
    \| T^{k/2}_D\phi \|_{L^2(\Omega)}^2 + \|\varphi \|_{L^2(\Omega)}^2 \simeq \|\varphi \|_{H^k(\Omega)}^2.
\]
Now, we consider the linear parabolic equation
\begin{equation}\label{eq:LinearParabolicEq}
\begin{split}
    \frac{\partial u}{\partial t} + T_D u + au&= h \quad \text{ on } [0, S]\times \Omega, \\
    u(0) &= \theta \quad \text{ on } \Omega,
\end{split}
\end{equation}
where $h \in L^2([0, S], L^2(\Omega))$ and $a \in L^\infty(Q)$. Following the same proofs as in \textsection 6.3 and \textsection 7.2 in \cite{Evans}, but keeping track of the exact assumptions needed on the coefficients, we obtain the following theorem:
\begin{proposition}\label{prop:LinearParabolicRegularity}
    Suppose $\theta \in H^1(\Omega)$, $h \in L^2([0, S], L^2(\Omega))$, $a\in L^\infty(Q)$, and $D: \Omega 
    \to [D_{\min}, \infty)$, $D_{\min} >0$, $D \in C^1(\Omega)$. Then there exists a unique strong solution of \eqref{eq:LinearParabolicEq} $u$ satisfying
    \[
        u \in L^2([0, S], H^2(\Omega))\cap C([0, S], H^1(\Omega)), \quad u' \in L^2([0, S), L^2(\Omega)), 
    \]
    and we have the estimates
    \[
        \|u\|_{L^2([0, S], H^2(\Omega)} + \|u'\|_{L^2([0, S], L^2(\Omega))}\leq C (\| h\|_{L^2([0, S], L^2(\Omega))} + \| \theta\|_{H^1(\Omega)})
    \]
    and
    \[
        \|u\|_{L^2([0, S], H^1(\Omega)} + \|u'\|_{L^2([0, S], H^{-1}(\Omega))}\leq C (\| h\|_{L^2([0, S], H^{-1}(\Omega))} + \| \theta\|_{L^2(\Omega)}),
    \]
    where $C$ depends only on an upper bound on the $C^1$-norm of $D$, $D_{\min}$, $d$, $\|a\|_\infty$, and $S$.
\end{proposition}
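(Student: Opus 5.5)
We follow the Galerkin strategy of \textsection 7.1--7.2 in \cite{Evans}, using the spectral basis $\{e_{j,D}\}$ of $T_D$ constructed above. Set $u_m(t) = \sum_{j=0}^m c_j^m(t) e_{j,D}$. The coefficients solve the linear ODE system
\[
\dot c^m_k + \lambda_{k,D} c^m_k + \sum_{j} \langle a u_m, e_{k,D}\rangle = \langle h, e_{k,D}\rangle,
\qquad c^m_k(0) = \langle \theta, e_{k,D}\rangle .
\]
This system has measurable bounded coefficients, so Carathéodory theory gives a unique absolutely continuous solution on $[0,S]$.

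\textbf{Energy estimates.} We derive the two estimates at the level of $u_m$, uniformly in $m$.

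\emph{Weak estimate.} Test with $u_m$ and use $\langle T_D u_m, u_m\rangle = \int D|\nabla u_m|^2 \ge D_{\min}\|\nabla u_m\|^2$. Bound $|\langle a u_m, u_m\rangle| \le \|a\|_\infty \|u_m\|^2$. Bound $\langle h, u_m\rangle$ by $\|h\|_{H^{-1}}\|u_m\|_{H^1}$ together with Young's inequality. Grönwall then gives control of $\sup_t\|u_m\|_{L^2}$ and $\|u_m\|_{L^2_tH^1_x}$ by $\|h\|_{L^2_tH^{-1}_x} + \|\theta\|_{L^2}$. The bound on $u_m'$ in $L^2_tH^{-1}_x$ follows by duality from the equation, using that the projection onto $\Span(e_{0,D},\dots,e_{m,D})$ is uniformly bounded on $H^1$. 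That boundedness comes from the spectral characterisation $\|\phi\|_{H^1}^2 \simeq \|T_D^{1/2}\phi\|^2 + \|\phi\|^2$, since the projection commutes with $T_D^{1/2}$.

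\emph{Strong estimate.} Test with $T_D u_m$, which lies in the Galerkin space. This gives
\[
\tfrac12 \tfrac{d}{dt}\|T_D^{1/2}u_m\|^2 + \|T_D u_m\|^2 = \langle h - a u_m, T_D u_m\rangle \le \tfrac12\|T_D u_m\|^2 + \|h\|^2 + \|a\|_\infty^2\|u_m\|^2 .
\]
Integrating, and using $\|T_D^{1/2}u_m(0)\| \lesssim \|\theta\|_{H^1}$, we obtain $u_m$ bounded in $L^\infty_tH^1_x$ and $T_D u_m$ bounded in $L^2_tL^2_x$.

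\textbf{Elliptic regularity.} To convert the $T_D u_m$ bound into an $H^2$ bound using only $D\in C^1$, we cannot rely on Proposition \ref{prop:EigenSobolevChar}, whose hypotheses are stronger. Instead we expand $T_D v = -D\Delta v - \nabla D\cdot\nabla v$. This gives $\|\Delta v\|_{L^2} \le D_{\min}^{-1}\big(\|T_D v\| + \|D\|_{C^1}\|\nabla v\|\big)$, and on the torus $\|v\|_{H^2}\simeq \|\Delta v\| + \|v\|$. The bound on $u_m'$ in $L^2_tL^2_x$ then follows from the equation. All constants depend only on $D_{\min}$, $\|D\|_{C^1}$, $d$, $\|a\|_\infty$ and $S$.

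\textbf{Passage to the limit and uniqueness.} Weak compactness gives a limit $u$ in $L^2_tH^2_x$ with $u'$ in $L^2_tL^2_x$. The equation passes to the limit because it is linear. The initial datum is recovered exactly as in \cite{Evans}. By Theorem 4 of \textsection 5.9 in \cite{Evans}, $u \in C([0,S],H^1)$, and weak lower semicontinuity transfers both estimates to $u$. For uniqueness, the difference of two solutions solves the problem with $h=0$ and $\theta=0$, so the weak estimate forces it to vanish.

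\textbf{Main obstacle.} The step needing the most care is the $H^2$ regularity with only $C^1$ coefficients. Testing with $T_D u_m$ rather than $-\Delta u_m$ sidesteps needing derivatives of $D$ in the energy identity. The remaining point is ensuring that $e_{j,D}\in H^2$, so that $T_D u_m \in L^2$ is legitimate; this is given by the eigenbasis discussion above.
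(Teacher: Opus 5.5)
Your proof is correct and is essentially the Galerkin energy argument from \cite{Evans} that the paper invokes without further detail. Your one variation is to work in the $T_D$-eigenbasis, so that $T_Du_m$ is an admissible test function, and to read off the $H^2$ bound from $T_Dv=-D\Delta v-\nabla D\cdot\nabla v$, instead of testing with $u_m'$ and applying the $C^1$-coefficient elliptic regularity theorem of Evans \textsection 6.3. This makes the dependence on $\|D\|_{C^1}$ and $D_{\min}$ transparent, and elliptic regularity now enters only through the fact $e_{j,D}\in H^2$, which is valid for $D\in C^1$.

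One caveat concerns your bound $\langle h,u_m\rangle\le\|h\|_{H^{-1}}\|u_m\|_{H^1}$, which uses $H^{-1}=(H^1(\Omega))^\ast$. That is the reading under which the statement is true: with the paper's stated convention $H^{-1}=(\dot H^1(\Omega))^\ast$ the second estimate fails (take $a=0$, $\theta=0$, $h\equiv 1$, so $u(t,x)=t$ while $\|h\|_{H^{-1}}=0$). The two conventions do coincide for the mean-zero forcing $-\nabla\cdot([D_2-D_1]\nabla u_{D_w})$ to which the paper applies the estimate in Proposition \ref{prop:HRDEForward}.
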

\subsubsection{Regularity Theory for HRDE}

\begin{theorem}\label{thm:strongsolsHRDE}
    For $f\in C_c^\infty(\R)$, $D \in C^1(\Omega)$, and $\theta \in H^1(\Omega)$, $d \in \N$, there exists a strong solution $u = u_{\theta, D}$ to the heterogeneous reaction-diffusion equation \eqref{eq:HRDE} that is unique in $C([0, 1], L^2(\Omega))$.
\end{theorem}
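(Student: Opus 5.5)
We will prove the existence of a strong solution of \eqref{eq:HRDE} by a fixed-point argument built on the linear theory of Proposition \ref{prop:LinearParabolicRegularity}. Uniqueness will follow from an energy estimate on the difference of two solutions. The key structural fact is that $f\in C_c^\infty(\R)$, so $f$ is bounded and globally Lipschitz, with $\|f\|_\infty$ and $\|f'\|_\infty$ finite. Because of this, no dimensional restriction or smallness of time is needed in principle, which explains why the statement allows arbitrary $d\in\N$.

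\textbf{Existence.} Set $X = C([0,1], L^2(\Omega))$ and define $\Phi: X\to X$ by letting $\Phi(w)$ be the unique strong solution of \eqref{eq:LinearParabolicEq} with $a \equiv 0$, forcing $h = f(w)$ and initial datum $\theta$. Since $|f(w)|\le \|f\|_\infty$, we have $h\in L^2([0,1],L^2(\Omega))$ with norm at most $\|f\|_\infty|\Omega|^{1/2}$. Proposition \ref{prop:LinearParabolicRegularity} then gives a strong solution with $\Phi(w)\in L^2_tH^2_x\cap C([0,1],H^1(\Omega))$, $\Phi(w)'\in L^2_tL^2_x$, and norms bounded uniformly in $w$. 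For contraction, take $w_1,w_2\in X$ and set $z=\Phi(w_1)-\Phi(w_2)$. Then $z$ solves the linear equation with zero initial data and forcing $f(w_1)-f(w_2)$. Testing against $z$ and using $\langle T_D z, z\rangle\ge 0$ gives
\[
\tfrac{d}{dt}\|z\|_{L^2}^2 \le \|z\|_{L^2}^2 + \|f'\|_\infty^2\|w_1-w_2\|_{L^2}^2 .
\]
This yields $\sup_{t\le \tau}\|z(t)\|_{L^2}^2 \le e^{\tau}\tau\|f'\|_\infty^2\sup_{t\le\tau}\|w_1-w_2\|_{L^2}^2$. To avoid restarting in time, I would work with the weighted norm $\sup_t e^{-\lambda t}\|w(t)\|_{L^2}$ and take $\lambda$ large; this makes $\Phi$ a contraction on all of $[0,1]$ at once. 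Banach's fixed point theorem then gives a fixed point $u=\Phi(u)$, which inherits the strong regularity from the linear theory and therefore is a strong solution.

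\textbf{Uniqueness.} Let $u_1,u_2$ be two solutions, both lying in $C([0,1],L^2(\Omega))$. Their difference $z$ is at least a weak solution, so by the weak formulation and the Lions–Magenes identity $\tfrac{d}{dt}\|z\|^2 = 2\langle z',z\rangle$ (valid since $z\in L^2_tH^1_x$ with $z'\in L^2_tH^{-1}_x$) we get
\[
\tfrac12\tfrac{d}{dt}\|z\|^2 + D_{\min}\|\nabla z\|^2 \le \|f'\|_\infty\|z\|^2 .
\]
Grönwall's inequality with $z(0)=0$ then gives $z\equiv0$.

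\textbf{Main obstacle.} The main obstacle is regularity bookkeeping. First, the contraction must be justified for merely $X$-valued $w$; this requires that $f(w)$ is measurable and lies in $L^2_tL^2_x$, which is immediate from boundedness and Lipschitz continuity of $f$. Second, the energy identity must be valid for the differences. Two further points need care: the constant in Proposition \ref{prop:LinearParabolicRegularity} depends only on $\|D\|_{C^1}$ and $D_{\min}$, and the nonnegativity $\langle T_D z,z\rangle=\int D|\nabla z|^2\ge0$ uses the periodic boundary conditions. I expect both of these to go through directly.
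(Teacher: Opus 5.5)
Your proposal is correct and follows the paper's route: a Banach fixed-point argument on $C([0,S],L^2(\Omega))$ for the map sending $w$ to the strong solution of the linear problem with forcing $f(w)$, which Proposition \ref{prop:LinearParabolicRegularity} supplies. Your two refinements are both valid and make the argument tighter. First, you use $\langle T_D z,z\rangle\ge 0$ with Gr\"onwall in place of the paper's mean-subtraction and Poincar\'e step. Second, the weighted sup-norm gives a contraction on all of $[0,1]$ at once, whereas the paper only contracts on a short interval $[0,S_0]$ and leaves the continuation to $[0,1]$ implicit.
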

\begin{proof}
    This proof of existence adapts ideas from \textsection 3.1.1 in \cite{N26+} and p.537f in \cite{Evans}.
    Define the Banach space $X \equiv C([0, S], L^2(\Omega))$, for some $S >0$ to be chosen, and imbue it with the norm
    \[
        \| u\|_X = \sup_{0 < t < S} \| u(t) \|_{L^2(\Omega)}.
    \]
    For any $u \in X$ define a new function $h(t, \cdot) = f(u(t, \cdot)) \in L^\infty([0, S], L^\infty(\Omega)$. The \textit{linear} parabolic PDE
    \begin{align*}
        \frac{\partial w}{\partial t} + T_Dw &= h \quad \text{ on } (0, S]\times \Omega, \\
        w(0, \cdot) &= \theta \quad \text{ on }\Omega,
    \end{align*}
    has a unique strong solution $w \in L^2([0, S], H^2(\Omega))\cap C([0, S], H^1(\Omega))$ with $w' \in L^2([0, S], L^2(\Omega))$ by Proposition \ref{prop:LinearParabolicRegularity}. For $u_i \in X$, $h_i = f(u_i)$, $i = 1, 2$, consider solutions $w_i \equiv A(u_i) \in X$ to \eqref{eq:LinearParabolicEq} and define the constant
    \[
        i_{1, 2} = \int_\Omega (w_1 - w_2) = \langle w_1 - w_2, 1\rangle_{L^2(\Omega)}.
    \]
    We can test the equation \eqref{eq:LinearParabolicEq} against $1$ and get that
    \[
        \frac{d}{dt}\langle w_1 - w_2, 1 \rangle + \langle T_D (w_1 - w_2), 1\rangle = \langle f(u_1) - f(u_2), 1\rangle.
    \]
    Now, we can use divergence theorem to show the middle term vanishes. If we integrate from $0$ to $t$, we see
    \begin{align*}
        |i_{1, 2}(t)| = |i_{1, 2}(t) - i_{1, 2}(0)| = \left|\int_0^t\int_\Omega (f(u_1(s, x)) - f(u_2(s, x))) \, dxds\right| \\
        \leq S\sup_{0 < t < S} \| f(u_1) - f(u_2) \|_{L^2(\Omega)} \leq S\| f\|_{\lip}\|u_1 - u_2 \|_X.
    \end{align*}
    Testing \eqref{eq:LinearParabolicEq} for $w_1$ and $w_2$ against $w_1 - w_2$, and then subtracting them, we get
    \begin{align*}
        \frac{d}{dt}\|w_1 - w_2\|^2_{L^2(\Omega)} &+ 2D_{\min} \| \nabla (w_1 - w_2 - i_{1, 2}) \|_{L^2}^2 \\
                                                  &\leq \frac{d}{dt}\|w_1 - w_2\|^2_{L^2(\Omega)} + 2\| \sqrt D \nabla(w_1-w_2-i_{1, 2})\|^2_{L^2} \\
                                                  &= 2 \langle w_1 - w_2, h_1 - h_2\rangle_{L^2(\Omega)}\\
                                                  & \leq (\epsilon/2)\|w_1 - w_2 - i_{1, 2} + i_{1, 2} \|^2_{L^2(\Omega)} + \frac{2}{\epsilon} \| f(u_1) - f(u_2) \|^2_{L^2(\Omega)} \\
                                                  &\leq \epsilon \| \nabla(w_1 - w_2 - i_{1, 2}) \|^2_{L^2(\Omega)} + (S^2 + \frac{2}{\epsilon})\|f\|_{\lip}^2\|u_1 - u_2\|_X^2.
    \end{align*}
    Then we can choose $\epsilon$ small enough and subtracting, we get
    \[
        \frac{d}{dt}\| w_1 - w_2 \|_{L^2(\Omega)}^2 \leq c(S^2 + 1) \| u_1 - u_2 \|_X^2,
    \]
    for some constant $c = c(\|f\|_{\lip}, D_{\min}, d)$. Integrating from $0$ to $t$, and using the fact that $(w_1 - w_2)(0) = 0$, we get
    \[
        \| A(u_1) - A(u_2) \|_X^2 = \sup_{0 < t < S} \| w_1(t) - w_2(t) \|^2_{L^2(\Omega)} \leq c(S^2 + 1)S \|u_1 - u_2 \|_X^2.
    \]
    We deduce for some fixed $S_0$ small enough that $A$ is a contraction mapping on $X$, so by the contraction mapping theorem, there exists a unique $u$ satisfying $u = A(u)$ on the interval $[0, S_0]$. Since $u$ is a strong solution to \eqref{eq:HRDE} if and only $u = A(u)$, we obtain the desired result.
\end{proof}

First, we prove spatial regularity of solutions:

\begin{proposition}\label{prop:VVSpatialReg}
    Assume that $D\in C^1(\Omega)$. For $\theta \in H^1(\Omega)$, $D \in H^s(\Omega)$,  and $f \in C^\infty_c(\R)$, let $u$ be a solution of \eqref{eq:HRDE}. Suppose that $\| \theta \|_{H^\gamma(\Omega)}, \|D\|_{H^\gamma(\Omega)} \leq B$ for $0\leq \gamma \leq s$ and assume that $d \leq 3$. Then we have
    \[
        \int_0^S \|u_{\theta, D}(t)\|^2_{H^{\gamma+1}(\Omega)} \, dt + \sup_{0\leq t \leq S} \| u_{\theta, D} \|_{H^{\gamma}(\Omega)} \leq C, 
    \]
    for some $C = C(B, f, \gamma, d, D_{\min}, \|D\|_{C^1}, S)$, and writing $u' = (\partial u/\partial t)$, we also have
    \[
        \int_0^S \|u'_{\theta, D}(t) \|^2_{H^{\gamma -1 }} \, dt + \ess \sup_{0 \leq t \leq S} \|u'_{\theta, D}(t) \|_{H^{\gamma - 2}} \leq C'.
    \]
    In particular, we have $u_{\theta, D} \in C([0, S], H^\gamma).$
\end{proposition}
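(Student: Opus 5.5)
We prove Proposition~\ref{prop:VVSpatialReg} by induction on $\gamma$, running energy estimates for the spectrally defined operator $T_D$ and closing each step with Gr\"onwall's inequality. The base case $\gamma \le 1$ follows from Theorem~\ref{thm:strongsolsHRDE} and Proposition~\ref{prop:LinearParabolicRegularity}. Freeze the nonlinearity as $h = f(u)$. Since $f\in C^\infty_c(\R)$, we have $\|h\|_{L^\infty(Q)}\le \|f\|_\infty$, so $h$ is bounded in $L^2([0,S],L^2(\Omega))$ independently of $u$. Proposition~\ref{prop:LinearParabolicRegularity} then yields
\[
u\in L^2_tH^2_x\cap C([0,S],H^1(\Omega)), \qquad u'\in L^2_tL^2_x,
\]
with constants depending only on $\|D\|_{C^1}$, $D_{\min}$, $d$, $S$, $\|f\|_\infty$ and $\|\theta\|_{H^1}$. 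This gives the claim for $\gamma\in[0,1]$. Non-integer values of $\gamma$ will be handled by interpolation once the integer cases are done.

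For the inductive step, suppose the bounds hold at level $\gamma-1$ and that $\|\theta\|_{H^\gamma},\|D\|_{H^\gamma}\le B$. I would test the equation against $T_D^{\gamma}u$, i.e.\ work in the eigenbasis $(e_{j,D})$ and use Proposition~\ref{prop:EigenSobolevChar} to identify $\|T_D^{k/2}\phi\|_{L^2}+\|\phi\|_{L^2}\simeq\|\phi\|_{H^k}$ for $k\le s+1$. Because $T_D$ commutes with its own powers, no commutator with $D$ appears in the linear part. This gives
\[
\frac12\frac{d}{dt}\|T_D^{\gamma/2}u\|_{L^2}^2+\|T_D^{(\gamma+1)/2}u\|_{L^2}^2=\langle T_D^{(\gamma-1)/2}f(u),\,T_D^{(\gamma+1)/2}u\rangle .
\]
By Young's inequality the right-hand side is at most $\tfrac12\|T_D^{(\gamma+1)/2}u\|^2+C\|f(u)\|_{H^{\gamma-1}}^2$. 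The nonlinear term is controlled by a Moser/composition estimate (for instance Proposition 3.9 in \cite{Taylor}):
\[
\|f(u)\|_{H^{\gamma-1}}\le C\big(\|u\|_{L^\infty}\big)\big(1+\|u\|_{H^{\gamma-1}}\big),
\]
after subtracting $f(0)$ to deal with constants. Here $\|u\|_{L^\infty}$ needs a separate argument, because for $d\le 3$ the embedding $H^2\hookrightarrow L^\infty$ is available only in $L^2_t$. I would bound $\|f(u)\|_{H^{\gamma-1}}$ directly from the chain rule, using that all derivatives of $f$ are bounded, together with Gagliardo--Nirenberg and the inductive $L^\infty_tH^{\gamma-1}$ bound. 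This leaves a term that is at most linear in $\|u\|_{H^\gamma}$ with an $L^1_t$ coefficient. Gr\"onwall then gives the $\sup_t\|u\|_{H^\gamma}$ bound and the $L^2_tH^{\gamma+1}$ bound.

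Next come the bounds on $u'$. They follow by reading off $u'=-T_Du+f(u)$. The estimate $\|T_Du\|_{H^{\gamma-1}}\lesssim\|u\|_{H^{\gamma+1}}$ gives the $L^2_tH^{\gamma-1}$ bound, and $\|T_Du\|_{H^{\gamma-2}}\lesssim\|u\|_{H^\gamma}$ gives the $\operatorname{ess\,sup}_t$ bound in $H^{\gamma-2}$. Continuity $u\in C([0,S],H^\gamma)$ then follows from $u\in L^2_tH^{\gamma+1}$, $u'\in L^2_tH^{\gamma-1}$ and Theorem 3 in \textsection 5.9 of \cite{Evans}, applied to the Hilbert triple $H^{\gamma+1}\subset H^\gamma\subset H^{\gamma-1}$.

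The main obstacle is the identification of the spectral norms with Sobolev norms up to level $\gamma+1$. This requires $\gamma+1\le s+1$ and $D\in H^s$ with $s>d$, which is exactly the range of Proposition~\ref{prop:EigenSobolevChar}; the stated hypothesis $\gamma\le s$ fits this. I also need the equivalence constants to depend only on $B$, $D_{\min}$ and $\|D\|_{C^1}$. If that uniformity fails, my fallback is to differentiate the equation $k$ times in $x$ and commute the derivatives past $\nabla\cdot(D\nabla\,\cdot)$. The commutators $\nabla\cdot(\partial^\beta D\,\nabla\partial^{\alpha-\beta}u)$ would be estimated with Kato--Ponce product estimates, which need $D\in H^\gamma$ with $\gamma>d/2+1$ or else a lower-order absorption argument. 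Finally, I would handle the $L^\infty$ issue for $f(u)$ in $d=3$ at low $\gamma$ exactly as above, relying on the boundedness of $f$ and its derivatives rather than on any embedding.
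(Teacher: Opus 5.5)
Your argument is essentially the paper's: test against $T_D^{\gamma}u$ in the eigenbasis, absorb with Young, control $\|f(u)\|_{H^{\gamma-1}}$ by the chain rule and the inductive $\sup_t\|u\|_{H^{\gamma-1}}$ bound, and read off $u'$ from the equation. That bound already makes $\|f(u)\|_{H^{\gamma-1}}$ uniformly bounded in $t$, so the paper just integrates instead of using Gr\"onwall, and your $L^\infty$ worry never arises: $\|f(u)\|_{H^1}$ needs no $L^\infty$ bound, and from the step $\gamma=3$ on you already have $L^\infty_tH^2\hookrightarrow L^\infty$. The paper gets the base case from a direct $L^2$ energy estimate; if you use Proposition~\ref{prop:LinearParabolicRegularity} instead, take its second ($L^2$-data) estimate for $\gamma=0$ so the constant depends only on $\|\theta\|_{L^2}$, and handle non-integer $\gamma$ by running the spectral estimate at fractional level (or interpolating only the linear Duhamel pieces) rather than interpolating the nonlinear solution map.
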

\begin{proof}
    We proceed by the usual arguments, where we assume that the solutions $u$ are sufficiently regular and establish a-priori estimates. The general case follows from going back to a Galerkin approximation and then using uniqueness of strong solutions, as is standard.

    \textbf{Step 1: Preliminary identities and estimates.} Start with equation \eqref{eq:HRDE}, and test it against $T_D^{\gamma} u$. Then, we get
    \[
        \frac{1}{2}\frac{d}{dt} \langle u, T_D^{\gamma} u \rangle + \langle T_D u, T_D^{\gamma} u \rangle = \langle f(u), T_D^{\gamma} u \rangle.
    \]
    Using the fact that $T_D$ is symmetric, we observe that
    \begin{equation}\label{eq:Generalgammacase}
        \frac{1}{2}\frac{d}{dt} \| T_D^{\gamma/2} u \|^2_{L^2} + \| T_D^{(\gamma + 1)/2} u \|^2_{L^2}= \langle T_D^{(\gamma -1)/2}f(u), T_D^{(\gamma + 1)/2} u \rangle.
    \end{equation}
    For the case $\gamma = 0$, we get
    \begin{equation}\label{eq:0gammacase}
        \frac{1}{2}\frac{d}{dt} \|u(t)\|_{L^2(\Omega)}^2 + \| \sqrt D \nabla u \|_{L^2(\Omega)}^2 = \langle f(u), u \rangle.
    \end{equation}
    Since $f(u)$ is uniformly bounded and of compact support, we know that
    \[
        \langle f(u), u \rangle_{L^2} = \int_\Omega f(u) u \mathbbm{1}_{|u|\leq K} \leq c_f < \infty, \quad \text{ for some }K > 0. 
    \]
    So, then going back to \eqref{eq:0gammacase}, we get
    \[
        \frac{d}{dt} \| u(t) \|_{L^2}^2 + 2 \| \sqrt D \nabla u \|_{L^2}^2 \leq 2c_f,
    \]
    which by integrating we obtain $\sup_{0 \leq t \leq S} \| u(t) \|_{L^2}^2 \leq 2Sc_f + \|\theta\|_{L^2}^2< \infty$ as well as 
    \[
        \int_0^S \| \nabla u \|^2_{L^2} \, dt \leq \frac{Sc_f}{D_{\min}} + \frac{1}{D_{\min}}\|u(0)\|^2_{L^2} \leq C(B, f, S, D_{\min}).
    \]
    Since $\|u\|^2_{H^1} \simeq \| u \|^2_{L^2} + \| \nabla u \|^2_{L^2}$ this proves the proposition in the case $\gamma = 0$.

    Next, we note that from the Cauchy-Schwarz inequality, Proposition \ref{prop:EigenSobolevChar}, and Young's inequality
    \[
        \langle T_D^{\gamma - 1/2} u, T_D^{\gamma + 1/2} u \rangle \lesssim \|f(u)\|_{H^{\gamma - 1}} \|T_D^{\gamma+1/2}u\|_{L^2} \leq 2\|f(u) \|_{H^{\gamma - 1}}^2+ \frac{1}{4}\| T^{\gamma + 1/2}_D u \|_{L^2}^2.
    \]
    Thus, returning to \eqref{eq:Generalgammacase}, we see
    \begin{equation}\label{eq:GronwallTypeIneq}
        \frac{d}{dt} \|T_D^{\gamma/2}u\|_{L^2}^2 + \|T^{(\gamma + 1)/2}_Du\|^2_{L^2} \lesssim \| f(u) \|_{H^{\gamma - 1}}^2.
    \end{equation}
    \textbf{Step 2: Proof of a-priori estimates.} We now want to prove the estimates in the proposition for $\gamma \geq 1$. We argue by induction. Suppose that
    \[
        \int_0^S \|u_{\theta, D}(t)\|^2_{H^{\gamma}(\Omega)} \, dt + \sup_{0\leq t \leq S} \| u_{\theta, D} \|_{H^{\gamma- 1}(\Omega)} \leq C, 
    \]
    In which case we have that
    \[
        \|u(t) \|_{H^{\gamma - 1}} \leq C,
    \]
    with constant $C = C(S, B, D_{\min}, \|D\|_{C^1}, d)$. We show that as a consequence that the RHS of \eqref{eq:GronwallTypeIneq} is bounded by a fixed constant. For $\gamma = 1$, this is follows from $f \in L^\infty$. For $\gamma = 2$, we know
    \[
        \|f(u)\|_{H^1} \lesssim \|f(u)\|_{L^2} + \max_{|\alpha| = 1}\|f'(u)D^\alpha u \|_{L^2} \leq \|f\|_\infty + \|f'\|_\infty\|u\|_{H^1} \leq C.
    \]
    For $\gamma = 3$, we have again that $\|f(u) \|_{H^2} \lesssim \|f\|_\infty + \max_{|\alpha| = 2}\|D^\alpha(f(u))\|_{L^2}$ and need to estimate the last term. We know that
    \[
        D^\alpha (f(u)) = f''(u)D^{\alpha_1} uD^{\alpha_2} u + f'(u)D^\alpha u.
    \]
    The second term is easily bounded by $\|f'\|_\infty \|u\|_{H^2}$, and the first is bounded by
    \begin{equation*}
        \|f''\|_\infty \bigg(\int_\Omega (D^{\alpha_1}u D^{\alpha_2}u)^2\, dx\bigg)^{1/2} \lesssim \| D^{\alpha_1}u \|_{L^4} \|D^{\alpha_2} u \|_{L^4},
    \end{equation*}
    and we note that when $d\leq 4$, that $H^1(\Omega) \hookrightarrow L^4(\Omega)$, and so we get that the above is bounded by $\|u\|_{H^2}^2$. Next, the restriction $d \leq 3$ is required. We have the multiplier inequality for Sobolev norms is
    \[
        \|fg\|_{H^a} \lesssim \|f\|_{H^a} \|g\|_{H^a}
    \]
    for $a > d/2$. Thus, we can apply it in the case of $H^2$-norms when $d\leq 3$. Now, for $\gamma = 4$, we have 
    \begin{align*}
        \|f(u) \|_{H^3} &\lesssim c + \max_{|\alpha| = 1}\|f'(u)D^\alpha u \|_{H^2} \lesssim c + \|f'(u)\|_{H^2}\max_{|\alpha | = 1} \|D^\alpha u \|_{H^2} \\
                        &\lesssim c + \|u\|_{H^3}^3,
    \end{align*}
    using the same bound we did in the previous case but now with $f'''$ replacing $f''$. For further $\gamma \geq 5$, we do the same argument as above, utilizing the multiplier inequality for $H^{\gamma - 2}$-norms.

    Thus, we have completed the induction, and we have shown that for all $\gamma$, there exists and constant $c_0$ (depending on the usual constants) such that 
    \[
        \frac{d}{dt} \| T_D^{\gamma} u \|^2_{L^2} + \|T_D^{\gamma + 1/2} u \|^2_{L^2} \leq c_0.
    \]
    To prove the a-priori estimates, we now simply do the usual integration argument, and get that 
    \[
        \|T^\gamma_D u(t)\|_{L^2}^2 + \int_0^S \|T_D^{\gamma + 1/2} u \|_{L^2}^2 \, dt \leq c_1(B, f, \gamma, S, d, D_{\min}, \|D\|_{C^1}).
    \]
    Now, we combine this with the bounds on $\|u(t) \|_{L^2}$, and use Proposition \ref{prop:EigenSobolevChar} to deduce the desired a-priori bound. The derivative a-priori bound follows from the fact
    \[
        \left\|\frac{du}{dt}\right\|_{H^{\gamma-1}}^2 = \| T_D u + f(u)\|_{H^{\gamma-1}}^2 \leq 2\|u\|_{H^{\gamma + 1}}^2 + 2\|f(u)\|^2_{H^{\gamma - 1}}.
    \]
    Now that we have deduced the a-priori estimates, we employ the usual Galerkin arguments with respect to the basis $\{e_{j, D}\}_{j\geq 0}$. Note that basis is required as we need for $T_D$ to preserve $E_J = \Span \{ e_{j, D}\}_{j = 0}^J$.
\end{proof}

Now we deduce the temporal regularity of $u$. We will again use a Galerkin argument.
\begin{proof}[Proof of Proposition \ref{prop:HRDEReg}]
    We will show via induction on $k$ that for $\gamma + 1 - 2k \geq -1$, 
    \begin{align*}
        &\frac{d^j u}{dt^j} \in L^2([0, S], H^{\gamma + 1 - 2j}(\Omega))\quad \text{ for } j = 0, \dots, k,\\
        &\left\| u^{(j)} \right\|_{L^2([0, S], H^{\gamma +1 - 2j}(\Omega))} \leq C,
    \end{align*}
    where $C = C(B, f, \gamma, d, D_{min}, \|D\|_{C^1}, S, k)$. We have proven the base case of $k = 1$ by Proposition \ref{prop:VVSpatialReg}. Now, using the standard Galerkin argument, it suffices to establish an a-priori bounds on $u^{(k+1)}$. We know that $u$ satisfies 
    \[
        \frac{ d u}{dt} = \nabla \cdot (D\nabla u ) + f(u), 
    \]
    so by differentiating this identity $k$-times w.r.t to time, we get
    \[
        \frac{d^{k+1} u}{dt^{k+1}} = \nabla \cdot ( D \nabla \frac{d^k u}{dt^k}) + \frac{d^{k+1}}{dt^{k+1}} \big( f(u) \big).
    \]
    Now, we can bound 
    \[
        \|\nabla \cdot (D\nabla u^{(k)})\|_{L^2([0, S], H^{\gamma -1-2k})} \lesssim \| u^{(k)} \|_{L^2([0, S], H^{\gamma + 1 - 2k})},
    \]
    which is in turn bounded by some constant from the induction hypothesis. Now we bound the second term. Using Fa\`a di Bruno's formula, we see
    \begin{align*}
        \frac{d^{k+1}}{dt^{k+1}}\big[f(u)\big] &= \sum \frac{k!}{m_1!\cdots m_k!} f^{(m_1 + \cdots m_k)}(u)\prod_{j = 1}^k \left(\frac{u^{(j)}}{j!}\right)^{m_j}, \\
        &\quad \text{ where }m_1 + 2m_2 + \cdots + km_k = k.
    \end{align*}
    Thus, it suffices to bound each term of the sum in $L^2([0, S], H^{\gamma - 1 - 2k})$. Next, by the induction hypothesis we have
    \begin{equation}\label{eq:bestspaceforuj}
        u^{(j)} \in C([0, S], H^{\gamma - 2k}(\Omega)) \quad \text{ for }j = 0, \dots k-1,
    \end{equation}
    with its norm bounded by some constant times the norms on $u^{(j)}$ and $u^{(j+1)}$ (See theorem 4 of \textsection 5.9.2 in \cite{Evans}).

    Suppose now that $\gamma - 1 - 2k \geq 2$. Then, we know $H^{\gamma - 1 - 2k}$ satisfies a multiplier inequality for $d\leq 3$, so for some non-negative integers $m_1, \dots, m_k$, such that $m_1 + 2m_2 + \cdots + km_k = k$, we have
    \begin{align}
        &\int_0^S \|f^{(m_1 + \cdots m_k)}(u)\prod_{j = 1}^k (u^{(j)})^{m_j}\|_{H^{\gamma - 1 - 2k}}^2 \, dt \nonumber \\
        \lesssim &\int^S_0 \|f^{(m_1 + \cdots m_k)}(u)\|_{H^{\gamma - 1 - 2k}}^2\prod_{j=1}^k \|u^{(j)}\|^{2m_j}_{H^{\gamma - 1-2k}} \, dt.\label{eq:easycasegoal}
    \end{align}
    Now, in the last proposition's proof, we showed that if $\sup_{0\leq t\leq S} \|u\|_{H^a}\leq C$, then $\sup_{0\leq t \leq S}\|f(u)\|_{H^a}\leq c'$. For any choice of $a$ provided $d\leq 3$. But we know from \eqref{eq:bestspaceforuj} that $\sup_{0\leq t \leq S}\|u\|_{H^\gamma}\leq c$, so 
    \begin{equation}\label{eq:normboundonf}
        \sup_{0\leq t \leq S}\|f^{(m_1 + \cdots + m_k)}(u)\|_{H^{\gamma - 1 -2k}} \leq C.
    \end{equation}
    Similarly, we also have for each $j = 1, \dots k-1$,
    \[
        \sup_{0\leq t \leq S} \|u^{(j)}\|_{H^{\gamma - 1- 2k}} \lesssim \sup_{0\leq t \leq S} \|u^{(j)}\|_{H^{\gamma - 2j}} \leq C.
    \]
    Thus, we have established that the RHS of \eqref{eq:easycasegoal} is bounded by a constant in all cases except when $m_k \neq 0$. In this case, $m_1 = \cdots = m_{k-1} = 0$, and $m_k = 1$. Then, we get that (using again \eqref{eq:normboundonf})
    \begin{align*}
        \eqref{eq:easycasegoal} &\lesssim \int_0^S \|u^{(k)}\|_{H^{\gamma - 1 - 2k}}^2 \, dt \\
        &\leq  \|u^{(k)}\|_{L^2([0, S], H^{\gamma +1 - 2k})} \leq C,
    \end{align*}
    so, we have established the a-priori bound when $\gamma - 1 - 2k \geq 2$.

    Next, suppose that $\gamma - 1 - 2k = 0$. The cases $\gamma - 1 - 2k = 1, -1$ use the same ideas. Then, we want to show that for $m_1 + 2m_2 +\cdots + km_k = 1$, 
    \[
        \int_0^S \|f(u)\prod_{j = 1}^k(u^{(j)})^{m_j}\|^2_{L^2(\Omega)}\, dt\leq C.
    \]
    Since $f$ is bounded, we know
    \begin{equation}
        \int_0^S \|f(u)\prod_{j = 1}^k(u^{(j)})^{m_j}\|^2_{L^2(\Omega)}\, dt\leq C\int_0^S \int_\Omega \prod_{j = 1}^ku^{(j)}(t, x)^{2m_j}\, dxdt. \label{eq:fullexpansion}
    \end{equation}
    Now, since $m_1 + 2m_2 + \cdot + km_k = k$, we know
    \[
        \frac{1}{k/m_1} + \frac{1}{k/2m_2} +\cdots \frac{1}{m_k} = 1.
    \]
    Thus, by generalized H\"older's inequality, we get
    \[
        \eqref{eq:fullexpansion} \leq C\int_0^S\prod_{j = 1}^k \|u^{(j)}\|_{L^{2k/j}(\Omega)}^{2m_j}\, dt.
    \]
    Thus, utilizing \eqref{eq:bestspaceforuj}, we know that $\|u^{(j)}\|_{L^{2k/j}(\Omega)}$ is bounded uniformly in time by some constant provided that $H^{\gamma - 2j}$ embeds into $L^{2k/j}$. In this case, we know $\gamma = 1 + 2k$, so for $j \leq k -1$, $\gamma - 2j \geq 3$. Hence, we know that indeed $H^{\gamma - 2j}$ embeds into $L^\infty$. The case where $m_k \neq 0$ is again similar, so we are done when $\gamma = 1+2k$. Finally, to finish this proof use Proposition \ref{prop:besovIsSobolev}.
\end{proof}

\subsection{$2$-d Navier Stokes Equations}\label{sec:NSRegTheory}

In the setting of the $2$-d Navier-Stokes equations, we have
\begin{equation}\label{eq:NSAppendix}
    \begin{split}
        \frac{du}{dt} - \nu \Delta u + B(u, u) = f \quad \text{ on }[0, S]\times \Omega, \\
        u(0, \cdot) = \varphi,
    \end{split}
\end{equation}
where $f:\Omega \to \R^2$ is a time-independent forcing function, $\varphi \in \dot H^1_\diamond$, and $\nu \in (\nu_{\min}, \infty)$. A strong solution to \eqref{eq:NSAppendix} is some $u: [0, S]\to \dot H^1_\diamond(\Omega)$ such that
\[
    u \in L^2([0, S], \dot H^2_\diamond)\cap C([0, S], \dot H^1_\diamond), \quad u' \in L^2([0, S], \dot H_\diamond),
\]
and $u$ solves \eqref{eq:NSAppendix} for in $L^2([0, S], \dot H_\diamond)$. We know from Proposition $1$ of \cite{NT} that strong solutions to exist and are unique in 2-dimensions. 

\begin{proof}[Proof of Proposition \ref{prop:NSReg}]
    The proof proceeds similarly to \textsection \ref{sec:HRDERegTheory}, except we utilize Proposition 3.6 in \cite{KN}, and then differentiate in time as performed in Proposition \ref{prop:HRDEReg}. In this case the computations are easier, as instead of having to apply Fa\'a di Bruno's formula on the non-linearity $f(u)$, we simply have to use the product rule on the $(u\cdot \nabla)u$ term. 
\end{proof}
\subsubsection{Proof of Proposition \ref{prop:ForwardRegNS}}\label{sec:ForwardRegNS}
\begin{proof}[Proof of Proposition \ref{prop:ForwardRegNS}]
    The first part is shown follows from Proposition \ref{prop:NSReg} coupled with Proposition \ref{prop:BesovEmbeddings}. Next, let $M \geq 0$, and suppose that $(\varphi_1, \nu_1), (\varphi_2, \nu_2) \in \Theta$ with $\|\varphi_i\|, \nu_i \leq M$ for $i = 1, 2$. Also, let $u_i$ be the solution to \eqref{eq:introNS} with parameters $(\varphi_i, \nu_i)$. We want to show a forward regularity estimate of the form
    \[
        \sup_{0\leq t \leq 1} \|u_1 - u_2\|_{L^2} \leq K\big(\|\varphi_1-\varphi_2\| + |\nu_1 - \nu_2|\big).
    \]
    Set $w = u_1 - u_2$. Then, subtracting the difference of the two equations and rewriting the bilinear terms akin to p.250 in \cite{R}, we see that $w$ satisfies 
    \begin{align*}
        \frac{dw}{dt} + \nu_1 A w &= -B(w, u_1) - B(u_2, w) - (\nu_1 - \nu_2)Au_2, \\
        w(0) &= \varphi_1 - \varphi_2.
    \end{align*}
    Taking inner products with $w$, and noting $\langle B(u_2, w), w\rangle = 0$ (Proposition 9.1 in \cite{R}), we see that
    \[
        \frac{1}{2} \frac{d}{dt} \left[ \|w\|_{L^2}^2\right] + \nu_1\|A^{1/2}w\|_{L^2} = -\langle B(w, u_1), w\rangle - (\nu_1 - \nu_2)\langle Au_2, w\rangle.
    \]
    Then, we use (9.26) in \cite{R}, as well as Young's inequality and the divergence theorem to obtain
    \[
        \frac{1}{2} \frac{d}{dt} \left[ \|w\|_{L^2}^2\right] + \nu_1\|A^{1/2}w\|_{L^2}^2 \leq \frac{\nu_1}{2}\|A^{1/2}w\|^2_{L^2} + \frac{k^2}{\nu_1}\|w\|_{L^2}^2\|u_1\|_{L^2}^2 + |\nu_1 - \nu_2|^2 \frac{\|A^{1/2}u_2\|_{L^2}^2}{\nu_1}.
    \]
    Then we apply Gr\"onwall's inequality to obtain
    \[
        \|w(t)\|^2 \leq \exp\left(\int^t_0\frac{2k^2}{\nu_1}\|u_1(s)\|_{L^2}^2\, ds\right)\left(\|w(0)\|^2_{L^2}+ |\nu_1 - \nu_2|^2 \int_0^t \frac{1}{\nu_1} \|u_2(s)\|^2_{L^2} \, ds\right),
    \]
    but we know that $\|u_i\|_{L^2(Q)} \leq C(M)$ for $i = 1, 2$ by Proposition \ref{prop:NSReg}, and so
    \[
        \sup_{t\in [0, 1]} \|w(t)\|^2_{L^2} \leq C(M, d, \nu_{\min}) \left(\|w(0)\|_{L^2}^2 + |\nu_1 - \nu_2|^2\right).
    \]
    Taking square roots yields the second result.
\end{proof}

\end{appendix}
\begin{acks}[Acknowledgments]
    The author would like to thank R. Nickl for suggesting the topic and many helpful conversations, A. Castre for helpful discussions, and D. Konen for providing an easier argument for Lemma \ref{lem:LaplaceNonZero} in correspondence.
\end{acks}

\begin{funding}
The author was supported by an ERC Advanced Grant (UKRI G116786).
\end{funding}

\end{document}